\documentclass[preprint]{imsart}

\RequirePackage{amsthm,amsmath,amsfonts,amssymb}
\RequirePackage[numbers,sort]{natbib}
\RequirePackage[colorlinks,citecolor=blue,urlcolor=blue]{hyperref}
\RequirePackage{graphicx}
\RequirePackage{cleveref} 
\RequirePackage{xcolor} 

\usepackage[textwidth=34pc,
textheight=574pt,
marginparwidth=0pt,
hoffset=2pc]{geometry}

\startlocaldefs
\theoremstyle{plain}
\newtheorem{theorem}{Theorem}[section]
\newtheorem{lemma}{Lemma}[section]
\newtheorem{corollary}{Corollary}[section]
\newtheorem{proposition}{Proposition}[section]
\theoremstyle{remark}

\newtheorem{remark}[theorem]{Remark}

\crefname{enumi}{condition}{conditions}
\Crefname{enumi}{Condition}{Conditions}

\newcommand{\bbR}{\mathbb{R}}
\newcommand{\bbC}{\mathbb{C}}
\newcommand{\calD}{\mathcal{D}}
\newcommand{\ntrain}{n_{\text{train}}}
\newcommand{\ntest}{n_{\text{test}}}
\newcommand{\supop}{\mathcal{S}}
\newcommand{\Hol}{\mathrm{Hol}}
\newcommand{\calM}{\mathcal{M}}
\newcommand{\calA}{\mathcal{A}}
\newcommand{\bbH}{\mathbb{H}}
\newcommand{\bbD}{\mathbb{D}}
\newcommand{\calY}{\mathcal{Y}}
\newcommand{\bbE}{\mathbb{E}}
\newcommand{\tr}{\mathrm{tr}}
\newcommand{\bbP}{\mathbb{P}}
\newcommand{\bbN}{\mathbb{N}}
\newcommand{\st}{\,:\,}
\newcommand{\diag}{\mathrm{diag}}
\renewcommand{\d}{\mathrm{d}}
\newcommand{\cov}{\mathcal{C}}
\newcommand{\ha}{\hat{a}}
\newcommand{\hA}{\hat{A}}
\newcommand{\F}{\mathcal{F}}
\newcommand{\Ftau}{\mathcal{F}^{(\tau)}}
\newcommand{\T}{\mathcal{T}}
\newcommand{\net}{\mathcal{N}}
\newcommand{\Lsub}{L^{(\mathrm{sub})}}
\newcommand{\Fsub}{\mathcal{F}^{(\mathrm{sub})}}
\newcommand{\Msub}{M^{(\mathrm{sub})}}
\newcommand{\calS}{\mathcal{S}}
\newcommand{\hy}{\hat{y}}
\newcommand{\Etest}{E_{\text{test}}}
\newcommand{\stab}{\mathcal{L}}

\newcommand{\supp}{\mathrm{supp}}
\newcommand{\D}{\mathrm{D}}
\newcommand{\calN}{\mathcal{N}}
\newcommand{\hX}{\hat{X}}
\newcommand{\wridge}{w_{\text{ridge}}}
\newcommand{\G}{\mathcal{G}}
\newcommand{\ubar}{\underline{u}}
\newcommand{\calL}{\mathcal{L}}
\newcommand{\calX}{\mathcal{X}}
\newcommand{\hx}{\hat{x}}

\crefname{assumption}{assumption}{assumptions}
\Crefname{assumption}{Assumption}{Assumptions}

\endlocaldefs

\begin{document}

\begin{frontmatter}
\title{Dyson Equation for Correlated Linearizations and Test Error of Random Features Regression}
\runauthor{Latourelle-Vigeant and Paquette}
\runtitle{Dyson Equation for Correlated Linearizations and Test Errors}

\begin{aug}
\author[A,t1]{\fnms{Hugo}~\snm{Latourelle-Vigeant}\ead[label=e1]{hugo.latourelle-vigeant@yale.edu}},
\and
\author[B]{\fnms{Elliot}~\snm{Paquette}\ead[label=e2]{elliot.paquette@mcgill.ca}}
\address[A]{Department of Statistics and Data Science, Yale University\printead[presep={,\ }]{e1}}

\address[B]{Department of Mathematics and Statistics, McGill University\printead[presep={,\ }]{e2}}
\end{aug}

\thankstext{t1}{This work was conducted while HLV was affiliated with McGill University.}

\begin{abstract}
    This paper develops some theory of the Dyson equation for correlated linearizations and uses it to solve a problem on asymptotic deterministic equivalent for the test error in random features regression. The theory developed for the correlated Dyson equation includes existence-uniqueness, spectral support bounds, and stability properties. This theory is new for constructing deterministic equivalents for pseudo-resolvents of a class of linearizations with correlated entries. In the application, this theory is used to give a deterministic equivalent of the test error in random features ridge regression, in a proportional scaling regime, wherein we have conditioned on both training and test datasets.
\end{abstract}

\begin{keyword}[class=MSC]
\kwd{60B20}
\kwd{47N20}
\kwd{68T07}
\end{keyword}

\begin{keyword}
\kwd{Anisotropic global law}
\kwd{empirical test error}
\kwd{Gaussian equivalence}
\kwd{linearization}
\kwd{Dyson equation}
\kwd{pseudo-resolvent}
\kwd{random features}
\end{keyword}

\end{frontmatter}


\section{Introduction}

Contemporary artificial neural networks have found widespread applications in diverse domains. A notable trend in modern neural network design is the increasing size and complexity of these models. In practical applications, there is a prevalent use of highly overparametrized models. These overparametrized models exhibit exceptional capacity, showcasing the ability to perfectly fit the training data, even in scenarios where the labels are pure noise and nonetheless generalize well~\cite{zhang_generalization}. Despite the remarkable practical success of neural networks, a considerable gap exists between theoretical understanding and real-world performance in machine learning. Neural networks pose unique challenges for analysis due to two key factors. Firstly, the high dimensionality of these models often leads to behaviors that defy conventional statistical knowledge. Secondly, the presence of non-linear activation functions, which are known to enhance the expressive capacity of neural networks, further complicates analysis.

In recent years, random matrix theory has been used to provide valuable insights into the behavior of neural networks and other machine learning models. A notable line of research at the confluence of random matrix theory and neural networks combines linearizations with operator-valued free probability to analyze simple neural networks~\cite{mel2022anisotropic,Adlam2019ARM,tripledescent,adlam_doubledescent,tripuraneni2021overparameterization}. This approach has been particularly fruitful in obtaining asymptotic characterizations of the training and test errors under various conditions.

Beyond the linear model, the random features model introduced in~\cite{Rahimi_RF} stands out as one of the simplest models with significant expressive power. In contrast to the linear model, the random features model incorporates a non-linear activation function and can be overparametrized. Despite its simplicity, the random features model provides a mathematically tractable framework that proves instrumental in studying phenomena observed in real-life machine learning models, such as multiple descent~\cite{montanari_RF,adlam_doubledescent,double_descent} and implicit generalization~\cite{chouard2022quantitative,jacot_RF}. Extensive studies have delved into the training and test error of random features ridge regression in high dimensions~\cite{pmlr-v119-gerace20a, montanari_RF, Adlam2019ARM, adlam_doubledescent, hastie_22, mel2022anisotropic, MEI20223,schröder2024asymptotics,tripuraneni2021overparameterization, hu2024asymptotics, wang2023overparameterized}. Notably, the non-linear random features model is connected to a simpler linear Gaussian model through a universality phenomenon~\cite{montanari_RF, pmlr-v119-gerace20a, goldt2022gaussian, hong_lu_GET, schroder2023deterministic, hu2024asymptotics}.

The concept of linearization---which also goes under the name ``realization'' in the control community~\cite{HELTON20181} and ``linear pencil method'' in operator theory~\cite{mel2022anisotropic,Adlam2019ARM,tripledescent,adlam_doubledescent,tripuraneni2021overparameterization}---entails representing rational functions of random matrices as blocks of inverses of larger random matrices which depend linearly on its random matrix inputs. These linearizations possess simpler correlation structures, rendering them more amenable to certain types of analysis. Beyond their application in analyzing the training and test error of neural networks, linearizations have been used to study polynomials of random matrices on the global scale~\cite{Anderson_2013, BelinschiMaiSpeicher+2017+21+53, haagerup_new_2005, HELTON20181, HELTON2006105} and the local scale~\cite{nemish_local_2020, fronk2023norm, anderson_anticommutator}. This exploration naturally leads to the study of pseudo-resolvents, or generalized resolvents. Pseudo-resolvents are inherently more challenging to study than resolvents due to the absence of a spectral parameter spanning the entire diagonal. Nonetheless, one effective approach to study pseudo-resolvent involves analyzing a fixed-point equation known as the Dyson equation for linearizations (DEL)~\cite{Anderson_2013,anderson_anticommutator,nemish_local_2020,fronk2023norm}.

\subsection{Main Contributions}
In this work, we introduce an extension of the Dyson equation for linearizations (DEL), tailored specifically for linearizations with correlated blocks.  The DEL gives a systematic approach to constructing deterministic equivalents for functionals of random matrices, where the underlying random matrix should satisfy a \emph{Gaussian equivalence principle}, which is to say the spectral properties of the pseudoresolvent are determined solely by the correlation structure of the linearization.

One of the main contributions of this work is to show the existence and uniqueness of the deterministic equivalent constructed from the DEL in \Cref{theorem:main_properties}. This extends existing theory on Dyson equations which focuses on the cases of resolvents of correlated matrices and on pseudoresolvents of block matrices whose blocks are i.i.d. (or similarly mean-field); see \Cref{sec:related_work} for further discussion on how this relates to existing DEL theory.

Furthermore, we give an approach to showing the stability of the DEL, which we implement for the empirical test error of random features ridge regression. We note that there is no known simple test for stability of linearizations, even in the case of i.i.d. blocks (see the discussion in \cite{nemish_local_2020}), and hence this must be checked for each problem to which this method is applied. Within this framework, we derive an anisotropic global law for a class of pseudo-resolvents with general correlation structures.



We then apply this general theory to derive an asymptotically exact representation for the empirical test error of random features ridge regression, conditioning on both the training and test datasets. Specifically, we confirm~\cite[Conjecture 1]{louart_random-matrix-approach} under the additional assumption that the norms of the training and test random features matrices are bounded in expectation. As a consequence, we establish a general Gaussian equivalence principle for the empirical test error of random feature ridge regression. This Gaussian equivalence holds regardless of the choice of activation functions and labels, and we provide empirical evidence demonstrating that it accurately characterizes the training and test errors in real-world datasets.


\subsection{Notation}
We adhere to the following conventions throughout this paper. Lowercase letters (e.g., \(v\)) represent real or complex scalars or vectors, while uppercase letters (e.g., \(M\)) denote real or complex matrices. Calligraphic letters are utilized for denoting sets (e.g., \(\calM\)) or operators on the space of complex matrices (e.g., \(\supop\)). The symbol \(\bbH\) refers to the upper half of the complex plane. We use \(\bbR_{\geq 0}\) to represent the set of non-negative real numbers and \(\bbR_{>0}\) to represent the set of positive real numbers. \(\bbD\) denotes the open unit disk in the complex plane, and \(\bbC\) represents the set of complex numbers.

When considering a matrix \(M\in \bbC^{\ell\times \ell}\) or an operator on the space of \(\ell\times \ell\) matrices, we often employ the blockwise representation:
\[ M =
\begin{bmatrix}
    M_{1,1} & M_{1,2} \\
    M_{2,1} & M_{2,2}
\end{bmatrix}.
\]
Here, \(M_{1,1}\in \bbC^{n\times n}\), \(M_{1,2}\in \bbC^{n\times d}\), \(M_{2,1}\in \bbC^{d\times n}\), and \(M_{2,2}\in \bbC^{d\times d}\).

Given a complex matrix \(M\), we denote its real transpose by \(M^{\top}\) and the conjugate transpose by \(M^{\ast}\). When dealing with matrix sub-blocks, \(M_{i,j}^{\ast}=(M_{i,j})^{\ast}\) denotes the conjugate transpose of the \((i,j)\) sub-block of \(M\). If \(\calD\) is a domain in a complex normed vector space \(\calX\) and \(\calY_0\) is a subset of a complex normed vector space \(\calY\), we denote by \(\Hol(\calD,\calY_0)\) the set of holomorphic functions \(f:\calD\to\calY\) with \(f(\calD)\subseteq \calY_0\).

We use \(\|\cdot\|\) to represent the standard Euclidean norm when applied to a vector and the spectral norm when applied to either matrices or complex matrix-valued functions. Additionally, we employ \(\|\cdot\|_{F}\) to denote the Frobenius norm and \(\|\cdot\|_{\ast}\) for the nuclear norm. The trace of a matrix is denoted by \(\tr\).

If \(n,d\to\infty\), we write \(d\asymp n\) to denote \(\lim_{n,d\to\infty}\frac{n}{d}=c\in \bbR_{>0}\) and \(d\lesssim n\) if \(d\leq cn\) for some constant \(c\in \bbR_{>0}\).

\subsection{Organization}

\Cref{sec:main_results} introduces the main results and settings of this work. In particular, \Cref{sec:main_DEL} establishes the existence and uniqueness of a solution to the Dyson equation for correlated linearizations, alongside some key properties of the solution, such as a Stieltjes transform representation, under broad assumptions. \Cref{sec:assumptions,sec:main_anisotropic} then leverages this solution to derive conditions under which an anisotropic global law holds for the pseudo-resolvent of a linearization, offering a sketch of the proof to clarify and motivate these conditions. In \Cref{sec:main_application}, we apply these findings to a random features ridge regression model and present a deterministic equivalent for its empirical test error, alongside a discussion of the theorem and its implications. We position our work within the existing literature in \Cref{sec:related_work}. The proofs of the main results are provided in \Cref{sec:properties,sec:proof_global_anisotropic,sec:proof_rf}, respectively. \Cref{sec:conclusion} concludes the paper. Finally, we present preliminary results in \ref{sec:prelim} and the proof of some intermediate lemmas in \ref{sec:tecnical_lemmas}.

\section{Main Results}\label{sec:main_results}

This section presents the main results of this work, organized into four parts: First, we establish the main properties of the Dyson equation for correlated linearizations. Second, we outline the necessary assumptions for deriving the anisotropic global law. Third, we state the anisotropic global law itself. Finally, we apply these results to a random features ridge regression model to analyze the test error.

\subsection{Dyson Equation for Correlated Linearizations}\label{sec:main_DEL}

Consider a class of real self-adjoint linearizations of the form
\begin{equation}\label{eq:linearization}
    L =
    \begin{bmatrix}
        A & B^{\top} \\
        B & Q
    \end{bmatrix}
    \in \bbR^{\ell\times \ell}.
\end{equation}
Here, \(A\in \bbR^{n\times n}\) is self-adjoint, \(Q\in \bbR^{d\times d}\) is self-adjoint with both \(Q\) and \(\bbE Q\) non-singular, and \(B\in \bbR^{d\times n}\) is arbitrary. Our primary focus is analyzing the high-dimensional behavior of the pseudo-resolvent \((L-z\Lambda)^{-1}\), where \(\Lambda:=\diag\{ I_n,0_{d\times d}\}\) and \(z\in\bbH:=\{z\in \bbC \st \Im[z]>0\}\) represents the upper half of the complex plane. Our framework relies on the \emph{Dyson equation for linearizations (DEL)}
\begin{equation}\label{eq:DEL}
    (\bbE L - \supop(M) - z\Lambda) M = I_{\ell},
\end{equation}
where the spectral parameter \(z\in\bbH\) and the \emph{superoperator}
\begin{equation}\label{eq:supop}
    \supop:M\in \bbC^{\ell\times\ell}\mapsto
    \begin{bmatrix}
        \supop_{1,1}(M) & \bbE[(A-\bbE A)M_{1,1}(B-\bbE B)^{\top}] \\
        \bbE[(B-\bbE B)M_{1,1}(A-\bbE A)] & \bbE[(B-\bbE B)M_{1,1}(B-\bbE B)^{\top}]
    \end{bmatrix} \in \bbC^{\ell\times \ell}
\end{equation}
is a positivity-preserving linear map encoding the correlation structure of the linearization. Here, by positivity-preserving, we mean that \(\supop\) maps positive semidefinite matrices to positive semidefinite matrices. The sub-superoperator \(\supop_{1,1}\) also preserves positivity. Importantly, the expectation in the superoperator is taken only over the linearization, and not over the superoperator's argument.

Concretely, the more natural superoperator to consider would be the covariance-induced one
\[
   M \mapsto \bbE[(L-\bbE L)M(L-\bbE L)^\top].
\]
However, in this form the off-diagonal and lower-right blocks depend on the full matrix \(M\). Since our analysis requires that these blocks depend only on \(M_{1,1}\) in order to close the self-consistent equations, we adopt the reduced version in~\eqref{eq:supop} which is obtained by discarding the terms involving \(M_{1,2}, M_{2,1},\) and \(M_{2,2}\) in the off-diagonal and lower-right blocks of \(\bbE[(L-\bbE L)M(L-\bbE L)^\top]\). This reduction is justified under mild assumptions (e.g., uncorrelated blocks or independent rows of \(B\) with well-conditioned covariance matrix), in which the discarded terms are negligible. The error associated with this approximation is explicitly controlled by the norm \(\|\tilde{\supop}\|\) appearing below. Finally, the precise form of \(\supop_{1,1}\) is not essential for the theory, as long as it is positivity-preserving and provides an asymptotically accurate approximation to the upper-left block of the covariance-induced superoperator. As a useful mental model, one can think of \(\supop_{1,1}(M) \approx \bbE[(L-\bbE L)M(L-\bbE L)^\top]_{1,1}\).

In order to ensure the existence of a unique solution to the Dyson equation for linearizations, we need to restrict~\eqref{eq:DEL} to a suitable set. Consequently, based on properties of the pseudo-resolvent \((L-z\Lambda)^{-1}\),\footnote{Under the present setting, \(Q\) is non-singular and the Schur complement of the lower-right \(d\times d\) block of \(L-z\Lambda\) is given by \(A-B^{\top}Q^{-1}B-z I_{n}\). Since its imaginary part is negative definite, the Schur complement is non-singular. Hence, we may apply a block matrix inversion lemma (see \Cref{lemma:block_inversion}) to conclude that the matrix \(L-z\Lambda\) is non-singular, and the pseudo-resolvent \((L-z\Lambda)^{-1}\) is well-defined for every \(z\in\bbH\).} we introduce the \emph{admissible set}.
\begin{equation}\label{eq:admissible_func}
    \calM:=\Hol(\bbH,\calA), \quad \calA:=
        \{ W\in \bbC^{\ell\times \ell}:\Im[ W]\succeq 0\text{ and }\Im[ W_{1,1}]\succ 0\}.
\end{equation}

Our primary strategy for analyzing~\eqref{eq:DEL} involves initially establishing analogous results for a regularized version of the equation. This regularization typically simplifies the problem, enabling us to leverage existing knowledge. Subsequently, we demonstrate the feasibility of setting the regularization parameter to zero, effectively reverting to the original equation. Importantly, we ensure that the statements derived for the regularized variant remain valid in this limit, thereby providing insights into the properties of~\eqref{eq:DEL}. For this reason, we introduce the \emph{regularized Dyson equation for linearizations (RDEL)}
\begin{equation}
    \label{eq:RDEL}
    ( \bbE L- \supop( M^{(\tau)})-z  \Lambda-i\tau I_{\ell}) M^{(\tau)}= I_{\ell}
\end{equation}
for every \(\tau> 0\). The corresponding admissible set is given by
\begin{equation}\label{eq:admissible_func_aug}
    \calM_{+}:=\Hol(\bbH,\calA_{+}),\quad \calA_{+}:=\{ W\in \bbC^{\ell\times \ell}:\Im[ W]\succ 0\}.
\end{equation}

Our initial key result naturally revolves around establishing the existence and uniqueness of a solution for~\eqref{eq:DEL}. In traditional Dyson equation theory, wherein the spectral parameter spans the entire diagonal, the existence of a unique solution usually emerges comprehensively from~\cite[Theorem 2.1]{helton2007operatorvalued}. Indeed, we leverage this theorem precisely to establish the existence and uniqueness of a solution to~\eqref{eq:RDEL}. However, due to the absence of a spectral parameter spanning the entire diagonal in our case, demonstrating the existence of a solution to~\eqref{eq:DEL} is not trivial and requires careful analysis. Nonetheless, by leveraging the suitable properties of the admissible set and the surrogate regularized Dyson equation, we obtain the following existence and uniqueness result.

\begin{theorem}[Main Properties]\label{theorem:main_properties}
    There exists a unique analytic matrix-valued function \(M\in \calM\) such that \(M(z)\) solves the DEL~\eqref{eq:DEL} for every \(z\in \bbH\). Additionally,
    \begin{equation*}
        M(z) =  \begin{bmatrix}
            0_{n\times n} & 0_{n\times d} \\
            0_{d\times n} & (\bbE Q)^{-1}
        \end{bmatrix}
        +  \int_{ \bbR}\frac{ \Omega(\d \lambda)}{\lambda-z}
    \end{equation*}
    for all \(z\in \bbH\), where \(\Omega\) is a compactly supported matrix-valued measure on bounded Borel subsets of \(\bbR\) satisfying 
    \[
        \int_{\bbR}\Omega(\d\lambda) =
        \begin{bmatrix}
            I_{n}            & -\bbE B^{\top}(\bbE Q)^{-1}         \\
            -(\bbE Q)^{-1} \bbE B & (\bbE Q)^{-1}\bbE[B B^{\top}](\bbE Q)^{-1}.
        \end{bmatrix}.
    \]
\end{theorem}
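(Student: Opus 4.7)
The strategy is to construct $M$ as a limit of solutions of the regularized equation~\eqref{eq:RDEL} as $\tau \to 0^+$. For each $\tau > 0$ and $z \in \bbH$, the coefficient matrix $\bbE L - \supop(M) - z\Lambda - i\tau I_\ell$ has imaginary part $\preceq -\tau I_\ell$ whenever $\Im M \succeq 0$ (using positivity-preservation of $\supop$), so is invertible; combined with the symmetry of $\supop$, this places RDEL in the framework of~\cite[Theorem~2.1]{helton2007operatorvalued} and supplies a unique $M^{(\tau)} \in \calM_+$. Since $\Im M^{(\tau)}(z) \succ 0$ on $\bbH$, each $M^{(\tau)}$ is matrix Herglotz; the $z \to i\infty$ asymptotics of RDEL (dominance of $-z\Lambda$ forces $M_{1,1}^{(\tau)}(z) = O(1/|z|)$, hence $\supop(M^{(\tau)}(z)) \to 0$ since~\eqref{eq:supop} depends only on the $(1,1)$-block) give $M^{(\tau)}(z) \to \diag\{0, (\bbE Q - i\tau I_d)^{-1}\}$. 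This reduces the matrix Herglotz--Nevanlinna representation to the simple Cauchy form $M^{(\tau)}(z) = C^{(\tau)} + \int_\bbR \Omega^{(\tau)}(\d\lambda)/(\lambda - z)$ with PSD matrix measure $\Omega^{(\tau)}$. A Neumann-series argument with radius depending only on $\|\bbE L\|$ and $\|\supop\|$ shows $M^{(\tau)}$ extends analytically across $\bbR \setminus K$ for a common compact $K$, so $\supp(\Omega^{(\tau)}) \subseteq K$; matching the $O(1/z)$ term in RDEL bounds $\|\Omega^{(\tau)}(\bbR)\|$ uniformly in $\tau \in (0,1]$.

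These uniform bounds let Montel's theorem produce a locally uniform limit $M = \lim_k M^{(\tau_k)}$ along some $\tau_k \to 0^+$, solving DEL by continuity. The limit inherits $\Im M \succeq 0$, and a lower bound $\Im M_{1,1}^{(\tau)}(z) \succeq c(\Im z) I_n$ read off the $(1,1)$-block of $\Im (M^{(\tau)})^{-1}$ transfers to the limit, so $M \in \calM$. The Cauchy form passes to the limit, yielding the stated representation with $C = \diag\{0, (\bbE Q)^{-1}\}$ and compactly supported $\Omega$. To compute $\int \Omega(\d\lambda)$, I would match orders in $M(z) = C + z^{-1} M_1 + z^{-2} M_2 + \dots$ against DEL: at order $z^0$, $\bbE L \cdot C - \Lambda M_1 = I_\ell$ determines the first block-row of $M_1$, giving $(M_1)_{1,1} = -I_n$ and $(M_1)_{1,2} = \bbE B^T (\bbE Q)^{-1}$; Hermiticity of $\int \Omega$ fixes $(M_1)_{2,1}$; the $(2,2)$ block of the order-$z^{-1}$ relation (which involves $[\supop(M_1)]_{2,2}$, computable from $(M_1)_{1,1} = -I_n$) yields $(M_1)_{2,2} = -(\bbE Q)^{-1} \bbE[BB^T] (\bbE Q)^{-1}$. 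The identity $\int_\bbR \Omega(\d\lambda) = -M_1$ then produces the stated matrix.

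Uniqueness of $M$ in $\calM$ is the final ingredient and I expect it to be the main obstacle. My plan is to show every $\tilde M \in \calM$ solving DEL is itself a $\tau \to 0^+$ limit of RDEL solutions: construct a perturbation $\tilde M + i\tau K^{(\tau)}$ with $K^{(\tau)} \in \calA$ tailored so that it satisfies RDEL up to $O(\tau)$ error, then invoke continuous dependence of $M^{(\tau)}$ on the equation together with the uniqueness of $M^{(\tau)}$ for each $\tau$ to conclude $\tilde M = \lim_\tau M^{(\tau)} = M$. An alternative would be to reduce DEL to a self-consistent equation for $M_{1,1}$ via a Schur-complement elimination of the $(2,\ast)$ blocks (which are rational functions of $M_{1,1}$ using non-singularity of $\bbE Q - [\supop(M)]_{2,2}$ on $\bbH$) and then apply a direct matrix Herglotz uniqueness principle to this reduced equation. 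In either approach the difficulty is the degeneracy of $\Im \tilde M$ on the $(2,2)$ block, which prevents direct application of the standard operator-valued Dyson equation uniqueness theorem to DEL itself and forces the perturbative construction (or the Schur reduction) to carefully preserve analyticity on $\bbH$, the asymptotic at $i\infty$, and membership in $\calM$.
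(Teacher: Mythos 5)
Your architecture---regularize with $i\tau$, solve via~\cite[Theorem~2.1]{helton2007operatorvalued}, pass $\tau\to 0^+$ by normal families, then read off the Stieltjes form and compute $\int\Omega$ by order-matching---tracks the paper's outline, and the order-matching for $\int\Omega$ is correct. But there is an unjustified step at the core: you assert a finite-measure Cauchy form $M^{(\tau)}(z) = C^{(\tau)} + \int_\bbR\Omega^{(\tau)}(\d\lambda)/(\lambda-z)$ for the \emph{full} matrix with $C^{(\tau)}=\diag\{0,(\bbE Q-i\tau I_d)^{-1}\}$ and PSD $\Omega^{(\tau)}$, and you use the resulting uniform-in-$\tau$ mass and support bounds to feed Montel. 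Such a representation would force $\Im M^{(\tau)}(z)\succeq\Im C^{(\tau)}$ for every $z\in\bbH$ (the Cauchy transform of a PSD measure has PSD imaginary part), and $\Im C^{(\tau)}=\diag\{0,\tau((\bbE Q)^2+\tau^2I_d)^{-1}\}\succ 0$ on the lower block for $\tau>0$. That pointwise lower bound is exactly what is \emph{not} known; the paper flags this obstruction explicitly in the discussion following \Cref{lemma:stieltjes_representation_RDEL} and does not attempt a Cauchy form for the full $M^{(\tau)}$. Consequently, the uniform bound you need for Montel (the only a priori bound on the full $M^{(\tau)}$ is $\tau^{-1}$, which blows up as $\tau\to 0$) has no source. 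The paper works around this by giving a finite-measure Cauchy form \emph{only} for the $(1,1)$ block of $M^{(\tau)}$ (\Cref{lemma:stieltjes_representation_RDEL}), which has Hermitian constant $0$ and the $\tau$-uniform bound $(\Im z)^{-1}$; it then runs Montel and a diagonal Prokhorov extraction on $M^{(\tau)}_{1,1}$ alone (\Cref{corollary:tight_measure}), proves $\Im M_{1,1}\succ 0$ in the limit from the uniformly compact support of those measures (your ``read off'' lower bound only gives the self-referential $\Im M^{(\tau)}_{1,1}\succeq(\Im z)M^{(\tau)}_{1,1}(M^{(\tau)}_{1,1})^{*}$, not a $\tau$-uniform constant), and finally \emph{reconstructs} $M_{1,2},M_{2,1},M_{2,2}$ from $M_{1,1}$ via the block relations~\eqref{eqs:M_blocks}.

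Uniqueness is left as a plan in your writeup and neither sketch matches the paper. The paper expands $M$ in a Laurent series at $\infty$ (\Cref{lemma:power_series}) and shows the coefficients satisfy a solvable recursion: since $M_0=M_\infty$ vanishes in its $(1,1)$ block and $\supop_{2,2}$ sees only the $(1,1)$ block, the term $C_0\supop(M_{k+1})C_0$ is confined to the $(2,2)$ corner, so $(M_{k+1})_{1,1}$ and then all of $M_{k+1}$ are determined, and analytic continuation finishes. Your perturbative idea (an ansatz $\tilde M+i\tau K^{(\tau)}$ plus RDEL stability) would require constructing $K^{(\tau)}$ and a quantitative stability bound at rate $o(1)$ in $\tau$ that you do not supply; the Schur-reduction alternative is reasonable in spirit but unexecuted, and as you note it must contend with exactly the degeneracy of $\Im M$ on the $(2,2)$ block.
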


\Cref{theorem:main_properties} is essentially a combination of \cref{lemma:existenceuniquness,lemma:stieltjes_representation}, and we refer the reader to these lemmas for the proof. For the rest of this paper, we will utilize the notation \(M\) to represent the unique solution as ensured by \Cref{theorem:main_properties}, \(M^{(\tau)}\) to represent the unique solution to the regularized DEL~\eqref{eq:RDEL}, and we will omit the explicit mention of \(z\) when the context confines it to a fixed \(z\in \bbH\).

\subsection{Assumptions for a General Anisotropic Global Law}
\label{sec:assumptions}

Having established the existence of a unique solution \( M \) to~\eqref{eq:DEL}, our next goal is to demonstrate that this solution serves as a favorable asymptotic approximation for the pseudo-resolvent \((L - z\Lambda)^{-1}\). To achieve this, we impose several conditions on the linearization \( L \), which are essential for proving the anisotropic global law. We present these conditions in a general form and, following their statement, offer a detailed discussion on how they can be practically verified.

\begin{enumerate}
    \renewcommand{\labelenumi}{\theenumi}
    \renewcommand{\theenumi}{(C\arabic{enumi})}
    \item\emph{Boundedness.}\label{condition:flat}
    Suppose there exists \(s \in \mathbb{R}_{>0}\) such that \(\|\supop(W)\| \leq s\|W\|\) for every \(W \in \mathbb{C}^{\ell\times \ell}\) and \(\limsup_{\ell\to\infty}s<\infty\). Furthermore, assume that \(\limsup_{\ell\to\infty}\|\mathbb{E} L\| <\infty\) and \(\limsup_{\ell\to\infty}\mathbb{E}\|(L-z\Lambda)^{-1}\|^{2} <\infty\).

    \item \emph{Stability.}\label{condition:convergence_RDEL}
    For every \(z \in \mathbb{H}\), there exists a function \(f\) and a subsequence \(\{\tau_{k}\} \subseteq \mathbb{R}_{>0}\) such that \(\tau_{k} \to 0\), \(f(\tau_{k}) \to 0\), and \(\|M^{(\tau_{k})}(z)-M(z)\| \leq f(\tau_{k}) + o_{\ell}(1)\) for all \(k \in \mathbb{N}\) and every sufficiently large \(\ell \in \mathbb{N}\).

    \item \emph{Gaussian Design.}\label{condition:gaussian_design}
    There exists \(\gamma \in \mathbb{N}\), \(g \sim \mathcal{N}(0,I_{\gamma})\), and a measurable symmetric\footnote{By symmetric, we mean that \(\cov(x) = \cov(x)^{\top}\) for every \(x \in \mathbb{R}^{\gamma}\).} map \(\cov: \mathbb{R}^{\gamma} \mapsto \mathbb{R}^{\ell \times \ell}\) with \(\bbE[\cov(g)]=0\) such that \(L \equiv L(g) = \cov(g) + \mathbb{E} L\).
\end{enumerate}

Before proceeding, we briefly discuss the implications of \cref{condition:convergence_RDEL,condition:flat,condition:gaussian_design}. The first condition,~\ref{condition:flat}, ensures that key quantities remain bounded as the dimension of the problem increases. In the application of our framework, we will derive an explicit formula for the superoperator \(\supop\) in terms of the linearization \(L\). This condition is a weaker form of the \emph{flatness} condition commonly encountered in the Dyson equation literature~\cite{erdos2019matrix,alt_thesis,alt_Kronecker}. Consequently, we will adopt the term flatness to describe this property. Specifically, since the blocks of the superoperator \(\supop\) are often quadratic in the blocks of the linearization \(L\), the boundedness of \(\|\supop\|\) can be established through the boundedness of the second moments of the linearization \(L\). This holds in common settings of interest where \(L\) is correctly scaled. The boundedness of \(\|\bbE L\|\) is a natural condition, ensuring that the linearization does not have a large deterministic component. Additionally, the boundedness of \(\mathbb{E} \| (L - z \Lambda)^{-1} \|^2\) is a technical requirement that ensures the regularized pseudo-resolvent approximates the actual pseudo-resolvent effectively. Using \Cref{lemma:block_inversion}, we can express 
\[
    (L - z \Lambda)^{-1} = \begin{bmatrix}
        R & -R B^\top Q^{-1} \\
        -Q^{-1} B R & Q^{-1} + Q^{-1} B R B^\top Q^{-1}
    \end{bmatrix},
\]
where \(R = (A - B^\top Q^{-1} B - zI_n)^{-1}\). Given that \(\|R\| \leq (\Im[z])^{-1}\), the condition \(\limsup_{\ell \to \infty}\allowbreak \bbE \| (L - z \Lambda)^{-1} \|^2 < \infty\) holds whenever, for example, \(\|Q^{-1}\|\) and \(\|B\|\) are bounded in \(L^{8}\). Overall, the validity of \cref{condition:flat} can be established through boundedness of moments of the linearization \(L\).

The second condition,~\ref{condition:convergence_RDEL}, is a stability condition that ensures that the solution to the regularized Dyson equation for linearizations is a good approximation for the solution to the Dyson equation for linearizations. Stability is central to the analysis of Dyson equations, often studied via the \emph{stability operator}. Following the notation of~\cite{alt_Kronecker}, the stability operator is defined as \(\stab: X \in \bbC^{\ell \times \ell} \mapsto X - M \supop(X) M\), where \(M \equiv M(z)\) is the unique solution to a Dyson equation. The stability operator plays a crucial role in establishing the stability of the Dyson equation, as shown in~\cite[Lemma 4.10]{ajanki_stability_2019} and others~\cite{erdos2019matrix,nemish_local_2020}. The stability operator also naturally arises in the proof of \Cref{theorem:main_properties}, where its invertibility enables the recursive determination of coefficients in the power series expansion of the solution. The connection between the stability operator and \cref{condition:convergence_RDEL} becomes evident when considering the derivative of \(M^{(\tau)}(z)\) with respect to \(i\tau\), yielding \(\stab(\partial_{i\tau} M(z)) = (M(z))^2\). Since \(M(z)\) is bounded in operator norm (as a consequence of the Stieltjes transform representation in \Cref{theorem:main_properties}), having an invertible stability operator with a bounded inverse implies \cref{condition:convergence_RDEL}. This condition is satisfied within the Dyson equation framework~\cite{nemish_local_2020,Anderson_2013,fronk2023norm}, as indicated in references~\cite[Equation 4.11]{nemish_local_2020},~\cite[Estimates 6.3.3]{Anderson_2013}, and~\cite[Equation A.25]{fronk2023norm}. Generally, \cref{condition:convergence_RDEL} holds due to the dimension-independent representation of the DEL solution, constructed using free probability techniques. For example,~\cite[Lemma 5.4]{haagerup_new_2005} demonstrates that such a representation exists when \(L\) has the form \(L = A_0 \otimes I_n + \sum_{j=1}^k A_j \otimes X_j\), with \(\{A_j\}_{j=0}^{k}\) being self-adjoint matrices and \(\{X_j\}_{j=1}^{k}\) independent random matrices drawn from i.i.d.~centered Gaussian distributions.

Finally, the third condition,~\ref{condition:gaussian_design}, is a design condition ensuring that the linearization \(L\) depends on a Gaussian source. If \(\cov\) is a linear map, then the linearization \(L\) is a matrix with jointly Gaussian entries. However,~\ref{condition:gaussian_design} allows for a more general, non-linear, map \(\cov\). This allows us to study non-linear functions of random matrices, a common scenario in machine learning research. For example, the map \(\cov\) could take the form \(Z \in \bbR^{n \times d} \mapsto \sigma(XZ) \in \bbR^{n \times d}\), where \(X \in \bbR^{n \times m}\) is a fixed matrix and \(\sigma: \bbR \mapsto \bbR\) is an elementwise activation function.

We note that while conditions \ref{condition:flat} and \ref{condition:convergence_RDEL} are in some sense essential to the approach we take, \ref{condition:gaussian_design} is not.  It could be replaced by an argument that shows the error term in the DEL is small (for example by means of leave-one-out type analysis\footnote{``Leave-one-out'' methods refer to standard techniques in random matrix theory where one decomposes the resolvent of a matrix into that of a closely related matrix obtained by removing a single row, column, or entry, together with an additive perturbation term in which the removed part is decoupled from the resolvent (see, e.g., the proof of the Marchenko-Pastur law in~\cite[Section~3.3.2]{bai2010spectral}, which is referred to as ``leave-one-out'' in~\cite{Couillet_Liao_2022}). This approach is related to Lindeberg-type replacement arguments, but differs in that one removes, rather than replaces, rows, columns, or entries.}). We also note that similar assumptions have appeared in random matrix theory analyses of neural networks \cite[Assumption 1]{louart_random-matrix-approach}.  Furthermore \ref{condition:gaussian_design} is frequently satisfied for problem setups arising in the random matrix theory of neural networks.

\subsection{Anisotropic Global Law}\label{sec:main_anisotropic}

We introduce two objects required for the statement of the anisotropic global law. Let \(\tilde{\supop}\) be the operator defined as\footnote{We may also remove any term in the upper-left block of \(\bbE\left[(L-\bbE L)M(L-\bbE L)\right]\) from \(\supop\) and add them to \(\tilde{\supop}\) without changing any of our arguments.}
\begin{equation}\label{eq:Stilde}
    \tilde{\supop}:M\in \bbC^{\ell\times \ell}\mapsto \bbE\left[(L-\bbE L)M(L-\bbE L)\right]
    - \supop(M)\in \bbC^{\ell\times \ell}.
\end{equation}
We think of \(\tilde{\supop}\) as a correction term, which measures the deviation of the superoperator \(\supop\) from the ``full'' superoperator \(M\mapsto \bbE\left[(L-\bbE L)M(L-\bbE L)\right]\), that should be vanishing for \(\ell\to\infty\). We also introduce
\begin{multline}
    \Delta(L,\tau;z)  =  \bbE[(L-\bbE L)(L-z\Lambda -i\tau I_{\ell})^{-1}] 
    \\  + \bbE[(\tilde{L}-\bbE L)(L-z\Lambda -i\tau I_{\ell})^{-1}(\tilde{L}-\bbE L)(L-z\Lambda -i\tau I_{\ell})^{-1}]  \label{eq:dist_gaussian}
\end{multline}
where \(\tilde{L}\) is an i.i.d. copy of \(L\). \(\|\Delta(L,\tau)\|\) characterizes the distance between \(L\) and a matrix with Gaussian entries, in the sense that \(\Delta(L,\tau)\approx 0\) whenever \(L\) almost statisfies a matrix Stein's lemma. In fact, \(\Delta(L,\tau)=0\) holds whenever \(L\) has Gaussian entries. We defer the proof of the following lemma to Appendix B.
\begin{lemma}\label{lemma:dist_gaussian}
    If \(\tau\in \bbR_{> 0}\), \(z\in \bbH\) and \(L-\bbE L\in \bbR^{\ell\times \ell}\) is a matrix with jointly Gaussian entries, then \(\Delta(L,\tau;z)=0\).
\end{lemma}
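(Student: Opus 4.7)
The plan is to prove the identity by a direct application of Gaussian integration by parts (Stein's lemma) to the first term of $\Delta(L,\tau;z)$ and matching it against the second term, which simplifies via the independence of $\tilde L$. Set $X := L - \bbE L$, so that the entries of $X$ form a centered jointly Gaussian family in $\bbR^{\ell\times \ell}$. Write $G := (L - z\Lambda - i\tau I_{\ell})^{-1}$, which is well-defined and satisfies $\|G\| \leq \tau^{-1}$ since the imaginary part of $L-z\Lambda-i\tau I_\ell$ is at most $-\tau I_\ell$ (because $L$ is self-adjoint and $\Lambda\succeq 0$). In particular, the entries of $G$ are smooth, uniformly bounded functions of the entries of $X$, so all moments needed below exist and Gaussian integration by parts is applicable.

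The first step is to expand the $(a,b)$ entry of $\bbE[X G]$ using Stein's lemma. For each $c$, writing $G_{cb} \equiv G_{cb}(X)$, one has the chain rule identity $\partial_{X_{c'b'}} G = -G\, E_{c'b'}\, G$, where $E_{c'b'}$ denotes the standard matrix unit at position $(c',b')$; hence $\partial_{X_{c'b'}} G_{cb} = -G_{c c'} G_{b' b}$. Gaussian integration by parts then yields
\[
    \bbE[(XG)_{ab}] = \sum_{c} \bbE[X_{ac}\, G_{cb}] = -\sum_{c,c',b'} \mathrm{Cov}(X_{ac},X_{c'b'})\, \bbE[G_{cc'} G_{b'b}].
\]

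The second step treats the term $\bbE[\tilde L - \bbE L)G(\tilde L - \bbE L) G] = \bbE[\tilde X G \tilde X G]$. Since $\tilde X$ is an independent copy of $X$, it is independent of $G$, and so the expectation factorizes:
\[
    \bbE[(\tilde X G \tilde X G)_{ab}] = \sum_{c,c',b'} \bbE[\tilde X_{ac}\tilde X_{c'b'}]\, \bbE[G_{cc'} G_{b'b}] = \sum_{c,c',b'} \mathrm{Cov}(X_{ac},X_{c'b'})\, \bbE[G_{cc'} G_{b'b}].
\]
The two displayed expressions are exact negatives of one another, so summing over $(a,b)$ gives $\Delta(L,\tau;z) = 0$.

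There is no real obstacle here; the argument is essentially a bookkeeping exercise built on Gaussian integration by parts. The only delicate point is verifying that the Stein identity can be applied, which is guaranteed by the uniform bound $\|G\| \leq \tau^{-1}$ together with the fact that $X$ has finite Gaussian moments of all orders, so $\bbE[X_{ac} G_{cb}]$ and $\bbE[\partial_{X_{c'b'}} G_{cb}]$ are finite and the integration-by-parts formula holds without any regularity caveats. This is precisely why the regularization by $i\tau I_\ell$ (with $\tau>0$) is present in the definition of $\Delta$.
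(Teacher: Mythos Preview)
Your proof is correct and follows essentially the same approach as the paper: apply Gaussian integration by parts (Stein's lemma) to the first term of $\Delta$, compute the resolvent derivative $\partial G = -G(\partial L)G$, and observe that the result cancels exactly with the second term once the independence of $\tilde L$ from $G$ is used to factor the expectation. The only cosmetic difference is that the paper parametrizes the Gaussian via an underlying standard normal vector $g$ and a linear map $\cov$ (so Stein's lemma is applied in the $g$ variables), whereas you apply it directly to the matrix entries $X_{c'b'}$; both routes produce the identical covariance sum and the same cancellation.
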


Having established the conditions on \(L\) and introduced the necessary objects, we can now state the global anisotropic law for the pseudo-resolvent of the linearization \(L\). The following theorem, which serves as a central result, follows from \Cref{corollary:expect_deterministic_equivalent} and \Cref{lemma:concentration}, which are stated and proven in \Cref{sec:proof_global_anisotropic}, as well as \Cref{lemma:trace_norm}.

\begin{theorem}[Global Anisotropic Law for Pseudo-resolvents]\label{theorem:convergence}
    Let \(z\in \bbH\), \(M\in \calM\) be the unique solution to~\eqref{eq:DEL}, \(\tilde{\supop}\) be defined in~\eqref{eq:Stilde}, \(\Delta(L,\tau)\) be defined in~\eqref{eq:dist_gaussian} and assume that \cref{condition:convergence_RDEL,condition:flat,condition:gaussian_design} holds. Suppose that the mapping \(g\in (\bbR^{\gamma},\|\cdot\|_{2})\mapsto \supop((L(g)-z\Lambda - i \tau I_{\ell})^{-1})\in (\bbC^{\ell\times \ell},\|\cdot\|_{2})\) is \(\lambda\)-Lipschitz and \(\lim_{\ell\to\infty}\sqrt{\ell}\lambda = \lim_{\ell\to\infty}\|\tilde{\supop}\|=\lim_{\ell\to\infty}\|\Delta(L,\tau)\|=0\) for every \(\tau\in \bbR_{>0}\) small enough. Additionally, suppose that the mapping \(g\in (\bbR^{\gamma},\|\cdot\|_{2})\mapsto  (L(g)-z\Lambda)^{-1}\in (\bbC^{\ell\times \ell},\|\cdot\|_{F})\) is \(c\ell^{-r}\)-Lipschitz for some \(c,r\in \bbR_{>0}\). Then, \(\tr(U(L-z\Lambda)^{-1}-M(z))\to 0\) almost surely as \(\ell\to\infty\) for every sequence of matrices \(U\in \bbC^{\ell\times \ell}\) with \(\|U\|_{\ast}\leq 1\).
\end{theorem}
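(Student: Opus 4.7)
The plan is to decompose the statistic into a centered fluctuation piece and an expectation-bias piece,
\[
    \tr\bigl(U(L-z\Lambda)^{-1} - UM\bigr) = \tr\bigl(U[(L-z\Lambda)^{-1} - \bbE(L-z\Lambda)^{-1}]\bigr) + \tr\bigl(U[\bbE(L-z\Lambda)^{-1} - M]\bigr),
\]
and to kill each summand as $\ell\to\infty$: the first through Gaussian concentration, the second through an expectation-level deterministic equivalent. Since \Cref{lemma:trace_norm} supplies the trace-nuclear duality $|\tr(UZ)|\le \|U\|_*\|Z\|$, the second summand reduces to showing $\|\bbE(L-z\Lambda)^{-1}-M\|\to 0$ in spectral norm.

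For the concentration piece, $\|U\|_F\le \|U\|_*\le 1$ together with the Frobenius Cauchy-Schwarz inequality $|\tr(UX)-\tr(UY)|\le \|X-Y\|_F$ and the $(c\ell^{-r})$-Frobenius-Lipschitz hypothesis on $g\mapsto (L(g)-z\Lambda)^{-1}$ make $g\mapsto \tr(U(L(g)-z\Lambda)^{-1})$ a $(c\ell^{-r})$-Lipschitz scalar function on $(\bbR^\gamma,\|\cdot\|)$. Condition~\ref{condition:gaussian_design} identifies the source law as standard Gaussian, so Tsirelson-Ibragimov-Sudakov yields subgaussian tails at scale $c\ell^{-r}$ around the mean; the tail probabilities for any fixed threshold $\varepsilon>0$ are summable in $\ell$, and Borel-Cantelli delivers the almost sure vanishing.

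For the deterministic-equivalent piece, I would first work at the regularized level, writing $G^{(\tau)}:=(L-z\Lambda-i\tau I_\ell)^{-1}$, and show $\|\bbE G^{(\tau)}-M^{(\tau)}\|\to 0$ for each small fixed $\tau>0$. Multiplying the identity $LG^{(\tau)} = I_\ell + (z\Lambda+i\tau I_\ell)G^{(\tau)}$ on the left by $\bbE L - L$ and taking expectations gives
\[
    \bbE L\cdot \bbE G^{(\tau)} = I_\ell + (z\Lambda+i\tau I_\ell)\bbE G^{(\tau)} - \bbE[(L-\bbE L)G^{(\tau)}],
\]
and \Cref{lemma:dist_gaussian} with \eqref{eq:dist_gaussian} rewrites the last term as $\Delta(L,\tau;z) - \bbE[(\tilde L-\bbE L)G^{(\tau)}(\tilde L-\bbE L)G^{(\tau)}]$. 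Conditioning on $L$ identifies the quadratic expectation as $\bbE[(\supop+\tilde\supop)(G^{(\tau)})G^{(\tau)}]$; a decorrelation step using the spectral-norm Lipschitz hypothesis and $\sqrt\ell\lambda\to 0$ (Gaussian Poincar\'e applied to $\supop(G^{(\tau)})$) then replaces $\bbE[\supop(G^{(\tau)})G^{(\tau)}]$ by $\supop(\bbE G^{(\tau)})\bbE G^{(\tau)} + o(1)$. Absorbing $\tilde\supop(\bbE G^{(\tau)})\bbE G^{(\tau)}$ and $\Delta(L,\tau;z)$ into the $o(1)$ error using $\|\tilde\supop\|\to 0$ and $\|\Delta(L,\tau)\|\to 0$ places $\bbE G^{(\tau)}$ into a perturbed RDEL~\eqref{eq:RDEL}. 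Since $M^{(\tau)}\in \calA_+$ has $\Im M^{(\tau)}\succ 0$, the stability operator $\stab:X\mapsto X - M^{(\tau)}\supop(X)M^{(\tau)}$ of the RDEL has a bounded inverse, and inverting gives $\|\bbE G^{(\tau)}-M^{(\tau)}\|\to 0$. Condition~\ref{condition:convergence_RDEL} then provides a subsequence $\tau_k\downarrow 0$ along which $M^{(\tau_k)}\to M$, and the resolvent identity $(L-z\Lambda)^{-1}-G^{(\tau_k)} = i\tau_k G^{(\tau_k)}(L-z\Lambda)^{-1}$ combined with the $L^2$ bound from~\ref{condition:flat} transfers the convergence to $\bbE(L-z\Lambda)^{-1}$.

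The main obstacle I foresee is the decorrelation step: extracting an \emph{operator}-norm bound on $\bbE[\supop(G^{(\tau)}-\bbE G^{(\tau)})(G^{(\tau)}-\bbE G^{(\tau)})]$ from a spectral-norm Lipschitz hypothesis is delicate, since the naive Poincar\'e estimate gives only Frobenius-scale variance, and the sharp hypothesis $\sqrt\ell\lambda\to 0$ appears tailored precisely to this bottleneck. Propagating that $o(1)$ residual through the inverse stability operator uniformly well enough to survive the eventual $\tau\to 0$ limit is where the three conditions~\ref{condition:flat}-\ref{condition:gaussian_design} must genuinely work together.
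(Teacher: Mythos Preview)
Your overall decomposition and the concentration argument match the paper exactly; the paper's \Cref{lemma:concentration} is precisely your Borel--Cantelli step from Gaussian Lipschitz tails, and your identification of the perturbed-RDEL residual with its three constituents (decorrelation error, $\tilde\supop$-term, $\Delta$-term) agrees with the paper's decomposition~\eqref{eq:D_ineq}.

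The one substantive divergence is how you establish stability of the RDEL. You assert that since $\Im M^{(\tau)}\succ 0$, the stability operator $\stab:X\mapsto X-M^{(\tau)}\supop(X)M^{(\tau)}$ has a bounded inverse, and then invert it. But positivity of $\Im M^{(\tau)}$ alone does not deliver this: the crude estimate $\|M^{(\tau)}\supop(X)M^{(\tau)}\|\le s\tau^{-2}\|X\|$ is a contraction only when $\tau^{2}>s$, which is the wrong regime for small $\tau$. The paper never invokes $\stab^{-1}$ directly. Instead (\Cref{lemma:strict_contraction} through \Cref{lemma:stability}) it shows that the RDEL map $\Ftau$ is a strict contraction on a bounded domain $\calA_{b}$ with respect to the Carath\'eodory--Riffen--Finsler pseudometric, via Earle--Hamilton, with explicit rate $(1+\delta)^{-1}$ where $\delta=(m_{b}^{2}\tau^{-2}-1)^{-1}$. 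Converting between the CRF metric and the operator norm (\Cref{lemma:CRF_norm_lower,lemma:CRF_eps}) yields the quantitative bound $\|M^{(\tau)}-\bbE G^{(\tau)}\|\lesssim \tau^{-1}\tanh\bigl(\tau^{-4}\,\mathrm{arctanh}(\tau^{-4}\|D^{(\tau)}\|)\bigr)$, which is what is threaded through the simultaneous $\tau\downarrow 0$, $\ell\to\infty$ limit in \Cref{corollary:stability}. Your route is salvageable if you supply an independent bound on $\|\stab^{-1}\|$ that degenerates only polynomially in $\tau^{-1}$, but that is essentially equivalent in difficulty to the CRF argument and is not a consequence of $\Im M^{(\tau)}\succ 0$ alone.

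A smaller point: your decorrelation mechanism (``Gaussian Poincar\'e applied to $\supop(G^{(\tau)})$'') is not what the paper does. Poincar\'e would give Frobenius-scale variance; the paper instead applies Gaussian Lipschitz concentration to each scalar $u^{\ast}\supop(G^{(\tau)})v$ and upgrades to an operator-norm bound via an $\epsilon$-net over the complex unit sphere (\Cref{lemma:supop_norm_gaussian}). The hypothesis $\sqrt{\ell}\,\lambda\to 0$ is consumed exactly here, the $\sqrt{\ell}$ paying for the union bound over the net---so your intuition that this is the bottleneck is correct, but the mechanism is covering, not Poincar\'e.
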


\Cref{theorem:convergence} asserts that scalar observations of the pseudo-resolvent \((L - z\Lambda)^{-1}\) converge asymptotically to scalar observations of the solution \(M(z)\) to~\eqref{eq:DEL}. It is important to note that our result is anisotropic, as the scalar observations can be taken along specific directions by appropriately choosing the sequence of matrices \(U\).

To motivate the conditions and assumptions used in \Cref{theorem:convergence}, we provide an outline of the argument through a series of pairwise comparisons:
\begin{subequations}\label{eqs:comparisons}
    \begin{align}
        (L-z\Lambda)^{-1} - M(z)  & = (L-z\Lambda)^{-1} - \bbE(L-z\Lambda)^{-1}\label{eq:comparison1}
        \\ & + \bbE(L-z\Lambda)^{-1} - \bbE(L-z\Lambda - i\tau I_{\ell})^{-1}\label{eq:comparison2} 
        \\ & + \bbE(L-z\Lambda - i\tau I_{\ell})^{-1} - M^{(\tau)}(z)\label{eq:comparison3} 
        \\ & + M^{(\tau)}(z) - M(z). \label{eq:comparison4}
    \end{align}
\end{subequations}

The first comparison in~\eqref{eq:comparison1} addresses the concentration aspect of our argument. While this difference may not always be controlled in norm, we can often establish concentration, either in probability or almost surely, of generalized trace entries of the pseudo-resolvent around its mean. Particularly when \(L\) is a function of a Gaussian source, as specified in~\ref{condition:gaussian_design}, this concentration can be demonstrated using Gaussian concentration inequalities for Lipschitz functions. This is why we require the mapping \(g \in (\bbR^{\gamma}, \|\cdot\|_{2}) \mapsto (L(g) - z\Lambda)^{-1} \in (\bbC^{\ell \times \ell}, \|\cdot\|_{F})\) to be \((c\ell^{-r})\)-Lipschitz. By isolating this concentration step, we simplify the analysis by primarily working with \emph{deterministic} objects, which offers significant simplifications in various steps and allows us to work with norm bounds. We will demonstrate this concentration in \Cref{lemma:concentration}.

The second comparison in~\eqref{eq:comparison2} assesses the proximity of the pseudo-resolvent to its regularized counterpart, measured in norm. We show in \Cref{lemma:Ftau_vs_F} that this difference can be easily controlled by the parameter \(\tau\) and \(\bbE \|(L-z\Lambda)^{-1}\|^{2}\). Consequently, if \(\bbE \|(L-z\Lambda)^{-1}\|^{2}\) is bounded as required by~\ref{condition:flat}, we can employ the regularized pseudo-resolvent with small \(\tau\in \bbR_{>0}\) as an approximation for the pseudo-resolvent.

The third comparison in~\eqref{eq:comparison3} is related to the stability properties of the RDEL. We will use the \emph{Carathéodory-Riffen-Finsler pseudometric} (CRF-pseudometric) to show that~\eqref{eq:RDEL} is stable under small additive perturbations in \Cref{sec:stability}. We will also demonstrate that the expected regularized pseudo-resolvent almost satisfies the RDEL, up to a small additive perturbation that vanishes as the dimension of the problem increases in \Cref{sec:perturbation}. To do this, we decompose the perturbation term into three components suitable for analysis. The first component arises from using the expected pseudo-resolvent and essentially requires the super-operator \(\supop\) to be \emph{averaging}. This means that \(\supop((L - z\Lambda - i\tau I_{\ell})^{-1})\) should exhibit a ``law of large numbers'' behavior and converge to a deterministic limit. We derive a condition for \(\supop((L - z\Lambda - i\tau I_{\ell})^{-1})\) to concentrate around its mean based on Gaussian concentration, which requires the Lipschitz property of the mapping \(g \in (\bbR^{\gamma}, \|\cdot\|_{2}) \mapsto \supop((L(g) - z\Lambda - i \tau I_{\ell})^{-1}) \in (\bbC^{\ell \times \ell}, \|\cdot\|_{2})\). The second perturbation term involves the operator \(\tilde{\supop}\) defined earlier, and the third involves \(\|\Delta(L, \tau)\|\). For details on the decomposition of the perturbation term, we refer the reader to \Cref{sec:perturbation}.

The fourth and final comparison,~\eqref{eq:comparison4}, simply states that the solution to~\eqref{eq:RDEL} should be a good approximation for~\eqref{eq:DEL} for small \(\tau\). For a fixed dimension \(\ell \in \bbN\), it follows from the construction of \(M\) using the limit point of the normal family \(\{M_{1,1}^{(\tau)}\st \tau > 0\}\) as \(\tau\to 0\) that \(\|M^{(\tau)}(z)-M(z)\|\to 0\) as \(\tau\to 0\). However, because we are taking \(\ell\to \infty\) and \(\tau \to 0\) concurrently, we rely on~\ref{condition:convergence_RDEL} to control this difference.

The assumptions and conditions in \Cref{theorem:convergence} are presented in a general form. However, we emphasize that we explicitly demonstrate how these conditions can be verified in the setting of a random features ridge regression model.

\subsection{Empirical Test Error of Random Features Ridge Regression}\label{sec:main_application}

Consider a supervised training problem with a labelled dataset \(\calD = \{(x_{j},y_{j})\}_{j=1}^{\ntrain}\) with \(x_{j}\in \bbR^{n_{0}}\) and \(y_{j}\in \bbR\) for every \(j\in \{1,2,\ldots, \ntrain\}\). For conciseness, let \(X\in \bbR^{n_{\ntrain}\times n_{0}}\) be the matrix with \(j\)th rows corresponding to \(x_{j}^{\top}\) and \(y\) be the vectors of labels. We wish to learn a relation between the inputs \(x_{j}\) and the outputs \(y_{j}\) by fitting the parametric function \(w\mapsto n^{-\frac{1}{2}}\sigma(x^{\top}W)w\) for some random matrix \(W\in \bbR^{n_{0}\times d}\), a \(\lambda_{\sigma}\)-Lipschitz activation function \(\sigma:\bbR\mapsto \bbR\) applied entrywise and some weights \(w\in \bbR^{d}\). This corresponds to the random features model of~\cite{Rahimi_RF}. This model can be viewed as a two-layer neural network, where the first layer is frozen at random initialization, and only the second layer is trained. Following the setup of~\cite{louart_random-matrix-approach}, we will assume that \(W=\varphi(Z)\) for some \(Z\in \bbR^{n_{0}\times d}\) with independent standard normal entries and \(\varphi:\bbR\mapsto \bbR\) a \(\lambda_{\varphi}\)-Lipschitz function applied entrywise. The Lipschitz constants \(\lambda_{\sigma}\) and \(\lambda_{\varphi}\) should be independent of the dimension of the problem in the sense that, as \(n\to\infty\) with \(\ntrain,n_{0},d\asymp n\), \(\limsup_{n\to\infty}(\lambda_{\varphi}\vee \lambda_{\sigma})<\infty\).

In order to find suitable weights \(w\), we minimize the \(\ell_{2}\)-regularized norm squared loss
\begin{equation}\label{eq:ridge_regression}
    \min_{w\in \bbR^{d}} \left\|y-A  w\right\|^{2}+ \delta \| w\|^{2}
\end{equation}
where \(A=n^{-1/2}\sigma(XW)\in \bbR^{\ntrain\times d}\) denotes the \emph{random features matrix} and \(\delta\in \bbR_{>0}\) is the \emph{ridge parameter}. In other words, we are fitting a random features model using ridge regression. The minimization problem in~\eqref{eq:ridge_regression} is strongly convex and admits the closed form solution \(\wridge = A^{\top}(AA^{\top} + \delta I_{\ntrain})^{-1}y\) which is called the \emph{ridge estimator}.

Suppose that we computed the ridge estimator \(\wridge\) which solves~\eqref{eq:ridge_regression}. To evaluate its performance, we can compute the empirical test error, or out-of-sample error, on a separate labeled dataset \(\hat{\calD} = \{(\hx_{j},\hy_{j})\}_{j=1}^{\ntest}\) using the squared norm of the residuals
\begin{equation}\label{eq:test_error}
    \Etest:=\|\hy - \hA  \wridge\|^{2} = \|\hy - \hA  A^{\top}(AA^{\top} + \delta I_{\ntrain})^{-1}y\|^{2}
\end{equation}
with \(\hA=n^{-\frac{1}{2}}\sigma(\hX W)\in \bbR^{\ntest\times d}\). This measures the performance of the model \(x\mapsto n^{-\frac{1}{2}}\sigma(x^{\top}W)\wridge\) on \(\hat{\calD}\). If \(\hat{\calD}=\calD\), then~\eqref{eq:test_error} corresponds to the training error.

Since \(\Etest\) is a scalar observation of many random variables, we expect that it will concentrate around a deterministic quantity depending on the first and second moments of \(A\) and \(\hA\). Consequently, denote
\[
\bbE[(a^{\top}_{1},\hat{a}^{\top}_{1})^{\top}(a^{\top}_{1},\hat{a}^{\top}_{1})] =
\begin{bmatrix}
    K_{AA^{\top}} & K_{A\hA^{\top}} \\
    K_{\hA A^{\top}} & K_{\hA\hA^{\top}}
\end{bmatrix}
\]
where \(\{(a^{\top}_{j},\hat{a}^{\top}_{j})^{\top}\}_{j=1}^{d}\) represent the i.i.d. columns of \(A\) and \(\hA\) respectively. Indeed, \(K_{AA^{\top}}\), \(K_{A\hA^{\top}}\), \(K_{\hA A^{\top}}\), and \(K_{\hA\hA^{\top}}\) encode the covariance between the entries of \(A\) and \(\hA\). Using the tools developed above, we verify~\cite[Conjecture 1]{louart_random-matrix-approach} under an additional boundedness assumption.

\begin{theorem}\label{theorem:rf_error}
    Assume that \(\bbE A=\bbE \hA = 0\). Furthermore, suppose that \(\ntrain,d,\ntest,n_{0}\asymp n\) such that \(\lambda_{\sigma}\), \(\lambda_{\varphi}\) \(\|X\|\), \(\|\hX\|\), \(\|y\|\), \(\|\hy\|\), \(\bbE[\|A\|]\) and \(\bbE[\|\hA\|]\) remain bounded as \(n\to\infty\).\footnote{Here, \(\ntrain,d,\ntest,n_{0}\asymp n\) formally means that there exists constants \(c_{1},c_{2},c_{3}\in \bbR_{>0}\) such that \(\lim_{n\to\infty}\frac{n}{\ntrain}=c_{1}\), \(\lim_{n\to\infty}\frac{n}{\ntest}=c_{2}\) and \(\lim_{n\to\infty}\frac{n}{n_0}=c_{3}\).} Let \(\alpha\) be the unique non-positive real number satisfying
    \[
        \alpha = -(1+\tr(K_{AA^{\top}} (\delta I_{\ntrain}-d\alpha K_{AA^{\top}})^{-1}))^{-1} \in \bbR_{\leq 0}
    \]
    and denote \(M=(\delta I_{\ntrain}-d\alpha K_{AA^{\top}})^{-1}\) as well as
    \[
        \beta = \frac{\alpha^{2}\tr(K_{\hA\hA^{\top}}+d\alpha K_{\hA A^{\top}}M(I_{\ntrain}+\delta M)K_{A\hA^{\top}})}{1-\|\sqrt{d}\alpha K_{AA^{\top}}^{\frac{1}{2}}MK_{AA^{\top}}^{\frac{1}{2}}\|_{F}^{2}}\in \bbR_{\geq 0}.
    \]
    Then, \(d\beta\|K_{AA^{\top}}^{\frac{1}{2}}My\|^{2}+\|d\alpha K_{\hA A^{\top}}M y + \hy\|^{2}- \Etest  \to 0\) almost surely as \(n\to\infty\).
\end{theorem}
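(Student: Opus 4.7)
The plan is to apply the global anisotropic law of \Cref{theorem:convergence} to the self-adjoint linearization
\[
L = \begin{bmatrix} 0_{\ntrain \times \ntrain} & A \\ A^{T} & -I_{d} \end{bmatrix}
\]
at spectral parameter $z = -\delta + i\eta$ with $\eta \downarrow 0$; the pseudo-resolvent $(L-z\Lambda)^{-1}$ has upper-left block $(AA^{T} + \delta I_{\ntrain})^{-1}$, which is the inverse appearing in $\Etest$. The test matrix $\hA$ is intentionally left out of $L$, since including it would alter the DEL fixed point; it will be handled separately using that the columns $\{(a_{j},\hat a_{j})\}_{j=1}^{d}$ of $(A,\hA)$ are i.i.d.\ across $j$. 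Since $\bbE A = 0$ and the columns of $A$ are i.i.d.\ with covariance $K_{AA^{T}}$, the superoperator~\eqref{eq:supop} simplifies to the block-diagonal operator $\supop(W) = \diag\bigl(\tr(W_{2,2})K_{AA^{T}},\, \tr(W_{1,1}K_{AA^{T}})\,I_{d}\bigr)$, and the DEL~\eqref{eq:DEL} decouples into two scalar relations whose fixed point is precisely the theorem's $M = (\delta I_{\ntrain} - d\alpha K_{AA^{T}})^{-1}$ and $M_{2,2} = \alpha I_{d}$.

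\textbf{Verifying the hypotheses.} \Cref{condition:gaussian_design} is immediate since $L$ is a Lipschitz function of the underlying standard Gaussian $Z$ through $W = \varphi(Z)$; the per-entry Lipschitz constant of $A = n^{-1/2}\sigma(X\varphi(Z))$ being of order $n^{-1/2}\lambda_{\sigma}\lambda_{\varphi}\|X\|$ also supplies the Lipschitz estimates required of the pseudo-resolvent in \Cref{theorem:convergence}. \Cref{condition:flat} reduces to bounds on $\|K_{AA^{T}}\|$, $\bbE\|A\|^{2}$, and $\bbE\|(AA^{T}+\delta I_{\ntrain})^{-1}\|^{2}$, each following from the standing boundedness assumptions. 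For \Cref{condition:convergence_RDEL}, I would work with the corresponding RDEL, whose solution has the same decoupled form, and verify invertibility of the stability operator on the effective low-dimensional subsystem parametrized by $\alpha$. The vanishing of $\|\tilde\supop\|$ and $\|\Delta(L,\tau)\|$, which measures the non-Gaussianity of the entries of $A$, is the most delicate condition and would be controlled via Gaussian Stein-type integration-by-parts estimates exploiting the Lipschitz dependence of $A$ on $Z$.

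\textbf{Extracting the test error.} With \Cref{theorem:convergence} available, suitable rank-one choices of $U$ (for example $yy^{T}$ placed on the training block) give $y^{T}(AA^{T}+\delta I_{\ntrain})^{-1} y \to y^{T} M y$ and analogous scalar limits. To treat the cross term, I would expand $\hA A^{T}(AA^{T}+\delta I_{\ntrain})^{-1} y = \sum_{j=1}^{d} \hat a_{j}\bigl(a_{j}^{T}(AA^{T}+\delta I_{\ntrain})^{-1} y\bigr)$, apply the Sherman--Morrison identity $a_{j}^{T}(AA^{T}+\delta I_{\ntrain})^{-1} = a_{j}^{T}\Sigma_{-j}^{-1}/(1 + a_{j}^{T}\Sigma_{-j}^{-1} a_{j})$ with $\Sigma_{-j} := \sum_{k\neq j} a_{k} a_{k}^{T} + \delta I_{\ntrain}$ independent of $(a_{j},\hat a_{j})$, and then use $1 + a_{j}^{T}\Sigma_{-j}^{-1} a_{j} \to -1/\alpha$ together with $\bbE[\hat a_{j} a_{j}^{T}] = K_{\hA A^{T}}$ to conclude $\hA A^{T}(AA^{T}+\delta I_{\ntrain})^{-1} y \to -d\alpha K_{\hA A^{T}} M y$. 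Combined with the trivial limit of $\|\hy\|^{2}$ and the mean part of the quadratic term, this produces the $\|d\alpha K_{\hA A^{T}} M y + \hy\|^{2}$ piece of the deterministic equivalent.

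\textbf{Quadratic term and main obstacle.} The crux is the quadratic term, which I would expand as $\|\hA A^{T}(AA^{T}+\delta I_{\ntrain})^{-1} y\|^{2} = \sum_{j,k}\bigl(a_{j}^{T}(AA^{T}+\delta I_{\ntrain})^{-1} y\bigr)\bigl(a_{k}^{T}(AA^{T}+\delta I_{\ntrain})^{-1} y\bigr)\hat a_{j}^{T}\hat a_{k}$. The off-diagonal $(j\neq k)$ terms concentrate to $\|d\alpha K_{\hA A^{T}} M y\|^{2}$ by the argument of the previous paragraph, while the diagonal terms combined with the quadratic fluctuations of the off-diagonal piece yield a variance that satisfies a self-consistent equation whose Neumann-series solution is $d\beta\|K_{AA^{T}}^{1/2} M y\|^{2}$; the denominator $1-\|\sqrt{d}\alpha K_{AA^{T}}^{1/2} M K_{AA^{T}}^{1/2}\|_{F}^{2}$ in $\beta$ emerges as the geometric sum obtained by iterating the Sherman--Morrison corrections. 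The main technical obstacles I anticipate are (i) establishing the vanishing of $\|\tilde\supop\|$ and $\|\Delta(L,\tau)\|$ uniformly for small $\tau$, since this is where the Gaussian-equivalence character of the argument enters, and (ii) resolving the self-consistent Neumann series rigorously with explicit control in the problem size so as to produce the precise factor $(1-\|\sqrt{d}\alpha K_{AA^{T}}^{1/2} M K_{AA^{T}}^{1/2}\|_{F}^{2})^{-1}$ appearing in $\beta$.
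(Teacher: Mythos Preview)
Your route diverges from the paper's in two structural ways. First, the paper does \emph{not} leave $\hA$ out of the linearization: it works with the $4\times 4$ block matrix
\[
L=\begin{bmatrix}
\delta I_{\ntrain} & A & 0 & 0\\
A^{T} & -I_{d} & 0 & \hA^{T}\\
0 & 0 & 0 & -I_{\ntest}\\
0 & \hA & -I_{\ntest} & 0
\end{bmatrix},
\qquad \Lambda=\diag\{I_{\ntrain+d},0_{2\ntest}\},
\]
so that $\hA A^{T}(AA^{T}+\delta I)^{-1}$ is literally the $(3,1)$ block of $L^{-1}$ and the cross term $\hy^{T}\hA A^{T}Ry$ falls out of the anisotropic law with a rank-one $U$. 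Your smaller linearization (essentially the paper's sub-DEL $\Lsub$) is used only as an auxiliary object to verify \cref{condition:convergence_RDEL} and to subtract off in the squared term. What the paper's choice buys is that the joint covariance of $(a_{j},\hat a_{j})$ enters the superoperator directly, so no separate Sherman--Morrison bookkeeping on the $\hA$ side is needed for the linear term.

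Second, and more importantly, the paper does not obtain $\beta$ by iterating Sherman--Morrison into a Neumann series. Instead it uses the identity $RA\hA^{T}\hA A^{T}R=(L^{-2})_{1,1}-((\Lsub)^{-2})_{1,1}$ and computes $L^{-2}=\partial_{i\tau}(L-i\tau)^{-1}|_{\tau\to 0}$ via Cauchy's integral formula around $i\tau$, transferring the deterministic equivalent for $(L-\zeta)^{-1}$ to one for the derivative. The derivative of the DEL solution satisfies the linear equation $\stab^{(\tau)}(\partial_{i\tau}M^{(\tau)})=(M^{(\tau)})^{2}$ with $\stab^{(\tau)}(N)=N-M^{(\tau)}\supop(N)M^{(\tau)}$, and inverting this on the effective scalar unknown $\tr(\partial_{i\tau}M_{2,2}^{(\tau)})$ produces the denominator $1-\|\sqrt{d}\alpha K_{AA^{T}}^{1/2}M K_{AA^{T}}^{1/2}\|_{F}^{2}$ in one algebraic step; the bound showing this denominator is bounded away from zero (\Cref{lemma:assumption_holds}) is proved separately. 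This is exactly the place you flagged as the main obstacle, and the differentiation-in-$\tau$ trick bypasses your Neumann-series resummation entirely.

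Your leave-one-out program for the cross and quadratic terms is the classical alternative and could be made rigorous (it is close in spirit to the Louart--Couillet approach the paper cites), but it would duplicate much of the work the DEL framework already does. The paper also uses leave-one-out, but only once and in a contained way: to show $\|\Delta(L,\tau)\|\to 0$ (\Cref{lemma:universality}), not to extract the deterministic equivalent itself. Finally, note that the paper builds $\delta$ into $L$ and sends $z\to 0$ through a neighborhood where \Cref{lemma:control_origin} gives uniform resolvent bounds, rather than evaluating at $z=-\delta+i\eta$; this is what lets it invoke the Stieltjes-representation machinery (\Cref{lemma:convergence_origin}) to justify the boundary limit.
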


\begin{figure}[h]
    \centering
    \includegraphics[width=\linewidth]{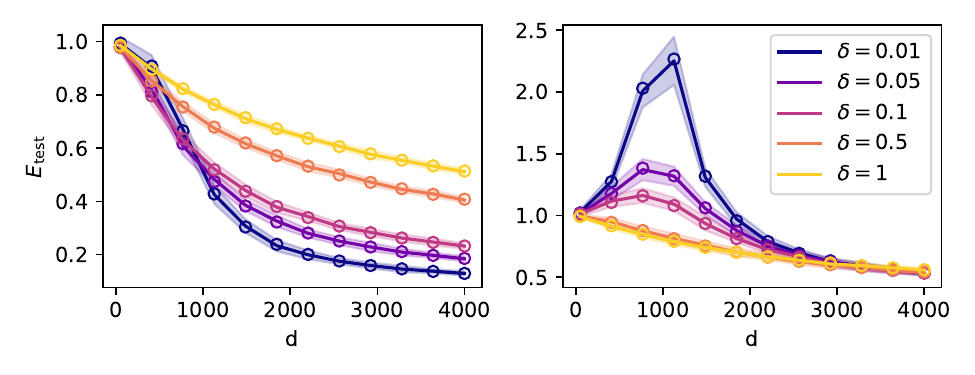}
    \caption{\(\Etest\) vs the deterministic approximation given in \Cref{theorem:rf_error} for various odd activation functions with different sizes of hidden layers \(d\) and ridge parameter \(\delta\). The data matrices, as well as the response variables, are sampled from a synthetic regression dataset, \(\ntrain=\ntest=n_{0}=1000\). Left: Error function activation (\(\sigma(x)=\mathrm{erf}(x)\)); Right: Sign activation (\(\sigma(x)=\mathrm{sign}(x)\)).}
    \label{fig:various_activation}
\end{figure}

We prove \Cref{theorem:rf_error} in \Cref{sec:proof_rf} by verifying the assumptions and applying \Cref{theorem:convergence}.

To do so, we begin by expanding the squared norm in the empirical test error \(\Etest\). This expansion reveals that it can be written as a sum of bilinear forms involving \(\hA A^\top(AA^\top + \delta I_{\ntrain})^{-1}\) and its square. We recognize this term as a block of a pseudo-resolvent corresponding to the linearization in \Cref{eq:first_linearization}, with the spectral parameter set to 0. This reduces the problem to an analysis of the pseudo-resolvent, along with a continuity argument.

The conditions and assumptions of \Cref{theorem:convergence} are verified as follows. First, \Cref{condition:gaussian_design} holds immediately, as the matrices \(X\) and \(\hX\) are Lipschitz functions from a Gaussian source. The Lipschitz continuity of the activation functions \(\varphi\) and \(\sigma\), together with the boundedness of \(\|X\|\) and \(\|\hX\|\), allow us to see the random features matrices as compositions of Lipschitz functions applied to Gaussian sources, and consequently makes it possible to apply standard concentration results. Additionally, the assumption that the first moment of \(\|A\|\) and \(\|\hA\|\) remains bounded as the dimension grows provides control over higher moments. This control makes it straightforward to verify that \Cref{condition:flat} holds. The Lipschitz condition in \Cref{theorem:convergence} is similarly satisfied due to the Lipschitzness of the activation functions and the boundedness of the data matrices. The stability condition, \Cref{condition:convergence_RDEL}, is shown to hold for all \(z\) in a neighborhood of the origin, based on a bound on the solution to the RDEL in operator norm, independent of the imaginary part of the spectral parameter. This analysis is case-specific, and depends on the structure of the particular linearization \(L\).

We demonstrate that \(\tilde{\supop}\) is negligible by leveraging an explicit formula. Finally, using a leave-one-out argument, we establish that \(\Delta(L,\tau)\) is also negligible. With these verifications, we apply \Cref{theorem:convergence} to show that the solution to the DEL is a deterministic equivalent for the first term, obtained by sending the spectral parameter to 0 via properties of the Stieltjes transform. Then, using properties of the RDEL and the result for the first term, we derive a deterministic equivalent for the squared term.

Let us briefly comment on the interpretation of \Cref{theorem:rf_error}. 
On the one hand, it shows that the empirical test error \(\Etest\) concentrates around a deterministic quantity that depends only on the first-order and second-order statistics of the random features matrices \(A\) and \(\hA\). This establishes a Gaussian equivalence principle for the empirical test error of random features ridge regression, as discussed in \Cref{sec:gaussian_equivalence}. The Gaussian equivalence principle reduces the analysis of random features ridge regression to that of a surrogate Gaussian model, thereby simplifying subsequent analyses of this model. Furthermore, the Gaussian universality of the test error implies that, once the relevant covariance operators are fixed, the distributional details of the random features beyond their second moments become immaterial. This can be viewed as a fundamental limit on statistical power: only the Gaussian approximation of the post-activation features matters for the quality of the fit.

On the other hand, because the deterministic limit is available in closed form, it enables a  systematic study of how model parameters such as the activation function \(\sigma\), the ridge parameter \(\delta\), or the mismatch between training and test distributions (encoded in the covariance matrices) affect the asymptotic performance. For instance, \Cref{sec:implicit_regularization} leverages the explicit formula in \Cref{theorem:rf_error} to link the asymptotic performance of random features ridge regression to that of kernel ridge regression, where the effective regularization is implicitly increased by the random features model.

In the rest of this section, we will discuss some aspects pertaining to the assumptions and implications of \Cref{theorem:rf_error}.

\subsubsection{Boundedness Assumptions}

The conditions
\[
    \limsup_{n\to\infty}\bbE[\|A\|]<\infty \quad \text{and} \quad \limsup_{n\to\infty}\bbE[\|\hA\|]<\infty
\]
are satisfied when the data matrices exhibit approximate orthogonality, as discussed in~\cite{fan_ck,wang2024nonlinear}. The boundedness conditions are also met with concentrated random vectors, as outlined in~\cite[Assumption 2]{Liao_2021}. Notably, these assumptions include the common case of i.i.d. standard normal entries in independent data matrices, a widely studied scenario in the literature.

It is worth noting that under the assumption that the norms of the training and test random features matrices are bounded in expectation,~\cite{louart_random-matrix-approach} conjectured \Cref{theorem:rf_error} and provided a sketch of the proof. Moreover,~\cite{Liao_2021} proved a similar result in the context of random Fourier features. While the conclusion of \Cref{theorem:rf_error} may seem unsurprising, to the best of our knowledge, this is the first time that the theorem has been rigorously established at this level of generality. Beyond being a significant theoretical result, the derivation of \Cref{theorem:rf_error} also demonstrates the broader utility of our framework based on the Dyson equation for correlated linearizations to solve problems at the interface of random matrix theory and machine learning.

\subsubsection{Bounded Denominator}
While not obvious at first, we show in \Cref{lemma:assumption_holds} that \(1-\|\sqrt{d}\alpha K_{AA^{\top}}^{1/2}MK_{AA^{\top}}^{1/2}\|_{F}^{2}\) is positive and bounded away from \(0\) as \(n\to\infty\) in the setting of \Cref{theorem:rf_error}. This implies that \(\beta\), and therefore \(\Etest\), is well-behaved in the proportional limit.

\subsubsection{Data Assumptions and Real-World Relevance}

Our assumptions concerning the norms of \(A\) and \(\hA\) implicitly impose constraints on the data matrices \(X\) and \(\hX\). However, conditioning on these matrices allows us to establish asymptotic equivalence without requiring restrictive distributional assumptions. Consequently, our results are applicable to a broad spectrum of data matrices, offering a more accurate model for real-world datasets. For instance, as illustrated in \Cref{fig:rf_rr_real_datasets}, we observe a striking alignment between the empirical test error \(\Etest\) and the deterministic approximation provided by \Cref{theorem:rf_error} across various dimensions and ridge parameters when the data is sourced from real-world flattened image classification datasets such as MNIST~\cite{deng2012mnist}, Fashion-MNIST~\cite{xiao2017fashionmnist}, CIFAR-10~\cite{Krizhevsky09learningmultiple}, and CIFAR-100~\cite{Krizhevsky09learningmultiple}. Notably, the agreement between empirical simulations and the theoretical prediction of \Cref{theorem:rf_error} holds even for datasets with anisotropic features.

\begin{figure}[t]
    \centering
    \includegraphics[width=\linewidth]{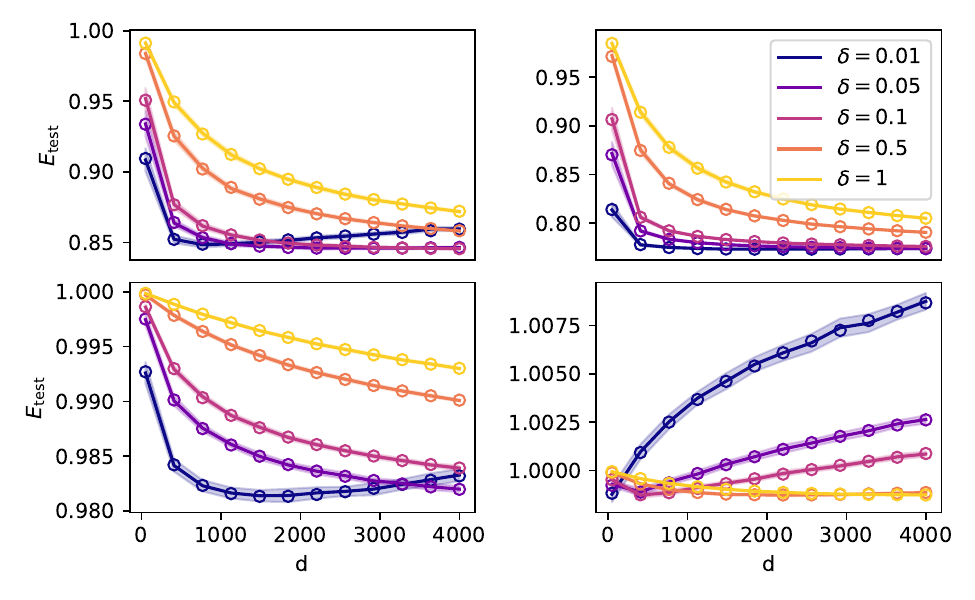}
    \caption{\(\Etest\) vs the deterministic approximation given in \Cref{theorem:rf_error} for various flattened image classification datasets with different sizes of hidden layers \(d\) and ridge parameter \(\delta\). Sine activation (\(\sigma=\sin\)), \(\ntrain=1500\), \(\ntest=1000\). Upper left: MNIST~\cite{deng2012mnist}; Upper right: Fashion-MNIST~\cite{xiao2017fashionmnist}; Lower left: CIFAR-10~\cite{Krizhevsky09learningmultiple}; Lower right: CIFAR-100~\cite{Krizhevsky09learningmultiple}.}
    \label{fig:rf_rr_real_datasets}
\end{figure}

\subsubsection{Extension to Deep Random Features}\label{sec:deep_rf}
\Cref{theorem:rf_error} extends naturally to deep random features models, which consider compositions of random feature layers
\[
    x \in \bbR^{n_{0}} \mapsto n_{k}^{-\frac{1}{2}}\sigma\left(n^{-\frac{1}{2}}_{k-1}\sigma\left(\cdots n^{-\frac{1}{2}}_2\sigma\left(n^{-\frac{1}{2}}_{1}\sigma(x^{\top}W_{1})W_{2}\right)\right)W_{k}\right)w \in \bbR^{n_{k}}.
\]
Here, \(k\in \bbN\) corresponds to the number of layers and \(\{W_{j}\in \bbR^{n_{j-1}\times n_{k}}\}_{j=1}^{k}\) is a collection of random matrices with independent standard normal entries.

More precisely, suppose we adopt the setting of~\cite{fan_ck}, which considers deep random features models in the proportional regime where \(n_1\asymp n_2\asymp \cdots \asymp n_{k}\asymp n\) with standard normal weights, an activation function \(\sigma\) that is Lipschitz with some additional regularity assumptions, and input data matrices \(X\) satisfying the notion of approximate pairwise orthogonality from~\cite[Definition~3.1]{fan_ck}. This notion notably ensures that \(\|X\|\) and \(\bbE[\|n^{-1/2}\sigma(XW)\|]\) remain bounded as \(n,d\to\infty\) for \(W\) with i.i.d.\ \(\calN(0,1)\) entries~\cite[Definition~3.1 and Lemma~D.4]{fan_ck}. Moreover,~\cite[Lemma~D.1]{fan_ck} shows that approximate orthogonality propagates through the layers with overwhelming probability as \(n,d\to\infty\).

Hence, let both \(X\) and \(\hX\) satisfy these conditions. Define
\[
    \tilde{X}=n_{k-1}^{-\frac{1}{2}}\sigma(\cdots n_{1}^{-\frac{1}{2}}\sigma(n_{0}^{-\frac{1}{2}}XW_{1})\cdots W_{k-1})\in \bbR^{\ntrain\times n_{k-1}}
\] and similarly for \(\hat{\tilde{X}}\) with \(\hX\) in place of \(X\). Let \(W := W_{k}\) and define the random features matrices at the last layer
\[
    A = n^{-1/2}\sigma(\tilde{X}W)\in \bbR^{\ntrain\times n_k},
    \qquad
    \hA = n^{-1/2}\sigma(\hat{\tilde{X}}W)\in \bbR^{\ntest\times n_k}.
\]

Conditioning on an event of overwhelming probability involving the weights \(\{W_j\}_{j=1}^{k-1}\) and using the approximate orthogonality of \(X\) and \(\hX\), it follows from~\cite[Lemma~D.1]{fan_ck} that the conditions of \Cref{theorem:rf_error} are satisfied. Applying \Cref{theorem:rf_error} then yields an object that does not depend on the last-layer weights \(W\) and is asymptotically equivalent to the empirical test error of the deep random features model with overwhelming probability as \(n,d\to\infty\). Admittedly, this object still depends on the earlier weights \(\{W_j\}_{j=1}^{k-1}\) 
through \(\tilde{X}\) and \(\hat{\tilde{X}}\), and further analysis is required to obtain a 
fully explicit deterministic equivalent, but this is beyond the scope of this paper.

\subsubsection{Gaussian Equivalence}\label{sec:gaussian_equivalence}

\Cref{theorem:rf_error} establishes a Gaussian equivalence principle, indicating that every random features model trained with ridge regression, as described in the statement of \Cref{theorem:rf_error}, performs equivalently to a surrogate Gaussian model with a matching covariance structure. However, it is important to note, as mentioned in~\cite{louart_random-matrix-approach}, that the distribution of the input data can impact the performance of the random features model. This influence stems from the fact that, although there is Gaussian equivalence at the level of random feature matrices, the distribution of the input may influence the covariance matrices \(K_{AA^{\top}}\), \(K_{A\hA^{\top}}\), \(K_{\hA A^{\top}}\), and \(K_{\hA\hA^{\top}}\), which are directly linked to the performance of the random features model.

\subsubsection{Global Anisotropic Law for Rectangular Random Matrices}

As part of the proof of \Cref{theorem:rf_error}, we establish in \Cref{corollary:first_deterministic_equiv} a result that may be of independent interest. Specifically, we show that \(\tr(U(L^{-1} - M(0))) \to 0\) almost surely as \(n \to \infty\), for any sequence \(U \in \bbC^{\ell \times \ell}\) with \(\|U\|_{\ast} \leq 1\), where \(\ell = \ntrain + d + 2\ntest\). The matrix \(L^{-1}\) is defined as
\[
L^{-1} = \begin{bmatrix}
    R & RA & RA\hA^{\top} & 0 \\
    A^{\top}R & \bar{R} & \bar{R}\hA^{\top} & 0 \\
    \hA A^{\top}R & \hA\bar{R} & \hA\bar{R}\hA^{\top} & -I_{\ntest} \\
    0 & 0 & -I_{\ntest} & 0
\end{bmatrix},
\]
where \(R = (AA^{\top} + \delta I_{\ntrain})^{-1}\), \(\bar{R} = -\delta(\delta I_{d} + A^{\top}A)^{-1}\), and \(M\) is given in~\eqref{eq:M_RF}.

For any \((i, j) \in \{1, 2, 3, 4\}^{2}\), consider \(U\) as a block matrix with \(U_{i,j}\) in the \((j, i)\)th block and zeros elsewhere. This construction leads to \(\tr(U (L^{-1} - M(0))) = U_{i,j} (L^{-1})_{i,j} - U_{i,j} M(0)_{j,i} \to 0\) almost surely as \(n \to \infty\). Moreover, the matrix \(U^{\top} U\) is block diagonal, with \(U_{i,j}^{\top} U_{i,j}\) in the \((i, i)\)-th block and zeros elsewhere, ensuring that \(\|U\|_{\ast} \leq 1\) whenever \(\|U_{i,j}\|_{\ast} \leq 1\). This implies that \(\tr(U ((L^{-1})_{i,j} - M_{i,j}(0))) \to 0\) almost surely as \(n \to \infty\), for any sequence \(U\) of the same size as \((L^{-1})_{i,j}\) and \(M_{i,j}(0)\), with \(\|U\|_{\ast} \leq 1\). This result constitutes a global anisotropic law for every block of \(L^{-1}\), providing deterministic equivalents for products of \(A\), \(\hA\), and related resolvents.

Importantly, some blocks of \(L^{-1}\) are rectangular, highlighting how our framework extends to study the asymptotic behavior of rectangular random matrices. By first deriving a global anisotropic law for a self-adjoint linearization with rectangular blocks, we can then extract deterministic equivalents for the blocks of interest.

\subsubsection{Implicit Regularization and Relation to Kernel Regression}\label{sec:implicit_regularization}

\Cref{theorem:rf_error} demonstrates the concentration of the empirical test error of random features ridge regression around the deterministic quantity \(d\beta\|K_{AA^{\top}}^{1/2}My\|^{2}+\|d\alpha K_{\hA A^{\top}}M y + \hy\|^{2}\) as the dimensions increases in the proportional regime. The second term, \(\|d\alpha K_{\hA A^{\top}}M y + \hy\|^{2}=\|\hy-d K_{\hA A^{\top}}(dK_{AA^{\top}}+(-\delta/\alpha)I_{\ntrain})^{-1} y \|^{2}\), is equivalent to the squared norm of the empirical test error for kernel ridge regression with ridge parameter \(-\delta/\alpha\in \bbR_{>0}\) and the conjugate kernel \(K(x_{1},x_{2})=\frac{d}{n}\bbE_{z\sim \calN(0,I_{n_{0}})}[\sigma(x_{1}^{\top}\varphi(z))\sigma(x^{\top}_{2}\varphi(z))]\). We will show in \Cref{sec:proof_rf} that \(-1 \leq \alpha < 0\), which implies that \(\delta<-\delta/\alpha\). This reveals that the randomness in the random features matrix acts as a form of regularization, similar to increasing the ridge parameter in kernel ridge regression. This is reminiscent of~\cite{jacot_RF} and the generalization in~\cite{chouard2022quantitative}, and is related to the implicit regularization of the random features model. In fact, the proof of \Cref{theorem:rf_error} recovers both of those results.

However, the additional term \(d\beta\|K_{AA^{\top}}^{1/2}My\|^{2}\) represents the variance in the empirical test error due to the random weights. Consequently, despite the computational benefits of the random features ridge regression compared to kernel ridge regression, \Cref{theorem:rf_error} indicates that in the proportional regime, the random features model may still underperform kernel ridge regression if \(d\beta\|K_{AA^{\top}}^{1/2}My\|^{2}\) is significant.

\subsubsection{Numerical Considerations}

Even though \Cref{theorem:rf_error} is an asymptotic result, \cref{fig:various_activation,fig:rf_rr_real_datasets} demonstrates a close match between the empirical test error \(\Etest\) and the deterministic approximation provided by \Cref{theorem:rf_error} for various activation functions \(\sigma\) and datasets. Notably, the approximation remains accurate for realistic dimensions and even for the non-Lipschitz continuous sign function. 

\Cref{theorem:rf_error} suggests that computing the asymptotic deterministic equivalent for \(\Etest\) can be reduced to solving a scalar fixed-point equation. As shown in \Cref{lemma:numerical_linearization_first}, the iterates \(\{\alpha_{k}\}_{k\in \bbN_{0}}\) obtained by iterating \(\alpha_{k+1}= -(1+\tr(K_{AA^{\top}}(\delta I_{\ntrain}-\alpha_{k} dK_{AA^{\top}})^{-1}))^{-1}\) for every \(k\in \bbN\) with arbitrary \(\alpha_{0}\in \bbR_{\leq 0}\) converge to \(\alpha\) as \(k\to\infty\). Using the spectral decomposition \(K_{AA^{\top}}=U\diag\{\lambda_{j}(K_{AA^{\top}})\}_{j=1}^{\ntrain}U^{\top}\) for some orthonormal matrix \(U\), we can rewrite the iteration as
\[
    \alpha_{k+1}= -\left(1+\sum_{j=1}^{\ntrain}\frac{\lambda_{j}(K_{AA^{\top}})}{\delta-d\alpha_{k}\lambda_{j}(K_{AA^{\top}})}\right)^{-1}.
\]
Hence, instead of performing a matrix inversion at each iteration, we compute one spectral decomposition and use the above formula to update \(\alpha_{k}\) for every \(k\in \bbN\). This allows us to more efficiently compute the deterministic equivalent of \(\Etest\) for various activation functions and dimensions. Moreover, when \(\varphi\) is the identity, the kernel matrices \(K_{AA^{\top}}\), \(K_{A\hA^{\top}}\), and \(K_{\hA\hA^{\top}}\) can be computed without Monte Carlo simulations using the closed-form expressions in~\cite[Table 1]{louart_random-matrix-approach}.

\section{Related Work}\label{sec:related_work}

In this section, we connect our results to the existing body of research.

\subsection{Conjugate Kernel}

Consider a fully-connected feedforward neural network with \(L\) layers, input dimension \(n_{0}=d\), hidden layer dimensions \(\{n_{\ell}\}_{\ell=1}^{L-1}\), scalar output and activation functions \(\sigma\). The network is defined as the parametric function
\[
    f:x \in \bbR^{n_{0}} \mapsto w^\top \frac{1}{\sqrt{n_{L}}}\sigma\left(\frac{1}{\sqrt{n_{L-1}}}\sigma\left(\cdots \frac{1}{\sqrt{n_{1}}}\sigma\left(\frac{1}{\sqrt{n_{0}}}x^{\top}W_{1}\right)W_{2}\right)\cdots W_{L-1}\right) \in \bbR,
\]
where \(W_{\ell}\in \bbR^{n_{\ell-1}\times n_{\ell}}\) for every \(\ell\in [L-1]\) and \(w\in \bbR^{n_{L}}\) are the weights of the network and the activation function \(\sigma\) is applied entrywise. Given \(n\) datapoints \(\{x_{i}\}_{i=1}^{n}\), we arrange them in a data matrix \(X\in \bbR^{n\times n_{0}}\) and let \(f(X)=[f(x_{1})^\top,\ldots,f(x_{n})^\top]^{\top}\in \bbR^{n}\) be the vector of network outputs. Recursively define the matrices of post-activation by
\[
    X_0 = X, \quad X_{\ell} = \frac{1}{\sqrt{n_{\ell}}}\sigma\left(\frac{1}{\sqrt{n_{\ell-1}}}X_{\ell-1}W_{\ell}\right) \in \bbR^{n\times n_{\ell}}, \quad \ell \in \{1,\ldots,L\}.
\]

Then, the \emph{conjugate kernel matrix} is defined as the Gram matrix of features produced by the final layer of a network, i.e., \(X_{L}X_{L}^{\top}\in \bbR^{n\times n}\)~\cite{fan_ck}. The conjugate kernel is central to the analysis of random features models. This connection stems from the fact that the output of a neural network is linear in those derived features. Thus, the conjugate kernel characterizes the training and test error of this linear model. In the case of shallow random features models (\(L=1\)), the conjugate kernel is equivalent to the Gram matrix of the random features themselves. Indeed, in the setting of \Cref{theorem:rf_error}, the conjugate kernel is given by \(AA^{\top}\).

Numerous studies have employed random matrix theory to investigate the conjugate kernel in the proportional regime. For shallow random features models with isotropic data and weight matrices, works such as~\cite{pennington_ck,benigni_moments-method} use the moment method to establish deterministic equivalents for the conjugate kernel, while~\cite{piccolo2021analysis} extends these results to include an additive bias. More generally,~\cite{chouard2022quantitative,louart_random-matrix-approach} analyze the conjugate kernel of random features models with deterministic data, relying on concentration of measure and leave-one-out techniques. Moving beyond the shallow setting,~\cite{fan_ck} introduces a notion of approximate pairwise orthogonality for the data that propagates through network layers, enabling the analysis of deep random features models (see \Cref{sec:deep_rf}). Their framework also covers isotropic data as a special case, with the difference that the data are conditioned upon. Beyond bulk spectral properties, motivated by results such as~\cite{ba2022highdimensional,cui2024asymptotics}, where a single gradient step at random initialization is shown to be equivalent to a spiked random features model, recent works have investigated outlier eigenvalues of the conjugate kernel~\cite{benigni2022largest,wang2024nonlinear}. This structure in random features model turns out to have important implications, notably in understanding the mechanisms of feature learning.

The proof of \Cref{theorem:rf_error} recovers a deterministic equivalent for the conjugate kernel \(AA^\top\), consistent with the findings of~\cite{chouard2022quantitative,louart_random-matrix-approach}, employing a significantly different approach. In particular, replacing the ridge parameter by a spectral parameter in \Cref{theorem:rf_error} and using the Stieltjes inversion lemma, we can recover the density of the empirical spectral distribution of the conjugate kernel.

\subsection{Dyson Equation and Linearizations}

The Dyson equation is a self-consistent equation that has proven to be a valuable tool in random matrix theory. For an excellent introduction to this subject, we recommend~\cite{erdos2019matrix}. It has been particularly crucial in establishing local laws for Wigner-type matrices, which are self-adjoint random matrices with independent centered entries and a prescribed variance profile~\cite{Ajanki_2017_vector,Ajanki_2019_vector1,Ajanki_2019_vector2}. In this setting, the Dyson equation collapses to a scalar one when the variance profile is stochastic (corresponding to the generalized Wigner case) or becomes vector-valued in the more general setting, in which case it is referred to as the quadratic vector equation in~\cite{Ajanki_2017_vector}. These works assume a flatness condition on the variance profile, which roughly enforces a mean-field scaling of the variances, together with boundedness of the vector solution. The latter property is easily verified in the spectral bulk but requires more delicate analysis near the spectral edges. Under these assumptions, the vector Dyson equation has been studied in great detail, and for instance it has been established that the associated self-consistent density of states is uniformly \(1/3\)-Hölder continuous with respect to the Lebesgue measure and supported on finitely many intervals, called bands~\cite{alt_band}. Overall, this line of work provides a comprehensive analysis of the vector Dyson equation and its implications for random matrices, notably optimal local laws in both the bulk and at the spectral edges. In general, such local laws are proved by combining a stability analysis of the vector Dyson map with some fluctuation-averaging techniques to control random fluctuations of the resolvent around its deterministic equivalent. While the flatness assumption covers a broad class of variance profiles, it excludes important structured examples such as band matrices or Hermite dilations, which require different methods.

Going beyond Wigner-type matrices, the Dyson equation has also been employed to study random matrices with correlated entries~\cite{Alt_2020,ERD_S_2019,ajanki_stability_2019}. In this line of work, one considers real symmetric or complex Hermitian random matrices of the form \(H = A + W\), where \(A\) is a deterministic \emph{bare matrix} and \(W\) is a random perturbation with centered entries that may exhibit correlations subject to suitable moment bounds. In contrast to the independent-entry case, these correlations are assumed to be local, in the sense that they decay with a metric distance between indices. A flatness condition is also imposed on the covariance structure, ensuring a mean-field scaling of the correlations analogous to the variance flatness in the vector Dyson setting. Under these assumptions, the Dyson equation no longer reduces to a vector equation but remains genuinely matrix-valued, giving rise to the matrix Dyson equation (MDE). The stability for the MDE was developed in~\cite{ajanki_stability_2019}, providing a foundation for further analysis. Building on this foundation,~\cite{ERD_S_2019} proved optimal local laws in the bulk for correlated ensembles by employing a multivariate cumulant expansion to show that the resolvent satisfies the MDE up to a negligible error term. In parallel,~\cite{Alt_2020} focused on regular spectral edges, which notably shows the absence of eigenvalues outside the self-consistent support and delocalization of eigenvectors. Although this framework allows for correlated entries, it remains restricted to a decaying correlation structures satisfying the mean-field flatness condition. By contrast, the models considered in this work involve structured correlation patterns that do not necessarily exhibit rapid decay, and our analysis concerns pseudo-resolvents, for which the existing MDE stability techniques cannot be applied directly.

The Dyson equation has also been applied to the study of Kronecker random matrices~\cite{alt_Kronecker}, which are block-structured random matrices obtained as linear combinations of Kronecker products between deterministic structure matrices and random matrices with independent, centered entries (subject to symmetry constraints). A key feature of this model is that it removes the flatness assumption required in earlier works, imposing only an upper bound on the entries of the variance profile. In this context, the corresponding self-consistent relation is referred to as the \emph{Hermitized Dyson equation}, which can be viewed as a Kronecker-type extension of the matrix Dyson equation. Using this formulation,~\cite{alt_Kronecker} proved that the eigenvalues of such ensembles lie, with high probability, within an \(\epsilon\)-neighborhood of the self-consistent spectrum defined by the Hermitized Dyson equation. Their stability analysis builds upon the framework developed in~\cite{ajanki_stability_2019}, suitably adapted to the Kronecker setting. An important distinction from our work is that Kronecker random matrices are linear in the random variables, whereas our models involve polynomial dependence, requiring a linearization step and the analysis of pseudo-resolvents.

Overall, the Dyson equation is a useful tool for deriving deterministic equivalents, but it is limited to certain classes of ``Wigner-like'' matrices. In order to extend its applicability to a broader class of matrices, we employ a linearization trick. The concept of linearization became particularly important following the influential work of~\cite{haagerup_new_2005}. This work essentially demonstrated that to analyze a polynomial expression in matrices, it is sufficient to consider a linear polynomial with matrix coefficients. More precisely, given a polynomial in non-commuting variables, they construct a linearization \(L\) such that the original polynomial can be recovered as a Schur complement of the linearization. Because the matrices considered in~\cite{haagerup_new_2005} are independent Gaussian Unitary Ensemble matrices, the authors can exploit operator-valued free probability to analyze the asymptotic behavior of the spectrum. Their main result shows that the empirical spectral distribution of polynomials in independent GUE matrices converges, in the large-dimensional limit, to that of the corresponding polynomial in freely independent semicircular variables.

However, a limitation of~\cite{haagerup_new_2005} is that their linearization construction does not, in general, preserve the self-adjointness of the original polynomial expression.~\cite{Anderson_2013} addressed this problem, proving that a self-adjoint polynomial expression allows the linearization's coefficients to be chosen in a way that retains self-adjointness~\cite{Anderson_2013}. Using this refined construction,~\cite{Anderson_2013} established the convergence of the largest singular value of self-adjoint polynomials in independent square Wigner matrices to the operator norm of the corresponding polynomial in free semicircular variables. Beyond the linearization itself, the analysis employs a version of the Dyson equation for linearizations, referred to there as the \emph{Schwinger-Dyson equation}. For polynomials in independent Wigner matrices, this equation collapses to a finite-dimensional system whose deterministic solution can be constructed explicitly using tools from free probability. Rather than relying on stability analysis or fluctuation-averaging methods as in the previously mentioned works on the Dyson equation,~\cite{Anderson_2013} combined matrix identities with \(L^p\)-type estimates to obtain global convergence results. Furthermore,~\cite{BelinschiMaiSpeicher+2017+21+53} combined Anderson's self-adjoint linearization with operator-valued free probability to prove that the empirical eigenvalue distribution of self-adjoint polynomials in several independent Wigner matrices converges to that of the corresponding polynomial in freely independent semicircular variables. The linearization framework was further generalized in~\cite{HELTON20181,HELTON2006105} to cover rational expressions in non-commutative variables, thereby extending the reach of free-probability techniques beyond polynomial settings.

The fact that linearizations are not unique has motivated the development of several refined constructions. In particular,~\cite{nemish_local_2020} introduced the concept of a minimal linearization, providing a systematic way to construct a self-adjoint linearization of smallest possible size for a given polynomial. This paper considers self-adjoint polynomials in several independent square random matrices with centered entries of identical variance. Within the Dyson equation for linearizations framework, the authors establish a local law for such models. A key technical insight is the identification of a nilpotent structure within the minimal linearization, which allows them to control norm of the pseudo-resolvent. Instead of the \(L^p\)-type estimates used in~\cite{Anderson_2013}, their proof relies on establishing stability of the DEL itself, combined with large-deviation and fluctuation-averaging estimates adapted from earlier Dyson-equation analyses. As in~\cite{Anderson_2013}, the DEL reduces in their setting to a finite-dimensional system of equations, since the matrices have independent entries with equal variance. The existence of the deterministic solution can therefore be stated explicitely using tools from free-probability. Analogously to our own approach, their argument involves working with a regularized DEL, where a small imaginary part is added to stabilize the inverse. They derive uniform bounds in this regularized setting before removing the regularization.

The local law proved in~\cite{nemish_local_2020} extends the earlier work~\cite{anderson_anticommutator}, which focuses on a specific polynomial, namely the anticommutator \(XY + YX\) of two independent Wigner matrices. In that setting, the author formulates and analyzes a DEL tailored to the specific self-adjoint linearization of the anticommutator, proving existence and stability of the fixed-point equation and using these properties to derive a local law for the resolvent. Finally,~\cite{fronk2023norm} investigates Hermitian non-commutative quadratic polynomials in multiple independent Wigner matrices, with a particular focus on the spectral edges. They establish a local law near the edges and, building on it, show that the operator norm of such polynomials converges to a deterministic limit. This work can be viewed as a specialization of~\cite{nemish_local_2020} to the quadratic case, where the additional structure allows for sharper control near the edges, a regime that is typically more delicate than the bulk.

A key strength of these linearization techniques is that their supporting arguments are often constructive, giving explicit instructions for creating appropriate linearizations. Combined with operator-valued free probability, the linearization trick---referred to as the pencil method in this context---has found successful applications in the study of simple neural networks~\cite{mel2022anisotropic, Adlam2019ARM, tripledescent, adlam_doubledescent, tripuraneni2021overparameterization}.

We conclude this section by briefly highlighting how our setting differs from previous works. First, we consider pseudo-resolvents rather than resolvents, which lack several of the structural and norm-control properties enjoyed by classical resolvents. As a result, the existence and stability properties of the associated Dyson equation are not directly transferable, and new arguments are required to establish them. Second, we allow for structured and possibly rectangular random matrices with correlated entries, whereas prior works on pseudo-resolvents focus on square matrices with independent entries of identical variance. Consequently, the corresponding Dyson equation no longer collapses to a finite-dimensional system, but must instead be treated as an matrix-valued equation.

Third, since the deterministic equivalent is not assumed to satisfy an algebraic equation in free variables, free-probability techniques cannot be used to construct and analyze the deterministic limit. Establishing existence and stability therefore requires a different analytic approach tailored to the pseudo-resolvent framework. Finally, we note that in this work we restrict attention to global laws, whereas~\cite{nemish_local_2020,anderson_anticommutator} derive local laws with optimal precision.

\subsection{Gaussian Equivalence}

As a consequence of \Cref{theorem:rf_error}, we discuss in~\Cref{sec:gaussian_equivalence} that random features ridge regression follows a Gaussian equivalence principle. This means that in terms of training and empirical test error, it behaves equivalently to a surrogate linear Gaussian model with matching covariance. This phenomenon was initially established for random features ridge regression under the linear regime in the sense of test error~\cite{montanari_RF}. Subsequent work proved its extension to broader loss functions and regularization, initially via non-rigorous replica methods~\cite{pmlr-v119-gerace20a} and later through rigorous analysis~\cite{goldt2022gaussian,hong_lu_GET}. Gaussian equivalence has also been demonstrated for deep random features~\cite{schroder2023deterministic} and for random features ridge regression beyond the linear scaling regime~\cite{hu2024asymptotics}.

Our work extends this literature by establishing Gaussian equivalence in terms of the empirical test error of random features ridge regression. This contributes to the generalization of Gaussian equivalence under broader distributional assumptions, aligning with the direction explored by~\cite{schröder2024asymptotics}.

\subsection{Training and Test Error of Random Features}

The random features model provides valuable insights into the behavior of more complex machine learning models and serves as a useful benchmark. Previous studies have extensively examined the test error of random features ridge regression in the proportional isotropic regime both using non-rigorous replica methods~\cite{pmlr-v119-gerace20a} and rigorous analyses~\cite{montanari_RF, Adlam2019ARM, adlam_doubledescent}. This line of research characterizes the training and test error of random features ridge regression in order to offer insights into the impacts of various model choices, such as overparameterization.

Research has expanded to address anisotropic data~\cite{hastie_22, mel2022anisotropic, MEI20223,schröder2024asymptotics} and covariate shift scenarios~\cite{tripuraneni2021overparameterization}. Beyond the linear scaling regime, studies revealed significant transitions in the degree of label learning as a function of the polynomial scaling exponent~\cite{hu2024asymptotics,MEI20223}. Additionally, some studies investigated training, test, and cross-validation errors in the highly overparameterized regime with nearly orthogonal data assumptions~\cite{wang2023overparameterized}. Another line of research investigates the test error of random features beyond ridge regression. Studies have explored generic convex losses~\cite{goldt2022gaussian} and alternative penalty terms~\cite{Loureiro_2022, bosch2023precise}. A recent trend focuses on \emph{deep random features}, a multi-layer generalization of random features. Empirical findings indicate that trained neural network outputs can be modeled by a deep random features model, with each layer's covariance corresponding to that of the neural network~\cite{guth2023rainbow}. Research has delved into deep structured linear networks~\cite{zavatone_deep_linear} and deep non-linear networks~\cite{bosch2023precise,schroder2023deterministic,schröder2024asymptotics}.

Our study diverges from these works by focusing on the empirical test error of random features ridge regression, without assuming specific data models or distributions beyond some boundedness conditions. Our main result regarding the test error of random features ridge regression is most similar to the work of~\cite{louart_random-matrix-approach}, who established an asymptotically exact expression for the training error of random features ridge regression. As previously mentioned, the authors conjectured that~\Cref{theorem:rf_error} holds without the additional conditions imposing bounded expectations for the norm of the random features matrices.~\cite{Liao_2021} resolves this conjecture in the special case of random Fourier features. Both~\cite{Liao_2021,louart_random-matrix-approach} employ leave-one-out techniques and concentration of measure arguments in their approaches, as do we in part of the universality argument.

\section{Properties of the Dyson Equation for Linearizations}\label{sec:properties}

In this section, we establish the main properties of the DEL and RDEL. It will be convenient to view~\eqref{eq:DEL} as a fixed point equation, so we introduce the \emph{DEL map}
\begin{equation}\label{eq:F}
    \F: f\in \calM\mapsto \left( \bbE L - \supop( f(\cdot))-(\cdot)  \Lambda\right)^{-1} \in \calM,
\end{equation}
assuming its well-definedness, which we establish through \cref{lemma:invertible,lemma:prior_bounds,lemma:holomorphicity}. With this definition, we can reexpress~\eqref{eq:DEL} as \(M=\F(M)\). Whenever convenient, we will fix a spectral parameter \(z\in \bbH\) and operate with \(\F\) over \(\calA\). Similarly, the formulation of~\eqref{eq:RDEL} becomes \(M^{(\tau)}=\F^{(\tau)}(M^{(\tau)})\), where
\begin{equation}\label{eq:F_tau}
    \F^{(\tau)}: f\in \calM_{+}\mapsto \left( \bbE L - \supop( f(\cdot))-(\cdot)  \Lambda-i\tau I_{\ell}\right)^{-1} \in \calM_{+}
\end{equation}
is the \emph{RDEL map}.

We begin by demonstrating general properties of the DEL map \(\F\) and the RDEL map \(\F^{(\tau)}\), such as their well-definedness. Subsequently, we consider the behavior of the fixed point equations for large spectral parameters, demonstrating that they are well-behaved in this limit. Then, we establish a Stieltjes transform representation, as well as a power series representation for the solution of the Dyson equation. These properties will be crucial to establish the existence of a unique solution to~\eqref{eq:DEL}.

\subsection{General Properties}\label{sec:general_properties}

The primary challenge in analyzing~\eqref{eq:DEL} lies in the fact that the spectral parameter does not span the entire diagonal. Consequently, obtaining certain desirable properties that are typically straightforward to establish for the full resolvent case requires additional effort. As an initial example, it is not immediately evident whether the matrix \(\bbE L - \supop(M) - z\Lambda\) is non-singular. The following lemma confirms that this is indeed the case.

\begin{lemma}\label{lemma:invertible}
    Let \(\tau\in \bbR_{\geq 0}\) and \( M\in  \calA\). Then, \( \bbE L- \supop( M)-z  \Lambda-i\tau I_{\ell}\) is invertible.
\end{lemma}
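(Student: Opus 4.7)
The approach is to show $\ker(\bbE L - \supop(M) - z\Lambda - i\tau I_\ell) = \{0\}$ by exploiting the Hermitian imaginary part of the matrix. Setting $H := \bbE L - \supop(M) - z\Lambda - i\tau I_\ell$ and using that $\bbE L$ is real self-adjoint, $\Lambda$ is real self-adjoint, and $\supop$ is symmetric (so $\supop(W)^{\ast} = \supop(W^{\ast})$ and therefore $\Im \supop(M) = \supop(\Im M)$), one computes
\begin{equation*}
\Im H \;=\; -\supop(\Im M) \;-\; (\Im z)\,\Lambda \;-\; \tau I_\ell.
\end{equation*}
Because $\supop$ preserves positivity, $\Im M \succeq 0$, $\Lambda \succeq 0$, and $\tau \geq 0$, this is a sum of three negative semidefinite self-adjoint terms.

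When $\tau > 0$, the term $-\tau I_\ell \prec 0$ already makes $\Im H$ strictly negative definite, so any $v \in \ker H$ satisfies $\langle v, \Im H\, v\rangle = 0$ and hence $v = 0$. The substantive case is $\tau = 0$, where $\Im H$ degenerates on the lower-right block of $\Lambda$. Writing $v = (v_1, v_2)^{T}$ with $v_1 \in \bbC^{n}$, $v_2 \in \bbC^{d}$, the identity $\langle v, \Im H\, v\rangle = 0$ decomposes into two non-negative contributions that must individually vanish. From $(\Im z)\|v_1\|^{2} = 0$ I deduce $v_1 = 0$, and from $\langle v, \supop(\Im M)\, v\rangle = 0$ together with $\supop(\Im M) \succeq 0$ I deduce $\supop(\Im M)\, v = 0$.

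The key step is to upgrade $\supop(\Im M)\,v = 0$ using the strict positivity $\Im M_{1,1} \succ 0$, which is exactly the feature of $\calA$ that distinguishes it from a plain upper half-plane. Applying $v^{\ast}\!\cdot\!$ to the $(2,2)$ block of that identity with $v_1 = 0$ yields
\begin{equation*}
\bbE\bigl[\,\bigl\|(\Im M_{1,1})^{1/2}(B-\bbE B)^{T} v_2\bigr\|^{2}\bigr] \;=\; 0,
\end{equation*}
so $(B-\bbE B)^{T}v_2 = 0$ almost surely. Plugging this back into the second block row of $Hv=0$, which with $v_1 = 0$ and $\tau = 0$ reduces to $\bbE Q\, v_2 - \supop(M)_{2,2} v_2 = 0$, makes $\supop(M)_{2,2} v_2 = \bbE[(B-\bbE B)M_{1,1}(B-\bbE B)^{T} v_2]$ vanish by the a.s. identity just derived. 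Hence $\bbE Q\, v_2 = 0$, and non-singularity of $\bbE Q$ gives $v_2 = 0$.

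The main obstacle is the $\tau = 0$ case: since $\Im H$ is only negative semidefinite on the full ambient space (and not on the second block at all), the usual dissipativity argument is not enough on its own. The resolution relies on both structural hypotheses working together: the strict positivity of $\Im M_{1,1}$ propagates the degeneracy into an a.s.~cancellation of $(B-\bbE B)^{T}v_2$, and the invertibility of $\bbE Q$ then closes the argument. Neither hypothesis alone would suffice.
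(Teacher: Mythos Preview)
Your proof is correct and follows essentially the same route as the paper's: deduce $v_1=0$ from the imaginary part, use $\Im M_{1,1}\succ 0$ to force $(B-\bbE B)^T v_2=0$ almost surely, and then invoke the invertibility of $\bbE Q$ to kill $v_2$. If anything, your splitting of $\Im H$ into the two separately negative semidefinite summands $-\supop(\Im M)$ and $-(\Im z)\Lambda$ makes the step $v_1=0$ cleaner than the paper's phrasing, which appeals to the $(1,1)$ block of $\Im H$ being negative definite; your decomposition is what actually justifies that conclusion.
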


\begin{proof}
    Let \(M\in \calA\) be arbitrary. By definition of \(\calA\), \(\Im[M]\succeq 0\). Since positivity-preserving, we may use \Cref{lemma:lin_pos_symmetric} to get \(\Im[\supop(M)] = \supop(\Im[M])\succeq 0\). If \(\tau>0\), it follows directly that \(\Im[\bbE L- \supop( M)-z  \Lambda-i\tau I_{\ell}]\preceq -\tau\), which implies that \(\bbE L- \supop( M)-z  \Lambda-i\tau I_{\ell}\) is non-singular by \Cref{lemma:real/imag_singularity_bound}.

    Assume that \(\tau=0\) and let \(v^{\ast}=( v_{1}^{\ast}, v_{2}^{\ast})\) with \(v_{1}\in \bbC^{n}\) and \( v_{2}\in \bbC^{d}\) be a unitary vector in the kernel of \( \bbE L- \supop( M)-z \Lambda\). We will show that \(v=0\) and conclude that the kernel of \(\bbE L- \supop( M)-z\Lambda\) is trivial. Decomposing \(\bbE L- \supop(M)-z \Lambda\) into its real and imaginary parts, we have
    \[
        0 = v^{\ast}(\bbE L- \supop( M)-z \Lambda)v = v^{\ast}\Re[\bbE L- \supop( M)-z \Lambda]v
        + i v^{\ast}\Im[\bbE L- \supop( M)-z \Lambda]v.
    \]
    Since both \(\Re[\bbE L- \supop(M)-z \Lambda]\) and \(\Im[\bbE L- \supop(M)-z \Lambda]\) are Hermitian, the quadratic forms are real and \(v^{\ast}\Re[\bbE L- \supop( M)-z \Lambda]v=v^{\ast}\Im[\bbE L- \supop(M)-z \Lambda]v = 0\). Since \(\Im[\supop(M)]\succeq 0\), the imaginary part of the upper-left \(n\times n\) block of \( \bbE L- \supop(M)-z \Lambda\) is
    negative definite and the imaginary part of the whole matrix is negative semidefinite. Consequently, it must be the case that \(v_{1}=0\).
    
    Returning to the equation \(( \bbE L- \supop(M)-z \Lambda) v=0\), we have in particular that \(( \bbE Q- \supop_{2,2}(M)) v_{2}=0\). Left-multiplying by \(v_{2}^{\ast}\) and decomposing the matrix \(\bbE Q- \supop_{2,2}(M)\) into its real and imaginary parts,
    \[
        0 = v_{2}^{\ast}\Re[\bbE Q-\supop_{2,2}(M)]v_{2} + i v_{2}^{\ast}\Im[\bbE Q-\supop_{2,2}(M)]v_{2}.
    \]
    Again, since the real and imaginary parts of a matrix are Hermitian, the quadratic forms are real and \(v_{2}^{\ast}\Im[\bbE Q-\supop_{2,2}(M)]v_{2}=v_{2}^{\ast}\Re[\bbE Q-\supop_{2,2}(M)]v_{2}=0\). In particular, since \(M\in \calA\), \(\Im[M_{1,1}]\succ 0\) and \(0=v_{2}^{\ast}\Im[\bbE Q-\supop_{2,2}(M)]v_{2}\leq - v_{2}^{\ast}\supop_{2,2}(\Im[M])v_{2} \leq 0\), where we have once again used \Cref{lemma:lin_pos_symmetric}. By definition of \(\supop_{2,2}\), \(v_{2}^{\ast}\supop_{2,2}(\Im[M])v_{2} = \bbE v^{\ast}_{2}(B-\bbE B)\Im[M_{1,1}](B-\bbE B)^{\top}v_{2}\). Consequently, as \(\Im[M_{1,1}]\) is positive definite, it must be the case that \((B-\bbE B)^{\top}v_{2}=0\) almost surely. Going back to the equation \((\bbE Q- \supop_{2,2}(M)) v_{2}=0\), we obtain \(\bbE Q v_{2}=0\). However, \(\bbE Q\) is non-singular, which implies that \(v_{2}=0\). This is a contradiction, the kernel of \( \bbE L- \supop( M)-z \Lambda\) is trivial, and the matrix is non-singular.
\end{proof}

For every \(M\in \calA\), \(\tau\in \bbR_{\geq 0}\) and \(z\in \bbH\), the upper-left \(n\times n\) block of \(\bbE L- \supop( M)-z  \Lambda-i\tau I_{\ell}\) has negative definite imaginary part. Hence, by \Cref{lemma:real/imag_singularity_bound}, the upper-left \(n\times n\) block is non-singular. By \Cref{lemma:block_inversion}, this implies that the full matrix \(\bbE L- \supop( M)-z  \Lambda-i\tau I_{\ell}\) is non-singular if and only if the associated Schur complement is non-singular. Since we established non-singularity of the full matrix, this implies that the lower-right block of \((\bbE L- \supop( M)-z  \Lambda-i\tau I_{\ell})^{-1}\) is non-singular. Similarly, the previous lemma establishes that the lower-right \(d\times d\) block of \(\bbE L - \supop( M)-z  \Lambda-i\tau I_{\ell}\) is non-singular. Additionally, the associated Schur complement (see \Cref{lemma:block_inversion}) has a negative definite imaginary part, which, by \Cref{lemma:real/imag_singularity_bound}, implies that the Schur complement is non-singular. Consequently, we obtain the following corollary.

\begin{corollary}\label{corollary:diag_invert}
    Let \(\tau\in \bbR_{\geq 0}\) and \( M\in  \calA\). Then, the diagonal blocks of \(( \bbE L - \supop( M)-z  \Lambda-i\tau I_{\ell})^{-1}\) are invertible.
\end{corollary}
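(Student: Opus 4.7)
The plan is to apply the block inversion lemma (\Cref{lemma:block_inversion}) to the matrix $N := \bbE L - \supop(M) - z\Lambda - i\tau I_{\ell}$. Partitioning $N$ in $(n,d)$-block form, the block inversion formula yields $(N^{-1})_{1,1} = (N_{1,1} - N_{1,2} N_{2,2}^{-1} N_{2,1})^{-1}$ and $(N^{-1})_{2,2} = (N_{2,2} - N_{2,1} N_{1,1}^{-1} N_{1,2})^{-1}$ as soon as both $N_{1,1}$ and $N_{2,2}$ are non-singular, and these diagonal blocks of $N^{-1}$ are then automatically invertible. Since \Cref{lemma:invertible} already guarantees that $N$ itself is invertible, the whole corollary reduces to checking invertibility of the diagonal blocks of $N$.

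First, I would verify that $N_{1,1} = \bbE A - \supop_{1,1}(M) - z I_n - i\tau I_n$ is non-singular. Its imaginary part is $-\supop_{1,1}(\Im M) - \Im[z] I_n - \tau I_n$. Because $\Im M \succeq 0$ and $\supop_{1,1}$ preserves positivity, $\supop_{1,1}(\Im M) \succeq 0$, and combined with $\Im[z] > 0$ and $\tau \geq 0$ this forces $\Im[N_{1,1}] \preceq -\Im[z] I_n \prec 0$. Invoking \Cref{lemma:real/imag_singularity_bound} then gives the non-singularity of $N_{1,1}$.

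Next, I would turn to $N_{2,2} = \bbE Q - \supop_{2,2}(M) - i\tau I_d$. For $\tau > 0$ the argument is immediate: $\Im[N_{2,2}] \preceq -\tau I_d \prec 0$, so \Cref{lemma:real/imag_singularity_bound} applies. The delicate case is $\tau = 0$, where $\Im[N_{2,2}] = -\supop_{2,2}(\Im M)$ is only negative semidefinite, so the general singularity bound does not yield invertibility directly. Here I would reuse the argument already carried out in the $\tau = 0$ portion of \Cref{lemma:invertible}: given $v_2 \in \ker N_{2,2}$, splitting $v_2^* N_{2,2} v_2 = 0$ into real and imaginary parts gives $v_2^* \supop_{2,2}(\Im M) v_2 = 0$, and since $\Im[M_{1,1}] \succ 0$ the explicit form $\supop_{2,2}(\Im M) = \bbE[(B-\bbE B)\Im[M_{1,1}](B-\bbE B)^T]$ forces $(B - \bbE B)^T v_2 = 0$ almost surely. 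This collapses $\supop_{2,2}(M) v_2$ to zero, leaving $\bbE Q \, v_2 = 0$, whence $v_2 = 0$ by the non-singularity of $\bbE Q$.

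The main obstacle is precisely the $\tau = 0$ case of $N_{2,2}$, because the imaginary part loses its strict negativity and one cannot close the argument on quadratic-form grounds alone; the issue is dispatched by leveraging the hypothesis $\Im[M_{1,1}]\succ 0$ baked into $\calA$ together with invertibility of $\bbE Q$, exactly as in \Cref{lemma:invertible}. With both diagonal blocks of $N$ non-singular, the Schur-complement representation above supplies explicit inverses for the diagonal blocks of $N^{-1}$, completing the proof.
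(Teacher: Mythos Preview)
Your proof is correct and follows essentially the same route as the paper: establish that both diagonal blocks $N_{1,1}$ and $N_{2,2}$ of $N=\bbE L-\supop(M)-z\Lambda-i\tau I_\ell$ are non-singular (the first via its negative definite imaginary part, the second via the kernel argument from \Cref{lemma:invertible}), then combine this with the invertibility of $N$ from \Cref{lemma:invertible} and the block inversion lemma to conclude that the Schur complements, and hence the diagonal blocks of $N^{-1}$, are invertible. The only cosmetic difference is that the paper additionally remarks that the Schur complement $N_{1,1}-N_{1,2}N_{2,2}^{-1}N_{2,1}$ has negative definite imaginary part, but this extra observation is not needed for the corollary itself.
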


\Cref{lemma:invertible} is a first step towards considering the DEL~\eqref{eq:DEL} as a fixed point equation \(\F(M)=M\), along with its regularized counterpart. A second step in this direction is showing that \(\F\) and \(\Ftau\) both map their respective domains to themselves. We adapt the argument from~\cite{helton2007operatorvalued}.

\begin{lemma}\label{lemma:prior_bounds}
    Let \(\tau\in \bbR_{\geq 0}\), \(z\in \bbH\) and \( M\in \calA\). Then,
    \[
        \Im[\F^{(\tau)}(M)]\succeq \tau\F^{(\tau)}(M)(\F^{(\tau)}(M))^{\ast}, \quad
        \Im[\F_{1,1}^{(\tau)}(M)]\succeq \Im[z]\F_{1,1}^{(\tau)}(M)(\F_{1,1}^{(\tau)}(M))^{\ast}
    \]
    and \(\|\F_{1,1}^{(\tau)}(M)\|\leq (\Im[z])^{-1}\). Furthermore, if \(\tau>0\), then \(\|\F^{(\tau)}(M)\|\leq \tau^{-1}\).
\end{lemma}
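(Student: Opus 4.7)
The plan is to view $\F^{(\tau)}(M)$ as $W^{-1}$ for $W := \bbE L - \supop(M) - z\Lambda - i\tau I_{\ell}$, whose invertibility is guaranteed by \Cref{lemma:invertible}, and then to bound $\Im[W^{-1}]$ from below via a suitable lower bound on $-\Im[W]$. First I would compute $\Im[W]$: since $\bbE L$ is real symmetric, $\supop$ is linear and symmetric (so $\Im[\supop(M)] = \supop(\Im[M])$), and $\Im[z\Lambda] = \Im[z]\Lambda$, one obtains
\[
\Im[W] = -\supop(\Im[M]) - \Im[z]\Lambda - \tau I_{\ell}.
\]
Because $M \in \calA$ gives $\Im[M] \succeq 0$ and $\supop$ is positivity preserving, $\supop(\Im[M])\succeq 0$, so
\[
\Im[W] \preceq -\Im[z]\Lambda - \tau I_{\ell}.
\]

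Next I would invoke the standard identity, obtained from $W^{-1} - (W^{*})^{-1} = W^{-1}(W^{*} - W)(W^{*})^{-1}$, namely $\Im[W^{-1}] = -W^{-1}\Im[W](W^{-1})^{*}$. Conjugating the displayed inequality above on both sides by $W^{-1}$ and $(W^{-1})^{*}$ (and using that $A\mapsto W^{-1}A(W^{-1})^{*}$ preserves positive semidefiniteness) yields
\[
\Im[W^{-1}] \succeq \Im[z]\, W^{-1}\Lambda (W^{-1})^{*} + \tau\, W^{-1}(W^{-1})^{*}.
\]
Dropping the first term on the right gives the first claim. For the $(1,1)$-block claim, I would take the $(1,1)$-block of both sides of this inequality after dropping the $\tau$ term. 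Using that for any PSD $\ell\times\ell$ matrix its $(1,1)$-block is PSD (via $v\mapsto (v,0)^{T}$), and the direct block computation $(W^{-1}\Lambda(W^{-1})^{*})_{1,1} = \F^{(\tau)}_{1,1}(M)(\F^{(\tau)}_{1,1}(M))^{*}$ (which follows since $\Lambda$ annihilates the second block column of $(W^{-1})^{*}$), one obtains the second inequality.

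Finally, the operator norm bounds follow from a soft spectral argument: if $X\in\bbC^{k\times k}$ satisfies $\Im[X]\succeq c\, XX^{*}$ for some $c>0$, then $\|X\| \geq \|\Im[X]\| \geq c\|XX^{*}\| = c\|X\|^{2}$, so $\|X\|\leq c^{-1}$. Applying this with $X = \F^{(\tau)}_{1,1}(M)$ and $c=\Im[z]$ gives $\|\F^{(\tau)}_{1,1}(M)\| \leq (\Im[z])^{-1}$; applying it to $X = \F^{(\tau)}(M)$ with $c=\tau$ (when $\tau>0$) gives $\|\F^{(\tau)}(M)\| \leq \tau^{-1}$.

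\textbf{Main obstacle.} There is no substantial obstacle here; the lemma is a fairly standard resolvent-identity computation. The only subtlety is that because the spectral parameter $z$ appears only in the upper-left block, one cannot directly bound $\|\F^{(\tau)}(M)\|$ by $(\Im[z])^{-1}$; however the block structure of $\Lambda$ is exactly what makes the $(1,1)$-block bound work, and this is handled cleanly by extracting the $(1,1)$-block from the PSD inequality rather than trying to bound the full matrix.
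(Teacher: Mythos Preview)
Your proposal is correct and follows essentially the same approach as the paper's proof: both compute $\Im[W]$ via the identity $\Im[W^{-1}] = -W^{-1}\Im[W](W^{-1})^{*}$, use positivity preservation of $\supop$ to bound $-\Im[W]\succeq \Im[z]\Lambda+\tau I_{\ell}$, then extract the $(1,1)$-block and deduce the norm bounds from $\|\Im[X]\|\leq\|X\|$. Your write-up is slightly more explicit about the block computation $(W^{-1}\Lambda(W^{-1})^{*})_{1,1}=\F^{(\tau)}_{1,1}(M)(\F^{(\tau)}_{1,1}(M))^{*}$, but otherwise the arguments coincide.
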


\begin{proof}
    By \cref{lemma:real/imag_inverse,lemma:lin_pos_symmetric},
    \begin{align*}
        \Im[\F^{(\tau)}(M)] & = - \F^{(\tau)}(M)\Im[\bbE L - \supop(M) - z\Lambda - i\tau I_{\ell}](\F^{(\tau)}(M))^{\ast} 
        \\ & \succeq \F^{(\tau)}(M)(\Im[z]\Lambda+\tau I_{\ell})(\F^{(\tau)}(M))^{\ast} 
        \succeq \tau\F^{(\tau)}(M)(\F^{(\tau)}(M))^{\ast}\succeq 0.
    \end{align*}
    Leveraging the observation that the spectral norm of positive semidefinite matrices adheres to the Loewner partial ordering of positive semidefinite matrices, we obtain \(\| \Im[\F^{(\tau)}(M)]\| \geq \tau \|\F^{(\tau)}(M)(\F^{(\tau)}(M))^{\ast}\| = \tau \|\F^{(\tau)}(M)\|^{2}\). By \Cref{lemma:real/imag_norm_bound}, \(\|\F^{(\tau)}(M)\|\leq \tau^{-1}\) whenever \(\tau > 0\). For the other inequalities, extract the upper-left \(n\times n\) block of the equation \(\Im[\F^{(\tau)}(M)]\succeq \Im[z]\F^{(\tau)}(M)\Lambda(\F^{(\tau)}(M))^{\ast}\) to obtain \(\Im[\F_{1,1}^{(\tau)}(M)]\succeq \Im[z]\F_{1,1}^{(\tau)}(M)(\F_{1,1}^{(\tau)}(M))^{\ast}\) and \(\|\F_{1,1}^{(\tau)}(M)\|\leq (\Im[z])^{-1}\).
\end{proof}

It is important to note that \Cref{lemma:prior_bounds} reveals a weaker control of the DEL in comparison to the RDEL. Specifically, we only have an a priori norm bound for the upper-left \(n\times n\) block of \(\F\).

Combining \cref{lemma:invertible,lemma:prior_bounds}, it only remains to show that the DEL map \(\F\) and the RDEL map \(\F^{(\tau)}\) preserve holomorphicity to establish that they map their respective domains to themselves.

\begin{lemma}\label{lemma:holomorphicity}
    Let \(\tau\in \bbR_{\geq 0}\). Then, \(\F\) and \(\F^{(\tau)}\) are well-defined. In particular, they map their respective domains into themselves.
\end{lemma}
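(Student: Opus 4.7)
The plan is to assemble the claim from the three preceding lemmas: Lemma \ref{lemma:invertible} guarantees invertibility of the matrix being inverted, Lemma \ref{lemma:prior_bounds} provides the imaginary-part lower bounds that place the image inside the correct admissible set, and Corollary \ref{corollary:diag_invert} supplies the invertibility of the $(1,1)$-block that we need in the unregularized case. The only additional ingredient is the routine fact that matrix inversion is holomorphic on the open set of invertible matrices.

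First, I would fix $\tau \in \bbR_{\geq 0}$, $f \in \calM$ (or $\calM_+$) and $z \in \bbH$, set $M = f(z) \in \calA$, and observe that Lemma \ref{lemma:invertible} makes $\F^{(\tau)}(f)(z) = (\bbE L - \supop(M) - z\Lambda - i\tau I_\ell)^{-1}$ a well-defined matrix. For $\tau > 0$, Lemma \ref{lemma:prior_bounds} gives $\Im[\F^{(\tau)}(M)] \succeq \tau \F^{(\tau)}(M)(\F^{(\tau)}(M))^\ast$, which is strictly positive definite because $\F^{(\tau)}(M)$ is invertible; hence $\F^{(\tau)}(f)(z) \in \calA_+$. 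For $\tau = 0$, Lemma \ref{lemma:prior_bounds} first yields $\Im[\F(M)] \succeq 0$; to upgrade the $(1,1)$-block to strict positivity I would use the companion bound $\Im[\F_{1,1}(M)] \succeq \Im[z]\,\F_{1,1}(M)(\F_{1,1}(M))^\ast$ together with Corollary \ref{corollary:diag_invert}, which makes $\F_{1,1}(M)$ invertible and hence $\F_{1,1}(M)(\F_{1,1}(M))^\ast \succ 0$. Thus $\F(f)(z) \in \calA$.

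For holomorphicity, I would argue that $z \mapsto \bbE L - \supop(f(z)) - z\Lambda - i\tau I_\ell$ is holomorphic from $\bbH$ into $\bbC^{\ell\times\ell}$ because $f \in \Hol(\bbH, \calA)$, $\supop$ is linear (hence entrywise holomorphic), and the remaining dependence on $z$ is affine. By Step 1 the image avoids the singular matrices, so composition with the matrix-inversion map $W \mapsto W^{-1}$ -- which is holomorphic on the open set of invertible matrices -- yields a holomorphic function of $z$. Together with the pointwise membership in $\calA$ (resp.\ $\calA_+$) established above, this gives $\F^{(\tau)}(f) \in \calM_+$ when $\tau > 0$ and $\F(f) \in \calM$ when $\tau = 0$.

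The only step requiring any thought is the strict positivity of $\Im[\F_{1,1}(M)]$ in the unregularized case, since Lemma \ref{lemma:prior_bounds} by itself only delivers a semidefinite bound. This is handled by invoking Corollary \ref{corollary:diag_invert} to promote $\F_{1,1}(M)(\F_{1,1}(M))^\ast$ from positive semidefinite to positive definite; the rest of the argument is bookkeeping.
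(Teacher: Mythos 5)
Your proposal is correct and complete; the only genuine divergence from the paper is in how holomorphicity is established. The paper's proof of this lemma reduces (via the discussion immediately preceding it, which invokes Lemmas \ref{lemma:invertible} and \ref{lemma:prior_bounds}) to showing that $\F$ and $\F^{(\tau)}$ preserve holomorphicity, and it does so by \emph{explicitly computing} the Fréchet derivative: it posits the candidate operator $h \mapsto \F(M(z))(z)\supop(\D M(z)h)\F(M(z))(z) + h[\F(M(z))(z)]^2$ and verifies the $o(h)$ remainder estimate by hand. You instead observe that $z \mapsto \bbE L - \supop(f(z)) - z\Lambda - i\tau I_\ell$ is holomorphic (affine in $z$ after composing the holomorphic $f$ with the bounded linear $\supop$) and lands in the open set $GL_\ell(\bbC)$, then invoke the standard fact that matrix inversion is holomorphic there. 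This is a cleaner and more modular route; the paper's computation buys nothing extra here except perhaps an explicit formula for the derivative, which is not used later. You also handle the pointwise-membership bookkeeping more carefully than the paper's surrounding prose does: your explicit use of Corollary \ref{corollary:diag_invert} to promote $\F_{1,1}(M)(\F_{1,1}(M))^\ast$ from positive semidefinite to positive definite in the unregularized case is exactly the step needed (the $(1,1)$-block of an invertible matrix need not be invertible on its own, so Lemma \ref{lemma:prior_bounds} alone does not suffice), and the paper only cites Lemmas \ref{lemma:invertible} and \ref{lemma:prior_bounds} in its lead-in, leaving this reliance on the corollary implicit.
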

\begin{proof}
    As mentioned above, it suffices to prove that \(\F\) and \(\F^{(\tau)}\) preserve holomorphicity. We will only prove this for \(\F\), as the proof for \(\F^{(\tau)}\) is analogous. Let \(M\in \calM\) be arbitrary and \(z,h\in \bbH\). Since \(M\) is holomorphic on \(\bbH\), let \(\D M(z):h\in\bbH \mapsto \D M(z)h\) be the Fréchet derivative of \(M\) at \(z\). Furthermore, let \(\D [\F (M(z))(z)]:h\in \bbH \mapsto \F(M(z))(z)\supop(\D M(z)h)\F(M(z))(z)+h[\F(M(z))(z)]^{2}\). Clearly, \(\D [\F (M(z))(z)]\) is a bounded linear operator. Furthermore, \(\F(M(z+h))(z+h) - \F(M(z))(z)-\D [\F (M(z))(z) = X_{1}+X_{2}\) with 
    \[
        X_{1}=-\F(M(z+h))(z+h)\supop(M(z+h)-M(z)-\supop(\D M(z)h))\F(M(z))(z+h)
    \] 
    and \(X_{2}= h([\F(M(z))(z)]^{2} -\F(M(z))(z+h)\F(M(z))(z))\).
    On one hand, since \(M\) is holomorphic,
    \[
        \lim_{h\to 0} \frac{\|X_{1}\|}{|h|} 
         \leq  s\|\F(M(z))(z)\|^{2} \lim_{h\to 0}\frac{\|M(z+h)-M(z)-\supop(\D M(z)h)\|}{|h|}
         = 0.
    \]
    On the other hand, by continuity of the map \(z\mapsto \F(M(z))(z)\) on \(\bbH\), we have \(\lim_{h\to 0} \|X_{2}\|/h=0\). Consequently, \(\F(M(z+h))(z+h) - \F(M(z))(z)-\D [\F (M(z))(z)]h = o(h)\), which implies that \(z\mapsto \F(M(z))(z)\) is holomorphic on \(\bbH\). This concludes the proof.
\end{proof}

The insights garnered from \cref{lemma:holomorphicity,lemma:prior_bounds,lemma:invertible} leads us to conceptualize the DEL and RDEL as fixed point equations. This characterization is instrumental, as we rely on it to assert the existence of a unique solution for~\eqref{eq:RDEL} using the contractive property of the RDEL map \(\F^{(\tau)}\) with respect to the CRF-pseudometric. Furthermore, this perspective will be crucial to establish the stability of the DEL.

In what follows, we will take advantage of the block structure of the DEL and RDEL. Using \Cref{lemma:block_inversion}, we decompose~\eqref{eq:DEL} as
\begin{subequations}\label{eqs:M_blocks}
    \begin{align}
        M_{1,1} = & \left(\T_{1,1}(M) - \T_{1,2}(M)(\T_{2,2}(M))^{-1} \T_{2,1}(M) \right)^{-1}, \label{eq:M11_schur}
        \\  M_{2,2} = & \left( \T_{2,2}(M)-\T_{2,1}(M)(\T_{1,1}(M))^{-1} \T_{1,2}(M) \right)^{-1}, \label{eq:M22_schur}
        \\  M_{1,2} = & \,  - \F_{1,1}(M) \T_{1,2}(M)(\T_{2,2}(M))^{-1}\text{ and} \label{eq:M12_schur}
        \\ M_{2,1} = &\, -(\T_{2,2}(M))^{-1}\T_{2,1}(M) \F_{1,1}(M) \label{eq:M21_schur}
    \end{align}
where we write \(\T(M) = \bbE L - \supop(M)-z\Lambda\) for notational convenience. It may sometimes be practical to work with the equivalent form
\begin{equation}\label{eq:M22_schur_v2}
    M_{2,2} =  (\T_{2,2}(M))^{-1} +(\T_{2,2}(M))^{-1}\T_{2,1}(M)\F_{1,1}(M) \T_{1,2}(M)(\T_{2,2}(M))^{-1}.
\end{equation}
\end{subequations}
We may decompose~\eqref{eq:RDEL} similarly. In this case, we will write \(\T^{(\tau)}(M) =  \bbE L- \supop( M^{(\tau)})-z\Lambda-i\tau I_{d}\).

\subsection{Large Spectral Parameter}

For any solution \(M\) to~\eqref{eq:DEL}, \Cref{lemma:prior_bounds} implies that \(\| M_{1,1}(z)\|\leq (\Im[z])^{-1}\). This bound is particularly useful when \(\Im[z]\) is large, as it allows us to ensure that the norm of the upper-left \(n\times n\) block of \(M\) is arbitrarily small. In fact, as this suggests, it will be beneficial to analyze the limit of the DEL map as \(\Im[z]\) grows large. For every \(\tau\in \bbR_{\geq 0}\), we define
\begin{equation}\label{eq:M_inf}
M^{(\tau)}_{\star}  = ( \bbE Q -i\tau I_{d}
)^{-1} \quad\text{ and }\quad  M^{(\tau)}_{\infty} =
\begin{bmatrix}
    0_{n\times n} & 0_{n\times d}      \\
    0_{d\times n} & M^{(\tau)}_{\star}
\end{bmatrix}
\end{equation}
and denote \( M_{\star}= M_{\star}^{(0)}\), \( M_{\infty}= M_{\infty}^{(0)}\). Indeed, by \Cref{lemma:real/imag_inverse}, \(\Im[ M^{(\tau)}_{\ast}]\succeq 0\). We demonstrate in \Cref{lemma:convergence_img} that \(M^{(\tau)}_{\star}\) corresponds precisely to the limit of \(M^{(\tau)}(z)\) as \(\Im[z]\) diverges to infinity.

\begin{lemma}\label{lemma:convergence_img}
    Fix \(\tau\in \bbR_{\geq 0}\) and assume that \( M^{(\tau)}\in \calM\) such that, for all \(z\in  \bbH\), \( M^{(\tau)}(z)\) solves the RDEL~\eqref{eq:RDEL}. Then, \( \| M^{(\tau)}(z)- M^{(\tau)}_{\infty}\|\to 0\) as \(\Im[z]\to\infty\).
\end{lemma}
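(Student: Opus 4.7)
The plan is to exploit the block Schur decomposition of $M^{(\tau)}$ (the regularized analog of \eqref{eqs:M_blocks}) together with the a priori bound $\|M^{(\tau)}_{1,1}(z)\|\leq (\Im[z])^{-1}$ supplied by \Cref{lemma:prior_bounds}. Denote, for brevity, $\T^{(\tau)}(M) := \bbE L - \supop(M) - z\Lambda - i\tau I_{\ell}$, so that the RDEL reads $\T^{(\tau)}(M)M = I_{\ell}$.

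The first step is the immediate observation that \Cref{lemma:prior_bounds} gives $\|M_{1,1}(z)\|\to 0$ as $\Im[z]\to\infty$. The second step is to notice, from the explicit form of $\supop$ in \eqref{eq:supop}, that the blocks $\supop_{1,2}$, $\supop_{2,1}$, and $\supop_{2,2}$ depend on $M$ only through $M_{1,1}$, and do so linearly through a map of fixed (dimension-dependent) norm. Consequently $\|\supop_{1,2}(M)\|$, $\|\supop_{2,1}(M)\|$, and $\|\supop_{2,2}(M)\|$ all tend to zero as $\Im[z]\to\infty$. In particular $\T^{(\tau)}_{2,2}(M) = \bbE Q - \supop_{2,2}(M) - i\tau I_{d}$ converges to $\bbE Q - i\tau I_{d}$, which is non-singular either by the standing assumption on $\bbE Q$ (when $\tau=0$) or by the negative-definite imaginary part (when $\tau>0$). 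Continuity of matrix inversion on the open set of non-singular matrices then yields $(\T^{(\tau)}_{2,2}(M))^{-1}\to M^{(\tau)}_{\star}$.

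For the last step, I would substitute into the Schur formulas \eqref{eq:M12_schur}, \eqref{eq:M21_schur}, and \eqref{eq:M22_schur_v2} (with $\T$ replaced by $\T^{(\tau)}$). The off-diagonal blocks $\T^{(\tau)}_{1,2}(M) = \bbE B^{T} - \supop_{1,2}(M)$ and $\T^{(\tau)}_{2,1}(M) = \bbE B - \supop_{2,1}(M)$ are bounded in norm (by $\|\bbE B\|$ plus a vanishing term), while $(\T^{(\tau)}_{2,2}(M))^{-1}$ is bounded by the previous step. Thus in \eqref{eq:M22_schur_v2} the correction term carries a factor of $\F^{(\tau)}_{1,1}(M) = M_{1,1}$ and vanishes, giving $M_{2,2}\to M^{(\tau)}_{\star}$. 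The same factor of $M_{1,1}$ in \eqref{eq:M12_schur} and \eqref{eq:M21_schur} forces $M_{1,2}, M_{2,1} \to 0$. Combined with $M_{1,1}\to 0$, this yields $\|M(z) - M_{\infty}^{(\tau)}\|\to 0$.

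There is no serious obstacle at fixed dimension $\ell$: all norms of $\supop$, $\bbE L$, and related quantities are finite constants, and the argument is essentially a continuity-at-infinity computation once one recognizes that the only part of $M$ that feeds back non-trivially into $\supop$ is the $M_{1,1}$ block, which is directly controlled by \Cref{lemma:prior_bounds}. The only point that requires any attention is confirming that $\T^{(\tau)}_{2,2}(M)$ lies in the non-singularity neighborhood of $\bbE Q - i\tau I_{d}$ for $\Im[z]$ sufficiently large, which follows from the convergence $\supop_{2,2}(M)\to 0$ together with the fact that the set of invertible matrices is open.
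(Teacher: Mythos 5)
Your proposal is correct and follows essentially the same route as the paper: control $M_{1,1}$ via \Cref{lemma:prior_bounds}, use flatness to show the off-diagonal and lower-right parts of $\supop(M)$ vanish, invoke continuity of inversion at $\bbE Q - i\tau I_d$, and feed this through the Schur formulas. The only minor stylistic difference is that you use \eqref{eq:M22_schur_v2} (so the correction term carries a factor of $M_{1,1}$ directly), whereas the paper additionally establishes $\|(\T^{(\tau)}_{1,1}(M))^{-1}\|\to 0$, which serves the same purpose when working from \eqref{eq:M22_schur} instead.
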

\begin{proof}
    We proceed block-wise. By \Cref{lemma:prior_bounds}, \(\| M^{(\tau)}_{1,1}(z)\|\leq (\Im[z])^{-1}\). Consequently, \(\| M^{(\tau)}_{1,1}(z)\|\to 0\) as \(\Im[z]\to\infty\). Furthermore, it follows from \Cref{lemma:real/imag_inverse} and the assumed properties of the superoperator that \(\Im[(\T^{(\tau)}_{1,1}(M^{(\tau)}))^{-1}]\succeq \Im[z](\T^{(\tau)}_{1,1}(M^{(\tau)}))^{-1}(\T^{(\tau)}_{1,1}(M^{(\tau)}))^{-\ast}\). Here, we use the notation \(\T^{(\tau)}(M)=\bbE L-\supop(M)-z\Lambda-i\tau I_{\ell}\) introduced in \Cref{eqs:M_blocks}. Taking the norm of both sides and rearranging, we obtain \(\|(\T^{(\tau)}_{1,1}(M^{(\tau)}))^{-1}\| \leq (\Im[z])^{-1}\). Hence, \(\|(\T^{(\tau)}_{1,1}(M^{(\tau)}))^{-1}\|\to 0\) as \(\Im[z]\to\infty\). Furthermore, using the flatness of the superoperator, we have \(\|\supop_{1,2}(M^{(\tau)})\|\vee \|\supop_{2,1}(M^{(\tau)})\|\vee \|\supop_{2,2}(M^{(\tau)})\| \to 0\) as \(\Im[z]\to\infty\). This implies that \(\T_{1,2}(M^{(\tau)})\to \bbE B^{\top}\), \(\T_{2,1}(M^{(\tau)})\to \bbE B\) and \(\T_{2,2}(M^{(\tau)})\to \bbE Q-i\tau I_{d}\) as \(\Im[z]\to\infty\). Since \(\bbE Q-i\tau I_{d}\) is non-singular and the taking a matrix inverse is a continuous operation, \((\T_{2,2}(M^{(\tau)}))^{-1}\to M^{(\tau)}_{\ast}\) as \(\Im[z]\to\infty\). Finally, using \Cref{eqs:M_blocks}, we conclude that \(M^{(\tau)}(z)\to M^{(\tau)}_{\infty}\) as \(\Im[z]\to\infty\).
\end{proof}

\begin{remark}
    Similarly to \Cref{lemma:convergence_img}, it follows from \cref{lemma:block_inversion,lemma:real/imag_singularity_bound} that \(\|(L-z\Lambda - i\tau I_{\ell})^{-1}-\diag\{0_{n\times n},(Q-i\tau I_{d})^{-1}\}\|\) as \(\Im[z]\to\infty\).
\end{remark}

Although the pseudo-resolvent and the solution to the (R)DEL display favorable properties when the spectral parameter moves far from the real axis, it is crucial to understand their behavior near the real axis, as this region contains the spectral information. Hence, we need to bring the spectral parameter closer to the real axis. The next lemma constructs a loose bound on the norm of any solution to~\eqref{eq:RDEL}. This bound holds for every spectral parameter large enough in norm, regardless of the magnitude of its imaginary part. As a result, we can explore the behavior of the solution of the (R)DEL for large spectral values that are close to the real line.

\begin{lemma}\label{lemma:large_z_norm_bound}
    Fix \(\tau\in \bbR_{\geq 0}\) and assume that \( M^{(\tau)}\in \calM\) such that, for all \(z\in  \bbH\), \(M^{(\tau)}(z)\) solves the RDEL~\eqref{eq:RDEL}. Then, there exists some constant \(c\in \bbR_{>0}\) such that \(\|M^{(\tau)}(z)- M^{(\tau)}_{\infty}\| \leq c (|z|-\kappa)^{-1}\) for all \(z\in \{z\in  \bbH \st |z|>\kappa+c\kappa^{-1}\}\) with \(\kappa:= 2  \|( \bbE Q-i\tau I_{d})^{-1}\|(\| \bbE B\|+(2\|( \bbE Q-i\tau I_{d})^{-1}\|)^{-1})^{2}+\| \bbE A\|+(2\|( \bbE Q-i\tau I_{d})^{-1}\|)^{-1}+s\| ( \bbE Q-i\tau I_{d})^{-1}\|\).
\end{lemma}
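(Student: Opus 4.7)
The plan is to bound each block of $M^{(\tau)}$ separately using the Schur decomposition \eqref{eqs:M_blocks} (adapted to the RDEL by replacing $\T$ with $\T^{(\tau)}$), under a bootstrap hypothesis that $\|M^{(\tau)}(z) - M^{(\tau)}_\infty\| \leq (2sq)^{-1}$, where $q := \|(\bbE Q - i\tau I_d)^{-1}\|$. I will then close the bootstrap via a connectedness argument seeded at infinity by \Cref{lemma:convergence_img}.

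For the first step, I observe from \eqref{eq:supop} that the blocks $\supop_{1,2}$, $\supop_{2,1}$, and $\supop_{2,2}$ depend only on the $(1,1)$-entry of their argument, so that under the bootstrap hypothesis $\|\supop_{2,2}(M^{(\tau)})\| \leq s\|M^{(\tau)}_{1,1}\| \leq (2q)^{-1}$. A Neumann series expansion of $\T^{(\tau)}_{2,2}(M^{(\tau)}) = \bbE Q - i\tau I_d - \supop_{2,2}(M^{(\tau)})$ around $\bbE Q - i\tau I_d$ then produces both $\|(\T^{(\tau)}_{2,2}(M^{(\tau)}))^{-1}\| \leq 2q$ and the closeness estimate $\|(\T^{(\tau)}_{2,2}(M^{(\tau)}))^{-1} - M^{(\tau)}_\star\| \leq 2sq^2\|M^{(\tau)}_{1,1}\|$. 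Writing the analogue of \eqref{eq:M11_schur} for the RDEL as $M^{(\tau)}_{1,1} = (Y - (z+i\tau)I_n)^{-1}$, where $Y := \bbE A - \supop_{1,1}(M^{(\tau)}) - \T^{(\tau)}_{1,2}(M^{(\tau)})(\T^{(\tau)}_{2,2}(M^{(\tau)}))^{-1}\T^{(\tau)}_{2,1}(M^{(\tau)})$, I combine the triangle inequality with $\|\supop_{1,1}(M^{(\tau)})\| \leq s\|M^{(\tau)}\| \leq sq + (2q)^{-1}$ and $\|\T^{(\tau)}_{1,2}(M^{(\tau)})\| \vee \|\T^{(\tau)}_{2,1}(M^{(\tau)})\| \leq \|\bbE B\| + (2q)^{-1}$ to show $\|Y\| \leq \kappa$. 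Since $|z + i\tau| \geq |z|$ for $\tau \geq 0$ and $z \in \bbH$, a standard Neumann bound yields $\|M^{(\tau)}_{1,1}\| \leq (|z| - \kappa)^{-1}$ on $|z| > \kappa$. Plugging this back into \eqref{eq:M22_schur_v2}, \eqref{eq:M12_schur}, and \eqref{eq:M21_schur}, and invoking the closeness estimate for the $(2,2)$-block, I obtain $\|M^{(\tau)} - M^{(\tau)}_\infty\| \leq C(|z| - \kappa)^{-1}$ for some explicit constant $C$ depending only on $s$, $q$, $\|\bbE A\|$, and $\|\bbE B\|$.

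To close the bootstrap, I take $c := \max(C, 2Csq\kappa)$, so that on the region $S := \{z \in \bbH : |z| > \kappa + c\kappa^{-1}\}$ the conditional bound $C(|z|-\kappa)^{-1}$ is strictly smaller than $(2sq)^{-1}$. The set $U := \{z \in S : \|M^{(\tau)}(z) - M^{(\tau)}_\infty\| < (2sq)^{-1}\}$ is open in $S$ by continuity of $M^{(\tau)}$, nonempty because \Cref{lemma:convergence_img} ensures that it contains every $z$ with sufficiently large imaginary part, and relatively closed in $S$ because the strict improvement above applies at any limit point. Connectedness of $S$ then forces $U = S$, so the bound $\|M^{(\tau)}(z) - M^{(\tau)}_\infty\| \leq c(|z|-\kappa)^{-1}$ extends to all of $S$. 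The main obstacle is that \Cref{lemma:prior_bounds} only provides the degenerate a priori control $\|M^{(\tau)}_{1,1}\| \leq (\Im z)^{-1}$, which on its own cannot be converted to a bound in $|z|$; it is precisely the bootstrap-connectedness argument above, seeded at the asymptotic regime of \Cref{lemma:convergence_img}, that lets us reach spectral parameters of small imaginary part but large modulus.
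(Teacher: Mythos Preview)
Your proposal is correct and follows essentially the same approach as the paper: a bootstrap on $\|M^{(\tau)}-M^{(\tau)}_\infty\|\le (2sq)^{-1}$ (the paper writes $m_\star$ for your $q$), a Neumann bound on $(\T^{(\tau)}_{2,2})^{-1}$, the resulting bound $\|M^{(\tau)}_{1,1}\|\le (|z|-\kappa)^{-1}$ from the Schur form \eqref{eq:M11_schur}, propagation to the remaining blocks, and closure via continuity/connectedness seeded by \Cref{lemma:convergence_img}. The only cosmetic differences are that you bound $M^{(\tau)}_{2,2}-M^{(\tau)}_\star$ through \eqref{eq:M22_schur_v2} and the Neumann closeness estimate for $(\T^{(\tau)}_{2,2})^{-1}$, whereas the paper uses \eqref{eq:M22_schur} together with \Cref{lemma:res_trick}, and you make the open/closed/connected argument for $U$ explicit where the paper phrases it as showing $\|M(z)-M^{(\tau)}_\infty\|$ avoids an interval.
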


\begin{proof}
    Fix \(z \in \bbH\) with \(|z|>\kappa\) and let \(M^{(\tau)}\equiv M^{(\tau)}(z)\). For notational convenience, we denote \(a=\|\bbE A\|\), \(b=\|\bbE B\|\) and \(m_{\star}=\|M_{\star}^{(\tau)}\|\). We will show that there exists \(c\in \bbR_{>0}\) such that \(\|M^{(\tau)}(z)-M_{\infty}^{(\tau)}\| \notin (c(|z|-\kappa)^{-1},\kappa]\) for every \(z\in \bbH\) with \(|z|> \kappa+c\kappa^{-1}\). By \Cref{lemma:convergence_img}, \(\|M^{(\tau)}(z)-M_{\infty}^{(\tau)}\|\) is in a neighborhood of the origin for every \(z\in \bbH\) with \(\Im[z]\) large enough. Since \(z\mapsto \|M^{(\tau)}(z)-M_{\infty}^{(\tau)}\|\) is a continuous function on \(\{z\in \bbH\st |z|> \kappa+c\kappa^{-1}\}\), this will imply that \(\|M^{(\tau)}(z)-M_{\infty}^{(\tau)}\|\in [0,c(|z|-\kappa)^{-1}]\) for every \(z\in \{z\in \bbH\st |z|> \kappa+c\kappa^{-1}\}\). 
    
    Suppose that \(\|M^{(\tau)} - M^{(\tau)}_{\infty}\|\leq (2s m_{\star})^{-1}\) such that \(\|M^{(\tau)}\|\leq \|M^{(\tau)} - M^{(\tau)}_{\infty}\|+\|M^{(\tau)}_{\infty}\|\leq (2sm_{\star})^{-1}+m_{\star}\). We consider the blocks separately using~\eqref{eqs:M_blocks}. By definition of \(\supop_{2,2}\), \(\|\supop_{2,2}(M^{(\tau)})\| \leq s\| M^{(\tau)}_{1,1}\| \leq (2m_{\star})^{-1}\). It follows from \Cref{lemma:neumann} that
    \[
        \|(\T_{2,2}(M^{(\tau)}))^{-1}\| = \|( \bbE Q-i\tau I_{d})^{-1}\left( \supop_{2,2}(M^{(\tau)})( \bbE Q-i\tau I_{d})^{-1}- I_{d}\right)^{-1}\| \leq 2m_{\star}.
    \]
    By subadditivity of the spectral norm,
    \begin{multline*}
        \|\bbE A-\supop_{1,1}(M^{(\tau)}) - \T_{1,2}(M^{(\tau)})(\T_{2,2}(M^{(\tau)}))^{-1} \T_{2,1}(M^{(\tau)}) \| \leq a+(2m_{\star})^{-1}\\+2m_{\star}(b+(2m_{\star})^{-1})^{2}.
    \end{multline*}
    For \(z\in \bbH\) with \(|z|>\kappa \geq a+(2m_{\star})^{-1}+2m_{\star}(b+(2m_{\star})^{-1})^{2}\), it follows from \Cref{lemma:neumann,eq:M11_schur} that \(\|M^{(\tau)}_{1,1}\| \leq (|z|-\kappa)^{-1}\). We now turn our attention to \(M^{(\tau)}_{2,2}\). Using \Cref{lemma:res_trick,eq:M22_schur},
    \[
        M^{(\tau)}_{2,2}-M_{\star}^{(\tau)} = M^{(\tau)}_{2,2}\left(\supop_{2,2}(M^{(\tau)})+\T_{2,1}(M^{(\tau)})(\T_{1,1}(M^{(\tau)}))^{-1} \T_{1,2}(M^{(\tau)}) \right)^{-1}M^{(\tau)}_{\ast}.
    \]
    By \Cref{lemma:neumann}, \(\|(\T_{1,1}(M^{(\tau)}))^{-1}\| \leq (|z|-a-(2m_{\star})^{-1}-sm_{\star})^{-1}\). Hence,
    \begin{multline*}
        \|M^{(\tau)}_{2,2}-M_{\star}^{(\tau)}\| \leq s m_{\star}((2sm_{\star})^{-1}+m_{\star})\|M^{(\tau)}_{1,1}\|
        \\ +m_{\star}((2sm_{\star})^{-1}+m_{\star})(b+(2m_{\star})^{-1})^{2}(|z|-\kappa)^{-1}.
    \end{multline*}
    Plugging the bound for \(\|M^{(\tau)}_{1,1}\|\) derived above and simplifying,
    \[
        \|M^{(\tau)}_{2,2}-M_{\star}^{(\tau)}\| \leq m_{\star}(s+(b+(2m_{\star})^{-1})^{2})((2sm_{\star})^{-1}+m_{\star})(|z|-\kappa)^{-1}.
    \]
    It only remains to treat \(\|M^{(\tau)}_{1,2}\|\) and \(\|M^{(\tau)}_{2,1}\|\), which we directly bound by
    \[
        \max\{\|M^{(\tau)}_{1,2}\|,\|M^{(\tau)}_{2,1}\|\}\leq 2m_{\star}(b+(2m_{\star})^{-1})\|M^{(\tau)}_{1,1}\| \leq 2m_{\star}(b+(2m_{\star})^{-1})(|z|-\kappa)^{-1}
    \]
    using~\eqref{eq:M12_schur} and~\eqref{eq:M21_schur}. To summarize, we showed that for every \(z\in \bbH\) with \(|z|>a+(2m_{\star})^{-1}+2m_{\star}(b+(2m_{\star})^{-1})^{2}+sm_{\star}\), \(\|M^{(\tau)}(z)- M^{(\tau)}_{\infty}\|\leq (2sm_{\star})^{-1}\) implies that
    \[
        \|M^{(\tau)}(z)- M^{(\tau)}_{\infty}\|  \leq \|M^{(\tau)}_{1,1}(z)\|+\|M^{(\tau)}_{1,2}\|+\|M^{(\tau)}_{2,1}\|+\|M^{(\tau)}_{2,2}- M^{(\tau)}_{\star}\|
        \leq c (|z|-\kappa)^{-1}
    \]
    with \(c:=1+m_{\star}(s+(b+(2m_{\star})^{-1})^{2})((2sm_{\star})^{-1}+m_{\star})+4m_{\star}(b+(2m_{\star})^{-1})\). Choosing \(|z|> \kappa+c\kappa^{-1}\) completes the proof.
\end{proof}

The previous lemma is a key step in controlling the norm of the solution to the RDEL for large spectral parameters. We will now proceed with our analysis of the RDEL by establishing an upper bound on the imaginary part of any solution when the spectral parameter large.

\begin{lemma}\label{lemma:large_z_img_bound}
    Fix \(\tau\in \bbR_{\geq 0}\) and assume that \( M^{(\tau)}\in \calM\) such that, for all \(z\in  \bbH\), \( M^{(\tau)}(z)\) solves the RDEL~\eqref{eq:RDEL}. Let \(\kappa\) and \(c\) be defined as in \Cref{lemma:large_z_norm_bound}. Then, there exists \(\kappa_{+}\geq \kappa+c\kappa^{-1}\) and constant \(c_{+} \in \bbR_{>0}\) such that \(\|\Im[M^{(\tau)}_{1,1}(z)]\| \leq c_{+}(|z|-\kappa)^{-2}(\tau + \Im[z])\) for every \(z\in \{z\in \bbH \st |z|\geq \kappa_{+}\}\). In particular, if \(\tau=0\), then \(\|\Im[M(z)]\|\) converges uniformly to \(0\) as \(\Im[z]\downarrow 0\) on \(\{z\in \bbH \st \Re[z]\geq \kappa_{+}\}\).
\end{lemma}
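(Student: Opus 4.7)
The plan is to start from the imaginary-part identity obtained by applying \Cref{lemma:real/imag_inverse} to the RDEL. Since $\bbE L$ is real self-adjoint and $\supop$ is symmetric-linear (so that $\Im[\supop(M)] = \supop(\Im[M])$), this gives the equality
\[
\Im[M] \;=\; M\,\supop(\Im[M])\,M^{\ast} + \Im[z]\,M\Lambda M^{\ast} + \tau\, MM^{\ast}.
\]
Extracting the upper-left $n\times n$ block yields
\[
\Im[M_{1,1}] \;=\; [M_{1,1},\,M_{1,2}]\,\supop(\Im[M])\,[M_{1,1},\,M_{1,2}]^{\ast} + \Im[z]\,M_{1,1}M_{1,1}^{\ast} + \tau\,\bigl(M_{1,1}M_{1,1}^{\ast} + M_{1,2}M_{1,2}^{\ast}\bigr).
\]

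The crucial observation is that $\supop$ depends only on the $(1,1)$ block of its argument, so that condition \ref{condition:flat} actually gives $\|\supop(\Im[M])\| \leq s\,\|\Im[M_{1,1}]\|$. Combining this with \Cref{lemma:large_z_norm_bound}, which provides $\|M_{1,1}\| \vee \|M_{1,2}\| \leq c(|z|-\kappa)^{-1}$ for $|z| > \kappa + c\kappa^{-1}$, and estimating each of the three blocks separately via the submultiplicativity of the spectral norm yields
\[
\|\Im[M_{1,1}]\| \;\leq\; 2sc^{2}(|z|-\kappa)^{-2}\,\|\Im[M_{1,1}]\| + c^{2}(|z|-\kappa)^{-2}\,\Im[z] + 2c^{2}(|z|-\kappa)^{-2}\,\tau.
\]
Choosing $\kappa_{+} \geq \max\{\kappa + c\kappa^{-1},\;\kappa + 2c\sqrt{s}\}$ ensures that $2sc^{2}(|z|-\kappa)^{-2} \leq 1/2$ for $|z| \geq \kappa_{+}$, which lets me absorb the self-referential term into the left-hand side and conclude $\|\Im[M_{1,1}]\| \leq c_{+}(|z|-\kappa)^{-2}(\Im[z] + \tau)$ with, e.g., $c_{+} = 4c^{2}$.

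For the last statement, I specialize to $\tau = 0$ and substitute the bound on $\|\Im[M_{1,1}]\|$ back into the original identity for $\Im[M]$. By \Cref{lemma:large_z_norm_bound}, $\|M\|$ is uniformly bounded on $\{\Re[z] \geq \kappa_{+}\}$, so $\|\Im[z]\,M\Lambda M^{\ast}\| \leq C\,\Im[z]$ and $\|M\,\supop(\Im[M])\,M^{\ast}\| \leq s\|M\|^{2}\,\|\Im[M_{1,1}]\|$ is also $O(\Im[z])$ by the first part. Both bounds are uniform in $z$ on the strip $\{\Re[z] \geq \kappa_{+}\}$, yielding the uniform decay $\|\Im[M(z)]\| \to 0$ as $\Im[z]\downarrow 0$.

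The main subtlety is that $\supop(\Im[M])$ feeds $\|\Im[M_{1,1}]\|$ back into the right-hand side, so the inequality is self-referential: one must push $|z|$ (and hence $\kappa_{+}$) large enough that the coefficient of $\|\Im[M_{1,1}]\|$ is strictly below $1$ and can be absorbed. This works only because $\supop$ depends on its argument solely through the $(1,1)$ block, closing the recursion on the scalar quantity $\|\Im[M_{1,1}]\|$ rather than on all of $\|\Im[M]\|$; extending the decay from the $(1,1)$ block to the full matrix is then a direct consequence of the same imaginary-part identity.
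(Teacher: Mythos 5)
Your argument hinges on the claim that ``$\supop$ depends only on the $(1,1)$ block of its argument, so $\|\supop(\Im[M])\|\le s\|\Im[M_{1,1}]\|$.'' This is not true in the paper's framework, and the gap is substantive. Looking at~\eqref{eq:supop}, only $\supop_{1,2}$, $\supop_{2,1}$ and $\supop_{2,2}$ are given explicitly and depend solely on $M_{1,1}$; the block $\supop_{1,1}(M)$ is left unspecified and is allowed to depend on all of $M$. Indeed, in the random-features application the general $\supop_{1,1}$ contains $\rho(M)I_d$, which depends on the off-diagonal and lower-right blocks of $M$. The paper's own proof carefully distinguishes $\|\supop_{1,1}(\Im[M])\|\le s\|\Im[M]\|$ from $\|\supop_{i,j}(\Im[M])\|\le s\|\Im[M_{1,1}]\|$ for $(i,j)\neq(1,1)$, and is forced to set up the $2\times 2$ entrywise system $N_{j,k}=\|\Im[M_{j,k}]\|$ to close the recursion.

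The place where this bites is exactly the self-referential step you flag as the ``main subtlety.'' Extracting the $(1,1)$ block of $\Im[M]=M(\Im[z]\Lambda+i\tau I_\ell+\supop(\Im[M]))M^\ast$ produces a term $M_{1,1}\,\supop_{1,1}(\Im[M])\,M_{1,1}^\ast$ whose norm is controlled by $s\,m^2\,\|\Im[M]\|$, not $s\,m^2\,\|\Im[M_{1,1}]\|$. Since $\|\Im[M]\|$ contains $\|\Im[M_{2,2}]\|$, which is \emph{not} damped by a factor $m^2$ (the coefficient of $\tau+\supop_{2,2}(\Im[M])$ in the $(2,2)$ block is $\|M_{2,2}\|^2\approx m_\star^2$, which stays bounded away from zero as $|z|\to\infty$), you cannot absorb the feedback into the left-hand side without first deriving a separate bound on $\|\Im[M_{2,2}]\|$ in terms of $\|\Im[M_{1,1}]\|$. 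The same issue propagates into the final ``in particular'' step, where you again invoke $\|\supop(\Im[M])\|\le s\|\Im[M_{1,1}]\|$. The conclusion is correct and your final formula matches the lemma, but to justify it you must, as the paper does, bound $x=\|\Im[M_{1,2}]\|\vee\|\Im[M_{2,1}]\|\vee\|\Im[M_{2,2}]\|$ in terms of $\|\Im[M_{1,1}]\|$, $\Im[z]$, and $\tau$, and then substitute that bound back into the inequality for $\|\Im[M_{1,1}]\|$.
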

\begin{proof}
    Let \(m\equiv m(z)=c(|z|-\kappa)^{-1}\) be the bound in \Cref{lemma:large_z_norm_bound} and choose \(\kappa_{+}\geq \kappa + c\kappa^{-1}\) such that \(3sm^{2}\leq 2^{-1}\) and \(4sm^{2}(1+2(m+m_{\star})^{2})\leq 2^{-1}\) for all \(z\in \bbH\) with \(|z|\geq \kappa_{+}\). Fix \(z\in \bbH\) with \(|z|\geq \kappa_{+}\) and denote \( M^{(\tau)}= M^{(\tau)}(z)\). 
    
    By \cref{lemma:real/imag_inverse,lemma:lin_pos_symmetric}, we may write \(\Im[M^{(\tau)}] = M^{(\tau)} (\Im[z]\Lambda + i\tau I_{\ell} + \supop(\Im[M^{(\tau)}]))(M^{(\tau)})^{\ast}\). By \Cref{lemma:large_z_norm_bound}, \(\|M^{(\tau)}\|\leq \|M^{(\tau)}-M^{(\tau)}_{\infty}\|+\|M^{(\tau)}_{\infty}\|\leq m+m_{\star}\) where we denote \(m_{\star}=\|M^{(\tau)}_{\ast}\|=\|(\bbE Q-i\tau I_{d})^{-1}\|\). Furthermore, by flatness, \(\|\supop_{1,2}(\Im[M^{(\tau)}])\|\vee \|\supop_{2,1}(\Im[M^{(\tau)}])\|\vee \|\supop_{2,2}(\Im[M^{(\tau)}])\|\leq s\|\Im[M^{(\tau)}_{1,1}]\|\) and \(\|\supop_{1,1}(\Im[M^{(\tau)}])\| \leq s\|\Im[M^{(\tau)}]\|\). Let \(N\in \bbR^{2\times 2}\) such that \(N_{j,k}= \|\Im[M^{(\tau)}_{j,k}]\|\). Then,
    \[
        N
        \leq
        \begin{bmatrix}
            m & m \\
            m & m+ m_{\star}
        \end{bmatrix}
        \begin{bmatrix}
            \Im[z] + \tau +s\|\Im[M^{(\tau)}]\| & s\|\Im[M^{(\tau)}_{1,1}]\|      \\
            s\|\Im[M^{(\tau)}_{1,1}]\|  & \tau +s\|\Im[M^{(\tau)}_{1,1}]\|
        \end{bmatrix}
        \begin{bmatrix}
            m & m \\
            m & m+ m_{\star}
        \end{bmatrix},
    \]
    where the inequality is entry-wise. Expanding the product, we get that
    \[
        \|\Im[M^{(\tau)}_{j,k}]\| \leq m^{2}\Im[z] + 2(m+m_{\star})^{2}\tau + 3s(m+m_{\star})^{2}\|\Im[M^{(\tau)}_{1,1}]\| + sm^{2}\|\Im[M^{(\tau)}]\|
    \]
    for every \((j,k)\in \{(1,2),(2,1),(2,2)\}\). In particular, since \(\|\Im[M^{(\tau)}]\| \leq \sum_{j,k=1}^{2}\|\Im[M^{(\tau)}_{j,k}]\|\),
    \[
        x \leq  m^{2}\Im[z] + 2(m+m_{\star})^{2}\tau + 4s(m+m_{\star})^{2}\|\Im[M^{(\tau)}_{1,1}]\| + 3sm^{2}x
    \] 
    where \(x=\|\Im[M^{(\tau)}_{1,2}]\|\vee \|\Im[M^{(\tau)}_{2,1}]\| \vee \|\Im[M^{(\tau)}_{2,2}]\|\). Given our choice of \(\kappa_{+}\), we can rearrange to obtain \( x \leq  2m^{2}\Im[z] + 4(m+m_{\star})^{2}\tau + 8s(m+m_{\star})^{2}\|\Im[M^{(\tau)}_{1,1}]\|\). Using the bound for \(N\) above,
    \begin{align*}
        \|\Im[M^{(\tau)}_{1,1}]\| & \leq m^{2}\Im[z]+2m^{2}\tau+4sm^{2}\|\Im[M^{(\tau)}_{1,1}]\|+m^{2}x
        \\ & \leq m^{2}(1+2m^{2})\Im[z]+2m^{2}(1+2(m+m_{\star})^{2})\tau
        \\ & + 4sm^{2}(1+2(m+m_{\star})^{2})\|\Im[M^{(\tau)}_{1,1}]\|.
    \end{align*}
    Rearranging, we obtain that \(\|\Im[M^{(\tau)}_{1,1}]\| \leq 2m^{2}(1+2m^{2})\Im[z]+4m^{2}(1+2(m+m_{\star})^{2})\tau\). This proves the first part of the lemma. The second part follows from the first part and the derived bound on \(x\).
\end{proof}

\subsection{Stieltjes Transform Representation}\label{sec:stieltjes}

Given our chosen admissible set, any solution to~\eqref{eq:DEL} or~\eqref{eq:RDEL} is a matrix-valued Herglotz function. Specifically, every solution possesses a Stieltjes transform representation, as guaranteed by the Nevanlinna, or Riesz-Herglotz, representation theorem~\cite{matrix_herglotz}. This representation will prove particularly advantageous, as the positive semidefinite measure in the Stieltjes transform representation of the solution to the DEL is compactly supported.

\begin{lemma}[Stieltjes transform representation]\label{lemma:stieltjes_representation}
    Assume that \( M\in \calM\) such that, for all \(z\in  \bbH\), \( M(z)\) solves the DEL~\eqref{eq:DEL}. Then,
    \begin{equation*}
        M(z) =  M_{\infty} +  \int_{ \bbR}\frac{ \Omega(\d \lambda)}{\lambda-z}
    \end{equation*}
    for all \(z\in \bbH\), where \(\Omega\) is a matrix-valued measure on bounded Borel subsets of \(\bbR\). Additionally, \(\Omega\) is compactly supported and satisfies
    \[
        \int_{\bbR}\Omega(\d\lambda) =
        \begin{bmatrix}
            I_{n}            & -\bbE B^{\top}(\bbE Q)^{-1}         \\
            -(\bbE Q)^{-1} \bbE B & (\bbE Q)^{-1}\bbE[B B^{\top}](\bbE Q)^{-1}.
        \end{bmatrix}.
    \]
\end{lemma}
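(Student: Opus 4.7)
The plan is to apply the matrix Nevanlinna--Riesz--Herglotz representation theorem to $M \in \Hol(\bbH, \calA)$. Since $\Im[M(z)] \succeq 0$ on $\bbH$, this yields
\[
    M(z) = C_{1} + C_{2}z + \int_{\bbR}\frac{1 + \lambda z}{\lambda - z}\,\d \mu(\lambda)
\]
for some Hermitian $C_{1}$, positive semidefinite $C_{2}$, and positive semidefinite matrix-valued measure $\mu$ with $\int(1 + \lambda^{2})^{-1}\,\d\mu < \infty$. The standard identity $C_{2} = \lim_{t\to\infty}\Im[M(it)]/t$ kills the linear term: by \Cref{lemma:convergence_img}, $M(it) \to M_{\infty}$, and since $M_{\infty}$ is a real matrix, $\Im[M(it)] \to 0$, forcing $C_{2} = 0$.

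The main technical step is to prove that $\mu$ is compactly supported. For this I would combine the matrix Stieltjes inversion formula with \Cref{lemma:large_z_img_bound} at $\tau = 0$, which controls $\|\Im[M(z)]\|$ by a constant multiple of $(|z|-\kappa)^{-2}\Im[z]$ for $|z|$ sufficiently large. A direct inspection of the proof of that lemma shows the bound depends only on $|z|$, so the uniform decay extends symmetrically to $\Re[z] \leq -\kappa_{+}$. Consequently $\|\Im[M(\lambda + i\epsilon)]\| \to 0$ uniformly in $\lambda$ on $\{|\lambda| \geq \kappa_{+}\}$ as $\epsilon \downarrow 0$, and Stieltjes inversion then forces $\mu$ to vanish outside $[-\kappa_{+},\kappa_{+}]$.

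With $\mu$ compactly supported, I would set $\Omega(\d\lambda) := (1 + \lambda^{2})\,\d\mu(\lambda)$, which is finite and compactly supported, and use the algebraic identity $(1 + \lambda z)/(\lambda - z) = (1 + \lambda^{2})/(\lambda - z) - \lambda$ to rewrite $M(z) = \alpha + \int \Omega(\d\lambda)/(\lambda - z)$ for a suitable Hermitian $\alpha$ absorbing $C_{1}$ and the $\lambda$-term. Letting $t\to\infty$ along $z = it$ makes the integral vanish, and comparison with \Cref{lemma:convergence_img} identifies $\alpha = M_{\infty}$. The total mass $\Omega(\bbR)$ is then extracted from the large-$|z|$ expansion $\int\Omega(\d\lambda)/(\lambda - z) = -z^{-1}\Omega(\bbR) + O(|z|^{-2})$, valid by compact support, giving $\Omega(\bbR) = \lim_{t\to\infty}-it(M(it) - M_{\infty})$, which I would evaluate blockwise using the Schur representations in \eqref{eqs:M_blocks}.

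I expect the main obstacle to be the $(2,2)$-block of $\Omega(\bbR)$. The other three blocks follow quickly from the fact that $-it\,M_{1,1}(it) \to I_{n}$ (which in turn comes from \eqref{eq:M11_schur} and the vanishing of $\supop(M(it))$ as $M_{1,1}(it)\to 0$), combined with \eqref{eq:M12_schur} and \eqref{eq:M21_schur}, producing $-\bbE B^{T}(\bbE Q)^{-1}$ and $-(\bbE Q)^{-1}\bbE B$ respectively. For the $(2,2)$-block I would rely on \eqref{eq:M22_schur_v2} and treat the two summands separately: the first term, expanded via the resolvent identity together with $\supop_{2,2}(M) = \bbE[(B - \bbE B)M_{1,1}(B - \bbE B)^{T}]$, contributes $(\bbE Q)^{-1}\bbE[(B - \bbE B)(B - \bbE B)^{T}](\bbE Q)^{-1}$ in the limit, while the second contributes $(\bbE Q)^{-1}\bbE B\,\bbE B^{T}(\bbE Q)^{-1}$; their sum collapses to $(\bbE Q)^{-1}\bbE[BB^{T}](\bbE Q)^{-1}$, matching the claimed formula.
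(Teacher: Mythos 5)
Your proof is correct and takes essentially the same route as the paper: both rely on \Cref{lemma:convergence_img} for the limit at $i\infty$, \Cref{lemma:large_z_img_bound} for compact support (correctly observing that the estimate there covers $|z|\geq\kappa_+$ on both sides), and the block-wise Schur formulas to extract the total mass $\int\Omega(\d\lambda)$. The only difference is cosmetic: where the paper cites the Gesztesy--Tsekanovskii theorems, which directly give the Stieltjes transform form for a matrix Herglotz function with a finite limit at $i\infty$, you unpack this from the general Nevanlinna representation by killing $C_2$ and then changing measure to $\Omega(\d\lambda)=(1+\lambda^2)\,\d\mu(\lambda)$ after compact support of $\mu$ is established.
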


\begin{proof}
    By \Cref{lemma:convergence_img}, \(\|M-M_{\infty}\|\to 0\) as \(\Im[z]\to\infty\). Using~\eqref{eq:M11_schur}, we have
    \[
        zM_{1,1} = -I_{n}+ (\bbE A - \supop_{1,1}(M)-\T_{1,2}(M)(\T_{2,2}(M))^{-1} \T_{2,1}(M))M_{1,1}.
    \]
    Hence, \(zM_{1,1}(z)\to -I_{n}\) as \(\Im[z]\to \infty\). In addition, by~\eqref{eq:M12_schur},~\eqref{eq:M21_schur} and \Cref{lemma:convergence_img}, \(zM_{1,2}\to \bbE B^{\top}(\bbE Q)^{-1}\) and \(zM_{2,1}\to (\bbE Q)^{-1}\bbE B\) as \(\Im[z]\to \infty\). Finally, by~\eqref{eq:M22_schur_v2} and \Cref{lemma:res_trick},
    \[
        z(M_{2,2}-M_{\star})=(\T_{2,2}(M))^{-1}\supop_{2,2}(zM)M_{\star} +(\T_{2,2}(M))^{-1}\T_{2,1}(M)zM_{1,1} \T_{1,2}(M)(\T_{2,2}(M))^{-1}.
    \]
    By definition of the superoperator,
    \[
        \lim_{\Im[z]\to\infty}\supop_{2,2}(zM)=\bbE[(B-\bbE B)\lim_{\Im[z]\to\infty}zM_{1,1}(B-\bbE B)^{\top}]=-\bbE[(B-\bbE B)(B-\bbE B)^{\top}].
    \]
    Since \(\supop_{2,2}(M)\) approach \(0\) as \(\Im[z]\) approaches infinity, \((\T_{2,2}(M))^{-1}\to (\bbE Q)^{-1}\) as \(\Im[z]\to \infty\). Also, \(\T_{1,2}(M)\to \bbE B^{\top}\) and \(\T_{2,1}(M)\to \bbE B\) as \(\Im[z]\to \infty\). We get \(z(M_{2,2}-M_{\star})\to (\bbE Q)^{-1}\bbE[BB^{\top}](\bbE Q)^{-1}\) as \(\Im[z]\to\infty\). Since \(M_{\infty}\) is real, \(M-M_{\infty}\) is a matrix-valued Herglotz function. Hence, by~\cite[Theorem 2.3(iii) and Theorem 5.4(iv)]{matrix_herglotz},
    \[
        M(z) =  M_{\infty} +  \int_{ \bbR}\frac{ \Omega(\d \lambda)}{\lambda-z}
    \]
    for all \(z\in \bbH\), where \(\Omega\) is a matrix-valued measure on bounded Borel subsets of \(\bbR\) satisfying
    \[
        \int_{\bbR}\Omega(\d\lambda)= -\lim_{\eta\to\infty}i\eta M(i\eta)=
        \begin{bmatrix}
            I_{n} & -\bbE B^{\top}(\bbE Q)^{-1}         \\
            -(\bbE Q)^{-1} \bbE B & (\bbE Q)^{-1}\bbE[B B^{\top}](\bbE Q)^{-1}.
        \end{bmatrix}
    \]

    It only remains to show that \(\Omega\) is compactly supported. By \Cref{lemma:large_z_img_bound}, the imaginary part \(\Im[M(z)]\) converges uniformly to \(0\) as \(\Im[z]\to 0\) on \(\{z\in \bbH \st \Re[z]\geq \kappa_{+}\}\). Hence, by~\cite[Theorem 5.4(v) and Theorem 5.4(vi)]{matrix_herglotz}, \(\Omega\) is compactly supported on \([-\kappa_{+},\kappa_{+}]\), where \(\kappa_{+}\) is the constant defined in \Cref{lemma:large_z_img_bound}.
\end{proof}

We can interpret the integral representation in \Cref{lemma:stieltjes_representation} as a matrix-valued Stieltjes transform, and we will refer to it using this terminology. Furthermore, given the normalization of \(\Omega\) in \Cref{lemma:stieltjes_representation}, we say that \(\Omega_{1,1}\) is a matrix-valued probability measure in the sense that \(v^{\ast}\Omega_{1,1}v\) is a real Borel measure satisfying \(\int_{\bbR}v^{\ast}\Omega_{1,1}(\d\lambda)v=1\) for every \(v\in \bbC^{n}\) with \(\|v\|=1\).

\Cref{lemma:stieltjes_representation} provides an explicit bound on the solution to~\eqref{eq:DEL}, which we state in the following corollary.

\begin{corollary}\label{corollary:norm_bound}
    Assume that \( M\in \calM\) such that, for all \(z\in  \bbH\), \( M(z)\) solves the DEL~\eqref{eq:DEL}. Then, for every \(z\in \bbH\),
    \begin{equation}
        \|M(z)\|\leq \|M_{\infty}\|+ \mathrm{dist}(z,\supp(\Omega))^{-1}\left\|\int_{\bbR}\Omega(\d\lambda)\right\|.
    \end{equation}
\end{corollary}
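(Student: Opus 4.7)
The plan is to derive the bound directly from the Stieltjes transform representation established in Lemma~\ref{lemma:stieltjes_representation} (the prior lemma). From that representation we already have
\[
M(z) = M_\infty + \int_\bbR \frac{\Omega(\d\lambda)}{\lambda - z},
\]
with $\Omega$ a compactly supported, positive semidefinite matrix-valued measure (positivity is built into the Nevanlinna representation for matrix Herglotz functions). The triangle inequality reduces the problem to bounding the spectral norm of the integral term by $\mathrm{dist}(z,\supp(\Omega))^{-1}\,\|\int_\bbR \Omega(\d\lambda)\|$.

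To obtain that bound, I would test against arbitrary unit vectors $v,w\in\bbC^\ell$ and write
\[
v^{\ast}\left(\int_\bbR \frac{\Omega(\d\lambda)}{\lambda-z}\right)w = \int_\bbR \frac{1}{\lambda-z}\,v^{\ast}\Omega(\d\lambda)w.
\]
The complex measure $v^{\ast}\Omega(\cdot)w$ has total variation at most $\sqrt{v^{\ast}\Omega(\bbR)v}\sqrt{w^{\ast}\Omega(\bbR)w}$: this follows by expressing the total variation as a sup over partitions $\sum_j |v^{\ast}\Omega(A_j)w|$, applying the Cauchy--Schwarz inequality for the positive semidefinite matrix $\Omega(A_j)$ entrywise (i.e. $|v^{\ast}\Omega(A_j)w|\leq\sqrt{v^{\ast}\Omega(A_j)v}\sqrt{w^{\ast}\Omega(A_j)w}$), and then applying Cauchy--Schwarz again over the partition. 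For unit $v,w$ this total variation is bounded by $\|\Omega(\bbR)\|$.

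Combined with the uniform pointwise bound $|\lambda-z|^{-1}\leq \mathrm{dist}(z,\supp(\Omega))^{-1}$ valid on $\supp(\Omega)$, this gives
\[
\left|v^{\ast}\!\!\int_\bbR \frac{\Omega(\d\lambda)}{\lambda-z}\,w\right| \leq \mathrm{dist}(z,\supp(\Omega))^{-1}\,\|\Omega(\bbR)\|,
\]
and taking the supremum over unit $v,w$ gives the stated norm bound. The only slightly subtle point is the Cauchy--Schwarz step controlling the total variation of the off-diagonal complex measure $v^{\ast}\Omega(\cdot)w$ by the norm of the diagonal $\Omega(\bbR)$; everything else is a direct consequence of Lemma~\ref{lemma:stieltjes_representation} and the compactness of $\supp(\Omega)$ that lemma already provides.
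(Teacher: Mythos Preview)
Your proposal is correct and is exactly the argument the paper has in mind: the corollary is stated without proof immediately after Lemma~\ref{lemma:stieltjes_representation}, as a direct consequence of the Stieltjes representation, the triangle inequality, and the pointwise bound $|\lambda-z|^{-1}\leq \mathrm{dist}(z,\supp(\Omega))^{-1}$ on $\supp(\Omega)$. Your Cauchy--Schwarz control of the total variation of $v^{\ast}\Omega(\cdot)w$ by $\|\Omega(\bbR)\|$ is the standard way to make the step rigorous and is fine.
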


It is tempting to try to directly generalize \Cref{lemma:stieltjes_representation} to the solution of the regularized matrix equation~\eqref{eq:RDEL}. However, we encounter a challenge in applying the same procedure to obtain a bound on the solution to the regularized version. The issue arises from the fact that, when the regularization parameter \(\tau\) is strictly positive, \(M^{(\tau)}_{\infty}\) has a positive semidefinite imaginary part, which implies that the function \(z \mapsto M^{(\tau)}(z) - M^{(\tau)}_{\infty}\) may not be Herglotz. One potential alternative approach is to utilize a multivariate Herglotz representation, as discussed in~\cite{Luger_2017}. This representation provides an integral representation for the function \((z, i\tau) \mapsto M^{(\tau)}(z)\) involving a multivariate measure. However, it should be noted that in such representations, the measure cannot be finite unless it is trivial. Nonetheless, an analogue of \Cref{lemma:stieltjes_representation} holds for the upper-left \(n \times n\) block of the solution to the RDEL. The result is obtained via a similar argument, so we omit the proof.

\begin{lemma}\label{lemma:stieltjes_representation_RDEL}
    Fix \(\tau\in \bbR_{\geq 0}\) and assume that \( M^{(\tau)}\in \calM\) such that, for all \(z\in  \bbH\), \( M^{(\tau)}(z)\) solves the RDEL~\eqref{eq:RDEL}. Then, \(M^{(\tau)}_{1,1}(z) =  \int_{ \bbR}\frac{ \Omega^{(\tau)}_{1,1}(\d \lambda)}{\lambda-z}\) for all \(z\in \bbH\), where \(\Omega^{(\tau)}_{1,1}\) is a \(n\times n\) matrix-valued measure on Borel subsets of \(\bbR\) satisfying \(\int_{\bbR}\Omega^{(\tau)}_{1,1}(\d\lambda) =I_{n}\).
\end{lemma}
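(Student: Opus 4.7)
The plan is to follow the argument used in the proof of Lemma~\ref{lemma:stieltjes_representation}, but restricted to the upper-left \(n\times n\) block. Because the claim here involves neither a shift by some ``\(M_{\infty}^{(\tau)}\)'' nor compact support of the representing measure, the argument is substantially shorter.

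First, I would verify that \(M_{1,1}\in \Hol(\bbH,\bbC^{n\times n})\) is a matrix-valued Herglotz function. Holomorphicity of \(M_{1,1}\) follows from \(M\in \calM_{+}\subseteq \calM\), and the positivity \(\Im[M_{1,1}(z)]\succeq 0\) on \(\bbH\) follows from the definition of the admissible sets \(\calA\) and \(\calA_{+}\) in~\eqref{eq:admissible_func} and~\eqref{eq:admissible_func_aug}.

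Second, I would compute the asymptotic behavior of \(zM_{1,1}(z)\) as \(\Im[z]\to\infty\). Applying the block form~\eqref{eq:M11_schur} to the RDEL (with \(\T^{(\tau)}(M)=\bbE L-\supop(M)-z\Lambda-i\tau I_{\ell}\)) yields
\[
    zM_{1,1}=-I_{n}+\bigl(\bbE A-\supop_{1,1}(M)-i\tau I_{n}-\T^{(\tau)}_{1,2}(M)(\T^{(\tau)}_{2,2}(M))^{-1}\T^{(\tau)}_{2,1}(M)\bigr)M_{1,1}.
\]
By \Cref{lemma:convergence_img} applied to the RDEL, \(\|M(z)-M_{\infty}^{(\tau)}\|\to 0\) as \(\Im[z]\to\infty\), so \(\|M_{1,1}(z)\|\to 0\). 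The bracketed factor is bounded in a neighbourhood of infinity (exactly as in the proof of \Cref{lemma:stieltjes_representation}, using flatness of \(\supop\) together with the fact that \((\T^{(\tau)}_{2,2}(M))^{-1}\to (\bbE Q-i\tau I_{d})^{-1}\)). Hence \(zM_{1,1}(z)\to -I_{n}\) as \(\Im[z]\to\infty\), which is equivalent to \(-\lim_{\eta\to\infty}i\eta M_{1,1}(i\eta)=I_{n}\).

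Third, I would invoke the matrix-valued Nevanlinna--Riesz--Herglotz representation theorem (\cite[Theorem 2.3(iii) and Theorem 5.4(iv)]{matrix_herglotz}), exactly as in the proof of \Cref{lemma:stieltjes_representation}, to conclude that there exists a matrix-valued measure \(\Omega_{1,1}\) on Borel subsets of \(\bbR\) with
\[
    M_{1,1}(z)=\int_{\bbR}\frac{\Omega_{1,1}(\d\lambda)}{\lambda-z},\qquad \int_{\bbR}\Omega_{1,1}(\d\lambda)=-\lim_{\eta\to\infty}i\eta M_{1,1}(i\eta)=I_{n}.
\]
There is no real obstacle here; the only subtlety worth noting is why we cannot push the argument further to obtain compact support for \(\Omega_{1,1}\), as we did in \Cref{lemma:stieltjes_representation}. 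The obstruction is the \(2\tau\) contribution in the bound of \Cref{lemma:large_z_img_bound}: when \(\tau>0\), \(\|\Im[M_{1,1}(z)]\|\) does not converge uniformly to \(0\) as \(\Im[z]\downarrow 0\) along \(\{\Re[z]\geq \kappa_{+}\}\), so we cannot apply \cite[Theorem 5.4(v)--(vi)]{matrix_herglotz} to deduce compact support. This is consistent with the weaker conclusion stated in the lemma, which does not assert compact support.
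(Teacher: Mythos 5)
Your proposal is correct and takes exactly the approach the paper has in mind when it says the result follows by a "similar argument" to \Cref{lemma:stieltjes_representation}: verify \(M_{1,1}\) is Herglotz, extract \(-\lim_{\eta\to\infty}i\eta M_{1,1}(i\eta)=I_n\) from the Schur-complement form of the RDEL together with \Cref{lemma:convergence_img}, and invoke the Nevanlinna--Riesz--Herglotz representation. Your closing observation correctly identifies why compact support is not asserted here: the \(\tau\)-proportional term in the bound of \Cref{lemma:large_z_img_bound} prevents \(\|\Im[M_{1,1}]\|\) from vanishing uniformly as \(\Im[z]\downarrow 0\) when \(\tau>0\), so the compact-support step of \Cref{lemma:stieltjes_representation} does not transfer.
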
 

\Cref{lemma:stieltjes_representation,lemma:stieltjes_representation_RDEL} are particularly significant because they allow us to treat the solution of the DEL as the limit of solutions to the RDEL as \(\tau\) approaches zero. The tightness of the family of measures induced by the Stieltjes representation of RDEL solutions plays a key role in this step.

\begin{corollary}\label{corollary:tight_measure}
    For every \(\tau\in \bbR_{> 0}\), let \( M^{(\tau)}\in \calM_{+}\) such that, for all \(z\in  \bbH\), \( M^{(\tau)}(z)\) solves the RDEL~\eqref{eq:RDEL} and denote by \(\Omega_{1,1}^{(\tau)}\) the positive semidefinite measure in the Stieltjes transform representation of \(M_{1,1}^{(\tau)}\). Then, for every \(v\in \bbC^{n}\) and \(\tau_{+}\in \bbR_{>0}\), the family of measures \(\{v^{\ast}\Omega_{1,1}^{(\tau)}v \st \tau\in [0,\tau_{+}]\}\) is tight.
\end{corollary}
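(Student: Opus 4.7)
The plan is to transport the decay estimate on $\|\Im[M_{1,1}^{(\tau)}(z)]\|$ provided by \Cref{lemma:large_z_img_bound} into a uniform tail bound on the measures $v^{\ast}\Omega_{1,1}^{(\tau)}v$ via Stieltjes inversion. The one preparatory step—which I expect to be the only delicate issue—is to verify that the constants $\kappa,c,\kappa_+,c_+$ produced by \Cref{lemma:large_z_norm_bound,lemma:large_z_img_bound} can be chosen uniformly in $\tau\in[0,\tau_+]$. This is the case because they depend on $\tau$ only through $\|(\bbE Q-i\tau I_d)^{-1}\|$, which is continuous in $\tau$ on $[0,\tau_+]$ (by non-singularity of $\bbE Q$) and hence bounded; call the resulting uniform constants $\bar\kappa,\bar c,\bar\kappa_+,\bar c_+$.

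Next, fix $v\in\bbC^{n}$, which I may assume satisfies $\|v\|=1$ by homogeneity of the claim, and set $\mu^{(\tau)}:=v^{\ast}\Omega_{1,1}^{(\tau)}v$. By \Cref{lemma:stieltjes_representation_RDEL} each $\mu^{(\tau)}$ is a Borel probability measure on $\bbR$, with Stieltjes transform $v^{\ast}M_{1,1}^{(\tau)}(z)v$. Writing $\Im[v^{\ast}M_{1,1}^{(\tau)}(x+i\eta)v]$ as a Poisson integral against $\mu^{(\tau)}$, Fubini and Fatou's lemma yield, for every $R>\bar\kappa_+$,
\[
\mu^{(\tau)}(\{|\lambda|>R\})\;\leq\;\liminf_{\eta\to 0^{+}}\frac{1}{\pi}\int_{|x|>R}\Im\!\left[v^{\ast}M_{1,1}^{(\tau)}(x+i\eta)v\right]\,\d x.
\]

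Applying the uniform form of \Cref{lemma:large_z_img_bound} together with $|x+i\eta|\geq|x|$, the integrand is dominated by $\bar c_+(|x|-\bar\kappa)^{-2}(\tau+\eta)$ on $\{|x|>\bar\kappa_+\}$. Integrating this bound over $\{|x|>R\}$ and passing to the limit $\eta\to 0^{+}$ give
\[
\mu^{(\tau)}(\{|\lambda|>R\})\;\leq\;\frac{2\bar c_{+}\,\tau_{+}}{\pi(R-\bar\kappa)},
\]
which vanishes as $R\to\infty$ independently of $\tau\in[0,\tau_+]$, i.e.\ tightness. The $\tau=0$ case is in fact already handled directly by \Cref{lemma:stieltjes_representation}, which guarantees that $\Omega_{1,1}^{(0)}$ is compactly supported. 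Once the uniformity of the constants is in hand, the remainder is a standard Stieltjes-inversion and Fatou estimate.
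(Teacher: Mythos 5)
Your argument follows the paper's route exactly: the decay estimate of \Cref{lemma:large_z_img_bound} is transferred into a uniform tail bound on $v^{\ast}\Omega_{1,1}^{(\tau)}v$ via Stieltjes inversion, yielding a bound proportional to $\tau$ and hence uniformly small over $\tau\in[0,\tau_{+}]$. You additionally make explicit why the constants $\kappa,c,\kappa_{+},c_{+}$ can be taken uniform in $\tau$ on $[0,\tau_{+}]$ (they depend on $\tau$ only through $\|(\bbE Q - i\tau I_{d})^{-1}\|$, which is continuous and bounded there) — a point the paper asserts without elaboration — and you derive the inversion estimate from the Poisson-kernel representation via Fubini and Fatou rather than citing~\cite{matrix_herglotz}; both are correct refinements rather than departures.
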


\begin{proof}
    Let \(\tau\in \bbR_{>0}\). By \cref{lemma:large_z_norm_bound,lemma:large_z_img_bound}, there exists \(\kappa,c,\kappa_{+}\in \bbR_{>0}\) such that \(\|\Im[M^{(\tau)}_{1,1}(z)]\|\leq c (|z|-\kappa)^{-2}(\tau+\Im[z])\) for every \(z\in \bbH\) with \(|z|\geq \kappa_{+}\). Then,
    \[
        \|\Im[M^{(\tau)}_{1,1}(\lambda + i\epsilon)]\| \leq c(\sqrt{\lambda^{2}+\epsilon^{2}}-\kappa)^{-2}(\tau + \epsilon) \leq c(\lambda-\kappa)^{-2}(\tau + \epsilon)
    \]
    for every \(\lambda > \kappa_{+}\) and \(\epsilon \in [0,1]\). Here, \(c\) is
    some constant independent of \(\lambda\) and \(\tau\). Hence, for every \(\lambda_{+}>\kappa_{+}\), according to the Stieltjes inversion formula for \(\Omega^{(\tau)}\) as stated in~\cite[Theorem 5.4(v) and Theorem 5.4(vi)]{matrix_herglotz},
    \begin{align*}
        \Omega^{(\tau)}_{1,1}\left((\lambda_{+},\infty)\right) & \preceq  \pi^{-1}\lim_{\epsilon\downarrow 0}\int_{\lambda_{+}}^{\infty}\Im[M^{(\tau)}_{1,1}(\lambda+i\epsilon)]\d\lambda
        \\    & \preceq \pi^{-1}\lim_{\epsilon\downarrow 0}\int_{\lambda_{+}}^{\infty}\|\Im[M^{(\tau)}_{1,1}(\lambda+i\epsilon)]\|  \d\lambda
        \\ & \preceq c\pi^{-1}\tau\int_{\lambda_{+}}^{\infty} (\lambda-\kappa)^{-2}   \d\lambda.
    \end{align*}
    Therefore, if \(\tau\) is bounded, we may pick \(\lambda_{+}>\kappa_{+}\) arbitrarily large to ensure that \(\int_{\lambda_{+}}^{\infty} (\lambda-\kappa)^{-2}  \d\lambda\) is arbitrarily small.
\end{proof}

\subsection{Power Series Representation}\label{sec:power_series}

As the set of admissible solutions \(\calM\) comprises analytic matrix-valued functions, any solution to equation~\eqref{eq:DEL} can be locally expressed as a power series. Utilizing the Stieltjes transform representation provided in \Cref{lemma:stieltjes_representation}, we can derive a solvable recurrence relation that determines the coefficients in such an expansion. This recurrence relation facilitates the systematic computation of the coefficients in the power series representation of the solution.

\begin{lemma}\label{lemma:power_series}
    Let \(M\in \calM\) such that, for all \(z\in  \bbH\), \( M(z)\) solves the DEL~\eqref{eq:DEL} and let \(\Omega\) be the positive semidefinite measure in \Cref{lemma:stieltjes_representation}. Then, there exists \(\lambda_{+}> \sup\{|\lambda|\st \lambda \in \supp(\Omega)\}\) such that
    \[
        M(z)=\sum_{j=0}^{\infty}z^{-j}M_{j} = \left(\bbE L-z\Lambda\right)^{-1}\sum_{j=0}^{\infty}\left(\sum_{k=0}^{\infty} z^{-k}\supop(M_{k})(\bbE L-z\Lambda)^{-1}\right)^{j}
    \]
    for every \(z\in \bbH\) with \(|z|\geq \lambda_{+}\). Here, \(M_{0}=M_{\infty}\) and \(M_{j}=-\int_{\bbR}\lambda^{j-1}\Omega(\d \lambda)\) for every \(j\in \bbN\).
\end{lemma}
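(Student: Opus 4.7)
The plan is to derive the two claimed representations independently and then pick $\lambda_+$ large enough that both series converge on $\{|z|\geq \lambda_+\}$. The Laurent expansion falls out of the Stieltjes representation of \Cref{lemma:stieltjes_representation}, while the Neumann series falls out of the DEL viewed as the self-consistent identity $M = (\bbE L - z\Lambda - \supop(M))^{-1}$.

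For the Laurent part, set $\lambda_0 := \sup\{|\lambda|\st \lambda\in \supp(\Omega)\}$, which is finite by compact support of $\Omega$. For $|z|>\lambda_0$, the geometric expansion $(\lambda-z)^{-1} = -\sum_{k\geq 0}\lambda^k z^{-k-1}$ converges uniformly in $\lambda\in \supp(\Omega)$, so term-by-term integration against the finite matrix-valued measure $\Omega$ yields $M(z) = M_{\infty}-\sum_{k\geq 0}z^{-k-1}\int_{\bbR}\lambda^k \Omega(\d\lambda) = \sum_{j\geq 0}z^{-j}M_j$ with $M_0 = M_{\infty}$ and $M_j = -\int_\bbR \lambda^{j-1}\Omega(\d\lambda)$ for $j\geq 1$.

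For the Neumann part, \Cref{lemma:invertible} and the DEL give $M = (\bbE L - z\Lambda - \supop(M))^{-1}$. A Schur-complement argument using non-singularity of $\bbE Q$ shows that $\bbE L - z\Lambda$ is itself invertible for $|z|$ large, with $\|(\bbE L - z\Lambda)^{-1}\|$ uniformly bounded. I would then factor $\bbE L - z\Lambda - \supop(M) = (I - \supop(M)(\bbE L - z\Lambda)^{-1})(\bbE L - z\Lambda)$ and invert; this is legitimate provided $\|\supop(M)(\bbE L - z\Lambda)^{-1}\|<1$, in which case the resulting series $(\bbE L - z\Lambda)^{-1}\sum_{j\geq 0}(\supop(M)(\bbE L - z\Lambda)^{-1})^j$ converges in norm to $M$. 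To verify the norm condition, exploit the complementary block structure: \Cref{lemma:large_z_norm_bound} gives $\|M_{1,1}\| = O(|z|^{-1})$, so the $(1,2)$, $(2,1)$, and $(2,2)$ blocks of $\supop(M)$ vanish in the large-$|z|$ limit (each depends on $M$ only through $M_{1,1}$), leaving the nontrivial part of $\supop(M_{\infty})$ in the $(1,1)$ block alone. On the other hand, block inversion places the $(1,1)$ block of $(\bbE L - z\Lambda)^{-1}$ at order $|z|^{-1}$, so multiplying block-wise yields $\|\supop(M)(\bbE L - z\Lambda)^{-1}\|\to 0$ as $|z|\to\infty$.

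Take $\lambda_+$ larger than both $\lambda_0$ and the threshold at which $\|\supop(M)(\bbE L - z\Lambda)^{-1}\|<1$. Absolute convergence of the Laurent series together with boundedness and linearity of $\supop$ justifies the interchange $\supop(M(z)) = \sum_k z^{-k}\supop(M_k)$, which substituted into the Neumann series produces the double-sum form in the statement. The main technical point is making $\|\supop(M)(\bbE L - z\Lambda)^{-1}\|$ strictly less than $1$ for $|z|$ large: a naive use of flatness and \Cref{corollary:norm_bound} yields only $s\|M\|\|(\bbE L - z\Lambda)^{-1}\| = O(1)$, so the block-structural cancellation described above is essential.
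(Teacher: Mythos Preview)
Your proposal is correct and follows essentially the same route as the paper: derive the Laurent expansion from the Stieltjes representation via a geometric series, then obtain the Neumann series from the DEL identity $M=(\bbE L - z\Lambda - \supop(M))^{-1}$ by checking $\|\supop(M)(\bbE L - z\Lambda)^{-1}\|\to 0$ through the block-structure observation that the second block column of $\supop(M)$ depends only on $M_{1,1}$ (which vanishes) while the first block column of $(\bbE L - z\Lambda)^{-1}$ is $O(|z|^{-1})$. Your explicit warning that a naive flatness-plus-norm bound only yields $O(1)$ is exactly the point the paper handles tersely by invoking that $M_\infty$ (the limit of $(\bbE L - z\Lambda)^{-1}$) is supported in the lower-right block.
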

\begin{proof}
    Since \(\Omega\) is compactly supported by \Cref{lemma:stieltjes_representation}, \(\sup\{|\lambda|\st \lambda \in \supp(\Omega)\}\) is finite. Let \(z\in \bbH\) with \(|z|>\sup\{|\lambda| \st \lambda\in \supp(\Omega)\}\) and write
    \[
        M(z) =  M_{\infty} +  \int_{ \bbR}\frac{ \Omega(\d \lambda)}{\lambda-z} = M_{\infty} -  z^{-1}\int_{\bbR}\frac{ \Omega(\d \lambda)}{1-\lambda/z}.
    \]
    We recognize \((1-\lambda/z)^{-1}\) as a geometric series and write \((1-\lambda/z)^{-1}=\sum_{j=0}^{\infty}\frac{\lambda^{j}}{z^{j}}\). By Fubini's theorem,
    \[
        \int_{\bbR}\frac{ \Omega(\d \lambda)}{1-\lambda/z} = \sum_{j=0}^{\infty}z^{-j}\int_{\bbR}\lambda^{j}\Omega(\d \lambda)
    \]
    which implies that
    \[
        M(z) =  M_{\infty}-\sum_{j=0}^{\infty}z^{-j-1}\int_{\bbR}\lambda^{j}\Omega(\d \lambda).
    \]
    On the other hand, by definition, \(M(z)\) solves~\eqref{eq:DEL}, and we may write \(M(z) = \F(M(z)) = ( \bbE L - \supop( M(z)) - z\Lambda)^{-1}\). Using \Cref{lemma:block_inversion},
    \begin{equation}\label{eq:res_expect}
        (\bbE L - z\Lambda)^{-1} =
        \begin{bmatrix}
            R                  & -R \bbE[B^{\top}]\bbE[Q]^{-1}                     \\
            -\bbE[Q]^{-1} \bbE [B] R & \bbE[Q]^{-1} + \bbE[Q]^{-1}\bbE[B]R\bbE[B^{\top}]\bbE[Q]^{-1}
        \end{bmatrix}
    \end{equation}
    with \(R=(\bbE [B^{\top}](\bbE Q)^{-1}\bbE [B]-\bbE[A]-zI_n)^{-1}\). Since
    \[
        \|R\|\leq \mathrm{dist}(z,\sigma\left(\bbE[A]-\bbE [B^{\top}]Q^{-1}\bbE [B]\right))^{-1},
    \]
    we obtain \((\bbE L - z\Lambda)^{-1} \xrightarrow[]{} M_{\infty}\) as \(|z|\to\infty\). Because \(M_{\infty}\) is non-zero only in its lower-right \(d\times d\) block, it follows from \Cref{lemma:large_z_norm_bound} and the flatness of the superoperator that \( \|\supop( M(z))(\bbE L - z\Lambda)^{-1}\|\to 0\) as \(|z|\to\infty\). Let \(\lambda_{+}> \max\{|\lambda|\st \lambda \in \supp(\Omega)\}\) such that \(\|\supop( M(z))(\bbE L - z\Lambda)^{-1}\| <1\) for all \(z\in \bbH\) with
    \(|z|\geq \lambda_{+}\). Then, \(I_{\ell} - \supop( M(z))(\bbE L -
    z\Lambda)^{-1}\) is non-singular with Neumann series
    \[
        \left(I_{\ell}-\supop( M(z))(\bbE L - z\Lambda)^{-1}\right)^{-1} = \sum_{j=0}^{\infty} \left(\supop( M(z))(\bbE L - z\Lambda)^{-1}\right)^{j}.
    \]
    In particular,
    \[
        \left( \bbE L - \supop( M(z)) - z\Lambda\right)^{-1} = (\bbE L - z\Lambda)^{-1}\sum_{j=0}^{\infty} \left(\supop( M(z))(\bbE L - z\Lambda)^{-1}\right)^{j}.
    \]
    We obtain the result by plugging the series expansion for \(M(z)\) and using linearity of the superoperator.
\end{proof}

\subsection{Existence and Uniqueness}\label{sec:existence_uniqueness}

We now establish the existence and uniqueness of a solution to the DEL~\eqref{eq:DEL}. As discussed above, we begin by proving the existence of a solution to the RDEL~\eqref{eq:RDEL} for every \(\tau \in \mathbb{R}_{>0}\), followed by employing a continuity argument to consider the vanishing regularization.

\subsubsection{Solution to the Regularized Dyson Equation}\label{sec:existence_RDEL}

For every \(\tau \in \mathbb{R}_{>0}\), the existence of a unique \(M \in \mathcal{M}_{+}\) satisfying~\eqref{eq:RDEL} for every \(z \in \mathbb{H}\) follows directly from~\cite{helton2007operatorvalued}. At a high level, the proof of the existence of a solution to the RDEL~\eqref{eq:RDEL} proceeds by demonstrating that the map \(M \mapsto \F^{(\tau)}(M)\) is strictly holomorphic on bounded domains of \(\mathcal{M}_{+}\). By the Earle-Hamilton fixed-point theorem, strict holomorphicity implies that the mapping is contractive with respect to the CRF-pseudometric. For further details on the CRF-pseudometric, we refer the reader to \Cref{subsec:CRF}. 

Define
\begin{equation}\label{eq:admissible_func_b}
    \calM_{b}:=\Hol(\bbH \cap b\bbD,\calA_{b}),\quad \text{ and }\quad   \calA_{b}:=\calA_{+}\cap \{W\in \bbC^{\ell\times \ell}\st \|W\|< b\}
\end{equation}
for every \(b>0\). Indeed, for every \(b\in \bbR_{>0}\), \(\calM_{b}\) is a domain in the Banach space of matrix-valued bounded holomorphic functions on \(\bbH\) with the canonical supremum norm. Also, \(\calA_{b}\) is a domain in the Banach space of complex symmetric \(\ell \times \ell\) matrices with the operator norm. Using the result above, we can show that \(\Ftau\) is indeed a strict holomorphic function on \(\calA_{b}\) for every \(\tau\in \bbR_{>0}\). The following lemma is a direct adaptation of~\cite[Proposition 3.2]{helton2007operatorvalued}.

\begin{lemma}\label{lemma:strict_holomorphic}
    Let \(z\in \bbH\), \(\tau,b\in \bbR_{>0}\) and define \(m_{b}=\|\bbE L\|+sb+|z|+\tau\). Then, for every \(W\in \calA_{b}\), \(\|\Ftau(W)\|\leq \tau^{-1}\) and \(\Im[\Ftau(W)]\succeq \tau  m_{b}^{-2}I_{\ell} \succ 0\). In particular, if \(b>\tau^{-1}\), then \(\Ftau\) maps \(\calM_{b}\) strictly
    into itself.
\end{lemma}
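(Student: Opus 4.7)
The plan is to assemble the lemma directly from the a priori estimates already established in \Cref{lemma:prior_bounds,lemma:holomorphicity}, together with a one-line bound on $\|(\bbE L-\supop(W)-z\Lambda-i\tau I_{\ell})\|$. The norm bound $\|\Ftau(W)\|\leq \tau^{-1}$ is not new: \Cref{lemma:prior_bounds} gives it for every $W\in\calA$, and since $\calA_{b}\subseteq\calA_{+}\subseteq\calA$, we may invoke it immediately on $\calA_{b}$.

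For the imaginary part bound, I would start from the lower bound
\[
\Im[\Ftau(W)] \succeq \tau\,\Ftau(W)(\Ftau(W))^{\ast},
\]
which is likewise proven in \Cref{lemma:prior_bounds} via the identity $\Im[X^{-1}]=-X^{-1}\Im[X]X^{-\ast}$ applied to $X=\bbE L-\supop(W)-z\Lambda-i\tau I_{\ell}$. To turn this into a scalar multiple of the identity, I would control $(\Ftau(W))^{-1}$ in operator norm. Triangle inequality and the boundedness hypothesis on $\supop$ (with constant $s$) give
\[
\|\bbE L-\supop(W)-z\Lambda-i\tau I_{\ell}\| \leq \|\bbE L\|+s\|W\|+|z|+\tau \leq \|\bbE L\|+sb+|z|+\tau = m_{b},
\]
using $\|W\|<b$. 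Hence the smallest singular value of $\Ftau(W)$ is at least $m_{b}^{-1}$, which yields $\Ftau(W)(\Ftau(W))^{\ast}\succeq m_{b}^{-2}I_{\ell}$ and therefore $\Im[\Ftau(W)]\succeq \tau m_{b}^{-2}I_{\ell}\succ 0$.

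For the final claim, assume $b>\tau^{-1}$. Combining the two preceding bounds, for every $W\in\calA_{b}$ and every $z\in\bbH\cap b\bbD$ we have $\|\Ftau(W)\|\leq \tau^{-1}<b$ and $\Im[\Ftau(W)]\succ 0$, so $\Ftau(W)\in\calA_{b}$. Holomorphicity in $z$ is preserved by \Cref{lemma:holomorphicity}, hence $\Ftau$ maps $\calM_{b}$ into $\calM_{b}$. For the word \emph{strictly} (in the Earle--Hamilton sense), I would verify that the image stays a positive distance from $\partial\calA_{b}$ uniformly in $z\in\bbH\cap b\bbD$: the upper bound $\|\Ftau(W)\|\leq \tau^{-1}$ is already independent of $z$ and strictly less than $b$, and since $|z|<b$ one has $m_{b}\leq \|\bbE L\|+sb+b+\tau$, so the lower bound $\Im[\Ftau(W)]\succeq \tau m_{b}^{-2}I_{\ell}$ is uniformly bounded below by a positive multiple of the identity. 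This uniform separation from the boundary is exactly what is needed to invoke the Earle--Hamilton fixed-point theorem downstream.

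There is no real obstacle in this lemma: it is a bookkeeping exercise that packages the imaginary-part identity from \Cref{lemma:prior_bounds} together with the elementary triangle-inequality bound on the perturbed ``linearization.'' The only subtlety worth flagging is to check that the bound $m_{b}$ indeed controls $z$ uniformly on $\bbH\cap b\bbD$, since this is what upgrades the pointwise lower bound on $\Im[\Ftau(W)]$ to the uniform statement required for the ``strictly into itself'' conclusion.
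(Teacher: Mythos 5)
Your proof is correct and follows essentially the same route as the paper: both reduce to the triangle-inequality bound $\|\bbE L-\supop(W)-z\Lambda-i\tau I_{\ell}\|\le m_{b}$ and then convert it into a lower bound on $\Ftau(W)(\Ftau(W))^{\ast}$. The only difference is cosmetic — you invoke the identity $\sigma_{\min}(\Ftau(W))=1/\|(\Ftau(W))^{-1}\|$, whereas the paper spells out the same fact via a short Cauchy--Schwarz computation — and your closing remark about making the lower bound on $\Im[\Ftau(W)]$ uniform over $z\in\bbH\cap b\bbD$ (by replacing $|z|$ with $b$ in $m_{b}$) is exactly the right point to check for the ``strictly into itself'' conclusion.
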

\begin{proof}
    Let \(W\in \calA_{b}\). By \Cref{lemma:prior_bounds}, \(\|\Ftau(W)\|\leq \tau^{-1}\) and
    \[
        \Im[\Ftau(W)]\succeq \tau \Ftau( W) [\Ftau( W)]^{\ast}.
    \]
    Let \(v\in \bbC^{\ell}\) such that \(\|v\|=1\). By Cauchy-Schwarz inequality,
    \[
        1=v^{\ast}(\Ftau( W))^{-1}\Ftau( W)v \leq \|\Ftau( W)v\|\|(\Ftau( W))^{-\ast}v\|
    \]
    which implies that \(\|(\Ftau( W))^{-1}\|^{-2}\leq
    \|(\Ftau( W))^{-\ast}v\|^{-2}\leq \|\Ftau( W)v\|^{2}\).
    Additionally,
    \[
        \|(\Ftau( W))^{-1}\| = \| \bbE L - \supop(W)- z\Lambda-i\tau I_{\ell}\| \leq m_{b}.
    \]
    Thus, \(\Im[\Ftau(W)]\succeq \tau m_{b}^{-2}I_{\ell}\).
\end{proof}

The existence of a unique solution to~\eqref{eq:RDEL} then follows directly from an application of the Earle-Hamilton fixed-point theorem~\cite{earl_hamilton}. Indeed, for every \(b\in\bbR_{>0}\) large enough, \(\Ftau\) has exactly one fixed point on \(\calM_{b}\). Since \(\calM_{+}=\bigcup_{b\in \bbR_{>0}}\calM_{b}\), we obtain the following result.
\begin{proposition}[{\cite[Theorem 2.1]{helton2007operatorvalued}}]\label{proposition:existenceuniquness_RDEL}
    There exists a unique solution \(M\in \calM_{+}\) such that \(M^{(\tau)}(z)\) solves~\eqref{eq:RDEL} for every \(\tau\in \bbR_{>0}\) and \(z\in \bbH\). Furthermore, for every \(W_{0}\in \calM_{+}\), the iterates \(W_{k+1}=\Ftau(W_{k})\) converge in norm to \(M^{(\tau)}\).
\end{proposition}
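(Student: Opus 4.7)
The plan is to apply the Earle–Hamilton fixed-point theorem on each domain $\calM_b$ defined in~\eqref{eq:admissible_func_b} and then patch the fixed points together to obtain the result on $\calM_+$.

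Fix $\tau \in \bbR_{>0}$ and any $b > \tau^{-1}$. First I would observe that $\calM_b$ is a bounded domain in the Banach space of bounded holomorphic maps $\bbH \to \bbC^{\ell\times \ell}$ equipped with the supremum norm, and similarly $\calA_b$ is a bounded domain in the finite-dimensional space of $\ell\times \ell$ complex matrices (endowed with the operator norm). By \Cref{lemma:holomorphicity}, the map $\Ftau$ is holomorphic on $\calM_+$, and hence on $\calM_b$. By \Cref{lemma:strict_holomorphic}, its image lies at uniformly positive distance from the boundary of $\calA_b$: indeed $\|\Ftau(W)\| \leq \tau^{-1} < b$ pins the norm away from the outer boundary, while the lower bound $\Im[\Ftau(W)] \succeq \tau m_b^{-2} I_\ell$ separates the image from the part of the boundary where $\Im[\,\cdot\,]$ degenerates. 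Consequently $\Ftau(\calM_b)$ lies strictly inside $\calM_b$ in the sense required by Earle–Hamilton.

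Next, I would invoke the Earle–Hamilton fixed-point theorem~\cite{earl_hamilton}: a holomorphic self-map of a bounded domain in a Banach space whose image is strictly contained in the domain admits a unique fixed point, and the Picard iterates starting from any initial point converge to it in norm. Applied to $\Ftau$ on $\calM_b$, this yields a unique $M^{(\tau,b)} \in \calM_b$ with $\Ftau(M^{(\tau,b)}) = M^{(\tau,b)}$, and convergence of the iterates for any $W_0 \in \calM_b$.

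To pass from $\calM_b$ to $\calM_+$, I would argue as follows. Suppose $W \in \calM_+$ is any fixed point of $\Ftau$. By \Cref{lemma:prior_bounds} we have $\|W(z)\| = \|\Ftau(W)(z)\| \leq \tau^{-1}$ pointwise in $z \in \bbH$, so $W \in \calM_b$ for every $b > \tau^{-1}$. By uniqueness within $\calM_b$, $W = M^{(\tau,b)}$, and since this holds for all $b > \tau^{-1}$, the fixed point is independent of $b$; call it $M^{(\tau)}$. This establishes existence and uniqueness of a solution in $\calM_+$. For the convergence of iterates from an arbitrary $W_0 \in \calM_+$, note that $W_1 = \Ftau(W_0)$ already satisfies $\|W_1\| \leq \tau^{-1}$ by \Cref{lemma:prior_bounds}, so $W_1 \in \calM_b$ for any $b > \tau^{-1}$, and convergence of the iterates $W_k$ to $M^{(\tau)}$ then follows from the Earle–Hamilton conclusion applied on $\calM_b$.

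The main conceptual obstacle is verifying the strict self-mapping hypothesis of Earle–Hamilton, which is precisely the content of \Cref{lemma:strict_holomorphic}; having that lemma in hand reduces the proof to an assembly of existing pieces. A secondary subtlety is that the natural domain $\calM_+$ is unbounded, so Earle–Hamilton cannot be applied directly on $\calM_+$—this is why the exhaustion $\calM_+ = \bigcup_{b > 0} \calM_b$ together with the a priori bound $\|\Ftau(W)\| \leq \tau^{-1}$ is essential to transfer uniqueness and convergence from each $\calM_b$ to the full admissible set.
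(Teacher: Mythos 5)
Your proposal is correct and takes essentially the same approach as the paper, which also applies Earle--Hamilton on each bounded domain $\calM_b$ (via \Cref{lemma:strict_holomorphic}) and then appeals to the exhaustion $\calM_+ = \bigcup_b \calM_b$ together with the a priori bound $\|\Ftau(W)\|\leq\tau^{-1}$; you simply spell out the patching step that the paper leaves implicit. One small imprecision: $\calM_b = \Hol(\bbH\cap b\bbD,\calA_b)$ restricts the \emph{spectral} domain to $\bbH\cap b\bbD$ as well (see~\eqref{eq:admissible_func_b}), which is what makes $m_b$ uniformly bounded in $z$ and hence the lower bound $\Im[\Ftau(W)]\succeq \tau m_b^{-2}I_\ell$ uniform; your phrasing "holomorphic maps $\bbH\to\bbC^{\ell\times\ell}$" elides this, and consequently the different $M^{(\tau,b)}$ live on different spectral domains and must be glued via the consistency of restrictions rather than being literally equal as functions.
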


In what follows, we will denote the unique solution of the RDEL with \(\tau \in \bbR_{>0}\) by \(M^{(\tau)}\). While not explicitly stated, the analyticity of \(M^{(\tau)}(z)\) in \(\tau\) can be inferred using an implicit function theorem, as demonstrated in~\cite[Theorem 2.14]{nemish_local_2020}. We state this result in the following lemma but omit the proof, directing the reader to the aforementioned reference for further details.

\begin{lemma}\label{lemma:analyticity_tau}
    For every \(z\in \bbH\), the map \(\tau \in \bbH \mapsto M^{(\tau)}(z)\) is analytic.
\end{lemma}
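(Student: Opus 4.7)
The plan is to apply the holomorphic implicit function theorem to the defining fixed-point equation. Fix $z \in \bbH$ and consider the map
\[
G: \bbC^{\ell \times \ell} \times \bbH \to \bbC^{\ell \times \ell}, \quad (M, \tau) \mapsto (\bbE L - \supop(M) - z\Lambda - i\tau I_{\ell})M - I_{\ell},
\]
which is jointly holomorphic in $(M,\tau)$: it is quadratic in $M$ (via the linearity of $\supop$) and affine in $\tau$. By \Cref{proposition:existenceuniquness_RDEL}, $G(M^{(\tau)},\tau)=0$ for every $\tau\in\bbR_{>0}$.

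Next, I would compute the partial Fréchet derivative of $G$ with respect to $M$ at a solution,
\[
\partial_M G(M^{(\tau)},\tau)[H] = (\bbE L - \supop(M^{(\tau)}) - z\Lambda - i\tau I_{\ell})H - \supop(H)M^{(\tau)}.
\]
Substituting the identity $\bbE L - \supop(M^{(\tau)}) - z\Lambda - i\tau I_{\ell} = (M^{(\tau)})^{-1}$ obtained from the RDEL gives
\[
\partial_M G(M^{(\tau)},\tau)[H] = (M^{(\tau)})^{-1}\bigl(H - M^{(\tau)}\supop(H)M^{(\tau)}\bigr) = (M^{(\tau)})^{-1}\stab(H),
\]
where $\stab$ denotes the stability operator at $M^{(\tau)}$. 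Since $M^{(\tau)}$ is invertible by \Cref{lemma:invertible}, invertibility of $\partial_M G(M^{(\tau)},\tau)$ is equivalent to invertibility of $\stab$ on $\bbC^{\ell\times\ell}$.

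The hard part will be showing that $\stab$ is invertible for every admissible $\tau$. For $\tau > \sqrt{s}$, the bound $\|M^{(\tau)}\|\leq \tau^{-1}$ from \Cref{lemma:prior_bounds} together with the flatness-type bound $\|\supop(W)\|\leq s\|W\|$ yield that the linear operator $H \mapsto M^{(\tau)}\supop(H)M^{(\tau)}$ has operator norm at most $s\tau^{-2} < 1$, so $\stab$ is invertible via a Neumann series. For smaller $\tau$, the argument relies on the strict positivity $\Im[M^{(\tau)}]\succ 0$ (guaranteed by the regularization through \Cref{lemma:prior_bounds}) combined with the symmetric positivity-preserving nature of $\supop$: a putative nonzero $H\in\ker\stab$ satisfies $H = M^{(\tau)}\supop(H)M^{(\tau)}$, and this relation can be shown to contradict the strict positivity of $\Im[M^{(\tau)}]$ by testing against quadratic forms constructed from $\Im[M^{(\tau)}]$, following the Stieltjes-based argument of \cite[Theorem 2.14]{nemish_local_2020}.

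Once invertibility of $\stab$ is established, the holomorphic implicit function theorem supplies a unique local analytic branch of solutions to $G(M,\tau) = 0$ in a complex neighborhood of each $\tau_0$. Patching these branches using the uniqueness in \Cref{proposition:existenceuniquness_RDEL} yields an analytic extension of $\tau\mapsto M^{(\tau)}(z)$, completing the proof of the claimed analyticity.
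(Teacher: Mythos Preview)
Your approach is essentially the same as the paper's: the paper omits the proof and simply refers to the implicit function theorem argument of \cite[Theorem 2.14]{nemish_local_2020}, and you have fleshed out exactly that strategy---setting up the holomorphic map $G$, computing $\partial_M G$ in terms of the stability operator, and invoking the cited reference for the invertibility of $\stab$ when the Neumann-series bound is unavailable. Your writeup is in fact more detailed than what the paper provides; the only part you leave as a sketch (invertibility of $\stab$ for small $\tau$) is precisely what both you and the paper outsource to \cite{nemish_local_2020}.
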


\subsubsection{Solution to the Dyson Equation for Linearizations}\label{sec:existence_DEL}

We now establish the existence and uniqueness of a solution to the DEL~\eqref{eq:DEL}.

\begin{lemma}[Existence and Uniqueness]\label{lemma:existenceuniquness}
    There exists a unique matrix-valued function \(M\in \calM\) such that \(M(z)\) solves the DEL~\eqref{eq:DEL} for every \(z\in \bbH\).
\end{lemma}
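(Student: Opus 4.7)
The plan is to construct a solution by taking the $\tau\downarrow 0$ limit of the family $\{M^{(\tau)}\}_{\tau>0}$ provided by \Cref{proposition:existenceuniquness_RDEL}, and then obtain uniqueness by exploiting the power series representation of \Cref{lemma:power_series}, whose coefficients are determined recursively by $\bbE L$ and $\supop$ alone. A key structural observation to exploit throughout is that $\supop(M)$ depends only on the block $M_{1,1}$, so once a candidate limit for the upper-left block is in hand, the remaining blocks are forced by the block-Schur formulas~\eqref{eqs:M_blocks}.

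For existence, I would first argue that the family $\{M_{1,1}^{(\tau)}\}_{\tau\in(0,1]}$ is a normal family on $\bbH$: by \Cref{lemma:prior_bounds} we have $\|M_{1,1}^{(\tau)}(z)\|\le (\Im z)^{-1}$, so Montel's theorem applies on matrix-valued holomorphic functions. Pick a sequence $\tau_k\downarrow 0$ with $M_{1,1}^{(\tau_k)}\to \hat{M}_{1,1}$ locally uniformly on $\bbH$. By \Cref{lemma:stieltjes_representation_RDEL}, each $M_{1,1}^{(\tau_k)}$ is the Stieltjes transform of a matrix-valued probability measure $\Omega_{1,1}^{(\tau_k)}$, and \Cref{corollary:tight_measure} gives tightness of this family. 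A Helly-type extraction then yields a limiting measure $\hat{\Omega}_{1,1}$ of total mass $I_n$ whose Stieltjes transform coincides with $\hat{M}_{1,1}$; in particular $\Im \hat{M}_{1,1}(z)\succ 0$ for every $z\in\bbH$. This is the crucial step ensuring the limit lands in $\calA$ rather than merely $\{W:\Im W\succeq 0\}$.

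Next, I would construct the remaining blocks of $\hat{M}$ using~\eqref{eqs:M_blocks}. Because $\supop(\cdot)$ depends only on the $(1,1)$-block, the operators $\T_{1,1}(\hat{M})$, $\T_{1,2}(\hat{M})$, $\T_{2,1}(\hat{M})$, $\T_{2,2}(\hat{M})$ are entirely determined by $\hat{M}_{1,1}$. Invertibility of $\T_{2,2}(\hat{M})=\bbE Q-\supop_{2,2}(\hat{M})$ follows from the argument of \Cref{lemma:invertible} applied to $\hat{M}$, since $\Im\hat{M}_{1,1}\succ 0$ and $\bbE Q$ is non-singular. Defining $\hat{M}_{1,2},\hat{M}_{2,1},\hat{M}_{2,2}$ through~\eqref{eq:M12_schur}--\eqref{eq:M22_schur_v2}, one checks by passing to the limit in the analogous block decomposition of~\eqref{eq:RDEL} (using continuity of inversion at the non-singular limit and locally uniform convergence of $M_{1,1}^{(\tau_k)}$) that $\hat{M}\in\calM$ and that $\hat{M}$ solves the DEL~\eqref{eq:DEL}.

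For uniqueness, suppose $M,M'\in\calM$ both solve~\eqref{eq:DEL}. By \Cref{lemma:stieltjes_representation}, both admit Stieltjes representations with compactly supported matrix measures, so both extend analytically to $\{|z|>R\}$ for some $R$. By \Cref{lemma:power_series}, each is given by the same expansion
\[
M(z)=\sum_{j=0}^{\infty}z^{-j}M_j=(\bbE L-z\Lambda)^{-1}\sum_{j=0}^{\infty}\Bigl(\sum_{k=0}^{\infty}z^{-k}\supop(M_k)(\bbE L-z\Lambda)^{-1}\Bigr)^{j},
\]
valid on $\{|z|>\lambda_+\}$ for some $\lambda_+$. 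Equating coefficients of $z^{-j}$ and using linearity of $\supop$, one reads off that $M_j$ is determined by $M_0,\ldots,M_{j-1}$, $\bbE L$, and $\supop$, via a recursion whose first term $M_0=M_\infty$ is fixed by \Cref{lemma:convergence_img}. Thus the coefficients of $M$ and $M'$ agree at every order, so $M\equiv M'$ on a punctured neighborhood of infinity in $\bbH$, and analyticity on the connected domain $\bbH$ forces $M=M'$ everywhere.

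The main obstacle I anticipate is the passage to the limit in the nonlinear fixed-point equation while preserving the strict positivity $\Im\hat{M}_{1,1}\succ 0$; this is exactly what the Stieltjes-transform representation of the $(1,1)$-block together with tightness is engineered to deliver. A secondary subtlety is verifying the limiting block formulas: one must confirm $\T_{2,2}(\hat{M})$ is invertible and that the matrix inversion in~\eqref{eqs:M_blocks} commutes with the $\tau\to 0$ limit, which is where the argument of \Cref{lemma:invertible} and the non-singularity of $\bbE Q$ do the work.
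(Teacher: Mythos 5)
Your existence construction is essentially the paper's: take a $\tau\downarrow 0$ limit of $M_{1,1}^{(\tau)}$ by Montel, preserve strict positivity of the imaginary part by tightness of the Stieltjes measures, and reconstruct the remaining blocks from $\hat M_{1,1}$ via the block-Schur formulas; your shortcut to $\Im\hat M_{1,1}\succ 0$ through total mass $I_n$ is a tidy variant of the paper's compact-support lower bound. The uniqueness half, however, has a genuine gap.

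When you equate coefficients of $z^{-(k+1)}$ in the expansion from \Cref{lemma:power_series}, writing $(\bbE L-z\Lambda)^{-1}=\sum_{j\ge 0}z^{-j}C_j$, what you obtain is an \emph{implicit} relation
\[
M_{k+1}=f(M_0,\dots,M_k;C_0,C_1,\dots)+C_0\,\supop(M_{k+1})\,C_0,
\]
not an explicit formula for $M_{k+1}$ in terms of lower-order data. ``Linearity of $\supop$'' does not by itself make this solvable: for a generic linear $T$, the equation $X=b+T(X)$ may have no solution or infinitely many, and you have not shown that $I-C_0\supop(\cdot)C_0$ is invertible. What actually closes the recursion is the block structure that the paper exploits: $C_0=M_\infty$ vanishes outside its lower-right $d\times d$ block, so $C_0\supop(X)C_0$ has only its $(2,2)$ block nonzero, equal to $M_\star\supop_{2,2}(X)M_\star$, and by~\eqref{eq:supop} the map $\supop_{2,2}$ depends only on $X_{1,1}$. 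Hence the $(1,1)$, $(1,2)$, and $(2,1)$ blocks of $M_{k+1}$ are read off directly from the corresponding blocks of the equation, and the $(2,2)$ block is then determined because $\supop_{2,2}(M_{k+1})$ is already known. Note also that your stated ``key structural observation'' --- that all of $\supop(M)$ depends only on $M_{1,1}$ --- is not true of the $(1,1)$ block of $\supop$ in general (in the random-features application $\supop_{1,1}(M)=\tr(M_{2,2})K_{AA^{T}}$); what is true, and what the uniqueness induction actually uses, is that the $(1,2)$, $(2,1)$, and $(2,2)$ blocks depend only on $M_{1,1}$.
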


\begin{proof}
    For every \(k\in \bbN\), let \(M^{(k^{-1})}\) be the unique solution to the RDEL and write
    \[
        M_{1,1}^{(k^{-1})}(z)= \int_{ \bbR}\frac{ \Omega_{1,1}^{(k^{-1})}(\d \lambda)}{\lambda-z}
    \]
    the Stieltjes transform representation guaranteed by
    \Cref{lemma:stieltjes_representation_RDEL}. Additionally, let \(\{v_{j}\st j\in\bbN\}\subseteq \bbC^{n}\) be a countable dense subset of the ball of \(n\)-dimensional complex unit vectors.

    By \Cref{corollary:tight_measure}, the family of measures \(\{v_{1}^{\ast}\Omega_{1,1}^{(k^{-1})}v_{1}\st k\in \bbN\}\) is tight. Consequently, by Prokhorov's theorem, there exists a probability measure \(\omega_{1}\) and a subsequence \(\{\tau_{1,k}\st k\in \bbN\}\subseteq \{k^{-1}\st k\in \bbN\}\) such that \(v_{1}^{\ast}\Omega_{1,1}^{(\tau_{1,k})}v_{1}\) converges weakly to \(\omega_{1}\) as \(k\) approaches infinity.

    We now proceed inductively. Assume that there exists \(m\in \bbN\) and a collection of compactly supported measures \(\{\omega_{j}\st 1\leq j\leq m \}\) such that \(v_{j}^{\ast}\Omega_{1,1}^{(\tau_{m,k})}v_{j}\) converges weakly to \(\omega_{j}\) for all \(1\leq j\leq m\) as \(k\) approaches infinity. By \Cref{corollary:tight_measure} and Prokhorov's theorem, there exists a probability measure \(\omega_{m+1}\) and a subsequence \(\{\tau_{m+1,k}\st k\in\bbN\}\subseteq \{\tau_{m,k}\st k\in \bbN\}\) such that \(v_{m+1}^{\ast}\Omega_{1,1}^{(\tau_{m+1,k})}v_{m+1}\) converges weakly to \(\omega_{m+1}\) as \(k\) approaches infinity. Also, by construction of the subsequence, \(v_{j}^{\ast}\Omega_{1,1}^{(\tau_{m+1,k})}v_{j}\) converges weakly to \(\omega_{j}\) for all \(1\leq j\leq m+1\) as \(k\) approaches infinity.

    Let \(\tau_{k}=\tau_{k,k}\) for all \(k\in \bbN\). By construction, \(v_{j}^{\ast}\Omega^{(\tau_{k})}v_{j}\) converges weakly to a probability measure \(\omega_{j}\) for every \(j\in \bbN\) as \(k\to\infty\). Furthermore, by \Cref{lemma:prior_bounds}, \(\{M_{1,1}^{(\tau_{k})}\st k\in \bbN\}\) is a locally uniformly bounded sequence of analytic functions. Hence, Montel's theorem guarantees the existence of a subsequence of \(\{\tau_{k}\st k\in \bbN\}\), which we will assume WLOG to be \(\{\tau_{k}\st k\in \bbN\}\) for notational convenience, such that \(M_{1,1}^{(\tau_{k})}\) converges to an analytic function \(M_{1,1}\). By the proof of \Cref{corollary:tight_measure}, there exists \(\kappa_{+}\in \bbR_{>0}\) and a constant \(c\in \bbR_{>0}\) such that
    \[
        \int_{\lambda_{+}}^{\infty}\omega_{j}(\d\lambda)  = \lim_{k\to\infty}\int_{\lambda_{+}}^{\infty}v^{\ast}_{j}\Omega^{(\tau_{k})}_{1,1}(\d\lambda) v_{j} \leq c\lim_{k\to\infty} \tau_{k} \int_{\lambda_{+}}^{\infty} (\lambda-\kappa)^{-2} \d\lambda = 0
    \]
    for every \(\lambda_{+} \geq \kappa_{+}\) and \(j\in\bbN\). By \Cref{lemma:stieltjes_representation_RDEL},
    \[
        v_{j}^{\ast}\Im[M_{1,1}]v_{j} = \lim_{k\to\infty}v_{j}^{\ast}\Im[M^{(\tau_{k})}_{1,1}]v_{j}  = \Im[z]\int_{\bbR}\frac{\omega_{j}(\d\lambda)}{|\lambda-z|^{2}}.
    \]
    Since \(\omega_{j}\) is a probability measure,
    \[
        \int_{\bbR}\frac{\omega_{j}(\d \lambda)}{|\lambda-z|^{2}} = \int_{[-\kappa_{+},\kappa_{+}]}\frac{\omega_{j}(\d \lambda)}{|\lambda-z|^{2}}  \geq \left(\max_{\lambda\in [-\kappa_{+},\kappa_{+}]}|\lambda-z|\right)^{-2}
    \]
    which implies that \( v_{j}^{\ast}\Im[M_{1,1}]v_{j} \geq \Im[z]\left(\max_{\lambda\in [-\kappa_{+},\kappa_{+}]}|\lambda-z|\right)^{-2}\) for every \(j\in \bbN\). Fix \(z\in \bbH\), \(\epsilon = 3^{-1}(\Im[z])^{2}\left(\max_{\lambda\in
        [-\kappa_{+},\kappa_{+}]}|\lambda-z|\right)^{-2}\in \bbR_{>0}\). Let \(v\in
    \bbC^{n}\) be any unit vector and let \(j\in \bbN\) such that
    \(\|v-v_{j}\|\leq \epsilon\). Then,
    \[
        v^{\ast}\Im[M_{1,1}]v \geq v^{\ast}\Im[M_{1,1}]v - 2\|v_{j}-v\|\|M_{1,1}\|\|v\|  \geq \frac{\epsilon}{3\Im[z]} > 0.
    \]
    In particular, \(\Im[M_{1,1}(z)]\succ 0\) for all \(z\in \bbH\).

    Define \(M_{1,2}, M_{2,1}\) and \(M_{2,2}\) as functions of \(M_{1,1}\) using~\eqref{eq:M12_schur},~\eqref{eq:M21_schur} and~\eqref{eq:M22_schur_v2} respectively and let \(M\) be the block matrix with \(j,k\) block given by \(M_{j,k}\) for all \((j,k)\in \{1,2\}^{2}\). It follows from \Cref{lemma:invertible}, that \( \bbE Q-\supop_{2,2}(M)\) is non-singular and that \(M\) is well-defined. By construction, it is clear that \(M\in \calM\) and that \(M(z)\) solves~\eqref{eq:DEL} for all \(z\in \bbH\).

    Uniqueness of the solution follows from analycity and the power series representation in \Cref{lemma:power_series}. Let \(\lambda_{+}\in\bbR_{>0}\) such that
    \[
        M(z)=\sum_{j=0}^{\infty}z^{-j}M_{j} = \left(\bbE L-z\Lambda\right)^{-1}\sum_{j=0}^{\infty}\left(\sum_{k=0}^{\infty} z^{-k}\supop(M_{k})(\bbE L-z\Lambda)^{-1}\right)^{j}
    \]
    for every \(z\in \bbH\) with \(|z|\geq \lambda_{+}\).

    Since resolvent of Hermitian matrices are analytic when the spectral parameter is away from the support, is follows from the decomposition in~\eqref{eq:res_expect} that \((\bbE L - z\Lambda)^{-1}\) is analytic. Write \((\bbE L - z\Lambda)^{-1} = \sum_{j=0}^{\infty}z^{-j}C_{j}\) for some complex matrices \(\{C_{j}\st j\in \bbN_{\geq 0}\}\subseteq \bbC^{\ell\times\ell}\). Plugging this in the power series expansion of \(M\) and gathering coefficients of \(z^{-1}\), we get that \(M_{1} = C_{1}+ C_{0}\supop(M_{0})C_{1}+C_{0}\supop(M_{1})C_{0}\). We computed in \Cref{lemma:power_series} that \((\bbE L - z\Lambda)^{-1}\to M_{\infty}\) as \(|z|\to \infty\) and similarly for \(M(z)\). In other words, \(C_{0}=M_{0}=M_{\infty}\). Looking at the structure of the superoperator, \(\supop_{2,2}(M_{0})=0\), which gives us \(C_{0}\supop(M_{0})=0\). In particular, \(M_{1} = C_{1}+ C_{0}\supop(M_{0})C_{1}\) is expressible solely in terms of \(C_{0}\) and \(C_{1}\).

    This proves the base case. Let \(k\in \bbN\) and assume that \(\{M_{j}\st j\in \{0,1,\ldots, k\}\}\) are fully determined by \(\{C_{j}\st j\in \bbN_{\geq 0}\}\). Gathering the coefficients for \(z^{-(k+1)}\) in the power series expansion, we get that
    \[
        M_{k+1} = f(M_{0},M_{1},\ldots, M_{k})+C_{0}\supop(M_{k+1})C_{0}
    \]
    for some analytic function \(f\). By induction hypothesis, \(f(M_{0},M_{1},\ldots, M_{k})\) may be expressed as an analytic function of \(\{C_{j}\st j\in \bbN_{\geq 0}\}\). Furthermore, since \(C_{0}=M_{\infty}\) is \(0\) everywhere
    outside its lower \(d\times d\) block,
    \[
        C_{0}\supop(M_{k+1})C_{0} =
        \begin{bmatrix}
            0_{n\times n} & 0_{n\times d}                       \\
            0_{d\times n} & M_{\star} \supop_{2,2}(M_{k+1})M_{\star}.
        \end{bmatrix}
    \]
    Therefore, extracting the upper-left \(n\times n\) block, we obtain that the upper-left \(n\times n\) block along with both off-diagonal blocks of \(M_{k+1}\) are determined by the coefficient matrices \(\{C_{j}\st j\in\bbN_{\geq 0}\}\). Since \( \supop_{2,2}(M)\) does not depend on the lower-right block of \(M\), we may also determine the lower-right block of \(M_{k+1}\). Inducting, we get that any two solution to~\eqref{eq:DEL} must be equal for all \(z\in \bbH\) with \(|z|>\lambda_{+}\) for some \(\lambda_{+}\in \bbR_{>0}\). By analytic continuation, it follows that any two solution must be equal for all \(z\in \bbH\).
\end{proof}

The rationale behind our choices regarding the settings is now quite evident. By excluding \(\tilde{\supop}\) from the superoperator, we gain the advantage that each block in the block decomposition of the DEL can be determined by the upper-left \(n\times n\) block. This upper-left block exhibits favorable properties, including an a priori norm bound due to the position of the spectral parameter. By leveraging these properties, we establish the existence of a solution for the upper-left block of~\eqref{eq:DEL}. Subsequently, we utilize the existence of this sub-solution to construct the remaining part of the solution. However, our selection of superoperator also imposes implicit restrictions on the distribution of the entries of \(L\).

In contrast to the assurance provided by \Cref{proposition:existenceuniquness_RDEL}, it is crucial to acknowledge that \Cref{lemma:existenceuniquness} does not generally guarantee pointwise convergence for the fixed-point iteration \(M_{k+1} = \F(M_{k})\) with an initial condition \(M_{0}\in \calM\) to the solution of the Dyson equation.

\section{Proof of the Global Anisotropic Law}\label{sec:proof_global_anisotropic}

This section is dedicated to proving~\Cref{theorem:convergence}. The proof is divided into several parts, as outlined in~\eqref{eqs:comparisons} following the theorem's statement.

\subsection{Regularization}\label{sec:regularization}

It will be easier to work with the regularized expected pseudo-resolvent \((L-z\Lambda -i\tau I_{\ell})^{-1}\) instead of the pseudo-resolvent \((L-z\Lambda)^{-1}\). The next lemma provides a bound on the difference between the two.

\begin{lemma}\label{lemma:Ftau_vs_F}
    For every \(\tau\in \bbR_{\geq 0}\) and \(z\in \bbH\), \(\|(L-z\Lambda -i\tau I_{\ell})^{-1} - (L-z\Lambda)^{-1}\| \leq \tau \|(L-z\Lambda)^{-1}\|^{2}\).
\end{lemma}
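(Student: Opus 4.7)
\medskip

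\noindent\textbf{Proof plan.} The proof is a direct application of the second resolvent identity together with a singular-value comparison. The plan is to set $T:=L-z\Lambda$ and write, via the identity $X^{-1}-Y^{-1}=X^{-1}(Y-X)Y^{-1}$ with $X=T-i\tau I_{\ell}$ and $Y=T$,
\[
    (L-z\Lambda-i\tau I_{\ell})^{-1}-(L-z\Lambda)^{-1}=(T-i\tau I_{\ell})^{-1}(i\tau I_{\ell})T^{-1}.
\]
Taking spectral norms and using submultiplicativity gives
\[
    \|(L-z\Lambda-i\tau I_{\ell})^{-1}-(L-z\Lambda)^{-1}\|\leq \tau\,\|(T-i\tau I_{\ell})^{-1}\|\,\|T^{-1}\|,
\]
so the claim reduces to showing the \emph{monotonicity bound} $\|(T-i\tau I_{\ell})^{-1}\|\leq \|T^{-1}\|$, i.e.\ that adding $-i\tau I_{\ell}$ to $T$ cannot decrease its smallest singular value.

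For the monotonicity step, I would fix a unit vector $v\in\bbC^{\ell}$ and expand
\[
    \|(T-i\tau I_{\ell})v\|^{2}=\|Tv\|^{2}+\tau^{2}-2\tau\,\Im\langle Tv,v\rangle.
\]
Because $L$ is real self-adjoint, $\Im[T]=(T-T^{\ast})/(2i)=-\Im[z]\Lambda\preceq 0$, so $\Im\langle Tv,v\rangle=\langle \Im[T]v,v\rangle\leq 0$ (this is where the sign of $\Im[z]>0$ and the positivity of $\Lambda$ enter). Hence the cross-term is nonnegative and $\|(T-i\tau I_{\ell})v\|^{2}\geq \|Tv\|^{2}$ for every unit $v$. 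This forces the smallest singular value of $T-i\tau I_{\ell}$ to be at least that of $T$, yielding $\|(T-i\tau I_{\ell})^{-1}\|\leq \|T^{-1}\|$ and completing the argument. Invertibility of both matrices for $z\in\bbH$ has already been established in the footnote following~\eqref{eq:admissible_func}, and the case $\tau=0$ is trivial.

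\medskip

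\noindent\textbf{Where the difficulty lies.} There is essentially no obstacle here; the only subtle point is that $\Im[T]=-\Im[z]\Lambda$ is merely negative semidefinite (not definite), so one cannot argue directly from $\|T^{-1}\|\leq (\Im[z])^{-1}$ as in the Hermitian resolvent case. The singular-value expansion above sidesteps this, since the cross-term being nonnegative is all that is required; the $\tau^{2}$ in the expansion is discarded, which is why the bound loses a factor relative to what one might naively hope for via a Neumann-series argument.
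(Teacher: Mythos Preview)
Your proof is correct and is essentially identical to the paper's own argument: both apply the resolvent identity to reduce to the bound $\|(T-i\tau I_\ell)^{-1}\|\leq\|T^{-1}\|$, and both establish the latter by expanding $\|(T-i\tau I_\ell)v\|^2$ and using $\Im[T]=-\Im[z]\Lambda\preceq 0$ to control the cross-term. The only cosmetic difference is that the paper writes the expansion in terms of $X=\Re[T]$, $Y=\Im[T]$ and phrases the conclusion via the Loewner ordering on $(T-i\tau I_\ell)^{\ast}(T-i\tau I_\ell)\succeq T^{\ast}T$, which is equivalent to your singular-value formulation.
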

\begin{proof}
    By \Cref{lemma:res_trick}, \(\|(L-z\Lambda -i\tau I_{\ell})^{-1} - (L-z\Lambda)^{-1}\| \leq \tau \|(L-z\Lambda -i\tau I_{\ell})^{-1}\|\|(L-z\Lambda)^{-1}\|\).
    Let \(v\in \bbC^{\ell}\) be arbitrary and decompose \(L - z\Lambda-i\tau I_{\ell} = X+iY-i\tau I_{\ell}\) with \(X=\Re[L - z\Lambda]\) and \(Y=\Im[L - z\Lambda]\). Then, using the fact that \((L - z\Lambda-i\tau I_{\ell})^{\ast}=X-iY+i\tau I_{\ell}\) and \(\Im[Y]\preceq 0\),
    \begin{align*}
          v^{\ast}\left(X+iY-i\tau I_{\ell}\right)^{\ast}\left(X+iY-i\tau I_{\ell}\right)v
         & \geq v^{\ast}(X+iY)^{\ast}(X+iY)v + \tau^{2}v^{\ast}v- 2\tau v^{\ast}Yv 
        \\ & \geq v^{\ast}(X+iY)^{\ast}(X+iY)v. 
    \end{align*}
    Because taking the inverse reverses the Loewner partial ordering, it follows that \(\|(L-z\Lambda -i\tau I_{\ell})^{-1}\|\leq \|(L-z\Lambda)^{-1}\|\).
\end{proof}

By \Cref{lemma:Ftau_vs_F} and Jensen's inequality, the expected pseudo-resolvent \(\bbE(L-z\Lambda)^{-1}\) is well-approximated by its regularized version for small \(\tau\) as long as \(\bbE\|(L-z\Lambda)^{-1}\|^{2}\) is bounded. In fact, if the norm squared of the expected pseudo-resolvent is bounded in expectation, then the regularized expected pseudo-resolvent converges to the expected pseudo-resolvent in operator norm as \(\tau\) approaches zero from above, uniformly in the dimension. As we expect the solution of~\eqref{eq:DEL} to be a deterministic equivalent for the (expected) pseudo-resolvent, we should anticipate a similar behavior from it and hence we suppose that \cref{condition:convergence_RDEL} holds. Together, \cref{condition:convergence_RDEL} and \Cref{lemma:Ftau_vs_F} control~\eqref{eq:comparison2} and~\eqref{eq:comparison4}.

\subsection{Stability}\label{sec:stability}

In this section, we establish the asymptotic stability of the RDEL~\eqref{eq:RDEL} with respect to small additive perturbations. In order to maintain a certain level of generality, we will fix \(z\in \bbH\) and consider matrices in \(\calA_{b}\) for some \(b\in \bbR_{>0}\), as defined in~\eqref{eq:admissible_func_b}. When considering the class of matrices \(\calA_{b}\), it is helpful to keep in mind the (expected) regularized pseudo-resolvent.

\begin{lemma}\label{lemma:Ftau_admissible}
    Fix \(z\in \bbH\) and let \(\tau,b\in \bbR_{>0}\) with \(\tau^{-1}<b\). Then, \((L-z\Lambda-i\tau I_{\ell})^{-1},\bbE(L-z\Lambda-i\tau I_{\ell})^{-1}\in \calA_{b}\).
\end{lemma}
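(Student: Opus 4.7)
The plan is to verify the three defining properties of $\calA_b$ (namely $\Im[W]\succ 0$, $\Im[W_{1,1}]\succ 0$, and $\|W\|<b$) for both $W=(L-z\Lambda-i\tau I_\ell)^{-1}$ and $W=\bbE(L-z\Lambda-i\tau I_\ell)^{-1}$. The deterministic case is a direct consequence of the preliminary results on real/imaginary parts of matrix inverses already cited in this section (\Cref{lemma:real/imag_inverse,lemma:real/imag_norm_bound}), and the expectation case follows by pushing expectation through an a.s.\ strict inequality.

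\medskip

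First, since $L$ is real self-adjoint, $\Im[L-z\Lambda-i\tau I_\ell] = -\Im[z]\Lambda -\tau I_\ell \preceq -\tau I_\ell \prec 0$. By \Cref{lemma:real/imag_inverse},
\[
    \Im[(L-z\Lambda-i\tau I_\ell)^{-1}] = (L-z\Lambda-i\tau I_\ell)^{-1}(\Im[z]\Lambda +\tau I_\ell)(L-z\Lambda-i\tau I_\ell)^{-\ast} \succeq \tau (L-z\Lambda-i\tau I_\ell)^{-1}(L-z\Lambda-i\tau I_\ell)^{-\ast},
\]
which is strictly positive definite because $L-z\Lambda-i\tau I_\ell$ is invertible. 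Extracting the upper-left block preserves the strict inequality, so the $\Im[W]\succ 0$ and $\Im[W_{1,1}]\succ 0$ conditions both hold. The spectral norm bound is immediate from \Cref{lemma:real/imag_norm_bound} applied to $\Im[L-z\Lambda-i\tau I_\ell]\preceq -\tau I_\ell$, yielding $\|(L-z\Lambda-i\tau I_\ell)^{-1}\|\leq \tau^{-1}<b$. This handles the first assertion.

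\medskip

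For the expectation, linearity of $\bbE$ and the norm inequality $\|\bbE X\|\leq \bbE\|X\|$ immediately give $\|\bbE (L-z\Lambda-i\tau I_\ell)^{-1}\|\leq \tau^{-1}<b$. To see that $\bbE[\Im[(L-z\Lambda-i\tau I_\ell)^{-1}]]\succ 0$, apply $\bbE$ to the lower bound above to obtain $\bbE[\Im[(L-z\Lambda-i\tau I_\ell)^{-1}]]\succeq \tau\,\bbE[(L-z\Lambda-i\tau I_\ell)^{-1}(L-z\Lambda-i\tau I_\ell)^{-\ast}]$. For any unit vector $v\in \bbC^\ell$, the random variable $\|(L-z\Lambda-i\tau I_\ell)^{-\ast}v\|^2$ is strictly positive almost surely (since the pseudo-resolvent is almost surely invertible), hence its expectation is strictly positive. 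This shows each diagonal entry of the quadratic form $v\mapsto v^\ast\bbE[\Im[\cdot]]v$ is strictly positive, and by continuity of this quadratic form on the compact unit sphere it has a positive minimum, so the matrix is strictly positive definite. The upper-left block inherits this property.

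\medskip

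I do not expect a main obstacle here, since every ingredient is already in place: the imaginary-part identity, the a priori $\tau^{-1}$ norm bound, and the standard fact that the expectation of an almost surely positive definite Hermitian matrix is positive definite. The only mild care needed is to distinguish $\succ$ from $\succeq$ when passing to the expectation, which is handled by the almost-sure strict invertibility of $L-z\Lambda-i\tau I_\ell$ for every $\tau>0$ (noted already when establishing that the pseudo-resolvent is well-defined in the introductory discussion of the admissible set).
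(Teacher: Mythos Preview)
Your proof is correct and follows essentially the same route as the paper. One small citation slip: the norm bound $\|(L-z\Lambda-i\tau I_\ell)^{-1}\|\leq\tau^{-1}$ comes from \Cref{lemma:real/imag_singularity_bound} (definiteness of the imaginary part controls the inverse), not from \Cref{lemma:real/imag_norm_bound} (which only says $\|\Im M\|\leq\|M\|$). The one substantive difference is the expectation step: the paper pushes the pointwise inequality further to $(L-z\Lambda-i\tau I_\ell)^{-1}(L-z\Lambda-i\tau I_\ell)^{-\ast}\succeq (\|L\|+|z|+\tau)^{-2}I_\ell$ and then applies Jensen's inequality for the convex map $x\mapsto x^{-2}$ to obtain the explicit lower bound $\Im[\bbE(L-z\Lambda-i\tau I_\ell)^{-1}]\succeq \tau(\bbE\|L\|+|z|+\tau)^{-2}I_\ell$, whereas you argue qualitatively that the expectation of an everywhere strictly positive (and bounded) quadratic form is strictly positive. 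Your version is slightly more general in that it does not need $\bbE\|L\|<\infty$; the paper's buys a quantitative constant. Either is adequate for the lemma as stated.
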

\begin{proof}
    By \Cref{lemma:real/imag_singularity_bound} and Jensen's inequality, \(\|(L-z\Lambda -i\tau I_{\ell})^{-1}\|\vee \|\bbE (L-z\Lambda-i\tau I_{\ell})^{-1}\|\leq \tau^{-1}\).  Furthermore, by \Cref{lemma:real/imag_inverse}, we can follow a similar argument as in the proof of \Cref{lemma:strict_holomorphic} to obtain \(\Im[(L-z\Lambda -i\tau I_{\ell})^{-1}]\succeq \tau (L-z\Lambda -i\tau I_{\ell})^{-1}(L-z\Lambda -i\tau I_{\ell})^{-\ast}\succeq \tau \|L-z\Lambda -i\tau I_{\ell}\|^{-2}\succeq \tau (\|L\| + |z| +\tau)^{-2}\succ 0\). By monotonicity of the expectation, \(\Im[\bbE(L-z\Lambda -i\tau I_{\ell})^{-1}]\succeq  \tau \bbE (\|L\| + |z| +\tau)^{-2}\). Since the function \(x\mapsto x^{-2}\) is convex for \(x\in \bbR_{>0}\), we may apply Jensen's inequality to obtain \(\Im[\bbE(L-z\Lambda -i\tau I_{\ell})^{-1}]\succeq \tau(\bbE\|L\| + |z| +\tau)^{-2}\succ 0\).
\end{proof}

Fix \(z\in \bbH\), let \(b,\tau\in \bbR_{>0}\) and assume that \(F^{(\tau)}\in \calA_{b}\) satisfies
\begin{equation}\label{eq:F_RDEL}
    (\bbE L - \supop(F^{(\tau)})-z\Lambda -i\tau I_{\ell})F^{(\tau)}(z) = I_{\ell}+D^{(\tau)},
\end{equation}
where \(D^{(\tau)}\) is a perturbation term. In particular, \(F^{(\tau)}\) almost solves~\eqref{eq:RDEL} up to an additive perturbation term \(D^{(\tau)}\). For a fixed \(z\in \bbH\), let \(E_{\tau}=\Ftau(F^{(\tau)})D^{(\tau)}\) for every \(F\in \calA_{b}\) and \(\tau\in \bbR_{> 0}\), defining the \emph{error matrix}, and \(\epsilon_{\tau}=\|E_{\tau}\|\) representing the \emph{magnitude of the error} at \(\tau\). The objective for the rest of this section is to show that if \(\epsilon_{\tau}\) is small, then \(F^{(\tau)}\) is close to \(M^{(\tau)}\). We will establish this result using properties of the CRF-pseudometric introduced in \Cref{subsec:CRF}.

Before stating the first lemma, we recall from~\eqref{eq:admissible_func_b} and~\eqref{lemma:strict_holomorphic} that \(\calA_{b}:=\calA_{+}\cap \{W\in \bbC^{\ell\times \ell} \st \|W\|< b\}\) is a domain in the Banach space of \(\ell\times \ell\) complex matrices for every \(b\in \bbR_{>0}\). The CRF-pseudometric is a crucial tool because \(\Ftau\) is a strict contraction on \(\calA_{b}\) with respect to the CRF-pseudometric. This can be observed and quantified by combining \Cref{lemma:strict_holomorphic} with the proof of the Earle-Hamilton fixed-point theorem~\cite{earl_hamilton}.

\begin{lemma}\label{lemma:strict_contraction}
    Fix \(z\in \bbH\) and \(\tau\in \bbR_{>0}\). For every \(b\in \bbR_{>0}\), let \(m_{b}=\|\bbE L\|+s b+|z|+\tau+1\) and \(\delta=(m_{b}^{2}\tau^{-2}-1)^{-1}\). Suppose that \(\tau^{-1}(1+2\delta)<b\) and let \(\rho\) denotes the CRF-pseudometric on \(\calA_{b}\). Then, for every \(X,Y\in \calA_{b}\), \(\rho(\Ftau(X),\Ftau(Y))\leq (1+\delta)^{-1}\rho\left(X,Y\right)\).
\end{lemma}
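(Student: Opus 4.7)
The lemma is a quantitative form of the contraction property underlying the Earle--Hamilton fixed-point theorem \cite{earl_hamilton}, so the plan is to combine the pointwise image bounds from \Cref{lemma:strict_holomorphic} with the standard holomorphic-extension argument behind that theorem. First I would translate \Cref{lemma:strict_holomorphic} into a uniform margin between $\Ftau(\calA_{b})$ and $\partial \calA_{b}$. The hypothesis $\tau^{-1}(1+2\delta) < b$ together with $\|\Ftau(W)\| \le \tau^{-1}$ gives a gap of at least $2\delta\tau^{-1}$ to the sphere $\{\|\cdot\|=b\}$. Using the identity $\tau m_{b}^{-2} = \delta(\tau(1+\delta))^{-1}$, which follows immediately from the definition $\delta = (m_{b}^{2}\tau^{-2}-1)^{-1}$, the bound $\Im[\Ftau(W)] \succeq \tau m_{b}^{-2} I_{\ell}$ gives a corresponding gap to the positivity boundary. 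Consequently there exists $\eta$ of order $\delta\tau^{-1}$ such that $\Ftau(X) + W \in \calA_{b}$ for every $X \in \calA_{b}$ and every $W$ with $\|W\| < \eta$.

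Next, for fixed $X, Y \in \calA_{b}$, I would take a holomorphic disk $\phi\colon \bbD \to \calA_{b}$ with $\phi(0)=X$ and $\phi(\alpha)=Y$ (as in the defining chains for $\rho$) and consider the composition $\Ftau \circ \phi\colon \bbD \to \Ftau(\calA_{b})$. The uniform margin from the previous step is exactly what permits extending this composed disk to a holomorphic map $\bbD_{1+\delta} \to \calA_{b}$ on the disk of radius $1+\delta$: one rescales the parameter by $1+\delta$ and absorbs the additional ``velocity'' using that an $\eta$-tube around $\Ftau(\calA_{b})$ still lies inside $\calA_{b}$. Since the Poincar\'e metric of $\bbD_{1+\delta}$ is $(1+\delta)^{-1}$ times that of $\bbD$, and since the CRF-pseudometric $\rho$ on $\calA_{b}$ is the largest pseudometric making every holomorphic disk from $\bbD$ (with its Poincar\'e metric) into $\calA_{b}$ non-expansive, the existence of such an extended disk through $\Ftau(X)$ and $\Ftau(Y)$ yields
\[
\rho(\Ftau(X),\Ftau(Y)) \;\le\; (1+\delta)^{-1}\, \rho_{\bbD}(0,\alpha).
\]
Taking the infimum over admissible $\phi$ and chaining in the usual way gives $\rho(\Ftau(X),\Ftau(Y)) \le (1+\delta)^{-1}\rho(X,Y)$.

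The main obstacle is the precise calibration in the first step: one must verify that both constraints cutting out $\calA_{b}$ (the strict norm bound and the strict positivity of the imaginary part) admit the \emph{same} enlargement factor $1+\delta$. This is the raison d'\^etre for the definitions of $m_{b}$ and $\delta$ in the statement. The positivity gap $\Im[\Ftau(W)] \succeq \tau m_{b}^{-2}I_{\ell}$ forces the choice $\delta = (m_{b}^{2}\tau^{-2}-1)^{-1}$, after which the hypothesis $\tau^{-1}(1+2\delta) < b$ ensures that the norm gap is likewise of order $\delta\tau^{-1}$. Once these two gaps are aligned, the holomorphic extension in the second step goes through and the contraction factor $(1+\delta)^{-1}$ is produced automatically.
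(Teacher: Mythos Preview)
Your overall strategy coincides with the paper's: both rely on \Cref{lemma:strict_holomorphic} to quantify how far $\Ftau(\calA_b)$ sits from the boundary of $\calA_b$, and then feed this into the Earle--Hamilton machinery to extract the factor $(1+\delta)^{-1}$. The paper carries this out more directly by fixing $X\in\calA_b$ and defining the dilated map $\G(Y)=\Ftau(Y)+\delta(\Ftau(Y)-\Ftau(X))$; the checks $\|\G(Y)\|\le\tau^{-1}(1+2\delta)<b$ and $\Im[\G(Y)]\succeq(1+\delta)\tau m_b^{-2}-\delta\tau^{-1}\succ 0$ are exactly your ``two gaps'' computation, and once $\G\colon\calA_b\to\calA_b$ is established, the contraction follows from the Schwarz--Pick inequality applied to $\G$ at the infinitesimal level (since $D\G(X)=(1+\delta)D\Ftau(X)$).

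Your second step, however, describes a mechanism that does not work as stated. You cannot extend the holomorphic disk $\Ftau\circ\phi\colon\bbD\to\calA_b$ to a larger disk $\bbD_{1+\delta}$ merely because the image admits an $\eta$-tube inside $\calA_b$; holomorphic continuation is not controlled by metric margins. Relatedly, the description ``the largest pseudometric making every holomorphic disk from $\bbD$ into $\calA_b$ non-expansive'' characterizes the Kobayashi pseudometric, not the CRF (Carath\'eodory) pseudometric, which is built from maps \emph{out of} $\calA_b$ to $\bbD$. The correct Earle--Hamilton mechanism is the dilated-map construction above, and your margin estimates in step~1 are precisely what is needed to run it; replacing your disk-extension heuristic with the map $\G$ closes the argument.
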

\begin{proof}
    Let \(X\in \calA_{b}\) and define \(\G:Y\in \calA_{b}\mapsto \Ftau(Y) + \delta (\Ftau(Y)-\Ftau(X))\). By \Cref{lemma:strict_holomorphic}, \(\|\Ftau(Y)\|\leq \tau^{-1}\) and \(\Im[\Ftau(Y)]\succ \tau m_{b}^{-2}\) for every \(Y\in \calA_{b}\). Hence, \(\|\G(Y)\| \leq \tau^{-1}+2\delta\tau^{-1}\) and \(\Im[\G(Y)]\succeq (1+\delta)\Im[\Ftau(Y)]-\delta\|\Ftau(X)\| \succ  (1+\delta)\tau m_{b}^{-2} - \delta \tau^{-1}\). By our choice of \(\delta\) and \(b\), \(\|\G(Y)\|<b\) and \(\Im[G(Y)]\succ 0\) for every \(Y\in \calA_{b}\). In fact, \(\G\) is a strict holomorphic function on \(\calA_{b}\). By the proof of the Earle-Hamilton fixed-point theorem in~\cite[Theorem 3.1]{harris_fixed_2003} and~\cite[Theorem 4]{HARRIS1979345}, we obtain \(\rho(\Ftau(X),\Ftau(Y))\leq (1+\delta)^{-1}\rho(X,Y)\).
\end{proof}

While the DEL map exhibits favorable properties with respect to the CRF-pseudometric, we would like to know whether the CRF-pseudometric captures the convergence relevant to the problem at hand. Specifically, we aim to understand the behavior of the operator norm with respect to the CRF-pseudometric. The subsequent lemma demonstrates that the CRF-pseudometric can be employed to establish convergence in terms of generalized trace entries.

\begin{lemma}\label{lemma:CRF_norm_lower}
    Fix \(z\in \bbH\) and \(\tau\in \bbR_{>0}\). Let \(b\in \bbR_{>0}\) with \(b>\tau^{-1}\), \(F\in\calA_{b}\) and \(\rho\) be the CRF-pseudometric on \(\calA_{b}\). Then, \(\tr(U(M^{(\tau)}-F)) \leq (b+\tau^{-1})\tanh(\rho(M^{(\tau)},F))\) for every \(U\in \bbC^{\ell\times \ell}\) with \(\|U\|_{\ast}\leq 1\).
\end{lemma}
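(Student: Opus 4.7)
The plan is to reduce the matrix-valued inequality to a one-dimensional complex-analytic one by composing with a scalar-valued holomorphic map, then to exploit two basic facts: (i) holomorphic maps between domains in complex Banach spaces are non-expansive for the CRF-pseudometric (the Schwarz--Pick principle that underlies the whole theory), and (ii) the CRF-pseudometric on a disk has the closed form $\rho_{r\mathbb{D}}(0,w)=\tanh^{-1}(|w|/r)$, which reduces to the standard Poincar\'e metric on $\mathbb{D}$ via the scaling biholomorphism. The factor $b+\tau^{-1}$ in the statement strongly suggests that one uses both the bound $\|F\|<b$ (which holds by definition of $\mathcal{A}_b$) and the a priori bound $\|M^{(\tau)}\|\le \tau^{-1}$ from \Cref{lemma:prior_bounds}, which is the main hint for the construction below.

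Concretely, fix $U\in\bbC^{\ell\times\ell}$ with $\|U\|_{\ast}\le 1$, and define the linear (hence holomorphic) scalar map
\[
g:X\in \mathcal{A}_b \mapsto \tr\bigl(U(X-M^{(\tau)})\bigr)\in\mathbb{C}.
\]
Since $b>\tau^{-1}$ and $\|M^{(\tau)}\|\le \tau^{-1}$ by \Cref{lemma:prior_bounds}, we have $M^{(\tau)}\in\mathcal{A}_b$, so $g$ is well-defined on $\mathcal{A}_b$. By trace/operator-norm duality,
\[
|g(X)|\le \|U\|_{\ast}\bigl(\|X\|+\|M^{(\tau)}\|\bigr) < b+\tau^{-1}
\qquad\text{for every }X\in\mathcal{A}_b,
\]
so $g$ maps $\mathcal{A}_b$ into the open disk $(b+\tau^{-1})\mathbb{D}\subset\mathbb{C}$.

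Next, apply the Schwarz--Pick property of the CRF-pseudometric to the holomorphic map $g:\mathcal{A}_b\to (b+\tau^{-1})\mathbb{D}$: with $F$ and $M^{(\tau)}$ as arguments, and using $g(M^{(\tau)})=0$,
\[
\rho_{(b+\tau^{-1})\mathbb{D}}\bigl(\tr(U(F-M^{(\tau)})),\,0\bigr)\;\le\;\rho(F,M^{(\tau)}).
\]
Combining this with the explicit formula $\rho_{r\mathbb{D}}(0,w)=\tanh^{-1}(|w|/r)$ (obtained from the Poincar\'e distance on $\mathbb{D}$ via the biholomorphism $z\mapsto z/r$) and applying the monotone function $\tanh$ to both sides yields
\[
\frac{|\tr(U(F-M^{(\tau)}))|}{b+\tau^{-1}}\;\le\;\tanh\bigl(\rho(F,M^{(\tau)})\bigr),
\]
which rearranges to the claim using symmetry of $\rho$ and the trivial bound $\tr(U(M^{(\tau)}-F))\le |\tr(U(M^{(\tau)}-F))|$.

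There is really no substantial obstacle: once one correctly identifies the disk $(b+\tau^{-1})\mathbb{D}$ as the target and invokes the non-expansiveness of the CRF-pseudometric under holomorphic maps, the result follows from a direct scaling computation. The only conceptual point worth highlighting is the judicious choice to use $g(X)=\tr(U(X-M^{(\tau)}))$ rather than $\tr(UX)$, which recenters the image at $0$ and lets the two individual norm bounds $\|F\|<b$ and $\|M^{(\tau)}\|\le\tau^{-1}$ combine additively into $b+\tau^{-1}$.
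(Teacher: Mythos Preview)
Your proof is correct and follows essentially the same route as the paper: define the scalar holomorphic map $X\mapsto \tr(U(X-M^{(\tau)}))$ from $\calA_b$ into a disk of radius $b+\tau^{-1}$, invoke the Schwarz--Pick contraction property of the CRF-pseudometric, and read off the bound from the explicit Poincar\'e formula. The only cosmetic difference is that the paper normalizes to land in $\bbD$ rather than $(b+\tau^{-1})\bbD$, and it mentions explicitly that $\Im[M^{(\tau)}]\succ 0$ (needed for $M^{(\tau)}\in\calA_b$), which you use implicitly via $M^{(\tau)}\in\calM_+$.
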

\begin{proof}
    Let \(m_{b}\) be defined as in the proof of \Cref{lemma:strict_contraction} and recall that \(\|M^{(\tau)}\|\leq \tau^{-1}\) as well as \(\Im[M^{(\tau)}]\succ \tau m_{b}^{-2}\). In particular, \(M^{(\tau)},F\in \calA_{b}\). Define the holomorphic function \(f:X\in \calA_{b}\mapsto \tr(U(X-M^{(\tau)}))(b+\tau^{-1})^{-1}\in \bbD\). By \Cref{lemma:trace_norm}, \(|\tr(U(X-M^{(\tau)}))|\leq \|X-M^{(\tau)}\| < b+\tau^{-1}\), which ensures that \(f\) is well-defined. By \Cref{prop:hol_inv} and \eqref{eq:poincare},
    \[
        \mathrm{arctanh}\left|\frac{\tr(U(X-M^{(\tau)}))}{(b+\tau^{-1})}\right|=\rho_{\Delta}\left(f(M^{(\tau)}),f(X)\right)\leq \rho\left(M^{(\tau)},X\right).
    \]
    Using the fact that the hyperbolic tangent is increasing, we obtain the result.
\end{proof}

Since the dual norm of the operator norm is the nuclear norm, \Cref{lemma:CRF_norm_lower} implies that the CRF-pseudometric captures the convergence of the operator norm.

\begin{corollary}\label{corollary:CRF_norm_lower}
    Under the settings of \Cref{lemma:CRF_norm_lower},
    \[
        \|M^{(\tau)}-F\| \leq (b+\tau^{-1})\tanh(\rho(M^{(\tau)},F)).
    \]
\end{corollary}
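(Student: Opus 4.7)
The plan is to invoke the trace-norm duality between the spectral norm and the nuclear norm. Recall that for any matrix $A\in \bbC^{\ell\times \ell}$, one has
\[
    \|A\| = \sup_{\substack{U\in \bbC^{\ell\times\ell} \\ \|U\|_{\ast}\leq 1}} |\tr(UA)|,
\]
since the spectral norm is the dual of the nuclear norm with respect to the trace pairing. Applying this with $A = M^{(\tau)} - F$ reduces the claim to a uniform bound on $|\tr(U(M^{(\tau)}-F))|$ over all $U$ with $\|U\|_{\ast}\leq 1$.

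Such a bound is exactly what \Cref{lemma:CRF_norm_lower} provides. Indeed, inspection of its proof shows that one establishes the stronger absolute-value inequality
\[
    |\tr(U(M^{(\tau)}-F))| \;\leq\; (b+\tau^{-1})\tanh(\rho(M^{(\tau)},F))
\]
for every $U\in \bbC^{\ell\times\ell}$ with $\|U\|_{\ast}\leq 1$, via the Schwarz--Pick estimate $\rho_{\Delta}(f(M^{(\tau)}),f(F))\leq \rho(M^{(\tau)},F)$ applied to the scalar holomorphic function $f(X)=\tr(U(X-M^{(\tau)}))/(b+\tau^{-1})$ into the unit disk. Even if one only had the signed version, replacing $U$ by $e^{i\theta}U$ (which preserves the nuclear norm) and choosing $\theta$ to align the phase of the trace recovers the same absolute-value bound.

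Taking the supremum of both sides over $\{U\in \bbC^{\ell\times\ell}\st \|U\|_{\ast}\leq 1\}$ and using the duality identity above yields the corollary. There is no real obstacle here: the work was already done in \Cref{lemma:CRF_norm_lower}, and the corollary is essentially a restatement via duality.
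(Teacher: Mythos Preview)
Your proposal is correct and matches the paper's approach exactly: the corollary is deduced from \Cref{lemma:CRF_norm_lower} via the duality $\|A\|=\sup_{\|U\|_{\ast}\leq 1}|\tr(UA)|$. Your observation that the proof of \Cref{lemma:CRF_norm_lower} already yields the absolute-value version (or that a phase rotation of $U$ recovers it) is spot on.
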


In \Cref{proposition:existenceuniquness_RDEL}, we established that the solution to~\eqref{eq:RDEL} can be obtained using a fixed-point iteration scheme. Using this idea, we will recursively define a sequence of matrices and use the contraction property in \Cref{lemma:strict_contraction} to control the distance between \(M^{(\tau)}(z)\) and \(F\in \calA_{b}\) in the CRF-pseudometric. Since the CRF-pseudometric dominates the operator norm, we will obtain convergence in norm. The only remaining ingredient is a loose control of the CRF-pseudometric. In fact, while the norm \(\|M^{(\tau)}(z)\|\) may be easily bounded uniformly in \(\ell\), transferring this bound to the CRF-pseudometric poses additional difficulties which we address in the following lemma.

\begin{lemma}\label{lemma:CRF_eps}
    Let \(z\in \bbH\) and \(\tau,b\in \bbR_{>0}\) such that \(b > \tau^{-1}+\tau m_{b}^{-2}\). Additionally, let \(F^{(\tau)}\in\calA_{b}\) satisfying~\eqref{eq:F_RDEL} with \(\epsilon_{\tau}<\tau m_{b}^{-2}\), \(\rho\) be the CRF-pseudometric on \(\calA_{b}\) and \(m_{b}=\|\bbE L\|+s b+|z|+\tau+1\) be defined as in \Cref{lemma:strict_contraction}. Then, \(\rho(\Ftau(F^{(\tau)}),F^{(\tau)}) \leq \mathrm{arctanh}(\epsilon_{\tau}m_{b}^{2}/\tau )\).
\end{lemma}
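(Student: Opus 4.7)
The plan is to produce an explicit holomorphic map $\phi : \bbD \to \calA_{b}$ sending $0$ to $\Ftau(F^{(\tau)})$ and the real point $s := \epsilon_{\tau}m_{b}^{2}/\tau$ to $F^{(\tau)}$. Once this map is exhibited, the contraction property of the CRF-pseudometric under holomorphic maps (the analogue of the argument invoked via \Cref{prop:hol_inv} in the proof of \Cref{lemma:CRF_norm_lower}) immediately yields $\rho(\Ftau(F^{(\tau)}), F^{(\tau)}) = \rho(\phi(0), \phi(s)) \leq \rho_{\Delta}(0, s) = \mathrm{arctanh}(\epsilon_{\tau}m_{b}^{2}/\tau)$, which is exactly the claim. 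A key preliminary identity is $F^{(\tau)} - \Ftau(F^{(\tau)}) = E_{\tau}$, obtained by left-multiplying \eqref{eq:F_RDEL} by $\Ftau(F^{(\tau)}) = (\bbE L - \supop(F^{(\tau)}) - z\Lambda - i\tau I_{\ell})^{-1}$ to produce $F^{(\tau)} = \Ftau(F^{(\tau)}) + \Ftau(F^{(\tau)}) D^{(\tau)} = \Ftau(F^{(\tau)}) + E_{\tau}$.

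By the hypothesis $\epsilon_{\tau} < \tau m_{b}^{-2}$, the value $s = \epsilon_{\tau} m_{b}^{2}/\tau$ lies in $(0,1)$. Define $\phi(t) := \Ftau(F^{(\tau)}) + (t/s) E_{\tau}$ for $t \in \bbD$. This is holomorphic as an affine function of $t$, and directly $\phi(0) = \Ftau(F^{(\tau)})$ and $\phi(s) = \Ftau(F^{(\tau)}) + E_{\tau} = F^{(\tau)}$. The principal verification is that $\phi(t) \in \calA_{b}$ for every $t$ in the open unit disk. From \Cref{lemma:strict_holomorphic} we have $\|\Ftau(F^{(\tau)})\| \leq \tau^{-1}$ and $\Im[\Ftau(F^{(\tau)})] \succeq \tau m_{b}^{-2} I_{\ell}$. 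For the norm, $\|\phi(t)\| \leq \tau^{-1} + (|t|/s)\epsilon_{\tau} < \tau^{-1} + \epsilon_{\tau}/s = \tau^{-1} + \tau m_{b}^{-2} < b$, where the final inequality uses the standing hypothesis on $b$. For the imaginary part, $\|\Im[(t/s) E_{\tau}]\| \leq (|t|/s)\epsilon_{\tau}$ gives $\Im[(t/s) E_{\tau}] \succeq -(|t|/s)\epsilon_{\tau} I_{\ell}$, so
\[
\Im[\phi(t)] \succeq \bigl(\tau m_{b}^{-2} - (|t|/s)\epsilon_{\tau}\bigr) I_{\ell} = \tau m_{b}^{-2}(1 - |t|) I_{\ell} \succ 0 \quad \text{for } |t| < 1,
\]
where the equality uses precisely the calibration $s = \epsilon_{\tau}m_{b}^{2}/\tau$ to make $\epsilon_{\tau}/s = \tau m_{b}^{-2}$.

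The main subtle point is exactly this calibration: $s = \epsilon_{\tau}m_{b}^{2}/\tau$ is the smallest value for which positivity of $\Im[\phi(t)]$ survives throughout $\bbD$, and the hypothesis $b > \tau^{-1} + \tau m_{b}^{-2}$ plays the analogous role for the operator-norm constraint. Both are tight, and together they force the map $\phi$ to land in $\calA_{b}$ on the whole open unit disk, producing the matching $\mathrm{arctanh}$ bound after invoking the holomorphic contraction of $\rho$.
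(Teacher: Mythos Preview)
Your proof is correct and essentially identical to the paper's: both construct the affine map $t\mapsto \Ftau(F^{(\tau)})+(t/s)E_\tau$ from $\bbD$ into $\calA_b$ (the paper writes the same map with the normalization $\tau m_b^{-2}/\|E_\tau\|$ in place of $1/s$, which is the same number) and then invoke \Cref{prop:hol_inv} together with~\eqref{eq:poincare}. The only cosmetic gap is that your claim $s\in(0,1)$ tacitly assumes $\epsilon_\tau>0$; the paper disposes of $\epsilon_\tau=0$ with a WLOG remark, and you should do the same since in that case the inequality is the trivial $0\le 0$.
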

\begin{proof}
    We may assume WLOG that \(\Ftau(F^{(\tau)})\neq F^{(\tau)}\), as otherwise the claim is trivial. By \Cref{lemma:strict_holomorphic}, \(\|\Ftau(F^{(\tau)})\|\leq \tau^{-1}\) and \(\Im[\Ftau(F^{(\tau)})]\succ \tau m_{b}^{-2}\). Define the holomorphic function
    \[
        g:w\in \bbD \mapsto \Ftau(F^{(\tau)}) + \frac{w\tau m_{b}^{-2}}{\|\Ftau(F^{(\tau)})-F^{(\tau)}\|}(F^{(\tau)}-\Ftau(F^{(\tau)}))\in \calA_{b}.
    \]
    Then, it is straightforward to check that \(\|g(w)\|\leq \tau^{-1}+\tau m_{b}^{-2}<b\) and \(\Im[g(w)] \succ 0\) for every \(w\in \bbD\). 
    
    Let \(\rho_{\bbD}\) denote the CRF-pseudometric on \(\bbD\). Using~\eqref{eq:F_RDEL}, we may write \(\Ftau(F)-F^{(\tau)}=-E_{\tau}=-\Ftau(F^{(\tau)})D^{(\tau)}\) which implies that \(\|\Ftau(F)-F^{(\tau)}\|\leq \epsilon_{\tau}\). Hence, by \Cref{prop:hol_inv},
    \begin{align*}
        \rho(\Ftau(F^{(\tau)}),F^{(\tau)}) & = \rho\left(g(0),g\left(\frac{\|\Ftau(F)-F^{(\tau)}\|}{\tau m_{b}^{-2}}\right)\right) 
         \leq \rho_{\bbD}\left(0,\frac{\|\Ftau(F)-F^{(\tau)}\|}{\tau m_{b}^{-2}}\right).
    \end{align*}
    By~\eqref{eq:poincare}, \(\rho_{\bbD}(0,\|\Ftau(F)-F^{(\tau)}\|m_{b}^{2}/\tau)\leq \mathrm{arctanh}(\epsilon_{\tau}m_{b}^{2}/\tau )\).
\end{proof}

\Cref{lemma:CRF_eps} controls the discrepancy between the matrix \(F^{(\tau)} \in \calA_{b}\), which approximately solves the RDEL up to an additive perturbation term \(D^{(\tau)}\), before and after applying the RDEL map once, in terms of the magnitude of the error \(\epsilon_{\tau}\). 

Combining \cref{lemma:CRF_eps,lemma:CRF_norm_lower,lemma:strict_contraction}, we obtain the main stability result.

\begin{lemma}\label{lemma:stability}
    Fix \(z\in \bbH\), \(\tau\in \bbR_{>0}\) and let \(b=\tau^{-1}+2\tau\) and \(m_{b}=\|\bbE L\|+sb+|z|+\tau + 1\). Let \(F^{(\tau)}\in \calA_{b}\) satisfying~\eqref{eq:F_RDEL} with \(\epsilon_{\tau}<\tau m_{b}^{-2}\). Then, \(\|M^{(\tau)}-F^{(\tau)}\|\leq 2(\tau+\tau^{-1})\tanh((1+\tau^{2})(m_{b}^{2}\tau^{-2}-1)\mathrm{arctanh}(\epsilon_{\tau}m_{b}^{2}/\tau ))\).
\end{lemma}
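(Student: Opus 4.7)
The plan is to run the strict contraction of $\Ftau$ from Lemma~\ref{lemma:strict_contraction} against the fixed point property of $M^{(\tau)}$ in the CRF-pseudometric $\rho$ on $\calA_{b}$, and then transport the resulting estimate to the operator norm via Corollary~\ref{corollary:CRF_norm_lower}. First I verify the hypotheses of the supporting lemmas for the choice $b=\tau^{-1}+2\tau$. With $\delta=(m_{b}^{2}\tau^{-2}-1)^{-1}$, the condition $\tau^{-1}(1+2\delta)<b$ of Lemma~\ref{lemma:strict_contraction} is equivalent to $m_{b}^{2}>1+\tau^{2}$, which follows from $m_{b}\geq \tau+1$ since $(\tau+1)^{2}=1+2\tau+\tau^{2}>1+\tau^{2}$. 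Likewise, $b>\tau^{-1}+\tau m_{b}^{-2}$ (needed for Lemma~\ref{lemma:CRF_eps}) reduces to $m_{b}^{2}>1/2$ and holds because $m_{b}\geq 1$. Finally, both $F^{(\tau)}\in \calA_{b}$ by hypothesis and $M^{(\tau)}\in \calA_{b}$ by Lemma~\ref{lemma:strict_holomorphic} applied to the fixed-point identity $M^{(\tau)}=\Ftau(M^{(\tau)})$, so $\rho$ is well-defined on the pair.

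The main estimate is a one-step triangle inequality: using $M^{(\tau)}=\Ftau(M^{(\tau)})$ together with the strict contraction factor $(1+\delta)^{-1}$,
\begin{equation*}
    \rho(F^{(\tau)},M^{(\tau)})\leq \rho(F^{(\tau)},\Ftau(F^{(\tau)}))+\rho(\Ftau(F^{(\tau)}),\Ftau(M^{(\tau)}))\leq \rho(F^{(\tau)},\Ftau(F^{(\tau)}))+(1+\delta)^{-1}\rho(F^{(\tau)},M^{(\tau)}).
\end{equation*}
Rearranging yields $\rho(F^{(\tau)},M^{(\tau)})\leq ((1+\delta)/\delta)\,\rho(F^{(\tau)},\Ftau(F^{(\tau)}))$, and the direct computation $(1+\delta)/\delta=1+1/\delta=m_{b}^{2}\tau^{-2}$, together with the elementary inequality $(1+\tau^{2})(m_{b}^{2}\tau^{-2}-1)\geq m_{b}^{2}\tau^{-2}$ (which rearranges to $m_{b}^{2}\geq 1+\tau^{2}$, already verified), yields the slightly looser bound $\rho(F^{(\tau)},M^{(\tau)})\leq (1+\tau^{2})(m_{b}^{2}\tau^{-2}-1)\,\rho(F^{(\tau)},\Ftau(F^{(\tau)}))$. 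Lemma~\ref{lemma:CRF_eps} then controls the right-hand inner factor by $\mathrm{arctanh}(\epsilon_{\tau}m_{b}^{2}/\tau)$, where the hypothesis $\epsilon_{\tau}<\tau m_{b}^{-2}$ keeps the $\mathrm{arctanh}$ argument in $(-1,1)$.

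To conclude, I apply Corollary~\ref{corollary:CRF_norm_lower}, which gives $\|M^{(\tau)}-F^{(\tau)}\|\leq (b+\tau^{-1})\tanh(\rho(M^{(\tau)},F^{(\tau)}))$, noting $b+\tau^{-1}=2(\tau+\tau^{-1})$ and using monotonicity of $\tanh$ to chain with the CRF bound above. Composing these estimates produces the stated inequality. The principal obstacle is purely bookkeeping: one must track the interlocking conditions on $b$, $\delta$, and $\epsilon_{\tau}$ to check that a single choice of $b$ simultaneously satisfies the hypotheses of Lemma~\ref{lemma:strict_holomorphic}, Lemma~\ref{lemma:strict_contraction}, Lemma~\ref{lemma:CRF_eps}, and Corollary~\ref{corollary:CRF_norm_lower}, and then absorb the sharp constant $m_{b}^{2}\tau^{-2}$ into the slightly larger expression $(1+\tau^{2})(m_{b}^{2}\tau^{-2}-1)$ that appears in the statement. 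The analytic heart of the argument has been fully isolated in the three prior lemmas, so no new ideas are needed.
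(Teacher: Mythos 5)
Your proof is correct and follows essentially the same route as the paper: verify that $b=\tau^{-1}+2\tau$ satisfies the hypotheses of Lemma~\ref{lemma:strict_contraction} and Lemma~\ref{lemma:CRF_eps}, bound $\rho(F^{(\tau)},M^{(\tau)})$ by $(1+\delta)/\delta$ times $\rho(F^{(\tau)},\Ftau(F^{(\tau)}))$ using the fixed-point property and contractivity, invoke Lemma~\ref{lemma:CRF_eps}, and pass to the operator norm via Corollary~\ref{corollary:CRF_norm_lower}. The only difference is that you derive the factor $(1+\delta)/\delta$ from a one-step triangle inequality and rearrangement rather than the paper's telescoping over the iterates $M_k=\Ftau(M_{k-1})$ followed by $k\to\infty$; this is a minor streamlining, not a different argument.
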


\begin{proof}
    Recall that \(\delta=(m_{b}^{2}\tau^{-2}-1)^{-1}\). We begin by verifying that this choice of \(b\) satisfies the settings in \cref{lemma:CRF_eps,lemma:strict_contraction}. That is, we show that \(\tau^{-1}(1+2\delta)<b\) and \(b>\tau^{-1}+\tau m_{b}^{-2}\). To this end, notice that \(m_{b}^{2}\tau^{-2}>\tau^{-2}+1\). Therefore, \(\delta<\tau^{2}\) and \(\tau^{-1}(1+2\delta)<b\). Furthermore, \(m_{b}^{-2}<1\) so \(\tau^{-1}+\tau m_{b}^{-2}<b\).

    By \Cref{corollary:CRF_norm_lower}, \(\|M^{(\tau)}-F^{(\tau)}\| \leq (b+\tau^{-1})\tanh(\rho(M^{(\tau)},F^{(\tau)}))\). Recursively define a sequence \(\{M_{k}\st k\in \bbN_{0}\}\subseteq \calA_{b}\) such that \(M_{0}=F^{(\tau)}\) and \(M_{k}=\Ftau(M_{k-1})\) for every \(k\in \bbN\). By \cref{lemma:strict_contraction,lemma:CRF_eps},
    \begin{align*}
        \rho(M^{(\tau)},F^{(\tau)}) & \leq \rho(M^{(\tau)},M_{k}) + \sum_{j=1}^{k}\rho(M_{j},M_{j-1})
        \\ & \leq (1+\delta)^{-k}\rho(M^{(\tau)},F^{(\tau)}) +  \rho(\Ftau(F^{(\tau)}),F^{(\tau)})\sum_{j=1}^{k}(1+\delta)^{-j}
        \\ & \leq (1+\delta)^{-k}\rho(M^{(\tau)},F^{(\tau)}) +  \frac{\mathrm{arctanh}(\epsilon_{\tau}m_{b}^{2}/\tau )}{1-(1+\delta)^{-1}}.
    \end{align*}
    Since the above inequality hold for every \(k\in \bbN\), we may take the limit as \(k\to \infty\) to obtain \(\rho(M^{(\tau)},F^{(\tau)}) \leq (1+\delta)\mathrm{arctanh}(\epsilon_{\tau}m_{b}^{2}/\tau )/\delta\). Combining everything, using the fact that \(\delta<\tau^{2}\), we obtain the result.
\end{proof}

To summarize, if \(F^{(\tau)} \in \calA_{\tau^{-1}+2\tau}\) approximately solves~\eqref{eq:RDEL} up to an additive perturbation term \(D^{(\tau)}\) in the sense of~\eqref{eq:F_RDEL}, and \(\|\Ftau(F^{(\tau)})D^{(\tau)}\|\leq \tau^{-1}\|D^{(\tau)}\|\) vanishes as \(\ell\to\infty\), then \(\|M^{(\tau)}-F^{(\tau)}\|\) converges to \(0\) as \(\ell\to\infty\). Here, \(M^{(\tau)}\) refers to the unique solution to~\eqref{eq:RDEL}. Notably, if we take \(F^{(\tau)}=\bbE(L-z\Lambda - i\tau I_{\ell})^{-1}\), then we obtain a way to establish pointwise convergence in \(z\) and \(\tau\). Utilizing \cref{condition:flat,condition:convergence_RDEL} as well as \cref{lemma:Ftau_vs_F,lemma:Ftau_admissible}, along with a diagonalization argument, we can establish the following result.

\begin{corollary}\label{corollary:stability}
    Let \(z\in \bbH\), \(M\in \calM\) be the unique solution to~\eqref{eq:DEL} and assume \cref{condition:flat,condition:convergence_RDEL} hold. For every \(\tau\in \bbR_{>0}\), let \(D^{(\tau)}\) be defined by~\eqref{eq:F_RDEL} with \(F^{(\tau)}=\bbE(L-z\Lambda - i\tau I_{\ell})^{-1}\). If \(\|D^{(\tau)}\| \to 0\) as \(\ell\to\infty\) for every \(\tau\in \bbR_{>0}\), then \(\|\bbE (L-z\Lambda)^{-1}-M(z)\|\to 0\) as \(\ell\to\infty\).
\end{corollary}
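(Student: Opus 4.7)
The plan is to decompose the difference using the roadmap laid out earlier in the paper,
\[
\bbE(L-z\Lambda)^{-1} - M(z) = T_{1} + T_{2} + T_{3},
\]
where \(T_{1} = \bbE(L-z\Lambda)^{-1} - \bbE(L-z\Lambda - i\tau I_{\ell})^{-1}\), \(T_{2} = \bbE(L-z\Lambda - i\tau I_{\ell})^{-1} - M^{(\tau)}(z)\), and \(T_{3} = M^{(\tau)}(z) - M(z)\), and to control each piece with a tool already available. The three terms vanish for different reasons: \(T_{1}\) shrinks with \(\tau\) uniformly in \(\ell\), \(T_{2}\) shrinks in \(\ell\) for each fixed \(\tau > 0\), and \(T_{3}\) is handled by~\ref{condition:convergence_RDEL}. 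The argument concludes by a diagonalization along the subsequence \(\{\tau_{k}\}\) supplied by that condition.

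For \(T_{1}\), \Cref{lemma:Ftau_vs_F} combined with Jensen's inequality gives \(\|T_{1}\| \leq \tau\,\bbE\|(L-z\Lambda)^{-1}\|^{2}\), and~\ref{condition:flat} bounds this expectation by some \(C < \infty\) uniformly in \(\ell\) for all sufficiently large \(\ell\). For \(T_{2}\), I will apply the stability estimate \Cref{lemma:stability} with \(F^{(\tau)} = \bbE(L-z\Lambda - i\tau I_{\ell})^{-1}\) and \(b = \tau^{-1} + 2\tau\). Membership \(F^{(\tau)} \in \calA_{b}\) follows from \Cref{lemma:Ftau_admissible}; the constant \(m_{b} = \|\bbE L\| + sb + |z| + \tau + 1\) appearing in the stability bound is bounded uniformly in \(\ell\) for fixed \(\tau\) by~\ref{condition:flat}; and the error magnitude satisfies \(\epsilon_{\tau} = \|\Ftau(F^{(\tau)})D^{(\tau)}\| \leq \tau^{-1}\|D^{(\tau)}\|\) by~\Cref{lemma:prior_bounds}, so the smallness hypothesis \(\epsilon_{\tau} < \tau m_{b}^{-2}\) of the stability lemma is met for all \(\ell\) large enough. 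Since the bound in \Cref{lemma:stability} is continuous in \(\epsilon_{\tau}\) and vanishes at \(\epsilon_{\tau} = 0\), it follows that \(\|T_{2}\| \to 0\) as \(\ell \to \infty\) for every fixed \(\tau > 0\). For \(T_{3}\),~\ref{condition:convergence_RDEL} provides the subsequence \(\{\tau_{k}\}\) along which \(\|T_{3}\| \leq f(\tau_{k}) + o_{\ell}(1)\) with \(f(\tau_{k}) \to 0\).

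Assembling these estimates at \(\tau = \tau_{k}\) and sending \(\ell \to \infty\) first, I obtain
\[
\limsup_{\ell\to\infty}\bigl\|\bbE(L-z\Lambda)^{-1} - M(z)\bigr\| \leq C\tau_{k} + f(\tau_{k})
\]
for every \(k \in \bbN\). Taking \(k \to \infty\) drives the right-hand side to zero, which is the claim.

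No step is genuinely hard, since all of the real work sits in~\Cref{lemma:Ftau_vs_F,lemma:Ftau_admissible,lemma:stability} and in the hypotheses~\ref{condition:flat} and~\ref{condition:convergence_RDEL}. The only delicate point to verify is that the constants \(m_{b}\) and \(C\) appearing in the regularization and stability bounds are uniform in \(\ell\) for each fixed \(\tau\); this uniformity is precisely what the flatness assumption~\ref{condition:flat} was engineered to deliver, and it is essential to ensure that the \(\ell \to \infty\) limit inside the \(\limsup\) can be taken before the \(\tau \to 0\) limit.
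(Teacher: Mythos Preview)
Your proposal is correct and follows essentially the same approach as the paper: the same three-term decomposition, the same lemmas (\Cref{lemma:Ftau_vs_F}, \Cref{lemma:Ftau_admissible}, \Cref{lemma:stability}) applied to each piece, and the same appeal to~\ref{condition:flat} and~\ref{condition:convergence_RDEL}. Your concluding diagonalization---taking \(\limsup_{\ell}\) first at fixed \(\tau_{k}\), then \(k\to\infty\)---is a slightly cleaner phrasing of what the paper describes as ``letting \(\tau\to 0\) and \(\ell\to\infty\) simultaneously,'' and it is equally valid.
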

\begin{proof}
    The result follows from a combination of \Cref{lemma:stability} along with \cref{condition:flat,condition:convergence_RDEL} as well as \cref{lemma:Ftau_vs_F,lemma:Ftau_admissible}. We write
    \begin{align*}
        \|\bbE (L-z\Lambda)^{-1}-M(z)\| & \leq \|\bbE (L-z\Lambda)^{-1}-\bbE (L-z\Lambda-i\tau I_{\ell})^{-1}\|
        \\ & + \|\bbE (L-z\Lambda-i\tau I_{\ell})^{-1} - M^{(\tau)}(z)\|
        \\ & + \|M^{(\tau)}(z)-M(z)\|.
    \end{align*}
    For the first term, we use \Cref{lemma:Ftau_vs_F} and \cref{condition:flat} to obtain \(\|\bbE(L-z\Lambda)^{-1}-\bbE (L-z\Lambda-i\tau I_{\ell})^{-1}\| \lesssim \tau\). For the second term, it follows from \Cref{lemma:stability} and \cref{condition:convergence_RDEL} that \(\|\bbE (L-z\Lambda-i\tau I_{\ell})^{-1} - M^{(\tau)}(z)\|\lesssim \tau^{-1}\tanh(\tau^{-4}\mathrm{arctanh}(\tau^{-4}\|D^{(\tau)}\|))\). For the third term, it follows directly from \cref{condition:convergence_RDEL} that there exists a sequence \(\{\tau_{k}\}_{k\in \bbN}\) and a function \(f:\bbR \mapsto \bbR_{\geq 0}\) such that \(\tau_{k}\to 0\) as \(k\to \infty\), \(f(\tau_{k})\to 0\) as \(k\to \infty\) and \(\|M^{(\tau)}(z)-M(z)\|\leq f(\tau_{k})+o_{\ell}(1)\). We obtain the result by letting \(\tau\to 0\) and \(\ell\to \infty\) simultaneously such that the rate of convergence of \(\tau\) is chosen in terms of the rate of convergence of \(D^{(\tau)}\).
\end{proof}

The proof of \Cref{corollary:stability} highlights a contrast between \(\tau\) and \(\ell\) regarding their impact on convergence behavior. On one hand, as \(\tau\) approaches \(0\), the solution to the regularized Dyson equation~\eqref{eq:RDEL} converges to the solution of the Dyson equation~\eqref{eq:DEL}. However, the solution to the Dyson equation possesses less desirable properties compared to the regularized solution, notably because the imaginary part is not guaranteed to be positive definite. On the other hand, as \(\ell\) approaches \(\infty\), we establish stability for fixed \(\tau\) by controlling the magnitude of the error \(\epsilon_{\tau}\). We leverage this stability to establish convergence between the expected regularized pseudo-resolvent and the solution to the DEL. Therefore, we need to allow \(\tau\) to approach \(0\) slowly enough as \(\ell\to\infty\) to preserve the stability property.

\subsection{Perturbation}\label{sec:perturbation}

In view of \Cref{lemma:stability} and \Cref{corollary:stability}, the focus shifts to proving that the perturbation matrix vanishes in norm as the problem dimension grows for every regularization parameter. For each \(\tau\in \bbR_{>0}\), we consider the expected regularized pseudo-resolvent \(F^{(\tau)}\equiv F^{(\tau)}(z) = \bbE (L-z\Lambda - i\tau I_{\ell})^{-1} \in \calA_{+}\) which satisfies \((\bbE L - \supop(F^{(\tau)}(z))-z\Lambda -i\tau I_{\ell})F^{(\tau)}(z) = I_{\ell}+D^{(\tau)}\) where \(D^{(\tau)}\) is a regularized perturbation term explicitly given by
\begin{equation}\label{eq:D}
    D^{(\tau)} = \bbE\left[\left(\bbE L - L -\supop(\bbE(L-z\Lambda -i\tau I_{\ell})^{-1})\right)(L-z\Lambda -i\tau I_{\ell})^{-1}\right].
\end{equation}

Under \cref{condition:gaussian_design}, we aim to decompose the perturbation matrix \(D^{(\tau)}\) into terms that are amenable to analysis. To achieve this, recall the definition of \(\Delta(L,\tau;z)\) from~\eqref{eq:dist_gaussian} and consider the decomposition
\begin{subequations}\label{eq:D_ineq}
    \begin{align}
        D^{(\tau)} & = \bbE\left[\supop((L-z\Lambda -i\tau I_{\ell})^{-1})(L-z\Lambda -i\tau I_{\ell})^{-1}\right] - \supop(F^{(\tau)})F^{(\tau)} \label{eq:D_ineq1}
        \\ & + \bbE\left[\tilde{\supop}((L-z\Lambda -i\tau \label{eq:D_ineq2}I_{\ell})^{-1})(L-z\Lambda -i\tau I_{\ell})^{-1}\right] 
        \\ & - \Delta(L,\tau). \label{eq:D_ineq3}
    \end{align}
\end{subequations}

In order to maintain an adequate level of abstraction, we will directly assume that the mapping \(g\mapsto \supop(L(g)-z\Lambda -i\tau I_{\ell})^{-1}\) is \(\lambda\)-Lipschitz with respect to the operator norm and employ an \(\epsilon\)-net argument to obtain bounds \(\bbE_{\tilde{L}}[\|(\tilde{L}-\bbE L)((L-z\Lambda -i\tau I_{\ell})^{-1}-\bbE (L-z\Lambda - i\tau I_{\ell})^{-1})(\tilde{L}-\bbE L)\|]\) for \(k\in \bbN\).

\begin{lemma}\label{lemma:supop_norm_gaussian}
    Fix \(z\in \bbH\) and \(\tau\in \bbR_{>0}\). Assume that the mapping \(g\in (\bbR^{\gamma},\|\cdot\|_{2})\mapsto \supop((L(g)-z\Lambda - i \tau I_{\ell})^{-1})\in (\bbC^{\ell\times \ell},\|\cdot\|_{2})\) is \(\lambda\)-Lipschitz. Then, for every \(k\in \bbN\), there exists an absolute constant \(c\in \bbR_{>0}\) such that
    \[
        \bbE\left[\|\supop\left((L-z\Lambda - i \tau I_{\ell})^{-1}-\bbE (L-z\Lambda - i\tau I_{\ell})^{-1}\right)\|^{k}\right] \leq c\ell^{k/2}\lambda^{k}.
    \]
\end{lemma}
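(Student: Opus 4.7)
The plan is to combine Gaussian concentration of measure for Lipschitz functions with a standard $\epsilon$-net argument on the unit sphere. Write $F(g) := \supop((L(g) - z\Lambda - i\tau I_\ell)^{-1})$ and $X(g) := F(g) - \bbE F$. Using the dual characterization of the operator norm, $\|X\| = \sup_{u,v} |u^{\ast} X v|$ where the supremum runs over complex unit vectors, I would approximate this supremum by a maximum over a $1/4$-net $\mathcal{N} \subseteq S^{\ell-1}_{\bbC}$ of the unit sphere, obtaining $\|X\| \leq 2 \max_{(u,v)\in \mathcal{N}\times \mathcal{N}} |u^{\ast} X v|$. A standard volumetric argument gives $|\mathcal{N}| \leq C^{\ell}$ for some absolute constant $C$.

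Next, for each fixed pair $(u,v)\in \mathcal{N}\times \mathcal{N}$, the scalar map $g \mapsto u^{\ast} F(g) v$ is $\lambda$-Lipschitz with respect to $\|\cdot\|_2$ on $\bbR^{\gamma}$, since
\[
|u^{\ast}(F(g)-F(g'))v| \leq \|F(g)-F(g')\|\cdot \|u\|\cdot \|v\| \leq \lambda\|g-g'\|_{2}
\]
by the hypothesis and Cauchy--Schwarz. Because $g \sim \mathcal{N}(0,I_{\gamma})$ under \ref{condition:gaussian_design}, the Borell--Tsirelson--Ibragimov--Sudakov Gaussian concentration inequality for Lipschitz functions yields the subgaussian tail bound
\[
\bbP\bigl(|u^{\ast} X v| \geq t\bigr) \leq 2\exp\!\left(-\frac{t^{2}}{2\lambda^{2}}\right), \qquad t \geq 0.
\]

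A union bound over $\mathcal{N}\times \mathcal{N}$ then gives $\bbP(\|X\| \geq t) \leq 2|\mathcal{N}|^{2} \exp(-t^{2}/(8\lambda^{2}))$. Plugging this into the identity $\bbE \|X\|^{k} = \int_{0}^{\infty} k t^{k-1}\bbP(\|X\|\geq t)\,\d t$ and splitting the integral at a threshold $t_{0} = c'\lambda\sqrt{\ell}$ chosen large enough that the exponential dominates the net cardinality $|\mathcal{N}|^{2} \leq C^{2\ell}$, the tail integral contributes only lower-order terms, while the trivial bound on $[0, t_{0}]$ gives $t_{0}^{k}$. This yields $\bbE\|X\|^{k} \leq c\,\ell^{k/2}\lambda^{k}$ with $c$ depending only on $k$ and absolute constants from the net.

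The only delicate point is ensuring the $\epsilon$-net reduction is carried out over complex unit vectors (doubling real dimension but keeping $|\mathcal{N}|$ still of the form $C^{\ell}$) and verifying that the Lipschitz constant passes cleanly through the bilinear form; neither represents a substantive obstacle. The whole argument is essentially a textbook covering-and-concentration scheme, and no structure of the superoperator $\supop$ or the linearization $L$ beyond the assumed Lipschitz property enters the proof.
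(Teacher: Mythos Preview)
Your proposal is correct and follows essentially the same approach as the paper: Gaussian concentration for each fixed bilinear form $u^{\ast}Xv$, an $\epsilon$-net over complex unit vectors, a union bound, and integration of the resulting tail bound split at a threshold of order $\lambda\sqrt{\ell}$. The paper's proof differs only in cosmetic details (it builds the complex net as $\mathcal{N}+i\mathcal{N}$ from a real $\epsilon$-net and parameterizes the integration slightly differently), so there is nothing further to add.
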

\begin{proof}
    By the Gaussian concentration inequality for Lipschitz functions~\cite[Theorem 2.1.12]{tao_rmt} and~\cite[Proposition 1.10]{ledoux_concentration_2001}, there exists some absolute constant \(c_{1},c_{2}\in \bbR_{>0}\) such that
    \[
        \bbP\left(\lambda^{-1} \left|u^{\ast}\supop\left((L-z\Lambda - i \tau I_{\ell})^{-1}-\bbE (L-z\Lambda - i\tau I_{\ell})^{-1}\right)v\right|\geq x\right)\leq c_{1}e^{-c_{2} x^{2}}
    \]
    for all unit vectors \(u,v\in \bbC^{\ell}\). Suppose that \(\epsilon\in (0,2^{-3/2})\) and let \(\net\) be an \(\epsilon\)-net for the unit ball of \(\ell\)-dimensional real vectors. Then, given \(u\in \bbC^{\ell}\), we may find \(v_{1},v_{2}\in \net\) such that \(\|u-v_{1}-iv_{2}\|^{2} = \|\Re[u]-v_{1}\|^{2}+\|\Im[u]-v_{2}\|^{2}\leq 2\epsilon^{2}\). In particular, \(\net+ i\net:=\{v_{1}+iv_{2}\st v_{1},v_{2}\in \net\}\) forms a \(\sqrt{2}\epsilon\)-net for the unit sphere of \(\ell\)-dimensional complex unitary vectors. By~\cite[Corollary 4.2.13]{vershynin_2018}, \(|\net+ i\net| \leq (2\epsilon^{-1}+1)^{2\ell}\).

    Let \(u,v\in \bbC^{\ell}\) be unitary and let \(u_{0},v_{0}\in \net + i\net\) such that \(\|u-u_{0}\|\leq \sqrt{2}\epsilon\) and \(\|v-v_{0}\|\leq \sqrt{2}\epsilon\). Let \(X\in \bbC^{\ell\times \ell}\) be any matrix. Using the identity \(u^{\ast}Xv=u_{0}^{\ast}Xv_{0}+(u^{\ast}-u_{0}^{\ast})Xv +u_{0}^{\ast}X(v-v_{0})\), we obtain \(|u^{\ast}Xv| \leq \sup_{u_{0},v_{0}\in \net+i\net}|u_{0}^{\ast}Xv_{0}|+2^{3/2}\epsilon \|X\|\). Taking the supremum over unitary complex vectors \(u\) and \(v\), we get that \(\|X\|\leq (1-2^{3/2}\epsilon)^{-1}\sup_{u_{0},v_{0}\in \net+i\net}|u_{0}^{\ast}Xv_{0}|\). In particular, for \(X=\supop((L-z\Lambda - i \tau I_{\ell})^{-1}-\bbE (L-z\Lambda - i\tau I_{\ell})^{-1})\), we apply a union bound to obtain \(\bbP\left(\lambda^{-1} \|X\|\geq y\right) \leq c_{1} (2\epsilon^{-1}+1)^{4\ell}e^{-c_{3} y^{2}}\) for every \(y\in \bbR\), where \(c_{3}=c_{2}(1-2^{3/2}\epsilon)\). Let \(c_{4}\in \bbR_{>0}\) and \(y=c_{4}(2\sqrt{\ell}+x)\) for all \(x\in \bbR_{\geq 0}\) such that \(y^{2}\geq c_{4}^{2}(4\ell+x^{2})\). Choosing \(c_{4}\) large enough such that \(c_{3}^{2}y^{2}\geq \ln(2\epsilon^{-1}+1)4\ell+x^{2}\) for every \(x\in \bbR_{\geq 0}\), we have \( c_{1} (2\epsilon^{-1}+1)^{4\ell}e^{-c_{3} y^{2}}\leq c_{1} (2\epsilon^{-1}+1)^{4\ell}e^{-\ln(2\epsilon^{-1}+1)4\ell-x^{2}}=c_{1}e^{-x^{2}}\). Let \(k\in \bbN\) be arbitrary. Then,
    \begin{align*}
        \bbE [\lambda^{-k}\|X\|^{k}] & = k\int_{0}^{\infty} \bbP\left(\lambda^{-1}\|X\|\geq y\right) y^{k-1}\d y
        \\ & = k\int_{0}^{2c_{4}\sqrt{\ell}} \bbP\left(\lambda^{-1}\|X\|\geq y\right) y^{k-1}\d y + k\int_{2c_{4}\sqrt{\ell}}^{\infty} \bbP\left(\lambda^{-1}\|X\|\geq y\right) y^{k-1}\d y.
    \end{align*}
    On one hand, it is straightforward to bound \(k\int_{0}^{2c_{4}\sqrt{\ell}} \bbP\left(\lambda^{-1}\|X\|\geq y\right) y^{k-1}\d y\leq 2^{k}c_{4}^{k}\ell^{k/2}\). On the other hand, we have
    \begin{align*}
        k\int_{2c_{4}\sqrt{\ell}}^{\infty} \bbP\left(\lambda^{-1}\|X\|\geq y\right) y^{k-1}\d y & = c^{k}_{4}k\int_{2c_{4}\sqrt{\ell}}^{\infty} \bbP\left(\lambda^{-1}\|X\|\geq c_{4}(2\sqrt{\ell}+x)\right) (2\sqrt{\ell}+x)^{k-1}\d x
        \\ & \leq c_{1}c^{k}_{4}k\int_{2c_{4}\sqrt{\ell}}^{\infty} e^{-x^{2}} (2\sqrt{\ell}+x)^{k-1}\d x
        \\ & = c_{1}c^{k}_{4}k\int_{0}^{\infty} e^{-(x+2c_{4}\sqrt{\ell})^{2}} (2(1+c_{4})\sqrt{\ell}+x)^{k-1}\d x.
    \end{align*}
    In particular, writing
    \[
        \int_{0}^{\infty} e^{-(x+2c_{4}\sqrt{\ell})^{2}} (2(1+c_{4})\sqrt{\ell}+x)^{k-1}\d x\leq e^{-2c^{4}\ell}\int_{0}^{\infty} e^{-x^{2}} (2(1+c_{4})\sqrt{\ell}+x)^{k-1}\d x
    \]
    and noting that \(\int_{0}^{\infty} e^{-x^{2}} (2(1+c_{4})\sqrt{\ell}+x)^{k-1}\d x\) is polynomial in \(\ell\), we get that
    \[
        \int_{2c_{4}\sqrt{\ell}}^{\infty} \bbP\left(\lambda^{-1}\|X\|\geq y\right) y^{k-1}\d y=o_{\ell}(1).
    \]
    The result follows.
\end{proof}

The practicality of \Cref{lemma:supop_norm_gaussian} relies on the Lipschitz constant \(\lambda\) satisfying \(\lim_{\ell\to\infty} \lambda \sqrt{\ell}=0\).
Under this condition, we may show that the perturbation term \(D^{(\tau)}\) vanishes in norm as \(\ell\to\infty\) for every \(\tau\in \bbR_{>0}\).

\begin{lemma}\label{lemma:D_vanishes}
    Let \(\tau\in \bbR_{> 0}\), \(z\in \bbH\) and \(D^{(\tau)}\) be the perturbation matrix in~\eqref{eq:D}. Under \cref{condition:gaussian_design}, assume that the mapping \(g\in (\bbR^{\gamma},\|\cdot\|_{2})\mapsto \supop((L(g)-z\Lambda - i \tau I_{\ell})^{-1})\in (\bbC^{\ell\times \ell},\|\cdot\|_{2})\) is \(\lambda\)-Lipschitz. Then, there exists an absolute constant \(c\in \bbR_{>0}\) such that
    \[
        \|D^{(\tau)}\| \leq c\tau^{-1} \sqrt{\ell}\lambda + \tau^{-1}\bbE\|\tilde{\supop}((L-z\Lambda - i\tau I_{\ell})^{-1})\| + \|\Delta(L,\tau)\|.
    \]
\end{lemma}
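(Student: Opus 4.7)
The plan is to apply the triangle inequality to the decomposition \eqref{eq:D_ineq} of $D^{(\tau)}$ that is recorded just above the statement, and to bound each of the three summands separately. The only substantive probabilistic input is \Cref{lemma:supop_norm_gaussian}; the rest is algebra together with the a priori bound $\|(L-z\Lambda - i\tau I_{\ell})^{-1}\| \leq \tau^{-1}$ established in the proof of \Cref{lemma:Ftau_admissible}, which transfers verbatim from the proof of \Cref{lemma:prior_bounds}. No new concentration ingredient is needed, and the argument is essentially a rearrangement.

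For the summand \eqref{eq:D_ineq1}, I would write $X := (L-z\Lambda - i\tau I_{\ell})^{-1}$ and recall $F^{(\tau)} = \bbE X$. Linearity of $\supop$ gives the algebraic identity
\[
\bbE[\supop(X)\,X] - \supop(F^{(\tau)})F^{(\tau)} = \bbE\bigl[\supop(X - F^{(\tau)})\,X\bigr],
\]
since the cross term $\supop(F^{(\tau)})\bbE X$ cancels against $\supop(F^{(\tau)})F^{(\tau)}$. Pulling the spectral norm inside the expectation, applying sub-multiplicativity, and using $\|X\| \leq \tau^{-1}$ produce the bound $\tau^{-1}\bbE\|\supop(X - F^{(\tau)})\|$ on the spectral norm of \eqref{eq:D_ineq1}. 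The Gaussian-Lipschitz hypothesis on $g \mapsto \supop((L(g)-z\Lambda - i\tau I_{\ell})^{-1})$ then feeds directly into \Cref{lemma:supop_norm_gaussian} with $k=1$, yielding $\bbE\|\supop(X - F^{(\tau)})\| \leq c\sqrt{\ell}\,\lambda$ for some absolute constant $c$. This accounts for the first contribution in the announced bound.

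For the summand \eqref{eq:D_ineq2}, the same a priori bound $\|X\| \leq \tau^{-1}$ combined with sub-multiplicativity of the spectral norm yields $\|\eqref{eq:D_ineq2}\| \leq \tau^{-1}\bbE\|\tilde{\supop}(X)\|$. The third summand \eqref{eq:D_ineq3} is literally $-\Delta(L,\tau)$ and contributes $\|\Delta(L,\tau)\|$. Summing the three estimates via the triangle inequality gives the lemma. There is no serious obstacle: the only delicate estimate is the Gaussian concentration of $\supop((L-z\Lambda - i\tau I_{\ell})^{-1})$ around its mean, which was handled in \Cref{lemma:supop_norm_gaussian} through an $\varepsilon$-net on the sphere together with the Gaussian concentration inequality for Lipschitz functions; that lemma is applied here as a black box and the remaining manipulations are elementary.
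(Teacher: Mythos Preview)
Your proposal is correct and matches the paper's proof essentially step for step: the paper applies the triangle inequality to the decomposition \eqref{eq:D_ineq}, bounds the first term via Jensen, sub-multiplicativity, the a priori estimate $\|(L-z\Lambda-i\tau I_\ell)^{-1}\|\le\tau^{-1}$, and \Cref{lemma:supop_norm_gaussian} with $k=1$, and handles the remaining two terms exactly as you do. Your explicit algebraic identity $\bbE[\supop(X)X]-\supop(F^{(\tau)})F^{(\tau)}=\bbE[\supop(X-F^{(\tau)})X]$ makes the first step a bit more transparent than the paper's terse reference, but the argument is the same.
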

\begin{proof}
    By~\eqref{eq:D_ineq}, we have
    \begin{align*}
        \|D^{(\tau)}\|  & \leq \|\bbE[ \supop((L-z\Lambda -i\tau I_{\ell})^{-1}-\bbE (L-z\Lambda - i\tau I_{\ell})^{-1})(L-z\Lambda -i\tau I_{\ell})^{-1}]\|
        \\ & + \| \bbE[\tilde{\supop}((L-z\Lambda -i\tau I_{\ell})^{-1})(L-z\Lambda -i\tau I_{\ell})^{-1}]\| + \|\Delta(L,\tau)\|.
    \end{align*}
    By Jensen's inequality, submultiplicativity of the operator norm, \Cref{lemma:prior_bounds} and \Cref{lemma:supop_norm_gaussian}, there exists an absolute constant \(c\in \bbR_{>0}\) such that \( \|\bbE[ \supop((L-z\Lambda -i\tau I_{\ell})^{-1}-\bbE (L-z\Lambda - i\tau I_{\ell})^{-1})(L-z\Lambda -i\tau I_{\ell})^{-1}]\|  \leq c \tau^{-1}\sqrt{\ell}\lambda\). Similarly, \(\|\bbE[\tilde{\supop}((L-z\Lambda -i\tau I_{\ell})^{-1})(L-z\Lambda -i\tau I_{\ell})^{-1}]\|  \leq \tau^{-1}\bbE\|\tilde{\supop}((L-z\Lambda - i\tau I_{\ell})^{-1})\|\). The result follows.
\end{proof}

As a direct outcome of \cref{lemma:D_vanishes,lemma:prior_bounds}, it follows that as the dimension \(\ell\) tends towards infinity, \(\|D^{(\tau)}\|\) diminishes, provided that \(\lim_{\ell\to\infty}\sqrt{\ell}\lambda = 0\), \(\lim_{\ell\to\infty}\|\supop\|=0\), and \(\lim_{\ell\to\infty}\|\Delta(L,\tau)\|=0\) for every \(\tau\in \bbR_{>0}\) sufficiently small. Before we proceed, we highlight two significant observations.

Despite the possibility to simplify the upper bound \(\tau^{-1}\bbE\|\tilde{\supop}((L-z\Lambda - i\tau I_{\ell})^{-1})\|\) in \Cref{lemma:D_vanishes} to \(\tau^{-2}\|\tilde{\supop}\|\) using \Cref{lemma:prior_bounds}, we choose to retain its current form since we consider it as more insightful and convenient in certain scenarios. For example, if \(B=0\) in the linearization, the regularized pseudo-resolvent becomes block-diagonal. Hence, the chosen bound allows us to focus solely on demonstrating the vanishing operator norm of \(\tilde{\supop}\) within the set of block diagonal matrices as \(\ell\) grows.

The convergence results from \Cref{corollary:stability} and \Cref{lemma:D_vanishes}, coupled with the outlined conditions on the linearization, establish that \(M(z)\) serves as a deterministic equivalent for the expected pseudo-resolvent \(\bbE (L-z\Lambda)^{-1}\) across the entire upper-half complex plane \(\bbH\).

\begin{corollary}\label{corollary:expect_deterministic_equivalent}
    Let \(z\in \bbH\) and \(\lambda\) be defined as in \Cref{lemma:D_vanishes}. Under \cref{condition:flat,condition:convergence_RDEL,condition:gaussian_design}, suppose that \(\lim_{\ell\to\infty}\sqrt{\ell}\lambda = \lim_{\ell\to\infty}\|\tilde{\supop}\|=\lim_{\ell\to\infty}\|\Delta(L,\tau)\|=0\) for every \(\tau\in \bbR_{>0}\) small enough. Then, \(\|\bbE(L-z\Lambda )^{-1}-M(z)\| \to 0\) as \(\ell\to\infty\).
\end{corollary}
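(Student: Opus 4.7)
The plan is to obtain this corollary by directly combining the stability result in \Cref{corollary:stability} with the perturbation bound in \Cref{lemma:D_vanishes}. More precisely, \Cref{corollary:stability} already reduces the desired convergence $\|\bbE(L-z\Lambda)^{-1}-M(z)\|\to 0$ to showing that the perturbation matrix $D^{(\tau)}$ defined by~\eqref{eq:F_RDEL} with $F^{(\tau)}=\bbE(L-z\Lambda-i\tau I_\ell)^{-1}$ satisfies $\|D^{(\tau)}\|\to 0$ as $\ell\to\infty$ for every $\tau\in\bbR_{>0}$. So the proof reduces to verifying this single vanishing condition under the stated hypotheses.

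To verify it, I would invoke \Cref{lemma:D_vanishes}, which yields the bound
\[
    \|D^{(\tau)}\| \leq c\tau^{-1}\sqrt{\ell}\lambda + \tau^{-1}\bbE\|\tilde{\supop}((L-z\Lambda-i\tau I_\ell)^{-1})\| + \|\Delta(L,\tau)\|
\]
for every $\tau\in\bbR_{>0}$, where the Lipschitz hypothesis on $g\mapsto \supop((L(g)-z\Lambda-i\tau I_\ell)^{-1})$ in the statement of the corollary is exactly what is required to apply \Cref{lemma:D_vanishes}. To control the middle term independently of the realization of $L$, I would use submultiplicativity together with \Cref{lemma:prior_bounds} (applied to the pseudo-resolvent, or equivalently \Cref{lemma:Ftau_admissible}) to get $\|\tilde{\supop}((L-z\Lambda-i\tau I_\ell)^{-1})\| \leq \|\tilde{\supop}\|\cdot\tau^{-1}$, which after taking expectations gives $\tau^{-1}\bbE\|\tilde{\supop}((L-z\Lambda-i\tau I_\ell)^{-1})\| \leq \tau^{-2}\|\tilde{\supop}\|$.

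Fixing an arbitrary small $\tau\in\bbR_{>0}$, the three prefactors $c\tau^{-1}$, $\tau^{-2}$, and $1$ are constants independent of $\ell$. The hypothesis $\lim_{\ell\to\infty}\sqrt{\ell}\lambda = \lim_{\ell\to\infty}\|\tilde{\supop}\| = \lim_{\ell\to\infty}\|\Delta(L,\tau)\| = 0$ for every sufficiently small $\tau$ therefore ensures that $\|D^{(\tau)}\|\to 0$ as $\ell\to\infty$ for each such $\tau$. Plugging this conclusion into \Cref{corollary:stability}, whose other hypotheses \cref{condition:flat,condition:convergence_RDEL} are assumed, yields the claim.

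I do not foresee a genuine obstacle in this argument, since the hard work has already been done in \Cref{corollary:stability} (handling the diagonal $\tau\to 0$, $\ell\to\infty$ limit via the CRF-pseudometric) and in \Cref{lemma:D_vanishes} (Gaussian concentration of $\supop$ via the $\epsilon$-net bound in \Cref{lemma:supop_norm_gaussian}). The only care point is making sure the $\tau$-dependence is treated in the right order: one must first fix $\tau$, let $\ell\to\infty$ so that $\|D^{(\tau)}\|\to 0$, and then the conclusion of \Cref{corollary:stability} is already structured to send $\tau\to 0$ slowly enough along a subsequence, inheriting the sequence from \cref{condition:convergence_RDEL}. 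No additional synchronization of rates is needed in the proof of the corollary itself.
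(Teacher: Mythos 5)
Your proof is correct and takes essentially the same route the paper intends: the corollary is obtained by combining \Cref{corollary:stability} with \Cref{lemma:D_vanishes}, bounding the middle term of the latter by $\tau^{-2}\|\tilde{\supop}\|$ via the a priori bound $\|(L-z\Lambda-i\tau I_\ell)^{-1}\|\leq\tau^{-1}$, and then fixing $\tau$ before taking $\ell\to\infty$. The paper presents exactly this argument informally in the two paragraphs preceding the corollary, including the remark that the $\tau^{-1}\bbE\|\tilde{\supop}((L-z\Lambda-i\tau I_\ell)^{-1})\|$ term simplifies to $\tau^{-2}\|\tilde{\supop}\|$.
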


\subsection{Concentration}\label{sec:concentration}

The only remaining task is to establish that the expected pseudo-resolvent is itself a deterministic equivalent for the pseudo-resolvent. We present one possible approach, which is based on \cref{condition:gaussian_design}.

\begin{lemma}\label{lemma:concentration}
    Under \cref{condition:gaussian_design}, let \(U\in \bbC^{\ell\times \ell}\) with \(\|U\|_{F}\leq 1\) and assume that the map \(g\in (\bbR^{\gamma},\|\cdot\|_{2})\mapsto  (L(g)-z\Lambda)^{-1}\in (\bbC^{\ell\times \ell},\|\cdot\|_{F})\) is \(\lambda\)-Lipschitz with \(\lambda \asymp \ell^{-r}\) for some \(r>0\). Then, \(\tr(U((L-z\Lambda)^{-1}-\bbE (L-z\Lambda)^{-1})) \to 0\) almost surely as \(\ell\to\infty\).
\end{lemma}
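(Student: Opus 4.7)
The plan is to apply Gaussian concentration for Lipschitz functions to the scalar random variable \(X_\ell := \tr(U(L-z\Lambda)^{-1})\) and then invoke Borel--Cantelli to upgrade the resulting exponential tail bound to almost sure convergence.

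First, I would check that the complex-valued map \(g \in (\bbR^{\gamma},\|\cdot\|_{2}) \mapsto \tr(U(L(g)-z\Lambda)^{-1}) \in \bbC\) is Lipschitz with constant of order \(\ell^{-r}\). Viewed as a composition, the outer map \(A \mapsto \tr(UA)\) is \(\|U\|_{F}\)-Lipschitz from \((\bbC^{\ell\times \ell}, \|\cdot\|_{F})\) to \(\bbC\) by Cauchy--Schwarz in the Frobenius inner product, and the inner map \(g \mapsto (L(g)-z\Lambda)^{-1}\) is \(c\ell^{-r}\)-Lipschitz into \((\bbC^{\ell\times\ell},\|\cdot\|_{F})\) by assumption. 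Since \(\|U\|_{F}\leq 1\), the composition is \(c\ell^{-r}\)-Lipschitz. Decomposing \(X_\ell = u(g) + iv(g)\) into real and imaginary parts then produces two real-valued \(c\ell^{-r}\)-Lipschitz functions of \(g\).

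Second, I would apply the Gaussian concentration inequality for Lipschitz functions (e.g., Borell's inequality, as used already in the paper in the proof of \Cref{lemma:supop_norm_gaussian}) to \(u\) and \(v\) separately. A union bound then yields a constant \(c' > 0\) with
\[
    \bbP\bigl(|X_\ell - \bbE X_\ell| > t\bigr) \leq 4\exp\bigl(-c' t^{2} \ell^{2r}\bigr)
\]
for every \(t>0\) and every sufficiently large \(\ell\).

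Third, I would close out with Borel--Cantelli: for each fixed \(t>0\), the tail bound is summable in \(\ell\) since \(r>0\), so almost surely \(|X_\ell - \bbE X_\ell| \leq t\) for all but finitely many \(\ell\). Intersecting these events along a countable sequence \(t_{k}\downarrow 0\) produces \(X_\ell - \bbE X_\ell \to 0\) almost surely, and by linearity of trace and expectation this is exactly the claimed convergence \(\tr(U((L-z\Lambda)^{-1}-\bbE (L-z\Lambda)^{-1})) \to 0\) a.s. There is no genuinely difficult step here: the only mild subtlety is that the observable is complex-valued, which is handled by splitting into real and imaginary parts; the remainder is routine Lipschitz bookkeeping combined with standard Gaussian concentration.
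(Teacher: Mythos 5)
Your proof is correct and follows essentially the same route as the paper: Cauchy--Schwarz in the Frobenius inner product to get Lipschitz continuity of the observable, Gaussian concentration for Lipschitz functions, and a Borel--Cantelli argument using $r>0$ for summability. The only (mild) difference is that you explicitly split the complex-valued observable into real and imaginary parts before invoking concentration, a small technical point the paper passes over silently.
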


\begin{proof}
    Let \(g_{1},g_{2}\in \bbR^{\gamma}\). Then, by Cauchy-Schwarz's inequality \(|\tr(U((L(g_{1})-z\Lambda)^{-1}-(L(g_{2})-z\Lambda)^{-1}))|\leq \|U\|_{F}\|(L(g_{1})-z\Lambda)^{-1}-(L(g_{2})-z\Lambda)^{-1}\|_{F}\leq \lambda \|g_{1}-g_{2}\|\). By Gaussian concentration inequality for Lipschitz functions~\cite{tao_rmt,ledoux_concentration_2001}, there exists absolute constants \(c_{1},c_{2}\in \bbR_{>0}\) such that \(\bbP(|\tr(U((L-z\Lambda)^{-1}-\bbE (L-z\Lambda)^{-1}))|\geq x)\leq c_{1}e^{-c_{2}x^{2}/\lambda^{2}}\leq c_{1}e^{-c_{3}x^{2}\ell^{2r}}\) for every \(x\in \bbR_{>0}\) and \(\ell\in \bbN\). Here, \(c_{3}\in \bbR_{>0}\) is some constant satisfying \(0<c_{3}\leq c_{2}/(\lambda \ell^{r})^{2}\) for every \(\ell\in \bbN\) large enough. In particular, for any \(\epsilon \in \bbR_{>0}\), \(\sum_{\ell=1}^{\infty}\bbP(|\tr(U((L-z\Lambda)^{-1}-\bbE (L-z\Lambda)^{-1}))|\geq \epsilon)\leq c_{1}\sum_{\ell=1}^{\infty}e^{-c_{3}\epsilon^{2}\ell^{2r}}\). By the integral test, the series \(\sum_{\ell=1}^{\infty}e^{-c_{3}\epsilon^{2}\ell^{2r}}\) converges, and the result follows from the Borel-Cantelli lemma.
\end{proof}

\section{Proof of Asymptotic Equivalence for Random Features}\label{sec:proof_rf}

In this section, we utilize the framework developed in previous sections to prove \Cref{theorem:rf_error}. Before delving into the argument, note that we may expand the test error in~\eqref{eq:test_error} as
\[
    \Etest = \|\hA A^{\top}(AA^{\top} + \delta I_{\ntrain})^{-1}y\|^{2}
- 2 \hy^{\top}\hA A^{\top}(AA^{\top} + \delta I_{\ntrain})^{-1}y
+ \|\hy\|^{2}.
\]
Each term in the above equations is a bilinear form, aligning well with the framework of deterministic equivalence. In order to apply our framework, we have to find a linearization such that a matrix of interest is contained in one of the blocks of the inverse. To this end, let \(\ell=\ntrain+d+2\ntest\) and consider the linearization
\begin{equation}\label{eq:first_linearization}
    \renewcommand{\arraycolsep}{7pt}
    L =
    \begin{bmatrix}
        \delta I_{\ntrain}  &A    & 0_{\ntrain\times \ntest}  & 0_{\ntrain\times \ntest}     \\
        A^{\top}    &  -I_{d\times d}   & 0_{d\times \ntest}    & \hA^{\top} \\
        0_{\ntest\times \ntrain} &  0_{\ntest\times d}          & 0_{\ntest\times \ntest}              & -I_{\ntest} \\
        0_{\ntest\times \ntrain}    & \hA           & -I_{\ntest}  & 0_{\ntest \times \ntest}
    \end{bmatrix}
    \in \bbR^{\ell\times \ell}.
\end{equation}
Taking \(\Lambda:=\diag\{ I_{\ntrain+d},0_{2\ntest\times 2\ntest}\}\), we use \Cref{lemma:block_inversion} to express the pseudo-resolvent \((L-z\Lambda)^{-1}\) block-wise as
\[
\renewcommand{\arraycolsep}{7pt}
    (L-z\Lambda)^{-1} = 
    \begin{bmatrix}
             R                         & (1+z)^{-1}RA           & (1+z)^{-1}RA\hA^{\top} & 0 \\
        (1+z)^{-1} A^{\top}R              & \bar{R}                &     \bar{R}\hA^{\top}  & 0 \\
        (1+z)^{-1}\hA A^{\top}R  & \hA\bar{R}  &\hA\bar{R}\hA^{\top}  &  -I_{\ntest}  \\
           0   &  0    &   -I_{\ntest}    &  0
    \end{bmatrix}.
\]
Here \(R:=((1+z)^{-1}AA^{\top} + (\delta-z) I_{\ntrain})^{-1}\) represents a resolvent and \(\bar{R}:=-((1+z)I_{d}+(\delta-z)^{-1}A^{\top}A)^{-1}\) is a co-resolvent. Indeed, \(\lim_{z\to 0}(L-z\Lambda)^{-1}_{3,1}=\hA A^{\top}(AA^{\top} + \delta I_{\ntrain})^{-1}\) holds one of the relevant expression for which we want to find a deterministic equivalent. Therefore, it suffices to find a deterministic equivalent for the pseudo-resolvent \((L-z\Lambda)^{-1}\) and take the spectral parameter to zero in order to recover a deterministic equivalent for \(\hA A^{\top}(AA^{\top} + \delta I_{\ntrain})^{-1}\).

The linearization in~\eqref{eq:first_linearization} yields the superoperator
\[
    \supop:M \in\bbC^{\ell\times \ell} \mapsto 
    \begin{bmatrix}
        \tr(M_{2,2}) K_{AA^{\top}} & 0 & 0 & \tr(M_{2,2})K_{A\hA^{\top}} \\
        0 &  \rho(M) I_{d}  & 0 & 0 \\
        0 & 0 & 0 & 0 \\
        \tr(M_{2,2})K_{\hA A^{\top}} & 0 & 0 & \tr(M_{2,2}) K_{\hA\hA^{\top}}
    \end{bmatrix} \in \bbC^{\ell\times \ell}
\]
where \(\rho(M):= \tr(K_{AA^{\top}} M_{1,1}+K_{A\hA^{\top}}M_{4,1}+K_{\hA A^{\top}}M_{1,4} + K_{\hA\hA^{\top}} M_{4,4})\). Then, \(\supop(M)=\bbE[(L-\bbE L)M(L-\bbE L)]-\tilde{\supop}(M)\)
holds with
\[
    \tilde{\supop}(M):=
    \bbE 
    \begin{bmatrix}
        0 & \substack{K_{AA^{\top}} M^{\top}_{2,1} + K_{A\hA^{\top}}M^{\top}_{2,4}} & 0 & 0 \\
        \substack{M^{\top}_{1,2}K_{AA^{\top}}  + M^{\top}_{4,2}K_{\hA A^{\top}}} &  0 &0 & 
        \substack{M_{1,2}^{\top}K_{A\hA^{\top}}  + M_{4,2}^{\top}K_{\hA\hA^{\top}}} \\
        0 & 0 & 0 & 0 \\
        0 & \substack{K_{\hA A^{\top}} M^{\top}_{2,1} + K_{\hA\hA^{\top}} M^{\top}_{2,4}} & 0 & 0
    \end{bmatrix}.
\]
By \Cref{theorem:main_properties}, there exists a unique solution \(M\in \calM\) such that \(M(z)\) solves~\eqref{eq:DEL} for every \(z\in \bbH\). Plugging-in the expression for the superoperator above and using \Cref{lemma:block_inversion}, we find that
\begin{equation}\label{eq:M_RF}
    M(z) = 
    \begin{bmatrix}
        \substack{( (\delta-z) I_{\ntrain}-\tr(M_{2,2})K_{AA^{\top}})^{-1}} & 0 & -\substack{\tr(M_{2,2})M_{1,1}K_{A\hA^{\top}}} & 0 \\
        0          & \substack{d^{-1}\tr(M_{2,2}) I_{d}} & 0 & 0  \\
        \substack{-\tr(M_{2,2})K_{\hA A^{\top}}M_{1,1}} & 0 & \substack{(\tr(M_{2,2}))^{2}K_{\hA A^{\top}}M_{1,1}K_{A\hA^{\top}}
        \\ +\tr(M_{2,2})K_{\hA\hA^{\top}}} & \substack{-I_{\ntest}} \\
        0  & 0  & \substack{-I_{\ntest}} & 0
    \end{bmatrix}
\end{equation}
with \(M_{2,2}=-(1+z+\tr(K_{AA^{\top}} M_{1,1}))^{-1}I_{d}\).

A key observation that greatly simplifies both the theoretical analysis of the DEL and enables us to derive an iterative procedure for computing its solution is the fact that we can treat the upper-left \(\ntrain+d\) block of the DEL as a separate DEL. This insight allows us to effectively break down the problem and focus on a smaller sub-DEL. Let \(\Lsub\) denote the upper-left \(\ntrain+d\) block of \(L\), define a \((\ntrain+d)\times (\ntrain + d)\) sub-superoperator
\[
    \supop^{(\mathrm{sub})}: X \mapsto 
    \begin{bmatrix}
        \tr(X_{2,2})K_{AA^{\top}} & 0 \\
        0 & \tr(K_{AA^{\top}}X_{1,1})
    \end{bmatrix}
\]
and a new sub-DEL mapping
\[
    \Fsub: f \in \calM^{(\mathrm{sub})}_{+} \mapsto (\bbE \Lsub -  \supop^{(\mathrm{sub})}(f(\cdot)) - (\cdot)I_{\ntrain+d})^{-1} \in \calM^{(\mathrm{sub})}_{+}.
\]
Here, the set \(\calM^{(\mathrm{sub})}_{+}=\Hol(\bbH,\calA^{(\mathrm{sub})})\) and \(\calA^{(\mathrm{sub})}=\{N\in \bbC^{(\ntrain+d)\times (\ntrain + d)}\st \Im[N]\succ 0\}\). Given that the sub-DEL has a spectral parameter spanning its diagonal, the iteration scheme \(N_{k+1} = \Fsub(N_{k})\) converges to the unique solution of the sub-DEL \(\Msub = \Fsub(\Msub)\) for any \(N_{0} \in \calM^{(\mathrm{sub})}_{+}\), as per~\Cref{proposition:existenceuniquness_RDEL}.

We will extensively use the DEL in~\eqref{eq:M_RF} and the sub-DEL to establish \Cref{theorem:rf_error}. To apply our theoretical framework, it is necessary to demonstrate that \(\|\Delta(L,\tau;z)\|\), as defined in~\eqref{eq:dist_gaussian}, vanishes as \(n\to\infty\) for every regularization parameter \(\tau\in \bbR_{>0}\). To achieve this, we employ a leave-one-out method. While the ensuing argument involves detailed and intricate calculations, it is tedious and differs heavily from the rest of the argument. Therefore, we state the result establishing that \(\|\Delta(L,\tau;z)\|\) is vanishing and defer the proof to Appendix B.

\begin{lemma}\label{lemma:universality}
    Fix \(z\in \bbH\). Let \(L\) be the linearization defined in~\eqref{eq:first_linearization} and \(\Delta(L,\tau)\) be defined as in~\eqref{eq:dist_gaussian}. Under the settings of \Cref{theorem:rf_error}, \(\lim_{\ell\to\infty}\|\Delta(L,\tau)\|=0\) for every \(\tau\in \bbR_{>0}\).
\end{lemma}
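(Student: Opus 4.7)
The plan is to exploit the column-wise independence of the random features matrices. Since $W=\varphi(Z)$ has independent columns $W_{j}$, the pairs $(a_{j},\hat a_{j})$, with $a_{j}=n^{-1/2}\sigma(XW_{j})$ and $\hat a_{j}=n^{-1/2}\sigma(\hX W_{j})$, are mutually independent and zero mean. Setting $\zeta_{j}:=(a_{j}^{T},\,0_{1\times d},\,0_{1\times \ntest},\,\hat a_{j}^{T})^{T}\in \bbR^{\ell}$ and $e_{j}:=e_{\ntrain+j}$, the random part of the linearization decomposes as $L-\bbE L=\sum_{j=1}^{d}V_{j}$ with $V_{j}:=\zeta_{j}e_{j}^{T}+e_{j}\zeta_{j}^{T}$ a rank-two perturbation; since $\zeta_{j}$ vanishes in its middle block, $\|V_{j}\|=\|\zeta_{j}\|$. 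An identical decomposition $\tilde L-\bbE L=\sum_{j}\tilde V_{j}$ holds for the i.i.d.\ copy, with $\tilde V_{j}\perp L$.

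Writing $G:=(L-z\Lambda-i\tau I_{\ell})^{-1}$, the double sum $\bbE[(\tilde L-\bbE L)G(\tilde L-\bbE L)G]=\sum_{j,k}\bbE[\tilde V_{j}G\tilde V_{k}G]$ collapses to its diagonal: for $j\neq k$, the summand vanishes because $G\perp\tilde L$, $\tilde V_{j}\perp\tilde V_{k}$, and $\bbE\tilde V_{j}=0$. For the first sum I introduce the leave-one-out object $L^{(j)}$ obtained from $L$ by zeroing the $j$-th column of $A$ and $\hA$, and set $G^{(j)}:=(L^{(j)}-z\Lambda-i\tau I_{\ell})^{-1}$, so $V_{j}\perp G^{(j)}$. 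The resolvent identity $G=G^{(j)}-GV_{j}G^{(j)}$ with $\bbE V_{j}=0$ yields $\bbE[V_{j}G]=-\bbE[V_{j}GV_{j}G^{(j)}]$. Iterating once more on the inner $G$ and using $V_{j}\perp G^{(j)}$, the leading contribution is $-\bbE_{L^{(j)}}[\Psi_{j}(G^{(j)})\,G^{(j)}]$, where $\Psi_{j}(M):=\bbE_{V_{j}}[V_{j}MV_{j}]$ is the deterministic second-moment linear map of $V_{j}$.

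The second sum is processed symmetrically: because $\tilde V_{j}\perp L$, conditioning on $L$ gives $\bbE[\tilde V_{j}G\tilde V_{j}G]=\bbE_{L}[\Psi_{j}(G)G]$, and by linearity of $\Psi_{j}$ together with the same resolvent expansion applied to both occurrences of $G$, this reduces to $\bbE_{L^{(j)}}[\Psi_{j}(G^{(j)})G^{(j)}]$ plus leave-one-out corrections. The two $\Psi_{j}(G^{(j)})G^{(j)}$ leading terms cancel exactly in $\Delta(L,\tau;z)$, and what remains is a sum over $j$ of cubic error terms of the schematic form
\[
\bbE[V_{j}GV_{j}G^{(j)}V_{j}G^{(j)}]\qquad\text{and}\qquad \bbE_{L}[\Psi_{j}(GV_{j}G^{(j)})G].
\]

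The main obstacle is ensuring the sum of these $d$ cubic corrections is $o(1)$ in operator norm; a naive triangle-inequality bound uses $\|V_{j}\|\leq\|\zeta_{j}\|=O(1)$ and $\|G\|\vee\|G^{(j)}\|\leq\tau^{-1}$ to give only $O(d)$. I would recover the missing small factor by exploiting the rank-two structure of $V_{j}$: expanding the cubic products reduces each correction to a linear combination of rank-at-most-four matrices weighted by scalar quadratic forms such as $\zeta_{j}^{T}G^{(j)}\zeta_{j}$, $e_{j}^{T}G^{(j)}\zeta_{j}$, $e_{j}^{T}G^{(j)}e_{j}$, and their $G$-analogues. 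By Gaussian concentration applied to the Lipschitz map $Z_{j}\mapsto\zeta_{j}$ (Lipschitz constant $O(n^{-1/2}\lambda_{\sigma}\lambda_{\varphi}(\|X\|\vee\|\hX\|))$), each such scalar concentrates around its mean with fluctuations of order $n^{-1/2}$; combining this with the boundedness of $\bbE\|A\|$, $\bbE\|\hA\|$, $\|X\|$, $\|\hX\|$ and a test-vector argument to pass from scalar to operator-norm estimates yields a per-$j$ bound of order $n^{-1/2}$, so the sum over $j\asymp n$ is $O(n^{1/2})$ in Frobenius norm but $o(1)$ in operator norm once the rank-four structure is exploited. Orchestrating these cancellations---in particular checking that the mean values of the scalar quadratic forms computed from $V_{j}$ and from $\tilde V_{j}$ coincide so that the leading $O(1)$ contributions of both error families annihilate each other---is the delicate core of the argument.
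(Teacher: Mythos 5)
Your rank-two decomposition $L-\bbE L=\sum_{j}V_{j}$ and the observation that the off-diagonal $(j,k)$ terms in the Gaussian double sum vanish by independence are correct, and the leading-order cancellation between $\bbE[V_{j}G]$ and $\bbE_{L}[\Psi_{j}(G)G]$ after one leave-one-out expansion is the right structural mechanism -- this is the same mechanism the paper exploits, though the paper implements the leave-one-out via a Schur complement of the $(\ntrain+j)$-th row and column rather than via a zeroed column.

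However there are two concrete gaps. First, the concluding claim that a per-$j$ bound of order $n^{-1/2}$ gives ``$O(n^{1/2})$ in Frobenius norm but $o(1)$ in operator norm once the rank-four structure is exploited'' does not follow as stated: rank-four matrices each of operator norm $O(n^{-1/2})$ can sum to a matrix of operator norm up to $O(n^{1/2})$. What actually rescues the argument is that each cubic correction is anchored at the $(\ntrain+j)$-th coordinate through $e_{\ntrain+j}$, so when you reassemble $\sum_{j}$ the contributions land in disjoint rows and columns. This means the diagonal $(e_{j}e_{j}^{T}$-type$)$ part is controlled by a $\max_{j}$ rather than a sum, and the row/column $(e_{j}\zeta_{j}^{T}$-type$)$ parts stack into rectangular blocks whose operator norm picks up a $\sqrt{\ell}$ factor but is then damped by the covariance bound $\|K\|\lesssim d^{-1}$ with $K=\bbE[\zeta_{1}\zeta_{1}^{T}]$, which follows from $\bbE\|A\|,\bbE\|\hA\|$ bounded. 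That last covariance-norm estimate is essential and is missing from your sketch; without it, the $\zeta_{j}\zeta_{j}^{T}$ contributions, which overlap across $j$, cannot be controlled. Second, invoking Gaussian concentration for the Lipschitz map $Z_{j}\mapsto\zeta_{j}$ gives concentration of $\|\zeta_{j}\|$, not of the quadratic form $\zeta_{j}^{T}G^{(j)}\zeta_{j}$; the latter requires a Hanson--Wright-type argument for concentrated (non-Gaussian) vectors, which the paper obtains by adapting~\cite[Lemma 4]{louart_random-matrix-approach}. Your plan implicitly assumes this quadratic-form concentration without establishing it.
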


An important lemma, which plays a key role in the proof of \Cref{lemma:universality} and is used in subsequent sections, establishes that the bound on the expected norm of the linearization can be extended to a bound on the expected value of powers of the norm of the linearization through a concentration argument. For completeness, we state and prove this lemma here.
\begin{lemma}\label{lemma:bound_moments}
    Under the settings of \Cref{theorem:rf_error}, \(\limsup_{n\to\infty}\bbE [\|L-\bbE L\|^{k}]<\infty\) for every \(z\in \bbH\) and \(k\in \bbN\).
\end{lemma}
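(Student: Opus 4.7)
The proof rests on reducing moment bounds for $\|L-\bbE L\|$ to moment bounds for $\|A\|$ and $\|\hA\|$, and then applying Gaussian concentration for Lipschitz functions.

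First, I would exploit the structure of the linearization in~\eqref{eq:first_linearization}: under the hypothesis $\bbE A = \bbE\hA = 0$, the matrix $L-\bbE L$ has nonzero blocks only in the positions where $A$, $A^T$, $\hA$, and $\hA^T$ appear, and all deterministic terms cancel. Bounding each off-diagonal block individually and invoking subadditivity of the operator norm on block matrices gives $\|L-\bbE L\| \leq C(\|A\|+\|\hA\|)$ for an absolute constant $C$. It therefore suffices to prove $\limsup_{n\to\infty}\bbE[\|A\|^k]<\infty$ and the analogous bound for $\hA$, for every $k\in \bbN$.

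Next, I would establish these moment bounds via Gaussian concentration. Writing $A = n^{-1/2}\sigma(X\varphi(Z))$ with $Z\in\bbR^{n_0\times d}$ having i.i.d.\ standard normal entries, I would verify that the map $Z\mapsto \|A(Z)\|$ is Lipschitz with respect to the Frobenius norm on $Z$ with constant $L_A := n^{-1/2}\lambda_\sigma \lambda_\varphi \|X\|$. This follows by composing: the entrywise map $\varphi$ is $\lambda_\varphi$-Lipschitz in Frobenius norm, left multiplication by $X$ is $\|X\|$-Lipschitz in Frobenius norm (by submultiplicativity $\|X(W_1-W_2)\|_F \leq \|X\|\|W_1-W_2\|_F$), the entrywise map $\sigma$ is $\lambda_\sigma$-Lipschitz, and the operator norm is $1$-Lipschitz with respect to the Frobenius norm. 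Under the hypotheses of \Cref{theorem:rf_error}, $L_A$ remains bounded as $n\to\infty$.

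Applying the standard Gaussian concentration inequality for Lipschitz functions then yields that $\|A\|-\bbE\|A\|$ is sub-Gaussian with variance proxy $O(L_A^2)$, so $\bbE[|\|A\|-\bbE\|A\||^k] \leq c_k L_A^k$ for every $k\in \bbN$, where $c_k$ is an absolute constant depending only on $k$. Combining this with the assumed bound $\limsup_{n\to\infty}\bbE\|A\|<\infty$ via $\bbE[\|A\|^k] \leq 2^{k-1}((\bbE\|A\|)^k + \bbE[|\|A\|-\bbE\|A\||^k])$ gives $\limsup_{n\to\infty}\bbE[\|A\|^k] <\infty$. The identical argument, with $X$ replaced by $\hX$, handles $\|\hA\|$. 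A final application of the triangle inequality $\bbE[\|L-\bbE L\|^k] \leq C^k 2^{k-1}(\bbE[\|A\|^k]+\bbE[\|\hA\|^k])$ concludes the proof.

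No step presents a genuine obstacle; the only point requiring care is tracking which norm is used at each stage of the composition defining $A$, since Gaussian concentration requires the Lipschitz bound in Frobenius (or Euclidean) norm on the Gaussian input, while the quantity of interest is the operator norm of the output.
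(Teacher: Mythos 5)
Your proposal is correct and follows essentially the same route as the paper's proof: identify $Z \mapsto \|A(Z)\|$ as $n^{-1/2}\lambda_\sigma\lambda_\varphi\|X\|$-Lipschitz in Frobenius norm, apply Gaussian concentration to conclude $\|A\|-\bbE\|A\|$ is sub-Gaussian, and combine moment bounds for sub-Gaussian variables with the assumed boundedness of $\bbE\|A\|$ and $\bbE\|\hA\|$. The only difference is that you make explicit the preliminary reduction $\|L-\bbE L\|\leq \|A\|+\|\hA\|$ (using $\bbE A = \bbE\hA = 0$ and the block structure of~\eqref{eq:first_linearization}), which the paper leaves implicit.
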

\begin{proof}
    Let \(W_{1}=\varphi(Z_{1}),W_{2}=\varphi(Z_{1})\) for some \(Z_{1},Z_{2}\in \bbR^{n_{0}\times d}\). Then,
    \begin{align*}
        |\|n^{-\frac{1}{2}}\sigma(XW_{1})\| - \|n^{-\frac{1}{2}}\sigma(XW_{2})\|| &\leq |n^{-\frac{1}{2}}\sigma(XW_{1}) - n^{-\frac{1}{2}}\sigma(XW_{2})\| 
        \\ & \leq n^{-\frac{1}{2}}\|\sigma(XW_{1})-\sigma(XW_{2})\|_{F} 
        \\ & \leq \frac{\lambda_{\sigma}}{\sqrt{n}}\|X(W_{1}-W_{2})\|_{F}
        \\ & \leq \frac{\lambda_{\sigma}\lambda_{\varphi}\|X\|}{\sqrt{n}}\|Z_{1}-Z_{2}\|_{F}.
    \end{align*}
    By Gaussian concentration inequality for Lipschitz functions~\cite{tao_rmt,ledoux_concentration_2001}, there exists absolute constants \(c_{1},c_{2}\in \bbR_{>0}\) such that \(\bbP(|\|A\|-\bbE\|A\|| \geq t)\leq c_{1}\exp(-c_{2}nt^{2}/(\lambda_{\sigma}^{2}\lambda_{\varphi}^{2}\|X\|^{2}))\). In other words, the random variable \(\|A\|-\bbE\|A\|\) is \(\lambda_{\sigma}\lambda_{\varphi}\|X\|/\sqrt{n}\)-sub-Gaussian, and similarly for \(\|\hA\|-\bbE \|\hA\|\). The result follows by applying the moment bound for sub-Gaussian random variables and using the fact that \(\bbE\|A\|\) and \(\bbE\|\hA\|\) are bounded as \(n\to\infty\).
\end{proof}

In the following sections, we use the Dyson equation for linearization framework to derive a deterministic equivalent for the matrix \(\hA A^{\top}(AA^{\top} + \delta I_{\ntrain})^{-1}\) in \Cref{sec:first_deterministic_equivalent}, and subsequently derive a deterministic equivalent for its square in \Cref{sec:second_deterministic_equiv}.

\subsection{First Deterministic Equivalent}\label{sec:first_deterministic_equivalent}

We will show that the solution to the DEL given in~\eqref{eq:M_RF} is a deterministic equivalent for the pseudo-resolvent \((L-z\Lambda)^{-1}\) in a neighborhood of \(z=0\). The key lies in establishing control over \(M(z)\) in the proximity of \(z=0\). This control is secured through the insights provided by the following lemma.

\begin{lemma}\label{lemma:control_origin}
    Let \(z\in \bbH\) with \(|z| < 1\wedge \delta\) and \(M\in \calM\) be the unique solution to~\eqref{eq:M_RF}. Then, \(\Re[M_{1,1}(z)]\succ 0\) and \(\Re[M_{2,2}(z)]\prec 0\). Additionally, \(\|M_{1,1}(z)\|\leq (\delta-\Re[z])^{-1}\) and \(\|M_{2,2}(z)\|\leq (1+\Re[z])^{-1}\).
\end{lemma}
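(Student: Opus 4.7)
The plan is to exploit the reduction to the sub-DEL introduced just above and to propagate the desired inequalities along a scalar fixed-point iteration. Since $\Fsub(N)_{2,2}$ is always a scalar multiple of $I_{d}$, the pair $(M_{1,1}(z), M_{2,2}(z))$ is entirely determined by a single complex scalar $m(z)$ through $M_{2,2}(z)=m(z)I_{d}$ and $M_{1,1}(z) = T(m(z))^{-1}$, where $T(m) := (\delta-z)I_{\ntrain}-dm K_{AA^{T}}$. The self-consistent system $\Msub = \Fsub(\Msub)$ then collapses to the scalar equation $m=f(m)$ with
\[
f(m) := -\Bigl(1 + z + \tr\bigl(K_{AA^{T}}T(m)^{-1}\bigr)\Bigr)^{-1}.
\]
By \Cref{proposition:existenceuniquness_RDEL}, iterating $N_{k+1}=\Fsub(N_{k})$ from any $N_{0} \in \calM^{(\mathrm{sub})}_{+}$ converges to $\Msub$; at the scalar level this becomes $m_{k+1}=f(m_{k})$ converging to $m(z)$.

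The key step is to exhibit a closed set $S\subset\bbC$ that is invariant under $f$ and already encodes the target bounds. Take $S := \{m\in\bbC : \Re[m]\le 0 \text{ and } |m|\le (1+\Re[z])^{-1}\}$. For $m\in S$,
\[
\Re[T(m)] = (\delta-\Re[z])I_{\ntrain}-d\Re[m]K_{AA^{T}} \succeq (\delta-\Re[z])I_{\ntrain} \succ 0,
\]
using $|z|<\delta$ and $K_{AA^{T}}\succeq 0$. Because $\|T(m)v\| \ge |v^{\ast} T(m)v| \ge v^{\ast}\Re[T(m)]v \ge \delta-\Re[z]$ for every unit $v\in\bbC^{\ntrain}$, one gets $\|T(m)^{-1}\| \le (\delta-\Re[z])^{-1}$; combined with the identity $\Re[T(m)^{-1}] = T(m)^{-1}\Re[T(m)]T(m)^{-\ast}$ and the invertibility of $T(m)$, this also forces $\Re[T(m)^{-1}] \succ 0$. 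Setting $w := 1+z+\tr(K_{AA^{T}}T(m)^{-1})$, we then have $\Re[w] \ge 1+\Re[z] > 0$ (using $|z|<1$), so $f(m)=-w^{-1}$ satisfies $\Re[f(m)]=-\Re[w]|w|^{-2}<0$ and $|f(m)| \le \Re[w]^{-1} \le (1+\Re[z])^{-1}$, which proves $f(S)\subseteq S$.

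To launch the iteration inside $S$, I would take $N_{0} := \diag\{(\delta-z)^{-1}I_{\ntrain}, -(1+z)^{-1}I_{d}\}$; one checks $\Im[N_{0}]\succ 0$ on $\bbH$ so that $N_{0} \in \calM^{(\mathrm{sub})}_{+}$, and the associated scalar $m_{0}=-(1+z)^{-1}$ lies in $S$. Induction yields $m_{k}\in S$ for every $k$, and since $S$ is closed with $m_{k}\to m(z)$, we conclude $m(z)\in S$. Finally, applying $f$ once more at $m=m(z)$ upgrades these weak bounds to the strict conclusions of the lemma via exactly the computations above: $\Re[M_{1,1}(z)]\succ 0$, $\|M_{1,1}(z)\| \le (\delta-\Re[z])^{-1}$, $\Re[M_{2,2}(z)] \prec 0$, and $\|M_{2,2}(z)\| \le (1+\Re[z])^{-1}$. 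I do not anticipate any substantive obstacle; the only subtle point is that the strict sign statements cannot be propagated directly along the iteration because $S$ is closed, so the final application of $f$ at the fixed point is what converts the weak bounds into the strict statements required by the lemma.
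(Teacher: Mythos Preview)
Your argument is correct and matches the paper's approach: propagate the real-part sign conditions through the convergent sub-DEL iteration and then extract the norm bounds from the fixed-point identity $\Re[M_{j,j}]=M_{j,j}\Re[T_{j,j}(M)]M_{j,j}^{\ast}$. The only imprecision is that the matrix iteration $N_{k+1}=\Fsub(N_k)$ does not literally project to $m_{k+1}=f(m_k)$ (the $(1,1)$ and $(2,2)$ blocks update from each other with a one-step lag, so in fact $m_{k+1}$ depends on $A_k$ rather than $m_k$), but this is harmless since your invariance argument applies verbatim once you track the pair $(\Re[A_k]\succeq 0,\ \Re[m_k]\le 0)$ jointly, which is exactly how the paper runs it.
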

\begin{proof}
    Let \(N\) be any \((\ntrain+d)\times (\ntrain+d)\) matrix-valued analytic function on \(\bbH\) such that \(\Im[N(z)]\succ 0\), \(N_{1,2}(z)=N_{2,1}(z)= 0\) for every \(z\in \bbH\). Further assume that \(\Re[N_{1,1}(z)]\succeq 0\) and \(\Re[N_{2,2}(z)]\preceq 0\) for every \(z\in \bbH\) with \(|z|< 1\wedge \delta\). By \Cref{lemma:res_trick},
    \begin{equation}\label{eq:real_part_F}
        \Re[\Fsub(N)] = \Fsub(N)
            \begin{bmatrix}
            \substack{(\delta-\Re[z]) I_{\ntrain} 
            \\  - \tr(\Re[N_{2,2}])K_{AA^{\top}}} & 0 \\
                0 & \substack{-(1+\Re[z]+\tr(K_{AA^{\top}}\Re[N_{1,1}]))}
            \end{bmatrix} 
            (\Fsub(N))^{\ast}
    \end{equation}
    where we omit the dependence of \(N\) on \(z\). Thus, \(\Re[\Fsub_{1,1}(N)]\succ 0\) and \(\Re[\Fsub_{2,2}(N)]\prec 0\) for every \(z\in \bbH\) with \(|z|< 1\wedge \delta\). Additionally, \(\Fsub_{1,2}(N)=\Fsub_{2,1}(N)=0\). Since the iterates \(N_{k+1}=\F^{(\mathrm{sub})}(N_{k})\) converges to the unique solution to the sub-DEL, it must be the case that  \(\Re[\Msub_{1,1}(z)] \succ 0\) and \(\Re[\Msub_{2,2}(z)]\prec 0\) for every \(z\in \bbH\) with \(|z|< 1\wedge \delta\). In fact, by uniqueness of the solution to~\eqref{eq:DEL}, \(\Re[M_{1,1}(z)] \succ 0\) and \(\Re[M_{2,2}(z)]\prec 0\) for every \(z\in \bbH\) with \(|z|< 1\wedge \delta\). Using the fact that \(\Fsub(\Msub)=\Msub\) and~\eqref{eq:real_part_F}, we get
    \[
        \Re[M_{1,1}] \succeq (\delta - \Re[z])M_{1,1}(M_{1,1})^{\ast} \quad \text{and} \quad \Re[M_{2,2}]\preceq -(1+\Re[z])M_{2,2}(M_{2,2})^{\ast}
    \]
    for every \(z\in \bbH\) with \(|z|\leq 1\wedge \delta\). Since the spectral norm maintains the Loewner partial ordering, it follows from \Cref{lemma:real/imag_norm_bound} that \(\|M_{1,1}\| \geq (\delta - \Re[z])\|M_{1,1}(M_{1,1})^{\ast}\| =(\delta - \Re[z])\|M_{1,1}\|^{2}\) and \(\|M_{2,2}\| \geq (1+\Re[z])\|M_{2,2}\|^{2}\). Rearranging yields the desired result.
\end{proof}

It will be useful later to not only have a deterministic equivalent for \((L-z\Lambda)^{-1}\), but also for \((\Lsub-zI_{\ntrain})^{-1}\).

\begin{lemma}\label{lemma:convergence_subDEL}
     Let \(z\in \bbH\) with \(|z|< \delta \wedge 1\) and \(\Msub \in \calM\) be the unique solution to the sub-DEL. Under the settings of \Cref{theorem:rf_error}, \(\tr(U((\Lsub-z I_{\ntrain+d})^{-1}-\Msub(z)))\to 0\) almost surely as \(n\to\infty\) for every sequence \(U\in \bbC^{(\ntrain+d)\times (\ntrain+d)}\) with \(\|U\|_{\ast}\leq 1\).
\end{lemma}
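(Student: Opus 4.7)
The plan is to apply \Cref{theorem:convergence} to the sub-linearization $\Lsub$. The crucial simplification is that the spectral parameter $zI_{\ntrain+d}$ spans the full diagonal of $\Lsub - zI_{\ntrain+d}$, placing us in the classical matrix Dyson equation setting rather than the pseudo-resolvent setting. In particular, we have the trivial a priori bound $\|(\Lsub - zI_{\ntrain+d})^{-1}\| \leq (\Im[z])^{-1}$ and the analogous bound on $\Msub(z)$ via its matrix-valued Stieltjes transform representation. These replace the delicate block-inversion estimates needed elsewhere in the paper.

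First I would verify \Cref{condition:gaussian_design}: $\Lsub$ depends on $A = n^{-1/2}\sigma(XW)$ with $W = \varphi(Z)$ for Gaussian $Z$, so it is a (non-linear) Lipschitz function of a Gaussian source with dimension-free Lipschitz constant by boundedness of $\|X\|$, $\lambda_{\sigma}$, $\lambda_{\varphi}$. For \Cref{condition:flat}, $\|\supop^{(\mathrm{sub})}\|$ is bounded in operator norm by $\|K_{AA^{T}}\|\leq \bbE\|A\|^{2}$, which is controlled via \Cref{lemma:bound_moments}, and $\bbE\|(\Lsub-zI_{\ntrain+d})^{-1}\|^{2}\leq (\Im[z])^{-2}$. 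The Lipschitz rates required by \Cref{theorem:convergence} follow from composition of Lipschitz maps and the standard fact that the resolvent map is Lipschitz in the underlying matrix entries with constant governed by $(\Im[z])^{-2}$.

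Next I would verify \Cref{condition:convergence_RDEL}. Because the spectral parameter spans the full diagonal, the stability analysis of \Cref{sec:stability} simplifies: the a priori norm bounds on $\Msub(z)$ and on $\bbE(\Lsub-zI_{\ntrain+d}-i\tau I_{\ntrain+d})^{-1}$ are uniform in $\tau\in \bbR_{\geq 0}$ and depend only on $(\Im[z])^{-1}$, so the CRF-pseudometric contraction argument of \Cref{lemma:strict_contraction} and \Cref{lemma:stability} goes through with $\tau$-independent bounds on the ball radius. For the perturbation analysis, $\|\tilde{\supop}^{(\mathrm{sub})}\|\to 0$ is shown by directly computing $\bbE[(\Lsub-\bbE \Lsub)M(\Lsub-\bbE \Lsub)]$ from the block structure of $\Lsub$ and checking that the terms not captured by $\supop^{(\mathrm{sub})}$ involve fluctuations of $A$ that concentrate as $n\to\infty$. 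The condition $\|\Delta(\Lsub,\tau)\|\to 0$ follows from a leave-one-out argument essentially identical to the one used to establish \Cref{lemma:universality} for the full linearization, since $\Lsub$ is a principal sub-block of $L$ with the same sources of randomness.

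Applying \Cref{theorem:convergence} then yields the claimed almost sure convergence. The restriction $|z|<\delta\wedge 1$ enters only insofar as \Cref{lemma:control_origin} supplies explicit control on $\Msub$ that is implicit in certain stability estimates near the real axis. The main obstacle I expect is ensuring that the stability machinery of \Cref{sec:stability}, originally designed for pseudo-resolvents, adapts cleanly to the full-resolvent sub-DEL setting with uniform bounds as $\tau\to 0$, and that the rate at which $\tau\to 0$ as $n\to\infty$ can be chosen to match the rates of convergence of $\|\tilde{\supop}^{(\mathrm{sub})}\|$ and $\|\Delta(\Lsub,\tau)\|$.
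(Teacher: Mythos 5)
Your plan follows the paper's proof quite closely: verify conditions \ref{condition:flat}, \ref{condition:convergence_RDEL}, \ref{condition:gaussian_design}, exploit the fact that the spectral parameter fills the diagonal of $\Lsub - zI_{\ntrain+d}$ so classical matrix-Dyson-equation stability theory applies, re-use the leave-one-out argument of \Cref{lemma:universality} (with $\hA = 0$) for $\|\Delta(\Lsub,\tau;z)\|\to 0$, and invoke \Cref{theorem:convergence}. Two points to tighten.

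First, your explanation of why $\|\tilde{\supop}^{(\mathrm{sub})}\|\to 0$ misattributes the mechanism. The operator $\tilde{\supop}^{(\mathrm{sub})}$ is deterministic --- it is built from the covariance kernels $K_{AA^{T}}, K_{A\hA^{T}}, K_{\hA A^{T}}, K_{\hA\hA^{T}}$, which are expectations, not random fluctuations. Its smallness is not a concentration phenomenon; it is the deterministic scaling fact that $\|K_{AA^{T}}\| = d^{-1}\|\bbE[AA^T]\| \lesssim n^{-1}$ (by Jensen and \Cref{lemma:bound_moments}), which immediately gives $\|\tilde{\supop}^{(\mathrm{sub})}\| \lesssim n^{-1}$ once you write out the off-diagonal blocks it produces. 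Framing it as ``fluctuations of $A$ that concentrate'' would lead you to look for a probabilistic estimate that is not needed and that does not exist (the quantity has no randomness left in it).

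Second, invoking \Cref{theorem:convergence} requires $\sqrt{\ell}\,\lambda \to 0$, where $\lambda$ is the Lipschitz constant of $g \mapsto \supop^{(\mathrm{sub})}((\Lsub(g)-z I - i\tau I)^{-1})$. You gesture at ``composition of Lipschitz maps,'' but the crucial quantitative input is that $\supop^{(\mathrm{sub})}$ has $\|\cdot\|_{F}\to\|\cdot\|_{2}$ operator norm $\lesssim n^{-1/2}$ (from Cauchy--Schwarz on the trace terms, combined again with $\|K_{AA^T}\|\lesssim n^{-1}$), and the covariance map $Z\mapsto \cov(Z)$ has Lipschitz constant $\lesssim n^{-1/2}\lambda_\sigma\lambda_\varphi\|X\|$. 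Multiplying gives $\lambda \lesssim (\Im z)^{-2} n^{-1}$, so $\sqrt{\ell}\lambda \lesssim n^{-1/2} \to 0$. Without the $n^{-1/2}$ gain from the $F\to 2$ norm of $\supop^{(\mathrm{sub})}$, the argument would not close. This is the nontrivial estimate your sketch elides, and the one the paper's proof makes explicit.
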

\begin{proof}
    We first derive some useful norm bounds. Recall that \(R=((1+z)^{-1}AA^{\top}+(\delta - z)I_{\ntrain})^{-1}\). For \(|z|< 1\wedge \delta\), \(\Re[(1+z)^{-1}]\geq |1+z|^{-2}(1-|z|)\geq (1-|z|)/4>0\) and \(\Re[\delta-z]\geq \delta-|z|\). Hence, \(\Re[R]\geq (\delta-|z|)RR^{\ast}\) which implies that \(\|R\|\leq (\delta-|z|)^{-1}\). A similar argument applied to \(\bar{R}=-((1+z)I_{d}+(\delta-z)^{-1}A^{\top}A)^{-1}\) gives \(\|\bar{R}\| \leq (1-|z|)^{-1}\). Furthermore, we know that \(\|RA\|^{2} =\|RAA^{\top}R^{\ast}\| \leq \|RAA^{\top}\| \|R\|\). By definition, \(RAA^{\top}=(1+z)I_{\ntrain}-(1+z)(\delta-z)R\). Thus, \(\|RAA^{\top}\| \leq 2(2+\delta)(\delta-|z|)^{-1}\) and \(\|RA\| \leq \sqrt{2(2+\delta)}(\delta-|z|)^{-1}\).

    We now check that \(\Lsub\) satisfies  \cref{condition:gaussian_design,condition:flat,condition:convergence_RDEL}. Since \(R\) and \(\bar{R}\) as well as the moments of \(\|A\|,\|\hA\|\) are bounded as \(n\) increases, \(\limsup_{\ell\to\infty}\bbE \|(L-z\Lambda)^{-1}\|^{2}<\infty\). In particular, \cref{condition:flat} is satisfied. Since there is a spectral parameter spanning the entire diagonal in the sub-DEL, it follows from the stability properties of the DEL or~\cite[Corollary 3.8]{alt_Kronecker} that \cref{condition:convergence_RDEL} is satisfied for the sub-DEL. It follows from \Cref{lemma:bound_moments} that \(\limsup_{\ell\to\infty}(\|\supop\|\vee  \|\bbE L\|)<\infty\). Finally, \cref{condition:gaussian_design} is evidently satisfied. Hence, the conditions are satisfied.
    
    Let \(D^{(\mathrm{sub})} = \bbE[(\bbE \Lsub - \Lsub -\supop^{(\mathrm{sub})}(\bbE(\Lsub-z I_{\ntrain+d})^{-1}))(\Lsub-zI_{\ntrain+d})^{-1}]\) be the perturbation matrix, as defined in~\eqref{eq:D}, associated with the linearization \(\Lsub\). In particular,
    \((\bbE \Lsub - \supop(\bbE(\Lsub-z I_{\ntrain+d})^{-1})-zI_{\ntrain+d})\bbE(\Lsub-z I_{\ntrain+d})^{-1} = I_{\ntrain+d}+D^{(\mathrm{sub})}\). As a consequence of \Cref{lemma:universality} with \(\hA=0\), \(\|\Delta(\Lsub,\tau;z)\|\to 0\) as \(n\to\infty\) for every \(\tau\in \bbR_{>0}\). Hence, to show that the perturbation matrix is vanishing as the dimension increases, we only have to establish that the term involving the Lipschitz constant and the term involving the norm of \(\tilde{\supop}^{(\mathrm{sub})}\) in \Cref{lemma:D_vanishes} are asymptotically negligible.

    Based on \cref{condition:gaussian_design}, write \(\Lsub \equiv \Lsub(Z) =\cov(Z)+\bbE \Lsub\) for \(Z\in \bbR^{n_{0}\times d}\) a matrix of i.i.d. standard normal entries and let \(\lambda\) be the Lipschitz constant associated to the function \(Z\in (\bbR^{n_{0}\times d},\|\cdot \|_{F})\mapsto \supop^{(\mathrm{sub})}((\Lsub(Z)-z I_{\ntrain+d})^{-1})\in (\bbC^{(\ntrain+d)\times (\ntrain+d)},\|\cdot \|_{2})\). Then, \(\lambda \leq (\Im[z])^{-2}\|\supop^{(\mathrm{sub})}\|_{F\mapsto 2}\lambda_{\cov}\) where \(\lambda_{\cov}\) is the Lipschitz constant associated with map \(\cov:Z\in (\bbR^{n_{0}\times d},\|\cdot\|_{F})\mapsto \cov(Z)\in (\bbR^{(\ntrain+d)\times (\ntrain+d)},\|\cdot \|_{F})\). For every \(N\in \bbC^{(\ntrain+d)\times (\ntrain+d)}\), we can use Cauchy-Schwarz inequality to obtain
    \[
        \|\supop^{(\mathrm{sub})}(N)\| \leq \|K_{AA^{\top}}\| |\tr(N_{2,2})|+|\tr(K_{AA^{\top}}N_{1,1})| \leq (\sqrt{d}+\sqrt{\ntrain})\|K_{AA^{\top}}\| \|N\|_{F}. 
    \]
    By Jensen's inequality, \(\|K_{AA^{\top}}\|=\|d^{-1}\bbE[AA^{\top}]\| \leq d^{-1}\bbE \|A\|^{2}\). In fact, by a similar argument, \(\|K_{AA^{\top}}\|\vee \|K_{\hA A^{\top}}\|\vee \|K_{A\hA^{\top}}\|\vee \|K_{\hA\hA^{\top}}\|\lesssim n^{-1}\). Since \(\bbE \|A\|\) is bounded by \Cref{lemma:bound_moments}, and we are working in the proportional limit, \(\|\supop^{(\mathrm{sub})}\|_{F\mapsto 2}\lesssim n^{-1/2}\). Next, let \(Z_{1},Z_{2}\in \bbR^{n_{0}\times d}\) and notice that \(\|\cov(Z_{1})-\cov(Z_{2})\|_{F} \leq n^{-1/2}\lambda_{\sigma}\lambda_{\varphi}\|X\|\|Z_{1}-Z_{2}\|_{F}\). Since \(\lambda_{\sigma}\), \(\lambda_{\varphi}\) and \(\|X\|\) are all bounded by assumption, \(\lambda \lesssim (\Im[z])^{-2}n^{-1}\). Finally, for every \(N\in \bbC^{(\ntrain+d)\times (\ntrain+d)}\),
    \[
        \tilde{\supop}^{(\mathrm{sub})}(N) = 
        \bbE 
        \begin{bmatrix}
            0 & K_{AA^{\top}} N^{\top}_{2,1} + K_{A\hA^{\top}}N^{\top}_{2,4} \\
            N^{\top}_{1,2}K_{AA^{\top}}  + N^{\top}_{4,2}K_{\hA A^{\top}} &  0  
        \end{bmatrix}.
    \]
    Since \(\|K_{AA^{\top}}\|\vee \|K_{\hA A^{\top}}\|\vee \|K_{A\hA^{\top}}\|\vee \|K_{\hA\hA^{\top}}\|\lesssim n^{-1}\), \(\|\tilde{\supop}^{(\mathrm{sub})}\| \lesssim n^{-1}\). Combining everything along with concentration of linear functional of the resolvent around their mean, the result follows from \Cref{theorem:convergence}.
\end{proof}

The final prerequisite needed to establish that the solution to~\eqref{eq:M_RF} acts as a deterministic equivalent for \((L-z\Lambda)^{-1}\), where \(L\) is defined in~\eqref{eq:first_linearization}, for every \(z\in \bbH\) within a neighborhood of the origin, is to confirm that \cref{condition:convergence_RDEL} holds. We demonstrate this in the following lemma.

\begin{lemma}\label{lemma:RDEL_conv_DEL}
    Suppose that \(M\in \calM\) is the unique solution to~\eqref{eq:M_RF} and \(M^{(\tau)}\) is the unique solution to the regularized version of the same equation. Then, \(M\) and \(M^{(\tau)}\) satisfy \cref{condition:convergence_RDEL} for all \(z\in \bbH\) with \(|z| < \delta \wedge 1\).
\end{lemma}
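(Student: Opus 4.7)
The plan is to verify \Cref{condition:convergence_RDEL} by establishing the stronger statement $\|M^{(\tau)}(z) - M(z)\| = O(\tau)$ as $\tau \to 0^{+}$, with implicit constants depending on $z$ and $\delta$ but not on $\ell$. Because the explicit form of $M$ in \eqref{eq:M_RF} determines every block of $M$ from the single scalar $\alpha = d^{-1}\tr(M_{2,2})$, and $M^{(\tau)}$ admits an analogous (though more involved) block representation in terms of $\alpha^{(\tau)} = d^{-1}\tr(M^{(\tau)}_{2,2})$, the heart of the matter is to establish $|\alpha^{(\tau)} - \alpha| = O(\tau)$ uniformly in $\ell$.

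First I would obtain uniform-in-$(\tau,\ell)$ norm bounds on $M^{(\tau)}_{1,1}$ and $\alpha^{(\tau)}$. The bound $\|M^{(\tau)}_{1,1}\| \leq (\Im[z])^{-1}$ is immediate from \Cref{lemma:prior_bounds}, and a matching bound on $|\alpha^{(\tau)}|$ can be extracted from the fixed-point relation $\alpha^{(\tau)} = -(1 + z + i\tau + \rho(M^{(\tau)}))^{-1}$ together with the nonnegativity of $\Im[\rho(M^{(\tau)})]$ (which follows from $\Im[M^{(\tau)}_{1,1}] \succeq 0$ and $K_{AA^{T}} \succeq 0$, plus analogous estimates for the remaining contributions to $\rho$). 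Second, I would carry out the block inversion of the RDEL. The key structural observation is that the $(3,3)$ block of $\bbE L - \supop(M^{(\tau)}) - z\Lambda - i\tau I_\ell$ equals $-i\tau I_{\ntest}$; Schur-complementing out this small block forces the three ``new'' blocks $M^{(\tau)}_{1,4}$, $M^{(\tau)}_{4,1}$, $M^{(\tau)}_{4,4}$ (which vanish identically in $M$) to be of operator norm $O(\tau)$, and expresses the remaining blocks of $M^{(\tau)}$ as explicit functions of $\alpha^{(\tau)}$ and $\tau$.

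Third, I would reduce everything to a scalar equation for $\xi := \alpha^{(\tau)} - \alpha$. Combining the fixed-point relation above with the resolvent identity applied to $M^{(\tau)}_{1,1} - M_{1,1}$ and the $O(\tau)$ estimates from Step~2 yields an equation of the shape
\begin{equation*}
    \xi \left(\frac{1}{\alpha\,\alpha^{(\tau)}} - d\,\tr\bigl(K_{AA^{T}} M^{(\tau)}_{1,1} K_{AA^{T}} M_{1,1}\bigr)\right) = O(\tau),
\end{equation*}
where the implicit constant depends only on the bounds of Step~1. A uniform-in-$\ell$ lower bound on the modulus of the parenthesized quantity gives $|\xi| = O(\tau)$, and feeding this back into the block formulas of Step~2 yields $\|M^{(\tau)}(z) - M(z)\| = O(\tau)$, which in turn verifies \Cref{condition:convergence_RDEL} with $f(\tau) = O(\tau)$.

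The main obstacle is precisely the uniform lower bound on the bracketed quantity. In the limit $\tau \to 0$, after multiplying by $\alpha\,\alpha^{(\tau)}$, it reduces to $1 - d\,\alpha\,\alpha^{(\tau)} \tr((K_{AA^{T}} M_{1,1})^{2})$, whose modulus is bounded below exactly when the bounded-denominator estimate of \Cref{lemma:assumption_holds} holds. That lemma is stated at $z = 0$, but I expect the extension to $z \in \bbH$ with $|z| < 1 \wedge \delta$ to go through along the same lines, with the strictly positive imaginary part $\Im[z] > 0$ only helping to separate the two quantities and produce an $\ell$-independent lower bound.
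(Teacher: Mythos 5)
Your route is genuinely different from the paper's. The paper first shows that the ``new'' blocks $M^{(\tau)}_{1,4}, M^{(\tau)}_{4,1}, M^{(\tau)}_{3,4}, M^{(\tau)}_{4,4}$ vanish as $\tau\to 0$ uniformly in $n$ (using that the $(3,3)$-block of $\mathbb{E}L - \supop(M^{(\tau)}) - z\Lambda - i\tau I_\ell$ is $-i\tau I_{\ntest}$, as you also observe), and then packages $\diag\{M^{(\tau)}_{1,1}, M^{(\tau)}_{2,2}\}$ as an approximate solution of the \emph{sub}-DEL at spectral parameter $z + i\tau$ with a perturbation $D^{(\mathrm{sub})}$ that vanishes as $\tau\to 0$ uniformly in $n$; it then invokes the CRF-based stability machinery of \Cref{sec:stability} for the sub-DEL (whose spectral parameter spans the full diagonal, so the stability bounds hold with $\Im[z]$ in place of $\tau$) and never touches \Cref{lemma:assumption_holds}. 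You instead collapse everything to a scalar fixed-point equation for $\xi = \alpha^{(\tau)}-\alpha$, which is more explicit and quantitative ($O(\tau)$ rather than mere convergence) but shoulders the burden of bounding a denominator.

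Two points in your argument need attention. First, the denominator $A_\tau := (\alpha\alpha^{(\tau)})^{-1} - d\,\tr(K_{AA^T}M^{(\tau)}_{1,1}K_{AA^T}M_{1,1})$ depends on $\alpha^{(\tau)}$ and $M^{(\tau)}_{1,1}$, hence implicitly on $\xi$ itself, so the equation $\xi A_\tau = O(\tau)$ is not a priori linear in $\xi$. After normalizing by $\alpha\alpha^{(\tau)}$ and expanding, you get $\xi\bigl(B + O(\xi) + O(\tau)\bigr) = O(\tau)$ where $B := 1 - d\alpha^2\tr\bigl((K_{AA^T}M_{1,1})^2\bigr)$; extracting $|\xi| = O(\tau)$ requires a continuity/connectedness bootstrap (for each fixed $\ell$ you know $\xi\to 0$ as $\tau\to 0$ by the construction of $M$, so you can start the bootstrap, and the uniform-in-$\ell$ constants let you propagate). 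This step is absent from your sketch and is where the circularity lives. Second, \Cref{lemma:assumption_holds} controls $1 - d\,\|K_{AA^T}^{1/2}M_{1,1}K_{AA^T}^{1/2}\|_F^2\,\|M_{2,2}\|^2$, which is not literally your $B$: your quantity has $\alpha^2\tr\bigl((K_{AA^T}M_{1,1})^2\bigr)$ without conjugation. One needs the Frobenius Cauchy--Schwarz inequality $\bigl|\tr\bigl((K^{1/2}M_{1,1}K^{1/2})^2\bigr)\bigr| \le \|K^{1/2}M_{1,1}K^{1/2}\|_F^2$ together with $|\alpha|^2 = \|M_{2,2}\|^2$ and the reverse triangle inequality to conclude $|B| \ge \epsilon$; make this explicit. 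Finally, a minor error: \Cref{lemma:assumption_holds} is already stated and proved for every $z\in\bbH$ with $|z| < \delta\wedge 1$, not only at $z=0$, so the extension you say you ``expect to go through'' is in fact already there.
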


\begin{proof}
    Fix \(z\in \bbH\) with \(|z|<1 \wedge \delta\) and let \(\tau\in \bbR_{>0}\). Expanding~\eqref{eq:RDEL}, we have \(((\delta - z -i\tau) I_{\ntrain}-\tr(M_{2,2}^{(\tau)})K_{AA^{\top}})M_{1,4}^{(\tau)}=\tr(M_{2,2}^{(\tau)})K_{A\hA^{\top}}M_{4,4}^{(\tau)}\), \(-i\tau M_{3,4}^{(\tau)}-M_{4,4}^{(\tau)}=0\) and \(-M_{3,4}^{(\tau)}-(\tr(M_{2,2}^{(\tau)})K_{\hA\hA^{\top}}+i\tau I_{\ntrain})M_{4,4}^{(\tau)}-\tr(M_{2,2}^{(\tau)})K_{\hA A^{\top}}M_{1,4}^{(\tau)}=I_{\ntest}\). Using those equations, we may solve for \(M_{4,4}^{(\tau)}\) and \(M_{1,4}^{(\tau)}\). In particular, we have
    \[
        M_{1,4}^{(\tau)} = \tr(M_{2,2}^{(\tau)})((\delta - z -i\tau) I_{\ntrain}-\tr(M_{2,2}^{(\tau)})K_{AA^{\top}})^{-1}K_{A\hA^{\top}}M_{4,4}^{(\tau)}
    \]
    and \((I_{\ntest}-i\tau \Xi)M_{4,4}^{(\tau)}=i\tau I_{\ntest}\) with \(\Xi = (\tr(M_{2,2}^{(\tau)})K_{\hA\hA^{\top}}+i\tau I_{\ntrain})-(\tr(M_{2,2}^{(\tau)}))^{2}K_{\hA A^{\top}}((\delta - z -i\tau) I_{\ntrain}-\tr(M_{2,2}^{(\tau)})K_{AA^{\top}})^{-1}K_{A\hA^{\top}}\). Since \(\|M_{2,2}^{(\tau)}\|\leq (\Im[z])^{-1}\) and \(\Im[M^{(\tau)}_{2,2}]\succeq 0\) for every \(\tau\in \bbR_{>0}\), it follows that \(\|\Xi\|\) is bounded as \(\tau \to 0\). Consequently, for every \(\tau\) small enough, \(\|i\tau \Xi\|<1\) and \(I_{\ntest}-i\tau \Xi\) is invertible. In particular, using the fact that \(\|K_{AA^{\top}}\|\vee \|K_{\hA A^{\top}}\|\vee \|K_{A\hA^{\top}}\|\vee \|K_{\hA\hA^{\top}}\|\lesssim n^{-1}\), \(M_{4,4}^{(\tau)}\) approaches \(0\) as \(\tau \to 0\) uniformly in \(n\). The same argument can be applied to \(M_{1,4}^{(\tau)}\).

    We turn our attention to~\eqref{eq:RDEL} again. Expanding, it is straightforward to see that \(M_{1,2}^{(\tau)}=M_{2,1}^{(\tau)}=M_{2,4}^{(\tau)}=M_{4,2}^{(\tau)}=0\). Furthermore, we can write
    \[
        (\bbE \Lsub -\supop^{(\mathrm{sub})}(\diag\{M_{1,1}^{(\tau)},M_{2,2}^{(\tau)}\})-(z + i\tau) I)  \diag\{M_{1,1}^{(\tau)},M_{2,2}^{(\tau)}\} = I + D^{(\mathrm{sub})}
    \]
    with
    \[
        D^{(\mathrm{sub})} = \diag\{\tr(M_{2,2}^{(\tau)})K_{A\hA^{\top}}M_{4,1}^{(\tau)},\tr(K_{A\hA^{\top}}M^{(\tau)}_{4,1}+K_{\hA A^{\top}}M^{(\tau)}_{1,4} + K_{\hA\hA^{\top}} M^{(\tau)}_{4,4}) M_{2,2}^{(\tau)}\}.
    \]
    In particular, \(\diag\{M_{1,1}^{(\tau)},M_{2,2}^{(\tau)}\}\) almost solves the sub-DEL up to an additive perturbation term \(D^{(\mathrm{sub})}\) which vanishes as \(\tau\to 0\) uniformly in \(n\). Using the stability properties of the sub-DEL, it follows that \(\|M_{1,1}^{(\tau)}-M_{1,1}\|\to 0\) and \(\|M_{2,2}^{(\tau)}-M_{2,2}\|\to 0\) as \(\tau\to 0\) uniformly in \(n\). Expanding~\eqref{eq:RDEL}, we can pass this convergence to the remaining blocks. Hence, \(M^{(\tau)}\) converges to \(M\) as \(\tau\to 0\) uniformly in \(n\). This completes the proof.
\end{proof}

\begin{lemma}\label{lemma:first_deterministic_equiv_res}
    Let \(z\in \bbH\) with \(|z|< \delta \wedge 1\) and \(M\in \calM\) be the unique solution to~\eqref{eq:M_RF}. Under the settings of \Cref{theorem:rf_error}, \(\tr(U((L-z)^{-1}-M(z)))  \to 0\) almost surely as \(n\to \infty\) for every \(U\in \bbC^{\ell\times \ell}\) with \(\|U\|_{\ast}\leq 1\).
\end{lemma}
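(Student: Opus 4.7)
The plan is to apply \Cref{theorem:convergence} to the linearization $L$ defined in \eqref{eq:first_linearization} and the solution $M$ given in \eqref{eq:M_RF}, verifying each of its hypotheses by combining the preceding lemmas. Condition \ref{condition:gaussian_design} is immediate because $L$ is a deterministic function of the standard Gaussian matrix $Z$ via $W=\varphi(Z)$, $A=n^{-1/2}\sigma(XW)$, and $\hA=n^{-1/2}\sigma(\hX W)$; condition \ref{condition:convergence_RDEL} is exactly \Cref{lemma:RDEL_conv_DEL}. For condition \ref{condition:flat}, the deterministic bounds on $\|R\|$ and $\|\bar{R}\|$ derived for $|z|<1\wedge\delta$ in the proof of \Cref{lemma:convergence_subDEL} combine with the moment estimates of \Cref{lemma:bound_moments} and the bound $\|K_{\cdot\cdot}\|\lesssim n^{-1}$ to give uniform control of $\bbE\|(L-z\Lambda)^{-1}\|^{2}$, $\|\supop\|$, and $\|\bbE L\|$ via the explicit block formula for the pseudo-resolvent.

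Among the three asymptotic inputs required by \Cref{theorem:convergence}, the bound $\|\tilde{\supop}\|=O(n^{-1})\to 0$ is immediate from its explicit formula, whose nonzero entries involve only the covariance matrices $K_{\cdot\cdot}$ of operator norm $O(n^{-1})$, while the vanishing of $\|\Delta(L,\tau)\|$ is exactly \Cref{lemma:universality}. For the Lipschitz constant $\lambda$ of $Z\mapsto\supop((L-z\Lambda-i\tau I_{\ell})^{-1})$ in operator norm, \Cref{lemma:prior_bounds} furnishes the deterministic bound $\|(L-z\Lambda-i\tau I_{\ell})^{-1}\|\leq\tau^{-1}$, Cauchy--Schwarz on the explicit formula of $\supop$ yields $\|\supop\|_{F\mapsto 2}\lesssim n^{-1/2}$, and the normalization of $A,\hA$ together with the boundedness of $\lambda_{\sigma},\lambda_{\varphi},\|X\|,\|\hX\|$ makes $Z\mapsto L(Z)$ an $O(n^{-1/2})$-Lipschitz map in Frobenius norm; these combine to give $\lambda=O(\tau^{-2}n^{-1})$, so in particular $\sqrt{\ell}\,\lambda\to 0$.

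The delicate step is the $c\ell^{-r}$-Lipschitz condition on $Z\mapsto(L-z\Lambda)^{-1}$ in Frobenius norm demanded by \Cref{lemma:concentration}, for which no deterministic operator-norm bound on the pseudo-resolvent is available---the block $\hA\bar{R}\hA^{T}$ is only of order $\|\hA\|^{2}$. I will handle this by truncation. On the event $E_{n}:=\{\|A\|\vee\|\hA\|\leq C\}$, with $C$ chosen strictly larger than $\limsup_{n}(\bbE\|A\|\vee\bbE\|\hA\|)$, the deterministic bounds on $\|R\|$ and $\|\bar{R}\|$ force $\|(L-z\Lambda)^{-1}\|=O(1)$, which combined with the $O(n^{-1/2})$ Frobenius-Lipschitz constant of $L$ makes the pseudo-resolvent $O(n^{-1/2})$-Lipschitz on $E_{n}$. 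Since $Z\mapsto\|A\|$ and $Z\mapsto\|\hA\|$ are $O(n^{-1/2})$-Lipschitz functions of the Gaussian matrix $Z$, Gaussian concentration yields $\bbP(E_{n}^{c})\leq C_{1}e^{-c_{2}n}$, which is summable in $n$. Replacing the pseudo-resolvent by a smooth truncation that agrees with it on $E_{n}$ and has operator norm bounded by a constant, \Cref{lemma:concentration} applies to the truncation and delivers almost sure concentration of $\tr(U(L-z\Lambda)^{-1})$ around its mean on $E_{n}$; Borel--Cantelli, together with the moment bound from \ref{condition:flat} to control the truncation error in expectation, disposes of the complement. Combined with \Cref{corollary:expect_deterministic_equivalent}, which provides the deterministic convergence $\|\bbE(L-z\Lambda)^{-1}-M(z)\|\to 0$, this yields the claimed almost sure convergence via \Cref{lemma:trace_norm}.
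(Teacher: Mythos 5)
Your proposal matches the paper's high-level strategy — verify \cref{condition:flat,condition:convergence_RDEL,condition:gaussian_design} and the three asymptotic hypotheses of \Cref{theorem:convergence}, then apply that theorem — and your verification of each of those items is sound and uses essentially the same lemmas the paper does (\Cref{lemma:universality}, \Cref{lemma:RDEL_conv_DEL}, \Cref{lemma:bound_moments}, the $O(n^{-1})$ bound on $\|K_{\cdot\cdot}\|$, the deterministic bounds on $R,\bar R$ near $z=0$, and the deterministic bound $\tau^{-1}$ on the \emph{regularized} pseudo-resolvent for the superoperator Lipschitz constant).

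Where your proposal goes beyond the paper's terse ``Utilizing a similar argument as in the proof of \Cref{lemma:convergence_subDEL}'' is the global Frobenius-Lipschitz hypothesis on $g\mapsto(L(g)-z\Lambda)^{-1}$ required by \Cref{lemma:concentration}. In \Cref{lemma:convergence_subDEL} that condition is unproblematic because $\Lsub$ is Hermitian, giving the deterministic bound $\|(\Lsub-zI)^{-1}\|\leq(\Im[z])^{-1}$ and hence an $O(n^{-1/2})$ Lipschitz constant. For the full linearization \eqref{eq:first_linearization} the block $(L-z\Lambda)^{-1}_{3,3}=\hA\bar R\hA^T$ is bounded in operator norm only up to $\|\hA\|^2\|\bar R\|$, which is controlled in expectation but not deterministically, so there is no global Lipschitz constant of the required order and \Cref{lemma:concentration} cannot be applied directly. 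You are right that the paper does not spell this out, and your truncation plan is the correct fix: work on the high-probability event $\{\|A\|\vee\|\hA\|\leq C\}$ (exponentially likely by the sub-Gaussian concentration of $\|A\|,\|\hA\|$ from the proof of \Cref{lemma:bound_moments}), extend the resolvent to a globally Lipschitz Hilbert-space-valued map (e.g.\ via Kirszbraun in Frobenius norm), apply Gaussian concentration and Borel--Cantelli, and control the truncation error in mean using the moment bounds of \ref{condition:flat}; then conclude with \Cref{corollary:expect_deterministic_equivalent} and \Cref{lemma:trace_norm}. So your argument is complete and closes a gap that the paper's proof leaves implicit.
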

\begin{proof}
   Utilizing a similar argument as in the proof of \Cref{lemma:convergence_subDEL}, we observe that \cref{condition:flat} is satisfied, \(\lim_{n\to\infty}\sqrt{n}\lambda =0\), and \(\limsup_{n\to\infty}\|\tilde{\supop}\|=0\), where \(\lambda\) is the Lipschitz constant defined in \Cref{lemma:D_vanishes}. In particular, since \(\|\Delta(L,\tau;z)\|\to 0\) as \(n\to\infty\) for every \(\tau\in \bbR_{>0}\) by \Cref{lemma:universality}, it follows from \Cref{lemma:D_vanishes} that \(\|D^{(\tau)}\|\to 0\) as \(n\to\infty\) for every \(\tau \in \bbR_{>0}\). By \Cref{lemma:RDEL_conv_DEL}, \cref{condition:convergence_RDEL} holds. The application of \Cref{theorem:convergence} yields the desired result.
\end{proof}

Having established that the solution to~\eqref{eq:M_RF} acts as a deterministic equivalent for the pseudo-resolvent \((L-z\Lambda)^{-1}\) linked to~\eqref{eq:first_linearization}, we aim to retrieve the expression in~\eqref{eq:test_error} by taking the spectral parameter to \(0\). To accomplish this, we need further control over the DEL near the origin.

\begin{lemma}\label{lemma:assumption_holds}
    Let \(z\in \bbH\) with \(|z|< \delta\wedge 1\) and \(M\in \calM\) the unique solution to~\eqref{eq:M_RF}. Then, for every \(\epsilon\in (0,2^{-1}]\) with \((2(\delta-\Re[z])^{-2}d^{2}\|K_{AA^{\top}}\|^{2}-1-\Re[z])\epsilon \leq 2^{-1}(1+\Re[z])\),
    \[
        1-d\|K_{AA^{\top}}^{\frac{1}{2}}M_{1,1}(z)K_{AA^{\top}}^{\frac{1}{2}}\|^{2}_{F}\|M_{2,2}(z)\|^{2} \geq \epsilon.
    \]
    In particular, under the settings of \Cref{theorem:rf_error}, there exists \(\epsilon\in (0,2^{-1}]\) depending on \(0<\eta<1\wedge \delta\) such that
    \[
        \limsup_{n\to\infty}d\|K_{AA^{\top}}^{\frac{1}{2}}M_{1,1}(z)K_{AA^{\top}}^{\frac{1}{2}}\|^{2}_{F}\|M_{2,2}(z)\|^{2}<1-\epsilon
    \]
    for all \(z\in \bbH\) with \(|z|\leq \eta\).
\end{lemma}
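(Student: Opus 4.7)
The plan exploits the commutativity of $M_{1,1}(z)$ with $K_{AA^T}$, which is immediate from the explicit form $M_{1,1}^{-1} = (\delta-z) I_{\ntrain} - \tr(M_{2,2}(z)) K_{AA^T}$ given in~\eqref{eq:M_RF}. Writing $m = m(z)$ for the scalar with $M_{2,2}(z) = m I_d$ and $t = \tr(K_{AA^T} M_{1,1}(z))$, the block structure of~\eqref{eq:M_RF} yields the scalar relation $-1/m = 1+z+t$. My first step would be to establish the identity
\[
    Q := d\|K_{AA^T}^{\frac{1}{2}} M_{1,1}(z) K_{AA^T}^{\frac{1}{2}}\|_F^2 \|M_{2,2}(z)\|^2 = \frac{\|(\delta-z) M_{1,1}(z) - I_{\ntrain}\|_F^2}{d},
\]
by combining three observations: the algebraic relation $\tr(M_{2,2}) K_{AA^T} M_{1,1} = (\delta-z) M_{1,1} - I_{\ntrain}$ coming from the expression for $M_{1,1}^{-1}$; the equality $|\tr(M_{2,2})|^2 = d^2 \|M_{2,2}\|^2$ since $M_{2,2}$ is a scalar multiple of the identity; and $\|K_{AA^T}^{1/2} M_{1,1} K_{AA^T}^{1/2}\|_F^2 = \|K_{AA^T} M_{1,1}\|_F^2$ arising from commutativity.

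Next I would diagonalize $M_{1,1}$ and $K_{AA^T}$ in their common eigenbasis and extract real parts. Let $\{\lambda_j\}$ denote the eigenvalues of $K_{AA^T}$ and $\mu_j = (\delta-z-dm\lambda_j)^{-1}$ the corresponding eigenvalues of $M_{1,1}$. Lemma~\ref{lemma:control_origin} gives $\Re[M_{1,1}(z)]\succ 0$, so $\Re[t] = \tr(K_{AA^T}\Re[M_{1,1}]) \geq 0$, and $\Re[m] < 0$, so $|\Re[m]|/|m|^2 = \Re[-1/m] = 1+\Re[z]+\Re[t]$. Writing $r = \sum_j \lambda_j|\mu_j|^2$ and $s = \sum_j \lambda_j^2|\mu_j|^2 = Q/(d|m|^2)$, a direct computation gives $\Re[t] = (\delta-\Re[z])r + d|\Re[m]|s$; combining with the previous equality yields the exact identity $(1-Q)\Re[t] = (\delta-\Re[z])r + Q(1+\Re[z])$, hence $\Re[t] \geq Q(1+\Re[z])/(1-Q)$ and in particular $Q < 1$.

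For the quantitative bound I would combine two estimates. From $|m| = |1+z+t|^{-1}$, the bound $|\mu_j| \leq (\delta-\Re[z])^{-1}$ (which follows because $\Re[\delta-z-dm\lambda_j] \geq \delta-\Re[z]$ when $\Re[m] < 0$ and $\lambda_j \geq 0$), and the rank estimate $\sum_j \lambda_j^2 \leq \|K_{AA^T}\|\tr(K_{AA^T}) \leq d\|K_{AA^T}\|^2$ (since $K_{AA^T}=d^{-1}\bbE[AA^T]$ has rank at most $d$), one obtains $Q|1+z+t|^2 \leq d^2\|K_{AA^T}\|^2/(\delta-\Re[z])^2$. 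On the other hand, $|1+z+t|^2 \geq (1+\Re[z]+\Re[t])^2 \geq (1+\Re[z])(1+\Re[z]+\Re[t]) \geq (1+\Re[z])^2/(1-Q)$, using $\Re[t]\geq 0$ together with the previous step. Combining delivers $Q/(1-Q) \leq d^2\|K_{AA^T}\|^2/((\delta-\Re[z])^2(1+\Re[z])^2)$, whence $1-Q \geq \epsilon$ whenever $\epsilon$ satisfies the stated constraint; the main technical care lies in matching the exact packaging of the $\epsilon$ bound (aided by $\epsilon \leq 1/2$). The asymptotic statement is then immediate: under Theorem~\ref{theorem:rf_error}'s hypotheses, \Cref{lemma:bound_moments} yields $\|K_{AA^T}\| \leq d^{-1}\bbE\|A\|^2 \lesssim n^{-1}$, so $d\|K_{AA^T}\|$ stays bounded as $n \to \infty$ while $\delta-\Re[z]$ and $1+\Re[z]$ are uniformly bounded away from zero on the compact set $\{|z|\leq \eta\}$, delivering a uniform $\epsilon > 0$.
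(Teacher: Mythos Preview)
Your route and the paper's coincide at the decisive step: both derive the exact identity
\[
  (1-Q)\,\Re[t] \;=\; (\delta-\Re[z])\,r \;+\; Q\,(1+\Re[z]),
\]
with $Q=d\|K_{AA^T}^{1/2}M_{1,1}K_{AA^T}^{1/2}\|_F^2\|M_{2,2}\|^2$, $t=\tr(K_{AA^T}M_{1,1})$, $r=\tr(K_{AA^T}M_{1,1}M_{1,1}^{\ast})$. The paper arrives at it by applying the real-part formula~\eqref{eq:real_part_F} twice and substituting $-\tr(\Re[M_{2,2}])=d\|M_{2,2}\|^2(1+\Re[z]+\Re[t])$; your eigenvalue computation in the common eigenbasis is the same calculation written coordinatewise. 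The observation $Q<1$ and the rank estimate $\tr(K_{AA^T})\le d\|K_{AA^T}\|$ are also used in both arguments.

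Where you diverge is in how the identity is cashed out. The paper argues by contradiction: assuming $Q>1-\epsilon$ it gets $\Re[t]\ge t'/\epsilon$, feeds this into $\|M_{2,2}\|\le(1+\Re[z]+\Re[t])^{-1}$, and concludes $Q\le 1/2$, contradicting $\epsilon\le 1/2$. You instead bound directly $Q/(1-Q)\le C:=d^2\|K_{AA^T}\|^2\big/\big((\delta-\Re[z])^2(1+\Re[z])^2\big)$. Your packaging is arguably cleaner, but note that it yields $1-Q\ge\epsilon$ under the condition $\epsilon(1+C)\le 1$, i.e.\ $\epsilon\,d^2\|K_{AA^T}\|^2\le (1+\Re[z])^2(1-\epsilon)(\delta-\Re[z])^2$, which is \emph{not} the same inequality as the lemma's stated constraint $(2(\delta-\Re[z])^{-2}d^2\|K_{AA^T}\|^2-1-\Re[z])\epsilon\le 2^{-1}(1+\Re[z])$; neither implies the other for all admissible $z$ (the comparison depends on whether $1+\Re[z]\gtrless 1$). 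So your phrase ``whence $1-Q\ge\epsilon$ whenever $\epsilon$ satisfies the stated constraint'' overstates what follows from your inequality; to recover the exact form in the lemma you would need to mimic the paper's contradiction step. This is only a cosmetic mismatch, however: for the ``in particular'' asymptotic statement both constraints are equivalent, since $d\|K_{AA^T}\|\le\bbE\|A\|^2$ is bounded and $\delta-\Re[z]$, $1+\Re[z]$ are uniformly bounded away from~$0$ on $\{|z|\le\eta\}$, so your argument delivers the conclusion that matters.
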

\begin{proof}
    Fix \(z\in \bbH\) with \(|z|< 1\wedge \delta\) and write \(M\equiv M(z)\). Let \(\epsilon\in (0,2^{-1}]\) such that
    \[
        (2(\delta-\Re[z])^{-2}d^{2}\|K_{AA^{\top}}\|^{2}-1-\Re[z])\epsilon \leq 2^{-1}(1+\Re[z])
    \]
    and assume, by contradiction, that \(d\|K_{AA^{\top}}^{\frac{1}{2}}M_{1,1}(z)K_{AA^{\top}}^{\frac{1}{2}}\|^{2}_{F}\|M_{2,2}(z)\|^{2}>1-\epsilon\). Using the definition of \(M_{1,1}\) and \(M_{2,2}\) and repeatedly applying~\eqref{eq:real_part_F}, 
    \begin{align*}
        \tr(K_{AA^{\top}}\Re[M_{1,1}]) & = (\delta-\Re[z])\tr(K_{AA^{\top}}M_{1,1}M_{1,1}^{\ast})
        \\ & - \tr(\Re[M_{2,2}])\tr(K_{AA^{\top}}M_{1,1}K_{AA^{\top}}M_{1,1}^{\ast}) 
        \\ & = (\delta-\Re[z])\tr(K_{AA^{\top}}M_{1,1}M_{1,1}^{\ast}) 
        \\ & + d\|M_{2,2}\|^{2}\|K_{AA^{\top}}^{\frac{1}{2}}M_{1,1}(z)K_{AA^{\top}}^{\frac{1}{2}}\|^{2}_{F} (1+\Re[z])
        \\ & + d\|M_{2,2}\|^{2}\|K_{AA^{\top}}^{\frac{1}{2}}M_{1,1}(z)K_{AA^{\top}}^{\frac{1}{2}}\|^{2}_{F} \tr(K_{AA^{\top}}\Re[M_{1,1}])
        \\ & \geq (\delta-\Re[z])\tr(K_{AA^{\top}}M_{1,1}M_{1,1}^{\ast}) + (1-\epsilon) (1+\Re[z])
        \\ & + (1-\epsilon)\tr(K_{AA^{\top}}\Re[M_{1,1}]).
    \end{align*}
    Solving for \(\tr(K_{AA^{\top}}\Re[M_{1,1}])\), we obtain that \(\tr(K_{AA^{\top}}\Re[M_{1,1}]) = t\epsilon^{-1}\) with
    \[
        t:= (\delta-\Re[z])\tr(K_{AA^{\top}}M_{1,1}M_{1,1}^{\ast}) + (1-\epsilon) (1+\Re[z]).
    \]
    In particular, taking the real part of \(M_{2,2}\), we have
    \[
        -\Re[M_{2,2}] = \|M_{2,2}\|^{2}(1+\Re[z]+\tr(K_{AA^{\top}}\Re[M_{1,1}]))I_{d} \succeq \|M_{2,2}\|^{2}(1+\Re[z]+t\epsilon^{-1})I_{d}.
    \]
    By taking the norm on both sides and leveraging the properties that the spectral norm preserves the Loewner partial ordering and that the spectral norm of the real and imaginary parts of a complex matrix are bounded above by the spectral norm of the matrix itself, we obtain
    \[
        \|M_{2,2}\|^{2}(1+\Re[z]+t\epsilon^{-1}) \leq \|\Re[M_{2,2}]\|\leq \|M_{2,2}\|.
    \]
    Rearranging, this implies that \(\|M_{2,2}\|\leq (1+\Re[z]+t\epsilon^{-1})^{-1}\). By \Cref{lemma:control_origin} and the definition of \(\epsilon\),
    \begin{align*}
        d\|K_{AA^{\top}}^{\frac{1}{2}}M_{1,1}(z)K_{AA^{\top}}^{\frac{1}{2}}\|^{2}_{F}\|M_{2,2}(z)\|^{2} & \leq \frac{(\delta-\Re[z])^{-2}d^{2}\|K_{AA^{\top}}\|^{2}}{1+\Re[z]+t\epsilon^{-1}}
         \leq 2^{-1}. 
    \end{align*}
    This is a contradiction. Thus, it must be the case that
    \[
        1-d\|K_{AA^{\top}}^{\frac{1}{2}}M_{1,1}(z)K_{AA^{\top}}^{\frac{1}{2}}\|^{2}_{F}\|M_{2,2}(z)\|^{2}\geq \epsilon
    \]
    for every \(\epsilon\in (0,2^{-1}]\) with \((2\delta^{-2}d^{2}\|X\|^{4}-1-\Re[z])\epsilon \leq 2^{-1}(1+\Re[z])\). The result follows.
\end{proof}

The statement of \Cref{lemma:assumption_holds} is intricate because the left-hand side of the inequality
\[
    (2\delta^{-2}d^{2}\|K_{AA^{\top}}\|^{2} - 1-\Re[z])\epsilon \leq 2^{-1}(1+\Re[z])
\]
may be negative. Nevertheless, the essence of \Cref{lemma:assumption_holds} lies in the fact that the quantity \(1-d\|K_{AA^{\top}}^{1/2}M_{1,1}(z)K_{AA^{\top}}^{1/2}\|^{2}_{F}\|M_{2,2}(z)\|^{2}\) can be consistently bounded away from \(0\) regardless of the dimension.

Now that we have some control on the solution of the DEL when the spectral parameter is close to the origin, we still need to continuously extend the function \(M\) to its boundary point \(0\). 
To do so, we analytically extend \(M\) by reflection to the lower complex plane \(\{z \in \bbH \st  \Im[z] < 0\}\) through an open interval containing the origin.

\begin{lemma}\label{lemma:convergence_origin}
    The unique solution \(M\) to~\eqref{eq:DEL} can be extended analytically to the lower-half complex plane through the open interval \((-(1\wedge \delta),1\wedge \delta)\).
\end{lemma}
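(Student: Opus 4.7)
The plan is to combine the Stieltjes representation of $M$ from \Cref{lemma:stieltjes_representation} with a vanishing estimate for $\Im[M]$ near the real interval $I := (-(1\wedge\delta), 1\wedge\delta)$. If the matrix-valued measure $\Omega$ in that representation assigns no mass to $I$, then $M(z) = M_{\infty} + \int_{\bbR} \Omega(\d\lambda)/(\lambda-z)$ is analytic on $\bbC\setminus\supp(\Omega)$, which contains $\bbH$, $I$, and a neighborhood of $I$ in the lower half plane, yielding the desired continuation. By the matrix-valued Stieltjes inversion formula it suffices to show $\|\Im[M(x+i\eta)]\|\to 0$ as $\eta\downarrow 0$, uniformly on compact subsets of $I$.

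To establish this boundary vanishing, apply \Cref{lemma:real/imag_inverse} to the DEL, using that $\bbE L$ is real and $\supop$ is linear and symmetric, to obtain the self-consistent identity
\[
\Im[M(z)] = M(z)\,\bigl(\Im[z]\,\Lambda + \supop(\Im[M(z)])\bigr)\, M(z)^{\ast}.
\]
Inserting the explicit forms of $\supop$ and $M$ from \eqref{eq:M_RF} and extracting the $(1,1)$ and $(2,2)$ block equations reduces this identity to a scalar linear system in the two quantities $v := \tr(K_{AA^{T}}\Im[M_{1,1}])$ and $w := \Im[m]$, where $m := \tr(M_{2,2})/d$. Elimination produces $(1-c)\,v = O(\Im[z])$ with
\[
c = d\,\|M_{2,2}\|^{2}\,\|K_{AA^{T}}^{1/2} M_{1,1} K_{AA^{T}}^{1/2}\|_{F}^{2},
\]
together with an analogous $O(\Im[z])$ bound for $w$ via the identity $\Im[m]=|m|^{2}(\Im[z]+v)$.

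\Cref{lemma:assumption_holds} provides $1-c\geq \epsilon>0$ uniformly for $|z|\leq \eta'<1\wedge\delta$ and large $n$, while \Cref{lemma:control_origin} supplies the uniform norm bounds on $M_{1,1}$ and $M_{2,2}$ needed to control the right-hand side. Thus $v,\, w = O(\Im[z])$; substituting back into the block identity gives $\|\Im[M_{1,1}]\|=O(\Im[z])$, and since the remaining nonzero blocks $M_{1,4}$, $M_{4,1}$, $M_{4,4}$ are explicit polynomial expressions in $M_{1,1}$ and $m$ by \eqref{eq:M_RF} (with $M_{3,4}=M_{4,3}=-I_{\ntest}$ constant), the full estimate $\|\Im[M(x+i\eta)]\|=O(\eta)$ on compact subsets of $I$ follows, forcing $\Omega$ to have no support in $I$.

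The principal technical difficulty is the block-algebraic reduction to the scalar system in $(v,w)$ and the identification of its stability coefficient $1-c$ with the quantity bounded strictly below $1$ by \Cref{lemma:assumption_holds}; in particular, recognizing that the trace identity $\|K_{AA^{T}}^{1/2} M_{1,1} K_{AA^{T}}^{1/2}\|_{F}^{2} = \tr(K_{AA^{T}} M_{1,1} K_{AA^{T}} M_{1,1}^{\ast})$ is precisely what arises when multiplying the $(1,1)$-block equation by $K_{AA^{T}}$ and taking the trace. Once this identification is made, the propagation of the $O(\eta)$ estimate through the remaining blocks and the final conclusion via Stieltjes inversion are routine.
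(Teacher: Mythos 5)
Your proposal is correct and takes essentially the same approach as the paper: deriving the self-consistent identity $\Im[M]=M(\Im[z]\Lambda+\supop(\Im[M]))M^{\ast}$, reducing the block equations to a scalar linear system in $\tr(K_{AA^T}\Im[M_{1,1}])$ and $\Im[\tr(M_{2,2})]/d$ with stability factor $1-d\|K_{AA^T}^{1/2}M_{1,1}K_{AA^T}^{1/2}\|_F^2\|M_{2,2}\|^2$, invoking \Cref{lemma:assumption_holds} and \Cref{lemma:control_origin} to get $\|\Im[M(z)]\|=O(\Im[z])$ uniformly near the origin, and finally using Stieltjes inversion and the Herglotz reflection lemma to obtain the analytic continuation. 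The only cosmetic difference is that you make explicit the propagation from the diagonal blocks to the off-diagonal blocks $M_{1,4}, M_{4,1}, M_{4,4}$, which the paper dispatches with the remark that $M_{1,1}$ and $M_{2,2}$ fully determine $M$.
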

\begin{proof}
    Using the definition of matrix imaginary part and the resolvent identity, we obtain the system of equations \(\Im[M_{1,1}] = M_{1,1}(\Im[z]+\tr(\Im[M_{2,2}])K_{AA^{\top}})(M_{1,1})^{\ast}\) and \(\tr(\Im[M_{2,2}]) = d \|M_{2,2}\|^{2}(\Im[z]+\tr(K_{AA^{\top}}\Im[M_{1,1}]))\). Combining the two equalities, we get \(d^{-1}\tr(\Im[M_{2,2}])(1-\|\sqrt{d}K_{AA^{\top}}^{1/2}M_{1,1}K_{AA^{\top}}^{1/2}\|^{2}_{F}\|M_{2,2}\|^{2}) =  \|M_{2,2}\|^{2} \Im[z](1+ \|\sqrt{d}K_{AA^{\top}}^{1/2} M_{1,1}\|_{F}^{2})\). By \Cref{lemma:assumption_holds}, \( 1-d\|K_{AA^{\top}}^{\frac{1}{2}}M_{1,1}K_{AA^{\top}}^{\frac{1}{2}}\|^{2}_{F}\|M_{2,2}\|^{2}>0\) uniformly on \(\{z\in \bbH \st |z|\leq \epsilon\}\) for every \(0<\epsilon <1\wedge \delta\). Using \Cref{lemma:control_origin},
    \[
        d^{-1}\tr(\Im[M_{2,2}])
        \leq \frac{\Im[z](1+ \|\sqrt{d} (\delta-\Re[z])^{-1}K_{AA^{\top}}^{\frac{1}{2}}\|_{F}^{2})}{1-\|\sqrt{d}K_{AA^{\top}}^{\frac{1}{2}}M_{1,1}K_{AA^{\top}}^{\frac{1}{2}}\|^{2}_{F}\|M_{2,2}\|^{2}}
        .
    \]
    Thus, we observe that \(\Im[M_{2,2}(z)]\downarrow 0\) uniformly as \(\Im[z]\to 0\) on \((-\epsilon,\epsilon)\) and similarly for \(\|\Im[M_{1,1}]\|\). Since \(M_{1,1}\) and \(M_{2,2}\) fully define the solution of the DEL, \(\|\Im[M(z)]\|\) vanishes uniformly for \(\Re[z]\in (-\epsilon,\epsilon)\) as \(\Im[z]\to 0\). By the Stieltjes inversion lemma~\cite[Theorem 5.4(v) and Theorem 5.4(vi)]{matrix_herglotz}, the positive semidefinite measure in \Cref{theorem:main_properties} has no support in \((-\epsilon ,\epsilon)\). The result follows from~\cite[Lemma 5.6]{matrix_herglotz}.
\end{proof}

We may now remove the spectral parameter in \Cref{lemma:first_deterministic_equiv_res}.

\begin{corollary}\label{corollary:first_deterministic_equiv}
    Let \(M\in \calM\) be the unique solution to~\eqref{eq:M_RF}. Under the settings of \Cref{theorem:rf_error}, \(\tr(U(L^{-1}-M(0)))\to 0\) almost surely as \(n\to\infty\) for every sequence \(U\in \bbC^{\ell\times \ell}\) with \(\|U\|_{\ast}\leq 1\).
\end{corollary}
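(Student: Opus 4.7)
The plan is to extend Lemma~\ref{lemma:first_deterministic_equiv_res}, which treats $z\in\bbH$ with $|z|<\delta\wedge 1$, down to the boundary point $z=0$. The natural decomposition is
\[
\tr(U(L^{-1}-M(0))) = \tr(U(L^{-1}-(L-z\Lambda)^{-1})) + \tr(U((L-z\Lambda)^{-1}-M(z))) + \tr(U(M(z)-M(0))),
\]
where the middle term vanishes almost surely as $n\to\infty$ by Lemma~\ref{lemma:first_deterministic_equiv_res}. The plan is to send $n\to\infty$ first and then $z\to 0$ along a countable subsequence, which will close the argument provided the two outer remainders are $O(|z|)$ uniformly in $n$ on an almost sure event.

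For the first remainder, the resolvent identity gives $L^{-1}-(L-z\Lambda)^{-1} = -zL^{-1}\Lambda(L-z\Lambda)^{-1}$, and the duality bound $|\tr(UV)|\leq \|U\|_{\ast}\|V\|$ yields a bound of order $|z|\|L^{-1}\|\|(L-z\Lambda)^{-1}\|$. The explicit block structure of $(L-z\Lambda)^{-1}$ written out after~\eqref{eq:first_linearization} shows that both $\|L^{-1}\|$ and $\|(L-z\Lambda)^{-1}\|$ are controlled by polynomial functions of $\|A\|$, $\|\hA\|$, and $\delta^{-1}$ for $|z|$ small. By the Gaussian concentration of $\|A\|$ and $\|\hA\|$ around their bounded means implicit in the proof of Lemma~\ref{lemma:bound_moments}, together with Borel--Cantelli, $\|A\|$ and $\|\hA\|$ are bounded uniformly in $n$ almost surely, so the first remainder is $O(|z|)$ on this event.

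For the third remainder, the Stieltjes representation in Theorem~\ref{theorem:main_properties} yields
\[
M(z)-M(0) = \int_{\bbR}\frac{z\,\Omega(d\lambda)}{\lambda(\lambda-z)},
\]
once one knows $\supp(\Omega)$ avoids a neighborhood of the origin. The proof of Lemma~\ref{lemma:convergence_origin} supplies exactly this, and careful inspection shows its key estimates $\|\Im M(z)\|=O(\Im[z])$ are in fact uniform in $n$: since~\eqref{eq:M_RF} forces $M_{2,2}$ to be a scalar multiple of $I_d$, since Lemma~\ref{lemma:assumption_holds} bounds the relevant denominator away from zero uniformly in $n$, and since $\|K_{AA^{T}}\|=O(n^{-1})$, the scaling works out so that the supporting measure avoids an interval $(-\epsilon,\epsilon)$ with $\epsilon>0$ independent of $n$. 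Combined with the uniform bound $\|\int\Omega(d\lambda)\|=O(1)$ (which follows from $\bbE A=\bbE\hA=0$ and $d\|K_{\hA\hA^{T}}\|=O(1)$ via the total-mass formula in Theorem~\ref{theorem:main_properties}), this gives $\|M(z)-M(0)\| = O(|z|)$ uniformly in $n$ for $|z|<\epsilon/2$.

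With both remainder estimates uniform in $n$, choose a countable sequence $z_k\in\bbH$ with $z_k\to 0$ and $|z_k|<\delta\wedge 1\wedge \epsilon/2$. Intersecting the probability-one event from Lemma~\ref{lemma:first_deterministic_equiv_res} applied at each $z_k$ with the almost sure boundedness of $\|A\|$ and $\|\hA\|$ yields a probability-one event on which $\limsup_{n\to\infty}|\tr(U(L^{-1}-M(0)))|\leq C|z_k|$ for every $k$, and sending $k\to\infty$ finishes the proof. The main obstacle throughout is ensuring that the two remainder bounds are uniform in $n$ rather than merely holding pointwise, since this uniformity is exactly what permits the order of limits $n\to\infty$ then $k\to\infty$ to be valid; this in turn rests on the scalar-multiple-of-identity structure of $M_{2,2}$ and the quantitative Stieltjes regularity extracted from Lemma~\ref{lemma:convergence_origin}.
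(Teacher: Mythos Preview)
Your approach is correct and is precisely the natural way to fill in the details the paper leaves implicit; the paper states this corollary without proof, as the immediate consequence of Lemma~\ref{lemma:first_deterministic_equiv_res} and Lemma~\ref{lemma:convergence_origin}, and your three-term decomposition together with the countable $z_k\to 0$ argument is the expected route.

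There is one small overstatement to fix. You assert that the estimate $\|\Im M(z)\|=O(\Im[z])$ from the proof of Lemma~\ref{lemma:convergence_origin} is uniform in $n$. It is not: the numerator in that proof contains the term $\|\sqrt{d}(\delta-\Re[z])^{-1}K_{AA^T}^{1/2}\|_F^2 = d(\delta-\Re[z])^{-2}\tr(K_{AA^T})$, and $d\tr(K_{AA^T})=\bbE\|A\|_F^2$ is of order $n$, not $O(1)$; the fact that $\|K_{AA^T}\|=O(n^{-1})$ does not help here because it is the trace that appears. Fortunately this does not damage your argument. What you actually need is the uniform support gap, and that follows anyway: for each fixed $n$ the bound $\|\Im M(z)\|\leq C_n\Im[z]$ (with $C_n$ possibly growing) already forces, via Stieltjes inversion, that $\Omega$ has no mass on $(-(\delta\wedge 1),\delta\wedge 1)$. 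Since this interval does not depend on $n$, the gap $\epsilon=\delta\wedge 1$ is automatically uniform. With this gap in hand, your Stieltjes-difference bound $\|M(z)-M(0)\|\lesssim |z|\,\|\int\Omega(d\lambda)\|$ goes through exactly as you wrote, using the total-mass formula in Theorem~\ref{theorem:main_properties} and $\bbE B=0$, $\|dK_{\hA\hA^T}\|=O(1)$ to control $\|\int\Omega(d\lambda)\|$ uniformly. So the only repair needed is to replace the uniform-$O(\Im[z])$ claim by the observation that the interval on which Lemma~\ref{lemma:convergence_origin} kills the measure is itself $n$-independent.
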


The solution to~\eqref{eq:M_RF} is fully defined by the scalar \(\tr(M_{2,2})\). By fully defined, we mean that we may explicitly construct the full solution of the DEL using only knowledge of \(\tr(M_{2,2})\). Using the sub-DEL defined above, we get the following numerical result.

\begin{lemma}\label{lemma:numerical_linearization_first}
    Suppose that \(M(0)\) solves~\eqref{eq:M_RF} when \(z=0\). Let 
    \[
        T:x\in \bbR_{< 0}\mapsto -(1+\tr(K_{AA^{\top}} (\delta I_{\ntrain}-dx K_{AA^{\top}})^{-1}))^{-1}\in \bbR_{< 0}
    \] 
    and consider the iterates \(\{\alpha_{k}\}_{k\in \bbN_{0}}\) obtained via \(\alpha_{k+1}=T(\alpha_{k})\) for every \(k\in \bbN\) with arbitrary \(\alpha_{0}\in \bbR_{\leq 0}\).
    Then,
    \[
    M(0) = 
    \begin{bmatrix}
        ( \delta I_{\ntrain}-d\alpha K_{AA^{\top}})^{-1} & 0 & -d\alpha  M_{1,1}(0)K_{A\hA^{\top}} & 0 \\
        0          &  \alpha I_{d} & 0 & 0  \\
        -d\alpha K_{\hA A^{\top}}M_{1,1}(0) & 0 & (d\alpha )^{2}K_{\hA A^{\top}}M_{1,1}(0)K_{A\hA^{\top}}+d\alpha K_{\hA\hA^{\top}} & -I_{\ntest} \\
        0  & 0  & -I_{\ntest} & 0
    \end{bmatrix}
\]
where \(\alpha:=d^{-1}\tr(M_{2,2}(0))=\lim_{k\to\infty}\alpha_{k}\).
\end{lemma}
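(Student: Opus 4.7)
The plan is to reduce the DEL at $z=0$ to a scalar fixed-point equation and then analyze the real-valued map $T$ directly. First, by \Cref{lemma:convergence_origin}, the solution $M$ extends analytically across an open interval containing $z=0$, so the structural formula~\eqref{eq:M_RF} remains valid there. The identity $M_{2,2}(z) = -(1+z+\tr(K_{AA^T}M_{1,1}(z)))^{-1}I_d$ forces $M_{2,2}(0)$ to be a scalar multiple of the identity; writing $\alpha := d^{-1}\tr(M_{2,2}(0))$, combining this with the top-left block $M_{1,1}(0) = (\delta I_{\ntrain} - d\alpha K_{AA^T})^{-1}$ yields exactly $\alpha = T(\alpha)$, i.e.\ $\alpha$ is a fixed point of $T$. \Cref{lemma:control_origin} moreover guarantees $\alpha \in [-1,0]$. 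Substituting $\alpha$ back into the other blocks of~\eqref{eq:M_RF} immediately produces the displayed block form of $M(0)$. What remains is to verify that the iterates $\{\alpha_k\}$ converge to this unique $\alpha$.

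Working in the eigenbasis of $K_{AA^T}$, a direct computation shows that $T$ maps $\bbR_{\leq 0}$ into $[-1,0)$, is smooth, strictly monotone increasing, and strictly convex on $\bbR_{\leq 0}$ (the convexity reduces to a Cauchy--Schwarz inequality among the eigenvalues of $K_{AA^T}$). Since $T(0) = -(1+\tr(K_{AA^T})/\delta)^{-1} < 0$, strict convexity rules out a second fixed point in $\bbR_{\leq 0}$: a chord argument using two fixed points would force $T(0) \ge 0$. A further differentiation identifies
\[
    T'(\alpha) \;=\; d\alpha^2 \tr(K_{AA^T}^2 M_{1,1}(0)^2) \;=\; d\,\bigl\|K_{AA^T}^{1/2} M_{1,1}(0) K_{AA^T}^{1/2}\bigr\|_F^{\,2}\,\|M_{2,2}(0)\|^2,
\]
which is strictly less than $1$ by \Cref{lemma:assumption_holds} (applied at $z=0$ through the continuous extension).

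Combining $T'(\alpha)<1$ with strict convexity and monotonicity yields $T(x) < x$ on $(\alpha, 0]$ and $T(x) > x$ on $(-\infty, \alpha)$, while monotonicity prevents the iterates from crossing $\alpha$. Hence, starting from any $\alpha_0 \in \bbR_{\leq 0}$, the sequence $\{\alpha_k\}$ is bounded and eventually monotone (decreasing if $\alpha_0 > \alpha$, increasing if $\alpha_0 < \alpha$), so by monotone convergence it converges to a fixed point, which by uniqueness is $\alpha$. The main obstacle is establishing the strict inequality $T'(\alpha) < 1$: this is not a routine property of the iteration but rather a genuine non-degeneracy of the DEL that was secured earlier in \Cref{lemma:assumption_holds} via a delicate contradiction argument leveraging the real-part bounds of \Cref{lemma:control_origin}. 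Without this ingredient, strict convexity alone would yield existence and uniqueness of the fixed point but would not guarantee that the iterates converge to it.
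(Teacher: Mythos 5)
Your proof is correct, but it takes a genuinely different and more elementary route than the paper. The paper works directly at the matrix level: it introduces the domain $\calS$ of block matrices with $\Re[N_{1,1}]\succ 0$, $\Re[N_{2,2}]\prec 0$, and vanishing off-diagonal blocks, shows that $\Fsub$ is strictly holomorphic on each bounded piece $\calS_b$, and invokes the Earle-Hamilton fixed-point theorem to obtain both uniqueness of the fixed point and convergence of the matrix iterates, recovering the scalar recursion by choosing the block-diagonal initial condition $N_0=\diag\{I_{\ntrain},\alpha_0 I_d\}$. You instead reduce immediately to the scalar map $T$ on $\bbR_{\leq 0}$ and run an elementary real-analysis argument: monotonicity, strict convexity via Cauchy-Schwarz among the eigenvalues of $K_{AA^{T}}$, a chord argument for uniqueness using $T(0)<0$, and monotone convergence of the iterates. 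The scalar analysis is more transparent and concrete for this specific lemma, while the Earle-Hamilton route fits more naturally into the DEL framework used throughout the paper and would still apply if the superoperator did not collapse $M_{2,2}$ to a scalar multiple of the identity. One simplification you should note: the appeal to \Cref{lemma:assumption_holds} to secure $T'(\alpha)<1$ is redundant. Given strict convexity, $T(\alpha)=\alpha$, and $T(0)<0$, the bound $T'(\alpha)<1$ is automatic: if $T'(\alpha)\geq 1$, then strict convexity gives $T(x)>\alpha+T'(\alpha)(x-\alpha)\geq x$ for all $x>\alpha$, in particular $T(0)>0$, a contradiction. So what you frame as the ``main obstacle'' secured by an external lemma in fact comes for free from the scalar picture, and your identity $T'(\alpha)=d\|K_{AA^{T}}^{1/2}M_{1,1}(0)K_{AA^{T}}^{1/2}\|_F^2\|M_{2,2}(0)\|^2$ then reads as an independent, elementary rederivation of the non-degeneracy bound that \Cref{lemma:assumption_holds} establishes by a matrix argument.
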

\begin{proof}
    In order to use the Earle-Hamilton fixed-point theorem~\cite{earl_hamilton}, we consider the set \(\calS\) of complex matrices \(N\in \bbC^{(\ntrain+d)\times (\ntrain+d)}\) with \(N_{1,2}=N_{2,1}=0\), \(\Re[N_{1,1}]\succ 0\) and \(\Re[N_{2,2}]\prec 0\) and denote
    \[
        \F^{(\mathrm{sub})}:N\in \calS \mapsto (\bbE \Lsub-\supop^{(\mathrm{sub})}(N))^{-1}\in \calS.
    \]
    Using an argument analogous to the one in \cref{lemma:control_origin,lemma:strict_holomorphic}, we see that
    \[
        \Re[\F^{(\mathrm{sub})}_{1,1}(N)]\succeq \delta \F^{(\mathrm{sub})}_{1,1}(N)(\F^{(\mathrm{sub})}_{1,1}(N))^{\ast}\succeq \delta (\delta + d\|K_{AA^{\top}}\|\|N\|)^{-2}
    \]
    and
    \[
        \Re[\F^{(\mathrm{sub})}_{2,2}(N)]\preceq -\F^{(\mathrm{sub})}_{2,2}(N)(\F^{(\mathrm{sub})}_{2,2}(N))^{\ast}\preceq -(1+d\|K_{AA^{\top}}\|\|N\|)^{-2}
    \]
    for every \(N\in \calS\). This implies that \(\F^{(\mathrm{sub})}\) is well-defined. Furthermore, by \Cref{lemma:real/imag_norm_bound}, \(\|\F^{(\mathrm{sub})}_{1,1}(N)\|\leq \delta^{-1}\) and \(\|\Re[\F^{(\mathrm{sub})}_{2,2}(N)]\|\leq 1\) for every \(N\in \calS\). Let \(\calS_{b}=\calS\cap \{N\in \bbC^{(\ntrain+d)\times (\ntrain+d)}\st \|N\|<b\}\). Then, for every \(b>\delta \vee 1\), \(\F^{(\mathrm{sub})}\) is strictly holomorphic on \(\calS_{b}\). By the Earle-Hamilton fixed-point theorem, there exists a unique \(N\in \calS_{b}\) such that \(\F^{(\mathrm{sub})}(N)=N\). Furthermore, the sequence \(\{N_{k}\}_{k\in \bbN_{0}}\) with \(N_{k+1}=\F^{(\mathrm{sub})}(N_{k})\) for every \(k\in \bbN\) converges to \(N\) for every \(N_{0}\in \calS_{b}\). Since \(\calS=\bigcup_{b>0}\calS_{b}\), it follows that there exists a unique \(N\in \calS\) such that \(\F^{(\mathrm{sub})}(N)=N\). Also, the sequence \(\{N_{k}\}_{k\in \bbN_{0}}\) with \(N_{k+1}=\F^{(\mathrm{sub})}(N_{k})\) for every \(k\in \bbN\) converges to \(N\) for every \(N_{0}\in \calS\). Choosing \(N_{0}=\diag\{I_{\ntrain},\alpha_{0}I_{d}\}\) gives the result.
\end{proof}

\subsection{Second Deterministic Equivalent}\label{sec:second_deterministic_equiv}

We now consider the squared matrix \((AA^{\top} + \delta I_{\ntrain})^{-1}A\hA^{\top}\hA A^{\top}(AA^{\top} + \delta I_{\ntrain})^{-1}\). Notice that \((L^{-2})_{1,1} = R^{2}+RA^{\top}AR+RA\hA^{\top}\hA AR\) and \((\Lsub)^{-2}_{1,1} = R^{2}+RA^{\top}AR\) for \(R:=(AA^{\top} + \delta I_{\ntrain})^{-1}\). Rearranging, we get that
\begin{equation}\label{eq:squared_lin}
    (AA^{\top} + \delta I_{\ntrain})^{-1}A\hA^{\top}\hA A(AA^{\top} + \delta I_{\ntrain})^{-1} = (L^{-2})_{1,1} - (\Lsub)^{-2}_{1,1}.
\end{equation}

Therefore, it suffices to find deterministic equivalents for \((\Lsub)^{-2}\) and \(L^{-2}\) to obtain a  deterministic equivalent for the random matrix \((AA^{\top} + \delta I_{\ntrain})^{-1}A\hA^{\top}\hA A(AA^{\top} + \delta I_{\ntrain})^{-1}\).

\begin{lemma}\label{lemma:theory_second_lin}
    Under the settings of \Cref{theorem:rf_error}, let \(\alpha=d^{-1}\tr(M_{2,2}(0))\) as in \Cref{lemma:numerical_linearization_first}, \(R=(AA^{\top} + \delta I_{\ntrain})^{-1}\) and define
    \[
        \beta = \frac{\alpha^{2}\tr\left(K_{\hA\hA^{\top}}+d\alpha K_{\hA A^{\top}}M_{1,1}(0)(I_{\ntrain}+\delta M_{1,1}(0))K_{A\hA^{\top}}\right)}{1-\|\sqrt{d}\alpha K_{AA^{\top}}^{\frac{1}{2}}M_{1,1}(0)K_{AA^{\top}}^{\frac{1}{2}}\|_{F}^{2}}\in \bbR_{\geq 0}.
    \]
    Then, \(\tr U(RA\hA^{\top}\hA A^{\top}R-d\beta M_{1,1}(0)K_{AA^{\top}}M_{1,1}(0)-M_{1,3}(0)M_{3,1}(0)) \to 0\) almost surely as \(n\to\infty\) for every sequence \(U\in \bbC^{\ntrain\times \ntrain}\) with \(\|U\|_{\ast}\leq 1\).
\end{lemma}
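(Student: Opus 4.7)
The plan is to realize $RA\hA^T\hA A^T R$ as the $(1,1)$ block of a directional derivative of a pseudo-resolvent, and then differentiate the deterministic equivalent of Corollary~\ref{corollary:first_deterministic_equiv} along the same direction. For $\eta$ in a small neighborhood of $0$, introduce the perturbed linearization $L(\eta) := L - \eta(I_\ell - \Lambda)$, which is a valid self-adjoint linearization with $\bbE L(\eta) = \bbE L - \eta(I_\ell - \Lambda)$ and the same random fluctuations as $L$. Using $(L^{-1})_{1,4} = 0$, a direct computation yields
\[
\bigl(\partial_\eta L(\eta)^{-1}\bigr|_{\eta=0}\bigr)_{1,1} = \bigl(L^{-1}(I_\ell - \Lambda)L^{-1}\bigr)_{1,1} = (L^{-1})_{1,3}(L^{-1})_{3,1} = RA\hA^T\hA A^T R,
\]
so it suffices to produce a deterministic equivalent for $L^{-1}(I_\ell-\Lambda)L^{-1}$ obtained by differentiating $M_\eta(0)$ in $\eta$, where $M_\eta$ denotes the solution of the DEL associated with $L(\eta)$.

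Because $L(\eta)-\bbE L(\eta) = L - \bbE L$, the superoperator $\supop$, the distance-to-Gaussian matrix $\Delta(L(\eta),\tau)$, the Lipschitz bounds and the stability input used in \Cref{lemma:universality,lemma:RDEL_conv_DEL,lemma:first_deterministic_equiv_res} all transfer with constants uniform in $\eta$ for $|\eta|$ small. Hence Corollary~\ref{corollary:first_deterministic_equiv} gives $\tr(\tilde U\, L(\eta)^{-1}) - \tr(\tilde U\, M_\eta(0)) \to 0$ almost surely, for each fixed real $\eta$ near $0$ and each $\tilde U\in\bbC^{\ell\times\ell}$ with $\|\tilde U\|_\ast \le 1$. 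Both sides extend holomorphically to a fixed complex disk about $0$ (the left by Neumann expansion of $L(\eta)^{-1}$ around $L^{-1}$, the right by the analytic implicit function theorem applied to the perturbed DEL) and are uniformly bounded there via \Cref{lemma:control_origin} and the analogous bound for $M_\eta(0)$. Selecting a countable dense sequence of real $\eta$ and intersecting the associated probability-one events, Vitali's theorem upgrades pointwise convergence to locally uniform convergence on the disk almost surely, and Weierstrass then gives $\tr(\tilde U [L^{-1}(I_\ell-\Lambda)L^{-1} - N]) \to 0$ almost surely, where $N := \partial_\eta M_\eta(0)|_{\eta=0}$. Embedding $U\in\bbC^{\ntrain\times\ntrain}$ with $\|U\|_\ast\le 1$ as the $(1,1)$ block of $\tilde U$ extracts the claimed $(1,1)$-block statement.

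To identify $N_{1,1}$, differentiating the perturbed DEL at $\eta=0$ produces the stability equation
\[
\stab(N) := N - M(0)\,\supop(N)\,M(0) = M(0)(I_\ell - \Lambda)M(0).
\]
Using the sparsity of $M(0)$ from Lemma~\ref{lemma:numerical_linearization_first} and the explicit block form of $\supop$ given at the start of \Cref{sec:proof_rf}, the system decouples: the $(4,4),(1,4),(4,1)$ equations yield $N_{4,4}=I_{\ntest}$, $N_{1,4} = d\alpha M_{1,1}(0)K_{A\hA^T}$, $N_{4,1} = d\alpha K_{\hA A^T}M_{1,1}(0)$, and the $(2,2)$ equation forces $N_{2,2} = \alpha^2 \rho(N)\,I_d$, where $\rho$ is the trace-functional appearing in the $(2,2)$ entry of $\supop$. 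Plugging the $(1,1)$ equation $N_{1,1} = \tr(N_{2,2})\,M_{1,1}(0)K_{AA^T}M_{1,1}(0) + M_{1,3}(0)M_{3,1}(0)$ back into the definition of $\rho(N)$ and using the identity $d\alpha K_{AA^T}M_{1,1}(0) = \delta M_{1,1}(0) - I_{\ntrain}$ yields a scalar self-consistent equation whose unique solution, made well-posed by Lemma~\ref{lemma:assumption_holds}, is $\rho(N) = \alpha^{-2}\beta$. Therefore $\tr(N_{2,2}) = d\beta$ and $N_{1,1} = d\beta\,M_{1,1}(0)K_{AA^T}M_{1,1}(0) + M_{1,3}(0)M_{3,1}(0)$, matching the claim.

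The main obstacle is the passage from almost-sure convergence at fixed $\eta$ to almost-sure convergence of the $\eta$-derivative: a first-order deterministic equivalent produces a random analytic function converging to zero pointwise almost surely, but differentiation needs a uniformity step. The Vitali argument above is the cleanest route; alternatively, one may use Cauchy's formula $\partial_\eta f(0) = (2\pi i)^{-1}\oint_{|\eta|=r}\eta^{-2}f(\eta)\,d\eta$ together with dominated convergence and the uniform-on-the-contour bound inherited from Lemma~\ref{lemma:control_origin}.
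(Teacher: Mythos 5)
Your proposal takes a genuinely different route from the paper, and it is a clever one. The paper realizes $RA\hA^{T}\hA A^{T}R$ as $(L^{-2})_{1,1}-(\Lsub)^{-2}_{1,1}$ (equation~\eqref{eq:squared_lin}) and then differentiates the deterministic equivalent along the \emph{full-diagonal} spectral direction $\zeta I_\ell$, using a Cauchy contour around $i\tau$ and the stability equation $\stab^{(\tau)}(\partial_{i\tau}M^{(\tau)}(0)) = (M^{(\tau)}(0))^2$, then sends $\tau\downarrow 0$; it must run this argument twice (for $L$ and for $\Lsub$) and subtract. You instead perturb along the \emph{complementary} direction $\eta(I_\ell-\Lambda)$, which exploits $(L^{-1})_{1,4}=0$ to isolate $RA\hA^{T}\hA A^{T}R$ as the $(1,1)$ block of a single directional derivative, eliminating the $\Lsub$ subtraction entirely. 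Your stability equation $\stab(N)=M(0)(I_\ell-\Lambda)M(0)$ is correct, and your block-wise solution checks out: the $(4,4),(1,4),(4,1),(2,2)$ equations decouple exactly as you describe, and the self-consistent scalar equation for $\rho(N)$, after the simplification $d\alpha K_{AA^{T}}M_{1,1}(0) = \delta M_{1,1}(0) - I_{\ntrain}$, gives $\rho(N)=\alpha^{-2}\beta$, hence $\tr(N_{2,2})=d\beta$ and $N_{1,1}=d\beta M_{1,1}K_{AA^{T}}M_{1,1}+M_{1,3}M_{3,1}$. The Vitali/Cauchy step to pass from pointwise a.s.\ convergence to convergence of the $\eta$-derivative mirrors what the paper does with its $\zeta$-contour.

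The real gap is the sentence ``all transfer with constants uniform in $\eta$.'' This is not immediate, and it is exactly the point where the paper's route is structurally safer. The paper deliberately moves to the DEL with $\zeta I_\ell$ on the full diagonal, for which the a priori bound $\|M^{(\zeta)}\|\le(\Im\zeta)^{-1}$ is automatic and stability is standard; it only needs the pseudo-resolvent machinery at $z=0$ for the original $L$ and $\Lsub$, both of which are verified in \Cref{lemma:RDEL_conv_DEL,lemma:convergence_origin}. Your perturbation keeps the spectral parameter confined to $\Lambda$ but changes the deterministic block $\bbE Q$ from $\begin{bmatrix}0&-I\\-I&0\end{bmatrix}$ to $\begin{bmatrix}-\eta I&-I\\-I&-\eta I\end{bmatrix}$. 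The explicit manipulations in \Cref{lemma:RDEL_conv_DEL} (solving for $M_{3,4}^{(\tau)}$, $M_{4,4}^{(\tau)}$, $M_{1,4}^{(\tau)}$ in terms of $i\tau$) acquire extra $\eta$ terms, and the conclusion that $M^{(\tau)}_{\eta}\to M_\eta$ as $\tau\downarrow 0$ uniformly in $n$ must be re-derived; likewise, the uniform bound that drives the boundary extension to $z=0$ in \Cref{lemma:convergence_origin}, and the norm control of \Cref{lemma:control_origin}, would need analogues on an $\eta$-disk. None of this looks like it would fail for $|\eta|$ small, but it is a nontrivial re-verification rather than an automatic transfer, and it is the part of the proposal you should make explicit before claiming the Corollary~\ref{corollary:first_deterministic_equiv} analogue for $L(\eta)$. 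Once that is in place, the rest of your argument is sound and gives a cleaner derivation than the paper's two-linearization subtraction.
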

\begin{proof}
    First, as stated in \Cref{lemma:analyticity_tau}, we note that \(i\tau \mapsto (L - i\tau)^{-1}\) is an analytic function with \(\partial_{i\tau}(L - i\tau)^{-1} = (L - i\tau)^{-2}\). Overloading notation, let \(M^{(\zeta)}\in \calM_{+}\) be the unique solution to the DEL \((\bbE L -\supop(M^{(\zeta)}) - \zeta I_{\ell})M^{(\zeta)} = I_{\ell}\) where \(\zeta\in \bbH\). By the proof of~\cite[Theorem 2.14]{nemish_local_2020}, the function \(\zeta \mapsto M^{(\zeta)}\) is analytic on \(\bbH\). Adapting a general argument resembling to the one in~\cite[equation (174)]{schroder2023deterministic}, it follows from Cauchy's integral formula that
    \begin{align*}
        (L - i\tau)^{-2}-\partial_{i\tau}M^{(\tau)}(0) & = \partial_{i\tau}\left((L - i\tau)^{-1}-M^{(\tau)}(0)\right)
         \\ & = \frac{1}{2\pi}\oint_{\gamma}\frac{(L-\zeta)^{-1}-M^{(\zeta)}}{(\zeta-i\tau)^{2}}\d\zeta
    \end{align*}
    where \(\gamma\) forms a counterclockwise circle of radius \(\tau/2\) around \(i\tau\). We know that \(M^{(\zeta)}\) is a deterministic equivalent for \((L-\zeta)^{-1}\) for every fixed \(\zeta\in \bbH\). By the resolvent identity, \(\zeta \mapsto (L-\zeta I_{\ell})^{-1}\) is \(4/\tau^{2}\)-Lipschitz on \(\{z\in \bbH\st \Im[z]\geq \tau/2\}\). Similarly, by the proof of~\cite[Theorem 2.14]{nemish_local_2020}, the function \(\zeta\mapsto M^{(\zeta)}\) is \((2/\tau)^{12}\)-Lipschitz on \(\{z\in \bbH\st \Im[z]\geq \tau/2\}\). Therefore, we obtain \(\tr (U( (L - i\tau)^{-2}-\partial_{i\tau}M^{(\tau)}(0))) \to 0\) almost surely as \(n\to\infty\) for every \(\tau \in \bbR_{>0}\) and \(U\in \bbC^{\ell\times \ell}\) with \(\|U\|_{\ast}\leq 1\). Taking the derivative of~\eqref{eq:RDEL}, we obtain \(\partial_{i\tau}M^{(\tau)}(0) = M^{(\tau)}(0)(\supop(\partial_{i\tau}M^{(\tau)}(0))+I_{\ell})M^{(\tau)}(0)\) or, relating this equation to the stability operator, \(\stab^{(\tau)}(\partial_{i\tau}M^{(\tau)}(0)) = (M^{(\tau)}(0))^{2}\) with \(\stab^{(\tau)}:N\in \bbC^{\ell\times \ell}\mapsto N - M^{(\tau)}(0)\supop(N)M^{(\tau)}(0)\).

    In what follows, we omit the argument of \(M^{(\tau)}\) and write \(M^{(\tau)}\equiv M^{(\tau)}(0)\). Using simple but tedious computations, we decompose \(\partial_{i\tau}M^{(\tau)}_{j,k} = C_{j,k}+D_{j,k}\tr(\partial_{i\tau} M_{2,2}^{(\tau)})\) for every \((j,k)\in \{(1,1),(1,4),(4,4)\}\) with
    \begin{align*}
        & C_{1,1}:=M_{1,4}^{(\tau)}M_{4,1}^{(\tau)} +  (M_{1,1}^{(\tau)})^{2}+M_{1,3}^{(\tau)}M_{3,1}^{(\tau)},
        \\ & D_{1,1} := M_{1,1}^{(\tau)}K_{AA^{\top}}M_{1,1}^{(\tau)}+ M_{1,1}^{(\tau)} K_{A \hA^{\top}}M_{4,1}^{(\tau)}+ M_{1,4}^{(\tau)} K_{\hA A^{\top}}M_{1,1}^{(\tau)}+ M_{1,4}^{(\tau)}K_{\hA\hA^{\top}}M_{4,1}^{(\tau)},
        \\ & C_{4,4}:= M_{4,1}^{(\tau)}M_{1,4}^{(\tau)} + (M_{4,4}^{(\tau)})^{2}+M_{4,3}^{(\tau)}M_{3,4}^{(\tau)},
        \\ & D_{4,4}:= M_{4,1}^{(\tau)}K_{AA^{\top}}M_{1,4}^{(\tau)}+ M_{4,1}^{(\tau)}K_{A\hA^{\top}}M_{4,4}^{(\tau)}+ M_{4,4}^{(\tau)}K_{\hA A^{\top}}M_{1,4}^{(\tau)}+ M_{4,4}^{(\tau)}K_{\hA\hA^{\top}}M_{4,4}^{(\tau)},
        \\ & C_{1,4}:= M_{1,1}^{(\tau)}M_{1,4}^{(\tau)} + M_{1,4}^{(\tau)}M_{4,4}^{(\tau)}+M_{1,3}^{(\tau)}M_{3,4}^{(\tau)}, \text{ and}
        \\ & D_{1,4} := M_{1,1}^{(\tau)}K_{AA^{\top}}M_{1,4}^{(\tau)}+  M_{1,1}^{(\tau)} K_{A\hA^{\top}} M_{4,4}^{(\tau)}+  M_{1,4}^{(\tau)} K_{\hA A^{\top}} M_{1,4}^{(\tau)}+ M_{1,4}^{(\tau)}K_{AA^{\top}}M_{4,4}^{(\tau)}.
    \end{align*}
    Taking the trace of the \(2,2\) block of \(\partial_{i\tau} M^{(\tau)}(0)\), we get
    \begin{align*}
        & \tr(\partial_{i\tau} M_{2,2}^{(\tau)}) = \tr((M_{2,2}^{(\tau)})^{2}) (\rho(\partial_{i\tau}M^{(\tau)})+1)
        \\ & = \tr((M_{2,2}^{(\tau)})^{2}) (\tr(K_{AA^{\top}} C_{1,1}+K_{A\hA^{\top}}C_{1,4}^{\top}+K_{\hA A^{\top}}C_{1,4} + K_{\hA\hA^{\top}} C_{4,4})+1)
        \\ & + \tr(\partial_{i\tau} M_{2,2}^{(\tau)})\tr((M_{2,2}^{(\tau)})^{2}) \tr(K_{AA^{\top}} D_{1,1}+K_{A\hA^{\top}}D_{1,4}^{\top}+K_{\hA A^{\top}}D_{1,4} + K_{\hA\hA^{\top}} D_{4,4}).
    \end{align*}
    By the proof of \Cref{lemma:first_deterministic_equiv_res}, we observe that there exists a function \(f:\bbR_{>0}\mapsto \bbR_{\geq 0}\) with \(\lim_{\tau\downarrow 0}f(\tau)=0\) such that \(\|D_{4,4}\| \leq f(\tau)+o_{n}(1)\), \(\|D_{1,1} - M_{1,1}K_{AA^{\top}}M_{1,1}\| \leq f(\tau)+ o_{n}(1)\), \(\|D_{1,4}\|=\|D_{4,1}^{\top}\|\leq f(\tau) +o_{n}(1)\) and \(\|M_{2,2}^{(\tau)}- M_{2,2}\|\leq f(\tau)+o_{n}(1)\). By \Cref{lemma:assumption_holds},
    \[
        |1-\tr((M_{2,2}^{(\tau)})^{2})\tr(K_{AA^{\top}} D_{1,1}+K_{A\hA^{\top}}D_{4,1}+K_{\hA A^{\top}}D_{1,4} + K_{\hA\hA^{\top}} D_{4,4}) |
    \]
    is bounded away from \(0\) for every \(n\in \bbN\) large enough and \(\tau\in \bbR_{>0}\) small enough. In particular, the limit \(\lim_{\tau\to 0}\partial_{i\tau} M^{(\tau)}\) exists and satisfies
    \begin{align*}
        \lim_{\tau\to 0}d^{-1}\tr(\partial_{i\tau} M_{2,2}^{(\tau)}) & =
          \frac{\alpha^{2} (\tr(K_{AA^{\top}} M^{2}_{1,1}+K_{AA^{\top}}M_{1,3}M_{3,1} + K_{\hA\hA^{\top}})+1)}{1-\|\sqrt{d}\alpha K_{AA^{\top}}^{\frac{1}{2}}M_{1,1}K_{AA^{\top}}^{\frac{1}{2}}\|_{F}^{2}}
          \\ & -   \frac{\alpha^{2} \tr(K_{A\hA^{\top}}M_{3,1}+K_{\hA A^{\top}}M_{1,3})}{1-\|\sqrt{d}\alpha K_{AA^{\top}}^{\frac{1}{2}}M_{1,1}K_{AA^{\top}}^{\frac{1}{2}}\|_{F}^{2}}
          \\ & = \beta + \frac{\alpha^{2}(1+\tr(K_{AA^{\top}} M^{2}_{1,1}))}{1-\|\sqrt{d}\alpha K_{AA^{\top}}^{\frac{1}{2}}M_{1,1}K_{AA^{\top}}^{\frac{1}{2}}\|_{F}^{2}}
    \end{align*}
    where we recall that \(\alpha=d^{-1}\tr(M_{2,2})\) as defined in \Cref{lemma:numerical_linearization_first}. Plugging this into the expression for \(\partial_{i\tau} M^{(\tau)}_{1,1}\) and taking the limit as \(\tau\downarrow 0\), we get that
    \[
        d\beta M_{1,1}K_{AA^{\top}}M_{1,1}+  M_{1,3}M_{3,1} +M^{2}_{1,1} + \frac{d\alpha^{2}(1+\tr(K_{AA^{\top}} M^{2}_{1,1}))}{1-\|\sqrt{d}\alpha K_{AA^{\top}}^{\frac{1}{2}}M_{1,1}K_{AA^{\top}}^{\frac{1}{2}}\|_{F}^{2}}M_{1,1}K_{AA^{\top}}M_{1,1}
    \]
    is an asymptotic deterministic equivalent for \((L^{-2})_{1,1}\).

    Using a similar argument, we note that \(\partial_{z}M^{(\mathrm{sub})}(0)\) is a deterministic equivalent for \((L^{(\mathrm{sub})})^{-2}\) and
    \[
        \partial_{z}M^{(\mathrm{sub})}_{1,1} = M_{1,1}^{2}+\frac{d \alpha^{2}(1+\tr(K_{AA^{\top}}M^{2}_{1,1}))}{1-\|\sqrt{d}\alpha K_{AA^{\top}}^{\frac{1}{2}}M_{1,1}K_{AA^{\top}}^{\frac{1}{2}}\|_{F}^{2}}M_{1,1}K_{AA^{\top}}M_{1,1}.
    \]
    We obtain the result by~\eqref{eq:squared_lin}.
\end{proof}

\section{Concluding Remarks}\label{sec:conclusion}

In conclusion, our work extends the Dyson equation framework to handle linearizations with general correlation structures. We establish a global anisotropic law for pseudo-resolvents, showing that under suitable conditions, generalized trace entries of the pseudo-resolvent converge to those of the solution of the Dyson equation. In contrast with much of the existing literature, our approach focuses on the macroscale properties of structured linearizations, allowing us to relax the usual independence assumptions and to accommodate correlated block structures. The conditions we impose include standard boundedness assumptions, a technical stability condition on the linearization, and a Gaussian design assumption on its random components. Within this framework, we derive a deterministic equivalent and a Gaussian equivalence theorem for the empirical test error of random-features ridge regression, where the training and test data are deterministic and the random weight matrix is obtained by applying an entrywise Lipschitz function to a matrix with independent standard Gaussian entries.

Looking forward, several open questions remain. While the boundedness and stability conditions are somewhat fundamental to our analysis, the Gaussian design assumption could likely be relaxed. Doing so would require replacing the Gaussian concentration arguments used here with more general probabilistic tools to ensure that the error terms vanish asymptotically and that the generalized trace entries of the pseudo-resolvent concentrate around their expectations. The current stability condition, though essential for our proofs, is technical and somewhat difficult to handle. Simplifying it through direct conditions on the linearization or the underlying matrix-valued polynomial would be highly valuable.. Additionally, our empirical simulations (e.g., \Cref{fig:various_activation}) suggest that the result might extend beyond Lipschitz activation functions, warranting further exploration. Moreover, our assumption that the norm of the random features matrix is bounded in expectation excludes low rank spike scenario, in which the norm of the random features matrix is unbounded due to the presence of a low rank spike but the spectrum is still well-behaved. Addressing this by separating bulk and spike analysis could be an interesting direction for future research.

\begin{appendix}
    
\renewcommand{\thelemma}{A.\arabic{lemma}}
    \renewcommand{\theproposition}{A.\arabic{proposition}}
    
    \section{Preliminary Results}\label{sec:prelim}
    
    \subsection{Carathéodory-Riffen-Finsler Pseudometric}\label{subsec:CRF}
    
    Let \(\calD\) be a domain in a normed linear space over the complex numbers \(\calX\). We define the \emph{infinitesimal Carathéodory-Riffen-Finsler (CRF)-pseudometric} as
    \[
        \alpha : (x,v)\in \calD\times \calX\mapsto \sup\{\|\D f(x)v\| \st f\in \Hol(\calD,\bbD)\}\in \bbR,
    \]
    where \(\bbD\) denotes the open complex unit ball of unit radius and \(\Hol(\calD, \bbD)\) denotes the set of holomorphic functions from \(\calD\) to \(\bbD\)~\cite{harris_fixed_2003,HARRIS1979345}. The pseudometric \(\alpha\) is an \emph{infinitesimal Finsler pseudometric} on \(\calD\), meaning that it is non-negative, lower semicontinuous, locally bounded, and satisfies \(\alpha(x,tv)=|t|\alpha(x,v)\) for every \((x,v)\in \calD\times \calX\) and \(t\in \bbR\).
    
    Let \(\Gamma\) be the set of all curves in \(\calD\) with piecewise continuous derivatives, referred to as \emph{admissible} curves, and define
    \[
        \calL:\gamma \in \Gamma \mapsto \int_{0}^{1}\alpha(\gamma(y),\gamma^{\prime}(t))\mathrm{d} t \in \bbR.
    \]
    The pseudometric \(\alpha\) is a seminorm at each point in \(\calD\), and we interpret \(\calL(\gamma)\) as the length of the curve \(\gamma\) measured with respect to \(\alpha\)~\cite{harris_fixed_2003}. Then, the \emph{CRF-pseudometric} \(\rho\) of \(\calD\) is defined as
    \[
        \rho:(x,y)\in \calD^{2} \mapsto \inf\{\calL(\gamma)\st \gamma\in \Gamma,\, \gamma(0)=x,\, \gamma(1)=y\}\in \bbR_{\geq 0}.
    \]
    As the name suggests, \(\rho\) is a pseudometric.
    
    Our main tool for extrapolating results about norms is the Schwarz-Pick inequality, which we state here for completeness.
    \begin{proposition}[{\cite[Proposition 3]{HARRIS1979345}}]\label{prop:hol_inv}
        Let \(\calD_{1}\) and \(\calD_{2}\) be domains in complex normed vector spaces, and let \(\rho_{1}\) and \(\rho_{2}\) be the associated CRF-pseudometrics. If \(f:\calD_{1}\mapsto \calD_{2}\) is holomorphic, then \(\rho_{2}(f(x),f(y))\leq \rho_{1}(x,y)\) for all \(x,y\in \calD_{1}\).
    \end{proposition}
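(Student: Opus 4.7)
The plan is a two-step argument: first prove the infinitesimal Schwarz--Pick contraction \(\alpha_{2}(f(x),\D f(x)v)\leq \alpha_{1}(x,v)\), where \(\alpha_{1}\) and \(\alpha_{2}\) denote the infinitesimal CRF-pseudometrics on \(\calD_{1}\) and \(\calD_{2}\); then lift this bound to the path-integrated CRF-pseudometric \(\rho\) by pushing admissible curves forward through \(f\) and applying the chain rule under the integral defining \(\calL\).

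For the infinitesimal step, unravel the definition of \(\alpha_{2}\) at the point \(f(x)\) in the direction \(\D f(x)v\):
\[
    \alpha_{2}(f(x),\D f(x)v) = \sup\{\|\D g(f(x))\D f(x)v\| \st g\in \Hol(\calD_{2},\bbD)\}.
\]
The chain rule yields \(\D g(f(x))\D f(x) = \D(g\circ f)(x)\), and since compositions of holomorphic maps are holomorphic, \(g\circ f\in \Hol(\calD_{1},\bbD)\) whenever \(g\in \Hol(\calD_{2},\bbD)\). Hence every quantity in the supremum defining \(\alpha_{2}(f(x),\D f(x)v)\) appears as a term in the supremum defining \(\alpha_{1}(x,v)\), which establishes the claimed infinitesimal contraction for all \((x,v)\in \calD_{1}\times \calX\).

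For the path-lifting step, fix \(x,y\in \calD_{1}\) and let \(\gamma\in \Gamma\) be any admissible curve in \(\calD_{1}\) from \(x\) to \(y\). Since \(f\) is holomorphic (hence continuously differentiable on \(\calD_{1}\)) and \(\gamma\) has piecewise continuous derivative, \(f\circ \gamma\) is an admissible curve in \(\calD_{2}\) from \(f(x)\) to \(f(y)\), with \((f\circ\gamma)'(t) = \D f(\gamma(t))\gamma'(t)\) wherever \(\gamma'(t)\) exists. Applying the infinitesimal contraction pointwise and integrating,
\[
    \calL(f\circ\gamma) = \int_{0}^{1}\alpha_{2}(f(\gamma(t)),\D f(\gamma(t))\gamma'(t))\d t \leq \int_{0}^{1}\alpha_{1}(\gamma(t),\gamma'(t))\d t = \calL(\gamma).
\]
Taking infimum over admissible \(\gamma\) joining \(x\) to \(y\) in \(\calD_{1}\) then gives
\[
    \rho_{2}(f(x),f(y)) \leq \inf_{\gamma}\calL(f\circ \gamma) \leq \inf_{\gamma}\calL(\gamma) = \rho_{1}(x,y),
\]
which is the desired inequality.

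The proof has no substantive obstacle; it is essentially the dual pairing of the variational characterization of \(\alpha\) with the chain rule. The only minor subtlety worth flagging is verifying that \(f\circ \gamma\) remains admissible in the sense of \(\Gamma\), which follows at once from the continuity of \(\D f\) (by holomorphicity of \(f\)) combined with the piecewise continuity of \(\gamma'\).
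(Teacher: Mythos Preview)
Your proof is correct and follows the standard argument for the Schwarz--Pick inequality in this setting: the infinitesimal contraction comes directly from the chain rule and the fact that precomposition with \(f\) embeds \(\Hol(\calD_{2},\bbD)\) into \(\Hol(\calD_{1},\bbD)\), and the global inequality follows by pushing admissible curves forward. Note, however, that the paper does not actually supply its own proof of this proposition; it is stated for completeness with a citation to \cite[Proposition 3]{HARRIS1979345}, so there is no in-paper argument to compare against. Your write-up is exactly the proof one finds in that reference.
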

    
    In fact, the inequality in \Cref{prop:hol_inv} can be replaced by an equality when the function \(f\) is a biholomorphic mapping. This means that the CRF-pseudometric is biholomorphically invariant~\cite{helton2007operatorvalued}. In some sense, \Cref{prop:hol_inv} indicates that the CRF-pseudometric is non-expansive on the space of holomorphic functions mapping a domain onto itself.
    
    If we denote by \(\rho_{\bbD}\) the CRF-pseudometric on the complex open unit disk \(\bbD\), then \Cref{prop:hol_inv} becomes particularly useful because \(\rho_{\bbD}\), also known as the \emph{Poincaré metric}, admits the closed form expression
    \begin{equation}\label{eq:poincare}
        \rho_{\Delta}(z_{1},z_{2}) = \mathrm{arctanh}\left|\frac{z_{1}-z_{2}}{1-\bar{z_{1}}z_{2}}\right|.
    \end{equation}
    For a derivation of~\eqref{eq:poincare}, refer to~\cite[Example 2]{HARRIS1979345}.
    
    \subsection{General Matrix Identities}
    
    \begin{lemma}\label{lemma:res_trick}
        If \(M_{1},M_{2}\in \bbC^{n\times n}\) are non-singular, then \(M^{-1}_{1}-M_{2}^{-1}=M_{1}^{-1}(M_{2}-M_{1})M_{2}^{-1}\).
    \end{lemma}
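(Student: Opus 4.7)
The identity in question is the classical (second) resolvent identity for matrices, and the plan is to verify it by a direct one-line expansion of the right-hand side, exploiting only that $M_1$ and $M_2$ are non-singular so that $M_1 M_1^{-1} = M_1^{-1} M_1 = I$ and similarly for $M_2$. Concretely, I would write
\[
    M_1^{-1}(M_2 - M_1)M_2^{-1} = M_1^{-1} M_2 M_2^{-1} - M_1^{-1} M_1 M_2^{-1} = M_1^{-1} - M_2^{-1},
\]
which is exactly the claim. No additional ingredients are needed.

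There is no real obstacle here: the statement is an algebraic identity that holds in any ring once the two elements are invertible, and the manipulation above uses only distributivity of multiplication over subtraction and the definition of an inverse. A symmetric alternative route, if one prefers, is to factor on the other side: $M_1^{-1} - M_2^{-1} = M_1^{-1}(M_2 M_2^{-1}) - (M_1^{-1} M_1) M_2^{-1} = M_1^{-1}(M_2 - M_1)M_2^{-1}$. Either formulation completes the proof in a single display.
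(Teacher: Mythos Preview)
Your proof is correct and essentially identical in spirit to the paper's: the paper verifies the identity by multiplying both sides on the left by $M_1$ and on the right by $M_2$, which is just your one-line expansion read in reverse.
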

    \begin{proof}
        Multiply on the left by \(M_{1}\) and on the right by \(M_{2}\).
    \end{proof}
    
    \begin{lemma}\label{lemma:neumann}
        Let \(z\in \bbC\), \(M\in \bbC^{n\times n}\) and assume that \(\| M\| \leq a < b\leq |z|\) from some \(a,b\in \bbR_{\geq 0}\). Then, \(M-z I_{n}\) is non-singular and \(\|(M-z I_{n})^{-1}\|\leq (b-a)^{-1}\).
    \end{lemma}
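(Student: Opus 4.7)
The plan is to reduce the claim to the standard Neumann series criterion. Since $|z|\geq b>0$, I can factor $M - zI_n = -z(I_n - z^{-1}M)$, so invertibility of $M-zI_n$ is equivalent to invertibility of $I_n - z^{-1}M$. The hypothesis gives
\[
\|z^{-1}M\| \;=\; |z|^{-1}\|M\| \;\leq\; \frac{a}{|z|} \;\leq\; \frac{a}{b} \;<\; 1,
\]
which is precisely the contraction condition needed for the Neumann series.

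From here the argument is routine: the Neumann series $\sum_{k=0}^{\infty}(z^{-1}M)^k$ converges absolutely in operator norm to $(I_n - z^{-1}M)^{-1}$, so $I_n - z^{-1}M$ (hence $M - zI_n$) is non-singular. For the norm bound, I would use the geometric-series estimate
\[
\|(I_n - z^{-1}M)^{-1}\| \;\leq\; \sum_{k=0}^{\infty}\left(\tfrac{a}{|z|}\right)^{k} \;=\; \frac{|z|}{|z|-a},
\]
and then, using $(M-zI_n)^{-1} = -z^{-1}(I_n - z^{-1}M)^{-1}$, conclude
\[
\|(M-zI_n)^{-1}\| \;\leq\; \frac{1}{|z|-a} \;\leq\; \frac{1}{b-a},
\]
where the last inequality uses $b \leq |z|$, which makes $|z|-a \geq b-a > 0$.

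There is no real obstacle here; the only things to double-check are that $b > a$ is needed to ensure the denominator is strictly positive (it is given), and that $|z| \geq b > 0$ so division by $z$ is legitimate. Everything else is a direct invocation of the geometric series in a Banach algebra, so the proof should be only a few lines.
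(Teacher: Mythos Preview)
Your proof is correct and complete. It differs from the paper's argument, though: the paper does not invoke the Neumann series at all but instead uses a direct lower bound via the reverse triangle inequality, namely \(\|(M-zI_n)v\|\geq |z|\,\|v\|-\|Mv\|\geq(|z|-\|M\|)\|v\|\) for every \(v\), which immediately gives injectivity (hence invertibility in finite dimensions), and then substitutes \(v=(M-zI_n)^{-1}u\) to read off the norm bound. Your route is the textbook Neumann-series argument and arguably matches the lemma's label more literally; it has the mild advantage of working verbatim in any Banach algebra. The paper's route is slightly more elementary in that it needs only the triangle inequality and no series convergence. Both arrive at the same intermediate estimate \(\|(M-zI_n)^{-1}\|\leq(|z|-a)^{-1}\) before invoking \(|z|\geq b\).
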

    \begin{proof}
        For every \(v\in \bbC^{n}\), we have \(\|(M-z I_{n})v\| \geq \|z v\| - \|Mv\| \geq (|z|-\|M\|)\|v\|\). This implies that \(M-z I_{n}\) is non-singular. Choosing \(v=(M-z I_{n})^{-1}u\) for some unit vector \(u\), we have \(\|(M-z I_{n})^{-1}u\| \leq (|z|-\|M\|)^{-1} \). Taking the supremum over unit vectors \(u\) and using the definition of spectral norm, we obtain the desired result.
    \end{proof}
    
    \begin{lemma}
        For every \(M_{1},M_{2}^{\top}\in \bbC^{n\times d}\) and \(z\in \bbC\) such that both \(M_{1}M_{2}-z I_{n}\) and \(M_{2}M_{1}^{\top}-z I_{d}\) are non-singular, we have \(M_{1}(M_{2}M_{1}-z I_{d})^{-1} = (M_{1}M_{2}-z I_{n})^{-1}M_{1}\).
    \end{lemma}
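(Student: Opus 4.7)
The plan is to establish the identity by the classical push-through trick, starting from a simple associativity/distributivity computation and then using the assumed invertibility to move factors across.

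First I would verify the intertwining relation $M_{1}(M_{2}M_{1}-zI_{d}) = (M_{1}M_{2}-zI_{n})M_{1}$, which follows immediately from associativity of matrix multiplication and the fact that the scalar $z$ commutes with $M_{1}$: expanding the left-hand side gives $M_{1}M_{2}M_{1}-zM_{1}$, and expanding the right-hand side gives the same expression. The dimensions are consistent since $M_{1}\in\bbC^{n\times d}$ so both sides are $n\times d$ matrices.

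Next, because $M_{1}M_{2}-zI_{n}$ and $M_{2}M_{1}-zI_{d}$ are assumed non-singular, I would left-multiply the identity by $(M_{1}M_{2}-zI_{n})^{-1}$ and right-multiply by $(M_{2}M_{1}-zI_{d})^{-1}$. This yields
\[
(M_{1}M_{2}-zI_{n})^{-1}M_{1} = M_{1}(M_{2}M_{1}-zI_{d})^{-1},
\]
which is the desired conclusion.

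There is essentially no obstacle here; the only thing to be careful about is the dimension bookkeeping (ensuring the two resolvents sit on the correct side of $M_{1}$ so the matrix products make sense), and noting the small inconsistency in the statement between $M_{2}^{T}\in\bbC^{n\times d}$ in the hypothesis and $M_{2}M_{1}^{T}$ appearing in the invertibility assumption versus $M_{2}M_{1}$ in the conclusion — interpreting $M_{2}\in\bbC^{d\times n}$ (so that $M_{2}M_{1}\in\bbC^{d\times d}$ and $M_{1}M_{2}\in\bbC^{n\times n}$) makes the whole statement dimensionally coherent and the argument above goes through verbatim.
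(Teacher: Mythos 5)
Your proof is correct and follows essentially the same push-through argument as the paper; the paper simply phrases it in the reverse direction (multiplying the claimed identity by the non-inverted matrices to reduce to the trivial intertwining relation), while you start from the intertwining relation and multiply by the inverses. You are also right to flag the typo in the hypotheses ($M_2^T \in \bbC^{n\times d}$ and $M_2M_1^T$ should read $M_2 \in \bbC^{d\times n}$ and $M_2M_1$).
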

    \begin{proof}
        Left-multiply the equation on both sides by \(M_{1}M_{2}-zI_{n}\) and right-multiply by \(M_{2}M_{1}-zI_{d}\).
    \end{proof}

    \subsection{Real and Imaginary Parts of Matrices}
    Apart from general matrix identities, we will also need to consider the real and imaginary parts of matrices. Just like complex numbers, we can decompose a complex matrix \(M\in \bbC^{n\times n}\) as \(M=\Re[M]+i\Im[M]\) where \(2\Re[M]=M+M^{\ast}\) and \(2i\Im[M]=M-M^{\ast}\). The real and imaginary parts of \(M\) are Hermitian. The following lemma states that the norm of the real and imaginary parts of a matrix are bounded by the norm of the matrix itself.
    
    \begin{lemma}\label{lemma:real/imag_norm_bound}
        For every \(M\in \bbC^{n\times n}\), \(\|\Re[M]\| \vee \| \Im[M]\| \leq \|M\|\).
    \end{lemma}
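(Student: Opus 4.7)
The plan is to prove this directly from the definitions $\Re[M]=(M+M^{\ast})/2$ and $\Im[M]=(M-M^{\ast})/(2i)$ together with two basic facts about the spectral norm: its subadditivity (triangle inequality) and its invariance under the adjoint, i.e.\ $\|M^{\ast}\|=\|M\|$. This is enough to conclude the result without any spectral-theoretic machinery.

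First I would write $\|\Re[M]\|=\|(M+M^{\ast})/2\|\leq (\|M\|+\|M^{\ast}\|)/2=\|M\|$, and then repeat the same step with $\Im[M]=(M-M^{\ast})/(2i)$, using that multiplication by the unit scalar $1/(2i)$ does not change the spectral norm. Taking the maximum of the two bounds gives $\|\Re[M]\|\vee\|\Im[M]\|\leq\|M\|$ as claimed.

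There is no real obstacle here; the only thing to be a bit careful about is recording why $\|M^{\ast}\|=\|M\|$ (which follows from the variational characterization $\|M\|=\sup_{\|u\|=\|v\|=1}|u^{\ast}Mv|$ and the interchange $u\leftrightarrow v$). As an alternative, if a more self-contained derivation is preferred, I could use the Hermitian nature of $\Re[M]$ to write $\|\Re[M]\|=\sup_{\|v\|=1}|v^{\ast}\Re[M]v|=\sup_{\|v\|=1}|\Re(v^{\ast}Mv)|\leq\sup_{\|v\|=1}|v^{\ast}Mv|\leq\|M\|$, and likewise for $\Im[M]$. Either route is a few lines.
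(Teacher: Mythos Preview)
Your proposal is correct. Both routes you sketch are valid.

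Your primary argument via the triangle inequality and $\|M^{\ast}\|=\|M\|$ is more economical than what the paper does. The paper instead takes your alternative route: for a unit vector $v$, Cauchy--Schwarz gives $\|Mv\|^{2}\geq |v^{\ast}Mv|^{2}$, and since $v^{\ast}\Re[M]v$ and $v^{\ast}\Im[M]v$ are real, $|v^{\ast}Mv|^{2}=(v^{\ast}\Re[M]v)^{2}+(v^{\ast}\Im[M]v)^{2}$; taking the supremum over $v$ finishes it. The reason the paper favors this longer argument is that the \emph{method} is reused immediately afterward: the inequality $\|Mv\|\geq |v^{\ast}\Re[M]v|$ (and its imaginary-part analogue) is exactly what drives the next lemma on invertibility and the bound $\|M^{-1}\|\leq a^{-1}$ when $\Re[M]\succeq aI$ or $\Im[M]\succeq aI$. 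Your triangle-inequality proof gives the norm bound cleanly but does not yield that pointwise estimate as a byproduct.
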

    \begin{proof}
        Let \(v\in \bbC^{n}\) be a complex unitary vector. By Cauchy-Schwarz's inequality, \(\|Mv\|^{2} = \|Mv\|^{2}\|v\|^{2} \geq |v^{\ast}M v|^{2} = |v^{\ast}\Re[M]v+iv^{\ast}\Im[M]v|^{2}\). Since both \(\Re[M]\) and \(\Im[M]\) are Hermitian, the quadratic forms \(v^{\ast}\Re[M]v\) and \(v^{\ast}\Im[M]v\) are real. Hence, \(|v^{\ast}\Re[M]v+iv^{\ast}\Im[M]v|^{2} = (v^{\ast}\Re[M]v)^{2}+(v^{\ast}\Im[M]v)^{2}\). Taking the supremum over all unitary vectors \(v\), we obtain the desired result.
    \end{proof}
    
    The proof of \Cref{lemma:real/imag_norm_bound} is as crucial as the statement itself, if not more so. For instance, the following lemma follows directly from this argument.
    
    \begin{lemma}\label{lemma:real/imag_singularity_bound}
        Let \(M\in \bbC^{n\times n}\). If there exists \(a\in \bbR_{>0}\) such that \(\Re[M] \succeq a I_{n}\), \(\Re[M] \preceq -a I_{n}\), \(\Im[M] \succeq a I_{n}\) or \(\Im[M] \preceq -a I_{n}\), then \(M\) is non-singular and \(\|M^{-1}\|\leq a^{-1}\).
    \end{lemma}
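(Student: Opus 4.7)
The plan is to mimic the Cauchy--Schwarz argument already used in the proof of \Cref{lemma:real/imag_norm_bound}, exploiting the Hermiticity of $\Re[M]$ and $\Im[M]$ to extract a quantitative lower bound on $\|Mv\|$ from a definiteness hypothesis on one of the parts. Concretely, for any unit vector $v \in \bbC^{n}$, Cauchy--Schwarz gives
\[
\|Mv\| \;=\; \|Mv\|\|v\| \;\geq\; |v^{\ast} M v| \;=\; |v^{\ast}\Re[M]v + i\, v^{\ast}\Im[M]v|.
\]
Since $\Re[M]$ and $\Im[M]$ are Hermitian, both $v^{\ast}\Re[M]v$ and $v^{\ast}\Im[M]v$ are real, so the modulus on the right equals $\bigl((v^{\ast}\Re[M]v)^{2} + (v^{\ast}\Im[M]v)^{2}\bigr)^{1/2}$.

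Under any one of the four hypotheses, one of the two real quadratic forms is at least $a$ in absolute value: e.g.\ if $\Re[M]\succeq aI_{n}$ then $v^{\ast}\Re[M]v \geq a$, and similarly in the other three cases (using $\preceq -aI_{n}$ to get a lower bound of $a$ on the absolute value). In every case we conclude $\|Mv\|\geq a$ for every unit vector $v$. This immediately implies $\ker(M) = \{0\}$, so $M$ is non-singular. Substituting $v = M^{-1}u/\|M^{-1}u\|$ for a unit vector $u$ (with $M^{-1}u \neq 0$ since $M$ is invertible) yields $\|u\| \geq a\|M^{-1}u\|^{-1}\|M^{-1}u\|$... more cleanly, from $\|Mw\| \geq a\|w\|$ for all $w$, set $w = M^{-1}u$ to obtain $\|u\|\geq a\|M^{-1}u\|$, and take the supremum over unit $u$ to conclude $\|M^{-1}\|\leq a^{-1}$.

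There is no real obstacle here; the statement is essentially a quantitative refinement of the non-singularity argument already executed inside the proof of \Cref{lemma:invertible}, and the bookkeeping for the four symmetric cases is routine. The only point worth being careful about is to handle all four cases uniformly by observing that definiteness of $\pm\Re[M]$ or $\pm\Im[M]$ by $aI_{n}$ yields a one-sided sign on the corresponding real quadratic form whose absolute value is then $\geq a$.
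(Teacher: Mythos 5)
Your proof is correct and takes essentially the same approach as the paper: both invoke the Cauchy--Schwarz lower bound $\|Mv\| \geq |v^{\ast}Mv|$ from the proof of \Cref{lemma:real/imag_norm_bound}, use Hermiticity to split into real quadratic forms, lower-bound by $a$ from the definiteness hypothesis, and substitute $v = M^{-1}u/\|M^{-1}u\|$ to bound $\|M^{-1}\|$. Your handling of the four cases uniformly via the modulus identity is a minor cosmetic improvement over the paper's "the second half follows similarly."
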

    \begin{proof}
        Assume that \(\Re[M] \succeq a I_{n}\) or \(\Re[M] \preceq -a I_{n}\). By the proof of \Cref{lemma:real/imag_norm_bound}, we have \(\|Mv\| \geq |v^{\ast}\Re[M]v| \geq a\) for every unitary \(v\in \bbC^{n}\). Hence, \(M\) is non-singular. Taking \(v=\frac{M^{-1}u}{\|M^{-1}u\|}\) for some unitary \(u\in \bbC^{n}\setminus \{0\}\), we have \(\|M^{-1}u\| \leq a^{-1}\). Taking the supremum over all unitary \(u\) gives the result first half of the result. The second half follows similarly.
    \end{proof}
    
    Another important lemma relates the real and imaginary parts of an inverse matrix to those of the original matrix.
    
    \begin{lemma}\label{lemma:real/imag_inverse}
        Let \(M\in \bbC^{n\times n}\) be invertible. Then, \(\Re[M^{-1}]=M^{-1}\Re[M]M^{-\ast}\) and \(\Im[M^{-1}]=-M^{-1}\Im[M]M^{-\ast}\).
    \end{lemma}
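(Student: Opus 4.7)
The plan is to prove both identities simultaneously by a short algebraic manipulation that reduces each to the identity $M^{-1} - M^{-\ast} = M^{-1}(M^{\ast} - M)M^{-\ast}$, which follows from factoring $M^{-1}$ on the left and $M^{-\ast}$ on the right out of the difference.

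Specifically, since $M$ is invertible, $M^{\ast}$ is also invertible, and $(M^{-1})^{\ast} = (M^{\ast})^{-1} = M^{-\ast}$. By definition, $2\Re[M^{-1}] = M^{-1} + M^{-\ast}$ and $2i\Im[M^{-1}] = M^{-1} - M^{-\ast}$. I would then write
\[
    M^{-1} + M^{-\ast} = M^{-1}(M^{\ast} + M)M^{-\ast} = 2M^{-1}\Re[M]M^{-\ast}
\]
and
\[
    M^{-1} - M^{-\ast} = M^{-1}(M^{\ast} - M)M^{-\ast} = -2iM^{-1}\Im[M]M^{-\ast},
\]
where each middle equality is verified by distributing and using $M^{-1}M = I = M^{\ast}M^{-\ast}$. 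Dividing the first display by $2$ yields the formula for $\Re[M^{-1}]$, and dividing the second by $2i$ yields the formula for $\Im[M^{-1}]$.

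There is essentially no obstacle here; the statement is a direct computation that packages a standard identity. The only care required is the bookkeeping of adjoints (in particular that $\Re[M]$ and $\Im[M]$ are Hermitian, so the expressions on the right-hand side are indeed Hermitian, matching the left-hand side). This lemma is used repeatedly throughout the paper — for instance in \Cref{lemma:prior_bounds} and \Cref{lemma:strict_holomorphic} — to pass from a bound on the imaginary part of $\bbE L - \supop(M) - z\Lambda - i\tau I_{\ell}$ to the corresponding positive semidefiniteness bound on $\Im[\F^{(\tau)}(M)]$, and the identity in the form stated above is precisely what makes those arguments work.
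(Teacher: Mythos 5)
Your proof is correct and matches the paper's argument essentially line for line: both start from $2\Re[M^{-1}] = M^{-1}+M^{-\ast}$ and $2i\Im[M^{-1}] = M^{-1}-M^{-\ast}$, then factor $M^{-1}$ and $M^{-\ast}$ out to reduce to $M^{\ast}\pm M$ (the paper cites its resolvent-difference lemma for the imaginary-part case, which is the same computation you do by hand).
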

    \begin{proof}
        Write \(M=\Re[M] + i\Im[M]\). Since the matrix \(\Re[M]\) is Hermitian and \(i\Im[M]\) is skew-Hermitian, we have \(M^{\ast} = (\Re[M]+i\Im[M])^{\ast} = \Re[M]-i\Im[M]\). By the definition of matrix real and imaginary parts as well as \Cref{lemma:res_trick}, \(2\Re[M^{-1}] = M^{-1} + M^{-\ast} = M^{-1}(M^{\ast}+M)M^{-\ast}=2M^{-1}\Re[M]M^{-\ast}\). The proof for the imaginary part is similar.
    \end{proof}
    
    \subsection{Matrix Norms}
    
    \begin{lemma}\label{lemma:frob_vs_spectral}
        Let \(M_{1}\in \bbC^{n\times d}\) and \(M_{2}\in \bbC^{d \times m}\) be arbitrary matrices. Then, \(\| M_{1}M_{2}\|_{F} \leq \|M_{1}\|\|M_{2}\|_{F}\) and \(\| M_{1}M_{2}\|_{F} \leq \|M_{1}\|_{F}\|M_{2}\|\).
    \end{lemma}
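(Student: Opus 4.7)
The plan is to prove both inequalities by reducing the Frobenius norm to a sum of squared Euclidean norms of columns (or rows) and then applying the definition of the spectral norm to each column (or row) separately. The key observation is that the Frobenius norm is column-additive: if we write \(M_2 = [v_1 \mid v_2 \mid \cdots \mid v_m]\) in terms of its columns \(v_j \in \bbC^d\), then \(\|M_2\|_F^2 = \sum_{j=1}^m \|v_j\|^2\), and \(M_1 M_2\) has columns \(M_1 v_j\) so that \(\|M_1 M_2\|_F^2 = \sum_{j=1}^m \|M_1 v_j\|^2\).

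For the first inequality, I would bound each term via the defining property of the spectral norm, \(\|M_1 v_j\| \leq \|M_1\| \|v_j\|\), and sum to get
\[
\|M_1 M_2\|_F^2 = \sum_{j=1}^m \|M_1 v_j\|^2 \leq \|M_1\|^2 \sum_{j=1}^m \|v_j\|^2 = \|M_1\|^2 \|M_2\|_F^2,
\]
whence taking square roots yields \(\|M_1 M_2\|_F \leq \|M_1\| \|M_2\|_F\). For the second inequality, I would transpose: using the identities \(\|M\|_F = \|M^T\|_F\) and \(\|M\| = \|M^T\|\) together with \((M_1 M_2)^T = M_2^T M_1^T\), the first inequality applied to \(M_2^T\) and \(M_1^T\) gives \(\|M_1 M_2\|_F = \|M_2^T M_1^T\|_F \leq \|M_2^T\| \|M_1^T\|_F = \|M_2\| \|M_1\|_F\).

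There is essentially no obstacle here: both inequalities are classical consequences of the definitions of the spectral and Frobenius norms, and the proof amounts to a one-line column-wise (respectively row-wise) estimate. The only thing to be mindful of is stating clearly that \(\|M_1 v\| \leq \|M_1\| \|v\|\) for every \(v \in \bbC^d\), which is the very definition of the operator norm induced by the Euclidean norm.
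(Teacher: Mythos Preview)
Your proof is correct and arguably more elementary than the paper's. The paper proceeds by writing \(\|M_1 M_2\|_F^2 = \tr(M_2^* M_1^* M_1 M_2)\), cycling the trace to \(\tr(M_2 M_2^* M_1^* M_1)\), diagonalizing \(M_2 M_2^* = U\Lambda U^*\), and then bounding \(\sum_j \Lambda_{j,j} W_{j,j} \leq \|M_2\|^2 \tr(W)\) where \(W = U^* M_1^* M_1 U\); this yields the second inequality directly, with the first obtained by a symmetric argument. Your column-wise estimate avoids the spectral decomposition entirely and reduces the claim to the defining inequality of the operator norm, then handles the second inequality by transposition. Both routes are short; yours is slightly more transparent, while the paper's trace manipulation generalizes more readily to other unitarily invariant norms.
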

    \begin{proof}
        By definition, \(\| M_{1}M_{2}\|_{F}^{2} = \tr(M^{\ast}_{2}M_{1}^{\ast}M_{1}M_{2})\). Using the cyclic property of the trace, \(\tr(M^{\ast}_{2}M_{1}^{\ast}M_{1}M_{2})=\tr(M_{2}M^{\ast}_{2}M_{1}^{\ast}M_{1})\). Let \(M_{2}M^{\ast}_{2}=U\Lambda U^{\ast}\) for some unitary \(U\in \bbC^{d\times d}\) and real positive semidefinite diagonal \(\Lambda\). With \(W = U^{\ast}M_{1}^{\ast}M_{1} U\), we have \(\tr(M^{\ast}_{2}M_{1}^{\ast}M_{1}M_{2})=\tr(M_{2}M^{\ast}_{2}M_{1}^{\ast}M_{1}) = \sum_{j=1}^{d}\Lambda_{j,j}W_{j,j}\). Indeed, as \(\|M_{2}\|^{2}=\|M_{2}M^{\ast}_{2}\| = \max_{j}\Lambda_{j,j}^{2}\), we have \(\sum_{j=1}^{d}\Lambda_{j,j}W_{j,j} \leq \|M_{2}\|^{2}\tr(U^{\ast}M_{1}^{\ast}M_{1} U)= \|M_{2}\|^{2}\|M_{1}\|^{2}_{F}\). A similar argument can be made for the second inequality.
    \end{proof}
    
    The second inequality related the trace to the spectral norm and the nuclear norm.
    
    \begin{lemma}\label{lemma:trace_norm}
        Let \(M,U\in \bbC^{n\times n}\). Then, \(|\tr(UM)| \leq \|U\|_{\ast}\|M\|\).
    \end{lemma}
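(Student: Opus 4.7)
The plan is to prove the duality bound $|\tr(UM)| \leq \|U\|_{\ast}\|M\|$ by expanding $U$ in its singular value decomposition and reducing the trace to a weighted sum of bilinear forms, each of which is controlled by the operator norm of $M$.

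First I would write the singular value decomposition $U = \sum_{j=1}^{n} \sigma_{j}(U)\, u_{j}v_{j}^{\ast}$, where $\{u_{j}\}$ and $\{v_{j}\}$ are orthonormal systems in $\bbC^{n}$ and $\sigma_{j}(U)\geq 0$ are the singular values of $U$, so that by definition $\|U\|_{\ast}=\sum_{j=1}^{n}\sigma_{j}(U)$. Plugging this into $\tr(UM)$ and using the cyclic property of the trace gives
\[
\tr(UM) \;=\; \sum_{j=1}^{n}\sigma_{j}(U)\,\tr(u_{j}v_{j}^{\ast}M) \;=\; \sum_{j=1}^{n}\sigma_{j}(U)\,v_{j}^{\ast}M u_{j}.
\]

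Next I would apply the triangle inequality followed by the Cauchy--Schwarz inequality to each scalar $v_{j}^{\ast}M u_{j}$, obtaining $|v_{j}^{\ast}M u_{j}|\leq \|v_{j}\|\,\|M u_{j}\|\leq \|M\|$ since $u_{j}$ and $v_{j}$ are unit vectors and $\|M u_{j}\|\leq \|M\|\|u_{j}\|$ by definition of the spectral norm. Summing over $j$ yields
\[
|\tr(UM)| \;\leq\; \sum_{j=1}^{n}\sigma_{j}(U)\,|v_{j}^{\ast}M u_{j}| \;\leq\; \|M\|\sum_{j=1}^{n}\sigma_{j}(U) \;=\; \|U\|_{\ast}\|M\|,
\]
which is the claimed inequality.

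There is essentially no obstacle here: the result is a standard duality statement between the operator norm and its dual, the nuclear norm, and the SVD reduction is routine. The only minor point to keep consistent with the paper's conventions is that $\|U\|_{\ast}$ denotes the sum of singular values (as stated in the notation section) and $\|M\|$ denotes the spectral norm, so that the two-line SVD argument above is all that is required.
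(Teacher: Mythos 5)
Your proof is correct, but it takes a genuinely different (and more elementary) route than the paper. The paper invokes Von Neumann's trace inequality as a black box: with $\{u_j\}$ and $\{m_j\}$ the non-increasing singular values of $U$ and $M$, it bounds $|\tr(UM)|\leq\sum_j u_j m_j\leq m_1\sum_j u_j=\|U\|_{\ast}\|M\|$. You instead avoid Von Neumann entirely by expanding only $U$ in its singular value decomposition and controlling each scalar $v_j^{\ast}M u_j$ directly via Cauchy--Schwarz. The trade-off is that Von Neumann's inequality is a heavier tool but gives the sharper intermediate bound $\sum_j u_j m_j$, which would be needed if one wanted to pair singular values more finely; your argument is lighter and self-contained but stops at the coarser bound $\|M\|\sum_j u_j$, which is all this lemma requires. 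Either is a valid and complete proof of the stated duality.
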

    \begin{proof}
        Let \(\{u_{j}\}_{j=1}^{n}\) and \(\{m_{j}\}_{j=1}^{n}\) be a non-increasing enumeration of the singular values of \(U\) and \(M\), respectively. By Von Neumann's trace inequality, \(|\tr(UM)|\leq \sum_{j=1}^{n}u_{j}m_{j} \leq m_{1}\sum_{j=1}^{n}u_{j}=\|U\|_{\ast}\|M\|\).
    \end{proof}
    
    \subsection{Block Matrices}

    The invertibility of block matrices is closely related to the invertibility of their so-called Schur complements. If \(M\) is a block matrix of the form
    \[
        M=\begin{bmatrix}
            A & B\\
            C & D
        \end{bmatrix}
    \]
    with \(A\in \bbC^{n\times n}\), \(B,C^{\top}\in \bbC^{n\times d}\) and \(D\in \bbC^{d\times d}\), then the Schur complement of \(A\) in \(M\) is defined as \(D-CA^{-1}B\) provided that \(A\) is non-singular. Similarly, the Schur complement of \(D\) in \(M\) is defined as \(A-BD^{-1}C\) provided that \(D\) is non-singular. The following lemma, known as the block matrix inversion lemma, relates the invertibility of \(M\) to that of its Schur complements.
    
    \begin{lemma}[Block matrix inversion lemma~{\cite[Theorem 2.1]{block_matrices}}]\label{lemma:block_inversion}
        Let
        \[
            M=\begin{bmatrix}
                A & B\\
                C & D
            \end{bmatrix}
        \]
        be a block matrix with \(A\in \bbC^{n\times n}\), \(B,C^{\top}\in \bbC^{n\times d}\) and \(D\in \bbC^{d\times d}\). If \(A\) is non-singular, then \(M\) is non-singular if and only if the Schur complement of \(A\), namely \(D-CA^{-1}B\), is non-singular. In this case, the inverse of \(M\) is given by
        \[
            M^{-1} = \begin{bmatrix}
                A^{-1} + A^{-1}B(D-CA^{-1}B)^{-1}CA^{-1} & -A^{-1}B(D-CA^{-1}B)^{-1}\\
                -(D-CA^{-1}B)^{-1}CA^{-1} & (D-CA^{-1}B)^{-1}.
            \end{bmatrix}.
        \]
        Alternatively, if \(D\) is non-singular, then \(M\) is non-singular if and only if the Schur complement of \(D\), namely \(A-BD^{-1}C\), is non-singular. In this case, the inverse of \(M\) is given by
        \[
            M^{-1} = 
            \begin{bmatrix}
                (A-BD^{-1}C)^{-1} & -(A-BD^{-1}C)^{-1}BD^{-1}\\
                -D^{-1}C(A-BD^{-1}C)^{-1} & D^{-1}+D^{-1}C(A-BD^{-1}C)^{-1}BD^{-1}.
            \end{bmatrix}
        \]
    \end{lemma}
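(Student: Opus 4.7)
The plan is to prove the lemma via a block LDU factorization with Schur complements, which makes both the invertibility criterion and the explicit inverse fall out simultaneously. Specifically, assuming $A$ is non-singular, I would first verify the identity
\[
    M = \begin{bmatrix} I_{n} & 0 \\ CA^{-1} & I_{d} \end{bmatrix} \begin{bmatrix} A & 0 \\ 0 & D - CA^{-1}B \end{bmatrix} \begin{bmatrix} I_{n} & A^{-1}B \\ 0 & I_{d} \end{bmatrix}
\]
by direct block multiplication. The outer two factors are unit triangular and thus always non-singular with explicit inverses obtained by negating their off-diagonal blocks.

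From this factorization, $M$ is non-singular if and only if the middle block-diagonal factor is non-singular, which (since $A$ is assumed non-singular) is equivalent to $D - CA^{-1}B$ being non-singular. This gives the first half of the statement. For the explicit inverse, I would invert each of the three factors (trivial for the triangular ones, block-diagonal for the middle one) and multiply them in reverse order, yielding
\[
    M^{-1} = \begin{bmatrix} I_{n} & -A^{-1}B \\ 0 & I_{d} \end{bmatrix} \begin{bmatrix} A^{-1} & 0 \\ 0 & (D-CA^{-1}B)^{-1} \end{bmatrix} \begin{bmatrix} I_{n} & 0 \\ -CA^{-1} & I_{d} \end{bmatrix}.
\]
Expanding this product block-by-block recovers the first formula in the lemma statement.

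For the alternative form, where $D$ is assumed non-singular, I would proceed symmetrically with the dual UDL-type factorization
\[
    M = \begin{bmatrix} I_{n} & BD^{-1} \\ 0 & I_{d} \end{bmatrix} \begin{bmatrix} A - BD^{-1}C & 0 \\ 0 & D \end{bmatrix} \begin{bmatrix} I_{n} & 0 \\ D^{-1}C & I_{d} \end{bmatrix},
\]
and repeat exactly the same invertibility argument and inversion computation. The only real bookkeeping point to watch is the order in which the three factors get inverted (the product reverses) and the sign flips on the off-diagonal blocks of the triangular factors; there is no genuine obstacle beyond carefully verifying the four block entries match the claimed formula. A sanity check via $MM^{-1} = I_{n+d}$ for one of the two forms will confirm no transcription error.
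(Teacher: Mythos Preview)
Your proposal is correct and follows essentially the same approach as the paper: the paper's proof also writes down the block LDU factorization $M = L \cdot \mathrm{diag}(A, D-CA^{-1}B) \cdot U$ with unit-triangular outer factors, notes that the triangular factors are always invertible, and concludes both the invertibility criterion and the explicit inverse from there (with the second part handled by the symmetric decomposition). Your write-up is, if anything, slightly more detailed than the paper's one-line sketch.
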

    \begin{proof}
        The first part of the lemma follows from the decomposition
        \[
            M = \begin{bmatrix}
                I_{n} & 0_{n\times d} \\
                C A^{-1} & I_{d}
            \end{bmatrix}
            \begin{bmatrix}
                A & 0_{n\times d} \\
                0_{d\times n} & D-CA^{-1}B
            \end{bmatrix}
            \begin{bmatrix}
                I_{n} & A^{-1}B \\
                0_{d\times n} & I_{d}
            \end{bmatrix}
        \]
        and the fact that the triangular block matrices are non-singular if and only if their diagonal blocks are non-singular. The second part follows from a similar decomposition.
    \end{proof}

    \subsection{Positivity-Preserving Linear Maps on Matrices}
    We record some observations about positivity-preserving linear maps on matrices. Recall that a linear map \(\supop: \bbC^{n\times n}\to \bbC^{n\times n}\) is positivity-preserving if \(\supop(M)\succeq 0\) whenever \(M\succeq 0\).

    \begin{lemma}\label{lemma:lin_pos_preserve_self_adjoint}
        Suppose that \(\supop: \bbC^{n\times n}\to \bbC^{n\times n}\) is a positivity-preserving linear map. Then, \(\supop\) maps self-adjoint matrices to self-adjoint matrices.
    \end{lemma}
    \begin{proof}
        Let \(M\in \bbC^{n\times n}\) be self-adjoint. We may write \(M=M_{1}-M_{2}\) where \(M_{1},M_{2}\succeq 0\). Since \(\supop\) is linear and positivity-preserving, \(\supop(M)=\supop(M_{1})-\supop(M_{2})\) is the difference of two self-adjoint matrices, and hence self-adjoint.
    \end{proof}

    \begin{lemma}\label{lemma:lin_pos_symmetric}
        Suppose that \(\supop: \bbC^{n\times n}\to \bbC^{n\times n}\) is a positivity-preserving linear map. Then, for any \(M\in \bbC^{n\times n}\), \((\supop(M))^{\ast} = \supop(M^{\ast})\). In particular, it follows that \(\Im[\supop(M)] = \supop(\Im[M])\) and \(\Re[\supop(M)] = \supop(\Re[M])\).
    \end{lemma}
    \begin{proof}
        Decompose \(M=\Re[M] + i\Im[M]\). By \Cref{lemma:lin_pos_preserve_self_adjoint}, \(\supop(\Re[M])\) and \(\supop(\Im[M])\) are self-adjoint. Hence, \((\supop(M))^{\ast} = \supop(\Re[M]) - i\supop(\Im[M]) = \supop(M^{\ast})\).
    \end{proof}
    
    \section{Proof of Technical Lemmas}\label{sec:tecnical_lemmas}
    
    \renewcommand{\thelemma}{B.\arabic{lemma}}
    \renewcommand{\theproposition}{B.\arabic{proposition}}
    
    \subsection{{Proof of \Cref{lemma:dist_gaussian}}}
    \begin{proof}
        Let \(j,k\in \{1,2,\ldots, \ell\}\) be arbitrary. For notational convenience, let \(\cov\) be a \(\ell\times \ell\times \gamma\)-dimensional tensor and \(g\sim \calN(0,I_{\gamma})\) such that \(L=\cov(g)+\bbE L\) and \([\cov(g)]_{j,k} = \cov_{j,k,\alpha}g_{\alpha}\). Here, we use Einstein's notation which means that we sum over every subscript appearing at least two times in a given expression. By Stein's lemma,
        \begin{align*}
            \bbE\left[(L-\bbE L)(L-z\Lambda -i\tau I_{\ell})^{-1}\right]_{j,k} & = \bbE\left[\cov_{j,m,\alpha}g_{\alpha}(L-z\Lambda -i\tau I_{\ell})^{-1}_{m,k}\right]
            \\ & = \bbE\left[\cov_{j,m,\alpha}\frac{\partial (L-z\Lambda -i\tau I_{\ell})^{-1}_{m,k}}{\partial g_{\alpha}}\right]
        \end{align*}
        Let \(e_{\alpha}\in \bbR^{\gamma}\) be the \(\alpha\)-th canonical basis
        vector, \(\delta\in \bbR_{>0}\) and \(L_{\delta}=\cov (g+\delta
        e_{\alpha})+\bbE L\). Then,
        \begin{align*}
            (L_{\delta}-z\Lambda -i\tau I_{\ell})^{-1}_{m,k} &- (L-z\Lambda -i\tau I_{\ell})^{-1}_{m,k} 
            \\ & = \left[(L_{\delta}-z\Lambda -i\tau I_{\ell})^{-1}(L-L_{\delta})(L-z\Lambda -i\tau I_{\ell})^{-1} \right]_{m,k}
            \\ & = -\delta\left[(L_{\delta}-z\Lambda -i\tau I_{\ell})^{-1}\cov( e_{\alpha})(L-z\Lambda -i\tau I_{\ell})^{-1} \right]_{m,k}.
        \end{align*}
        Taking the limit of the quotient of this difference with \(\delta\) as it approaches \(0\), we get that
        \[
            \frac{\partial (L-z\Lambda -i\tau I_{\ell})^{-1}_{m,k}}{\partial g_{\alpha}} = - \left[(L-z\Lambda -i\tau
                I_{\ell})^{-1}\cov(e_{\alpha})(L-z\Lambda -i\tau I_{\ell})^{-1}\right]_{m,k}
        \]
        and, consequently, \(\bbE[(L-\bbE L)(L-z\Lambda -i\tau I_{\ell})^{-1}]_{j,k}=-\bbE[\cov_{j,m,\alpha}(L-z\Lambda -i\tau I_{\ell})^{-1}_{m,a}\cov_{a,b,\alpha}(L-z\Lambda -i\tau I_{\ell})^{-1}_{b,k}]\). Note that \(\bbE[(L-\bbE L)^{\top}W(L-\bbE L)]_{j,k}= \bbE[\cov_{j,a,\alpha}g_{\alpha}W_{a,b}\cov_{b,k,\beta}g_{\beta}]= \bbE[\cov_{j,a,\alpha}W_{a,b}\cov_{b,k,\alpha}]\) for every \(W\in \bbR^{\ell\times \ell}\) independent of \(L\).
    \end{proof}
    
    \subsection{{Proof of \Cref{lemma:universality}}}
    Fix \(z\in \bbH\), let \(\{a_{j}\}_{j=1}^{d}\), \(\{\hat{a}_{j}\}_{j=1}^{d}\) denote the columns of \(A\) and \(\hA\) respectively. Suppose that \(l^{\top}_{j}=(a_{j}^{\top},0,0,\ha^{\top}_{j})\) and \(L_{j} = l_{j}e_{\ntrain+j}^{\top} + e_{\ntrain+j}l_{j}^{\top}\) for every \(j\in\{1,2,\ldots, d\}\), where \(\{e_{j}\}_{j=1}^{\ell}\) is the canonical basis of \(\bbR^{\ell}\). In particular, we may write the linearization in~\eqref{eq:first_linearization} as \(L=\bbE L + \sum_{j=1}^{d}L_{j}\). For every \(j\in\{1,2,\ldots, d\}\), let \(P_{j}\in \bbR^{\ell\times \ell}\) be the orthogonal matrix permuting the first and \(\ntrain+j\)th entries exclusively and \(C_{j}\in \bbR^{(\ell-1)\times (\ell-1)}\) be the matrix cycling from position \(\ntrain+j-1\) to \(1\). For instance, if \(v=(v_{k})_{k=1}^{\ell-1}\), then
    \[
        v^{\top}C^{-1}_{j} = (v_{2},v_{3},\ldots, v_{j-1},v_{1},v_{j},v_{j+1},\ldots, v_{\ell-1}).
    \]
    We will rely heavily on a Schur complement decomposition of \((P_{j}LP_{j}-zI_{\ell})^{-1}\). For every \(j\in \{1,2,\ldots, d\}\), let \(l_{-j}\in \bbR^{\ell-1}\) be obtained by removing the \(\ntrain+j\)th entry of \(l_{j}\) and \(L_{-j}\in \bbR^{(\ell-1)\times (\ell-1)}\) be obtained by removing the \(\ntrain+j\)th columns and \(\ntrain+j\)th row from \(L\). Define the scalar \(\xi_{j}:=(1+z+l_{-j}^{\top}(L_{-j}-zI_{\ell-1})^{-1}l_{-j})^{-1}\)
    and the matrix
    \[
        \Xi_{j} := C_{j}(L_{-j}-zI_{\ell-1})^{-1} C_{j} -\xi_{j}C_{j}(L_{-j}-zI_{\ell-1})^{-1} l_{j}l_{j}^{\top} (L_{-j}-zI_{\ell-1})^{-1}C_{j}.
    \]
    We have the following block matrix inversion formula.
    \begin{lemma}\label{lemma:schur_PLP}
        For every \(j\in \{1,2,\ldots, d\}\) and \(z\in \bbH\), 
        \[
            (P_{j}LP_{j}-zI_{\ell})^{-1}
            =
            \begin{bmatrix}
                -\xi_{j} & \xi_{j} l_{-j}^{\top}(L_{-j}-z I_{\ell-1})^{-1}C_{j} \\
                \xi_{j} C_{j}(L_{-j}-zI_{\ell-1})^{-1}l_{-j} & \Xi_{j}
            \end{bmatrix}.
        \]
    \end{lemma}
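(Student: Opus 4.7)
The plan is to apply the block matrix inversion lemma (\Cref{lemma:block_inversion}) to $(P_j L P_j - z I_\ell)^{-1}$ by splitting off the top-left $1 \times 1$ entry from the remaining $(\ell-1) \times (\ell-1)$ block. Since $P_j$ swaps indices $1$ and $n_{\text{train}}+j$, the $(1,1)$ scalar entry of $P_j L P_j - z I_\ell$ is $L_{n_{\text{train}}+j, n_{\text{train}}+j} - z = -1 - z$. This is the quantity whose reciprocal (after subtracting the Schur correction) will yield $-\xi_j$.

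Next, I will identify the lower-right block and the off-diagonal strips in terms of $L_{-j}$ and $l_{-j}$. Deleting the first row and column from $P_j L P_j$ leaves a submatrix indexed by $\{1,\ldots,\ell\}\setminus\{n_{\text{train}}+j\}$, but with indices in the cyclically shifted order corresponding to the permutation $C_j$. Thus the lower-right block is $C_j (L_{-j} - z I_{\ell-1}) C_j^{-1}$, and the first row (respectively column) beyond the diagonal entry is $l_{-j}^{T} C_j^{-1}$ (respectively $C_j l_{-j}$), since the $(n_{\text{train}}+j)$-th row of $L$ with its diagonal entry removed is exactly $l_j^{T}$ restricted to indices $\neq n_{\text{train}}+j$, i.e., $l_{-j}^{T}$.

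Applying \Cref{lemma:block_inversion}, the Schur complement of the lower-right block is
\[
-1 - z - l_{-j}^{T} C_j^{-1}\bigl[C_j (L_{-j} - z I_{\ell-1}) C_j^{-1}\bigr]^{-1} C_j l_{-j} = -1 - z - l_{-j}^{T}(L_{-j}-zI_{\ell-1})^{-1}l_{-j} = -\xi_j^{-1},
\]
so the $(1,1)$ entry of the inverse equals $-\xi_j$. Plugging into the explicit formula from \Cref{lemma:block_inversion}, the off-diagonal entries are $\xi_j l_{-j}^{T}(L_{-j}-zI_{\ell-1})^{-1}C_j$ and $\xi_j C_j (L_{-j}-zI_{\ell-1})^{-1}l_{-j}$ after cancelling pairs of $C_j$ and $C_j^{-1}$, and the lower-right block picks up an additional rank-one correction $-\xi_j C_j (L_{-j}-zI_{\ell-1})^{-1}l_{-j}l_{-j}^{T}(L_{-j}-zI_{\ell-1})^{-1}C_j$, matching the definition of $\Xi_j$ (the appearance of $l_j$ in the stated definition of $\Xi_j$ should be read as $l_{-j}$).

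The only real obstacle is the bookkeeping: carefully tracking the cyclic permutation $C_j$ and its inverse through the Schur complement formula to make sure the cancellations work out and that the permutation of indices induced by deleting the first row/column of $P_j L P_j$ genuinely corresponds to $C_j$ rather than $C_j^{-1}$. Once this permutation correspondence is fixed, no analytic ingredient is needed beyond the block inversion identity.
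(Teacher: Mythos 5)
Your proposal is correct and takes essentially the same route as the paper: identify the $1\times 1$ plus $(\ell-1)\times(\ell-1)$ block structure of $P_j L P_j - z I_\ell$ after the permutation $P_j$ and apply \Cref{lemma:block_inversion}. Your block decomposition is in fact the more careful of the two: since $P_jLP_j$ is symmetric, the lower-right block must be a similarity transform $C_j L_{-j} C_j^{-1}$ and the lower-left strip must be $C_j l_{-j}$ (the transpose of $l_{-j}^T C_j^{-1}$), exactly as you have them, and you correctly flag the $l_j$-for-$l_{-j}$ substitution in $\Xi_j$. One remaining bookkeeping wrinkle to watch: running $B D^{-1}$ through the formula gives $l_{-j}^T(L_{-j}-zI_{\ell-1})^{-1}C_j^{-1}$, so the trailing factor in the upper-right entry (and the two trailing factors in $\Xi_j$) should be $C_j^{-1}$ rather than $C_j$; this is consistent with symmetry of the inverse and with the $C_j$ appearing on the left of the lower-left entry.
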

    \begin{proof}
        The lemma follows directly from the observation
        \[
        P_{j}LP_{j}  = 
        \begin{bmatrix}
            -1 & l^{\top}_{-j}C_{j}^{-1} \\
            C_{j}^{-1}l_{-j} & C^{-1}_{j}L_{-j}C^{-1}_{j}
        \end{bmatrix}
        \]
        and an application of \Cref{lemma:block_inversion}.
    \end{proof}
    
    For every \(j\in\{1,2,\ldots, d\}\), let \(  q_{j}=l_{-j}^{\top}R_{-j} l_{-j}\) and \(R_{-j}:=(L_{-j}-zI_{\ell})^{-1}\).
    Concentration of bilinear forms is a central ingredient of many random matrix theory proof. 
    We obtain a concentration result for \(  q_{j}\) by adapting~\cite[Lemma 4]{louart_random-matrix-approach}.
    \begin{lemma}\label{lemma:quadratic_concentration}
        Under the settings of \Cref{theorem:rf_error}, \(\lim_{n\to\infty}\bbE[\max_{1\leq j\leq d}|  q_{j}-\bbE   q_{j}|^{2}]=0\) for every \(z\in \bbH\).
    \end{lemma}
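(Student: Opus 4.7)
The plan is to combine Gaussian concentration of Lipschitz functions, applied to each column $z_j$ of $Z$ separately, with a union bound over $j$. The crucial structural observation is that $a_j = n^{-1/2}\sigma(X\varphi(z_j))$ and $\hat{a}_j = n^{-1/2}\sigma(\hX\varphi(z_j))$ depend only on the $j$-th column $z_j$ of $Z$, so $l_{-j}$ is a function of $z_j$ alone. Conversely, since $L_{-j}$ is obtained by deleting row and column $\ntrain+j$, it no longer contains $a_j$ or $\hat{a}_j$, and so $R_{-j}$ is a function of $\{z_k\}_{k\neq j}$, independent of $z_j$. Moreover, $\bbE l_{-j}=0$ by the hypothesis $\bbE A=\bbE\hA=0$.

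Setting $K_j := \bbE[l_{-j}l_{-j}^T]$, the independence above gives $\bbE[q_j\mid R_{-j}]=\tr(R_{-j}K_j)$, and I would decompose
\[
q_j - \bbE q_j = \bigl[l_{-j}^T R_{-j} l_{-j}-\tr(R_{-j}K_j)\bigr] + \tr\bigl((R_{-j}-\bbE R_{-j})K_j\bigr) =: E_j^{(1)} + E_j^{(2)}.
\]
The term $E_j^{(1)}$ is the fluctuation of a quadratic form in $z_j$ with $R_{-j}$ frozen, while $E_j^{(2)}$ is a linear functional of $R_{-j}$, controlled by concentration in $\{z_k\}_{k\neq j}$; these act on disjoint Gaussian coordinates, which is what lets both be small.

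For $E_j^{(1)}$, the function $z_j \mapsto l_{-j}(z_j)^T R_{-j} l_{-j}(z_j)$ is quadratic in a Lipschitz image of $z_j$ and is therefore not globally Lipschitz, so Gaussian concentration does not apply directly; this is the main technical obstacle. I would circumvent it by truncation. Using the identity $u^T M u - v^T M v = (u-v)^T M(u+v)$ together with the fact that $z_j \mapsto l_{-j}(z_j)$ is $O(n^{-1/2})$-Lipschitz (since $\lambda_\sigma$, $\lambda_\varphi$, $\|X\|$, $\|\hX\|$ are bounded), one sees that on the event $\{\|l_{-j}\|\le R\}$ the map $z_j \mapsto q_j$ is $O\bigl(R(\Im[z])^{-1}n^{-1/2}\bigr)$-Lipschitz. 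Since $z_j \mapsto \|l_{-j}(z_j)\|$ is itself $O(n^{-1/2})$-Lipschitz with mean bounded (via \Cref{lemma:bound_moments}), the event $\{\|l_{-j}\|>R\}$ has probability at most $2\exp(-cR^2 n)$ for $R$ exceeding a constant. Gaussian concentration on the truncated event, combined with the complementary tail bound, yields sub-exponential tails for $E_j^{(1)}$ of scale $O(n^{-1/2})$ (up to logarithmic factors).

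For $E_j^{(2)}$, the mapping $\{z_k\}_{k\neq j}\mapsto\tr(R_{-j}K_j)$ is Lipschitz in the Frobenius metric with constant at most $\|K_j\|_F\cdot\mathrm{Lip}(R_{-j})$. The covariance $K_j$ has bounded Frobenius norm (again via \Cref{lemma:bound_moments} applied to $\bbE\|l_{-j}\|^2$), and $R_{-j}$ is $O\bigl(n^{-1/2}(\Im[z])^{-2}\bigr)$-Lipschitz, so Gaussian concentration gives sub-Gaussian fluctuations of width $O(n^{-1/2})$. A union bound over the $d\asymp n$ indices, each with sub-exponential tails of scale $O(n^{-1/2})$, together with routine integration of the tail probabilities, yields $\bbE\bigl[\max_{1\le j\le d}|q_j-\bbE q_j|^2\bigr] = O\bigl((\log n)^2/n\bigr)$, which tends to zero. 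An alternative route is to invoke the concentrated-random-vector framework of~\cite{louart_random-matrix-approach} to package the truncation step abstractly, as is done in that reference's Lemma~4.
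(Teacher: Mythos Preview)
Your proposal is correct and takes essentially the same approach as the paper. The paper's proof simply invokes \cite[Lemma~4]{louart_random-matrix-approach} to obtain the sub-exponential tail bound $\bbP(|q_j-\bbE q_j|>t)\le c_1 e^{-c_2 n\min\{t,t^2\}}$ and then performs the union bound and tail integration you outline; your decomposition $q_j-\bbE q_j=E_j^{(1)}+E_j^{(2)}$ together with the truncation argument is precisely what underlies that cited lemma, as you yourself note at the end.
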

    \begin{proof}
        Adapting~\cite[Lemma 4]{louart_random-matrix-approach}, there exists some absolute constants \(c_{1},c_{2}\in \bbR_{>0}\) such that
        \[
            \bbP\left(|  q_{j}-\bbE   q_{j}| >t\right) \leq c_{1}e^{-c_{2}n\min\{t,t^{2}\}}
        \]
        for every \(t\in \bbR_{\geq 0}\). Then, \(\bbE[\max_{1\leq j\leq d}|  q_{j}-\bbE   q_{j}|^{2}]  \leq n^{-\frac{1}{2}} + \int_{n^{-\frac{1}{2}}}^{1}\bbP(\max_{1\leq j\leq d}|  q_{j}-\bbE   q_{j}|^{2} > t)\d t + \int_{1}^{\infty}\bbP(\max_{1\leq j\leq d}|  q_{j}-\bbE   q_{j}|^{2} > t)\d t\). Using a union bound,
        \begin{align*}
            \int_{n^{-\frac{1}{2}}}^{1}\bbP\left(\max_{1\leq j\leq d}|  q_{j}-\bbE   q_{j}|^{2} > t\right)\d t & \leq c_{1}d \int_{n^{-\frac{1}{2}}}^{1}e^{-c_{2}nt}\d t
             = \frac{c_{1}d}{c_{2}n}\left(e^{-c_{2}\sqrt{n}}-e^{-c_{2}n}\right)
        \end{align*}
        Also,
        \begin{align*}
            \int_{1}^{\infty}\bbP\left(\max_{1\leq j\leq d}|  q_{j}-\bbE   q_{j}|^{2} > t\right)\d t  \leq c_{1}d\int_{1}^{\infty}e^{-c_{2}n\sqrt{t}}\d t
            & = 2c_{1}d\int_{1}^{\infty}t e^{-c_{2}nt}\d t 
            \\ & = \frac{2c_{1}d}{c_{2}n} e^{-c_{2}n}\left(1+\frac{1}{c_{2}n}\right)
        \end{align*}
        Taking \(n\to\infty\) and using the fact that \(d\propto n\) concludes the proof.
    \end{proof}
    
    We are ready to prove \Cref{lemma:universality}.
    
    \begin{proof}(\emph{Proof of \Cref{lemma:universality}})
        For simplicity, we will demonstrate that \(\lim_{n\to\infty}\| \bbE[(L-\bbE L)(L-z I_{\ell})^{-1}]+\bbE[(\tilde{L}-\bbE L)(L-z I_{\ell})^{-1}(\tilde{L}-\bbE L)(L-z I_{\ell})^{-1}]\|=0\) for every \(z\in \bbH\), where \(\tilde{L}\) is an i.i.d. copy of \(L\). This adjustment streamlines notation without altering any steps in the proof. For every \(j\in \{1,2,\ldots, d\}\),
        \[
            P_{j}L_{j}P_{j} =
            \begin{bmatrix}
                0 & l_{-j}^{\top}C^{-1}_{j} \\
                C^{-1}_{j} l_{-j} & 0
            \end{bmatrix}
        \]
        and, by \Cref{lemma:schur_PLP},
        \begin{align*}
            \bbE[(L-\bbE L)(L-z&I_{\ell})^{-1}]  = \sum_{j=1}^{d}P_{j}\bbE\left[P_{j}L_{j}P_{j}(P_{j}LP_{j}-zI_{\ell})^{-1}\right]P_{j}  
            \\ & = \sum_{j=1}^{d}P_{j}\bbE
                \begin{bmatrix}
                    \xi_{j} l_{-j}^{\top}R_{-j} l_{-j} & l_{-j}^{\top}R_{-j}C_{j} - \xi_{j}l_{-j}^{\top}R_{-j}l_{-j}l_{-j}^{\top}R_{-j}C_{j} \\
                    -\xi_{j} C_{j}^{-1}l_{-j} & \xi_{j}C^{-1}_{j}l_{-j}l_{-j}^{\top}R_{-j}C_{j}
                \end{bmatrix}
            P_{j} 
            \\ & = \sum_{j=1}^{d}P_{j}\bbE
                \begin{bmatrix}
                    \xi_{j} l_{-j}^{\top}R_{-j} l_{-j} &  - \xi_{j}l_{-j}^{\top}R_{-j}l_{-j}l_{-j}^{\top}R_{-j}C_{j} \\
                    -\xi_{j} C^{-1}_{j}l_{-j} & \xi_{j}C^{-1}_{j}l_{-j}l_{-j}^{\top}R_{-j}C_{j}
                \end{bmatrix}
            P_{j}   
        \end{align*}
    where we recall that \(R_{-j}=(L_{-j} -zI_{\ell-1})^{-1}\). On the other hand,
    \begin{align*}
        & \bbE\left[(\tilde{L}-\bbE L)(L-zI_{\ell})^{-1}(\tilde{L}-\bbE L)(L-zI_{\ell})^{-1}\right] 
        \\ & = \sum_{j=1}^{d}P_{j}\bbE\left[P_{j}\tilde{L}_{j}P_{j}(P_{j}LP_{j}-zI_{\ell})^{-1}P_{j}\tilde{L}_{j}P_{j}(P_{j}LP_{j}-zI_{\ell})^{-1}\right]P_{j}  
        \\ & = \sum_{j=1}^{d}P_{j}\bbE
            \begin{bmatrix}
                \xi_{j} \tilde{l}_{-j}^{\top}R_{-j} l_{-j} & \tilde{l}_{-j}^{\top}R_{-j}C_{j} - \xi_{j}\tilde{l}_{-j}^{\top}R_{-j}l_{-j}l_{-j}^{\top}R_{-j}C_{j} \\
                -\xi_{j} C^{-1}_{j}\tilde{l}_{-j} & \xi_{j}C^{-1}_{j}\tilde{l}_{-j}l_{-j}^{\top}R_{-j}C_{j}
            \end{bmatrix}^{2}
        P_{j}.  
    \end{align*}
    Thus,
    \begin{align*}
        \sum_{j=1}^{d}P_{j}\bbE[\xi_{j}\Psi_{j}]P_{j}  & = \bbE\left[(L-\bbE L)(L-zI_{\ell})^{-1}\right] 
        \\ &+  \bbE\left[(\tilde{L}-\bbE L)(L-zI_{\ell})^{-1}(\tilde{L}-\bbE L)(L-zI_{\ell})^{-1}\right]
    \end{align*}
    where \( q_{j}=l_{-j}^{\top}R_{-j} l_{-j}\), \(\tilde{q}_{j}=\tilde{l}_{-j}^{\top}R_{-j} \tilde{l}_{-j}\), \(r_{j}=\tilde{l}_{-j}^{\top}R_{-j} l_{-j}\) and
    \[
        \Psi_{j} = 
        \begin{bmatrix}
              \substack{q_{j}-\tilde{q}_{j}  + 2\xi_{j}r_{j}^{2}} &  \substack{r_{j} \tilde{l}_{-j}^{\top}R_{-j}C_{j} - 2 \xi_{j}r_{j}^{2}l_{-j}^{\top}R_{-j}C_{j} + (\tilde{q}_{j}- q_{j})l_{-j}^{\top}R_{-j}C_{j}} \\
            \substack{-2\xi_{j}r_{j}C^{-1}_{j}\tilde{l}_{-j}  - C^{-1}_{j}l_{-j}} & \substack{C^{-1}_{j}(l_{-j}l_{-j}^{\top}-\tilde{l}_{-j}\tilde{l}_{-j}^{\top})R_{-j}C_{j} + 2\xi_{j}r_{j}C^{-1}_{j}\tilde{l}_{-j}l_{-j}^{\top}R_{-j}C_{j}}
        \end{bmatrix}.
    \]
    We will consider the upper blocks of \(\sum_{j=1}^{d}P_{j}\bbE[\xi_{j}\Psi_{j}]P_{j}\) separately.
    
    First, for the upper-left corner, the sum along with the permutation matrices \(P_{j}\) are simply tiling the diagonal. Furthermore, by \Cref{lemma:real/imag_singularity_bound}, both \(|\xi_{j}|\) and \(\|R_{j}\|\) are bounded by \((\Im[z])^{-1}\) for every \(j\in \{1,2,\ldots, d\}\). Then,
    \begin{align*}
        \left\|\sum_{j=1}^{d}P_{j}\bbE \begin{bmatrix}
            \xi_{j}( q_{j}-\tilde{q}_{j})  + 2\xi^{2}_{j}r_{j}^{2} & 0 \\
          0 & 0
       \end{bmatrix}P_{j}\right\| & \leq \max_{1\leq j\leq d}|\bbE [\xi_{j}( q_{j}-\tilde{q}_{j})  + 2\xi^{2}_{j}r_{j}^{2}]|
       \\ & \leq \frac{1}{\Im[z]}\bbE [| q-\tilde{ q}|]  + 2|\bbE[\xi^{2}_{1}l_{-1}^{\top}R_{-1}K R_{-1} l_{-1}]|
       \\ & \leq \frac{2}{\Im[z]}\bbE [| q-\bbE q|] 
        + \frac{2\bbE[\|L-\bbE L\|^{2}]\|K\|}{(\Im[z])^{4}}.
    \end{align*}
    Here, we introduced the correlation matrix \(K=\bbE[l_{-1}l_{-1}^{\top}]\). Using Jensen's inequality and Cauchy Schwarz, 
    \begin{align}
        \|K\| &\leq  \|\bbE [a_{1}a_{1}^{\top}]\|+\|\bbE [\ha_{1}a_{1}^{\top}]\|+\|\bbE [a_{1}\ha_{1}^{\top}]\|+\|\bbE [\ha_{1}\ha_{1}^{\top}]\| \notag
        \\ & = d^{-1}(\|\bbE [AA^{\top}]\|+\|\bbE [\hA A^{\top}]\|+\|\bbE [A\hA^{\top}]\|+\|\bbE [\hA\hA^{\top}]\|)  \notag
        \\ & \leq d^{-1}(\bbE [\|A\|^{2}]+2\sqrt{\bbE [\|\hA\|^{2}]\bbE [\|A\|^{2}]}+\bbE [\|\hA\|^{2}]) \lesssim d^{-1}. \label{eq:norm_K}
    \end{align}
    Here, we used the fact that \(\bbE [\|A\|^{2}]\) and \(\bbE [\|\hA\|^{2}]\) are bounded by~\Cref{lemma:bound_moments}. Additionally, by \Cref{lemma:quadratic_concentration}, it is clear that \(\bbE [| q-\bbE q|]\to 0\) as \(n\to \infty\).
    
    We now turn our attention to the upper-right \(1\times (\ell-1)\) corner of \(\sum_{j=1}^{d}P_{j}\bbE[\xi_{j}\Psi_{j}]P_{j}\). For every unit vector \(x\in \bbC^{\ell}\),
    \begin{align*}
        & \left\|\sum_{j=1}^{d}P_{j}\bbE\begin{bmatrix}
          0 &  \xi_{j}r_{j} \tilde{l}_{-j}^{\top}R_{-j}C_{j} - 2 \xi^{2}_{j}r_{j}^{2}l_{-j}^{\top}R_{-j}C_{j} + \xi_{j}(\tilde{q}_{j}- q_{j})l_{-j}^{\top}R_{-j}C_{j} \\
           0 & 0
       \end{bmatrix}P_{j}x \right\|_{2}
       \\ & = \left\|\bbE 
       \begin{pmatrix}
       \begin{pmatrix}
          0 &  \xi_{1}r_{1} \tilde{l}_{-1}^{\top}R_{-1}C_{1} - 2 \xi^{2}_{1}r_{1}^{2}l_{-1}^{\top}R_{-1}C_{1} + \xi_{1}(\tilde{ q}_{1}- q_{1})l_{-1}^{\top}R_{-1}C_{1} \\
       \end{pmatrix}P_{1}x \\
       \vdots \\
       \begin{pmatrix}
          0 &  \xi_{d}r_{d} \tilde{l}_{-d}^{\top}R_{-d}C_{d} - 2 \xi^{2}_{d}r_{d}^{2}l_{-d}^{\top}R_{-d}C_{d} + \xi_{d}(\tilde{ q}_{d}- q_{d})l_{-d}^{\top}R_{-d}C_{d} \\
       \end{pmatrix}P_{d}x
       \end{pmatrix}
       \right\|_{2}
        \\ & \leq \sqrt{\ell}\max_{1\leq j\leq d}\|\bbE[\xi_{j}r_{j} \tilde{l}_{-j}^{\top}R_{-j} - 2 \xi^{2}_{j}r_{j}^{2}l_{-j}^{\top}R_{-j} + \xi_{j}(\tilde{q}_{j}- q_{j})l_{-j}^{\top}R_{-j}]\|_{2}.
    \end{align*}
    On one hand,
    \[
        \bbE[\xi_{j}r_{j} \tilde{l}_{-j}^{\top}R_{-j} - 2 \xi^{2}_{j}r_{j}^{2}l_{-j}^{\top}R_{-j}] = \bbE[\xi_{j}l_{-j}^{\top}R_{-j}KR_{-j} - 2 \xi^{2}_{j}l_{-j}^{\top}R_{-j}KR_{-j}l_{-j}l_{-j}^{\top}R_{-j}]
    \]
    and, since \(|\xi_{j}|\leq (\Im[z])^{-1}\),
    \[
        \max_{1\leq j\leq d}\|\bbE[\xi_{j}r_{j} \tilde{l}_{-j}^{\top}R_{-j} - 2 \xi^{2}_{j}r_{j}^{2}l_{-j}^{\top}R_{-j}]\| \leq \frac{\bbE[\|l_{-1}\|]\|K\|}{(\Im[z])^{3}} + \frac{2\bbE[\|l_{-1}\|^{3}]\|K\|}{(\Im[z])^{5}}.
    \]
    Furthermore, by Cauchy-Schwarz for complex random variables,
    \begin{align*}
        \|\bbE[\xi_{j}(\tilde{q}_{j}- q_{j})l_{-j}^{\top}R_{-j}]\|_{2} & = \sup_{\|y\|\leq 1} |\bbE [\xi_{j}(\tilde{q}_{j}- q_{j})l_{-j}^{\top}R_{-j}y] |
        \\ & \leq (\Im[z])^{-1}\sup_{\|y\|\leq 1} \sqrt{\bbE[| q -\tilde{ q}|^{2}]\bbE[|l_{-j}^{\top}R_{-j}y|^{2}]}  
        \\ & = (\Im[z])^{-1}\sup_{\|y\|\leq 1} \sqrt{\bbE[| q -\tilde{ q}|^{2}]\bbE[y^{\ast}R_{-j}^{\ast}K R_{-j}y]}
        \\ & \leq  \frac{\sqrt{\bbE[| q -\tilde{ q}|^{2}]\|K\|}}{(\Im[z])^{2}}.
    \end{align*}
    Combining everything, we obtain that the upper-right \(1\times (\ell-1)\) corner of \(\sum_{j=1}^{d}P_{j}\bbE[\xi_{j}\Psi_{j}]P_{j}\) is bounded, in norm, by
    \[
        \frac{\sqrt{\ell}\bbE[\|l_{-1}\|]\|K\|}{(\Im[z])^{3}} + \frac{2\sqrt{\ell}\bbE[\|l_{-1}\|^{3}]\|K\|}{(\Im[z])^{5}}+ \frac{\sqrt{\ell \bbE[| q -\tilde{ q}|^{2}]\|K\|}}{(\Im[z])^{2}}.
    \]
    We conclude that this bound vanishes as \(n\) increases using~\eqref{eq:norm_K}, \(\bbE[\|l_{-1}\|]\leq \bbE[\|L-\bbE L\|]\) as well as \cref{lemma:quadratic_concentration,lemma:bound_moments}.
    
    We consider the two lower blocks together. For notational convenience, let
    \[
        \ubar{\Psi}_{j} = 
        \begin{bmatrix}
            0 &  0 \\
            -2\xi_{j}r_{j}C^{-1}_{j}\tilde{l}_{-j}  - C^{-1}_{j}l_{-j} & C^{-1}_{j}(l_{-j}l_{-j}^{\top}-\tilde{l}_{-j}\tilde{l}_{-j}^{\top})R_{-j}C_{j} + 2\xi_{j}r_{j}C^{-1}_{j}\tilde{l}_{-j}l_{-j}^{\top}R_{-j}C_{j}
        \end{bmatrix}
    \]
    for every \(j\in \{1,2,\ldots, d\}\). Since we expect \( q_{j}\) to concentrate around its mean, we write \(\xi_{j} = (1+z+ q_{j})^{-1} = (1+z+\bbE q_{j})^{-1} + \frac{\bbE q_{j}- q_{j}}{(1+z+\bbE q_{j})}\xi_{j}\) and
    \begin{align*}
        \sum_{j=1}^{d}P_{j}\bbE [\xi_{j}\ubar{\Psi}_{j}]P_{j} & = (1+z+\bbE q)^{-1}\sum_{j=1}^{d}P_{j}\bbE [\ubar{\Psi}_{j}]P_{j}
        \\ & - (1+z+\bbE q)^{-1} \sum_{j=1}^{d}P_{j}\bbE [( q_{j}-\bbE q_{j})\xi_{j}\ubar{\Psi}_{j}]P_{j}.
    \end{align*}
    Using independence of \(R_{-j}\), \(l_{-j}\) and \(\tilde{l}_{-j}\),
    \[
        \bbE \ubar{\Psi}_{j} = 
        \begin{bmatrix}
          0  &  0 \\
            -2\xi_{j}C^{-1}_{j}KR_{-j}l_{-j} &  2\xi_{j}C^{-1}_{j}K R_{-j}l_{-j}l_{-j}^{\top}R_{-j}C_{j}
        \end{bmatrix}.
    \]
    Using a similar argument as above,
    \[
        \left\|\sum_{j=1}^{d}P_{j}\bbE\begin{bmatrix}
            0 &  0 \\
            -2\xi_{j}C^{-1}_{j}KR_{-j}l_{-j} & 0
        \end{bmatrix}P_{j}\right\| \leq \frac{2\sqrt{\ell}\bbE[\|l_{-1}\|]\|K\|}{(\Im[z])^{2}} \xrightarrow[]{n\to\infty} 0.
    \]
    Moreover, further decomposing the lower-right corner,
    \begin{align*}
        &\sum_{j=1}^{d}P_{j}\bbE\begin{bmatrix}
            0 &  0 \\
            0 &  2\xi_{j}C^{-1}_{j}K R_{-j}l_{-j}l_{-j}^{\top}R_{-j}C_{j}
        \end{bmatrix}P_{j}
          \\ &\quad\quad\quad\quad\quad =  (1+z+\bbE q)^{-1}\sum_{j=1}^{d}P_{j}\bbE\begin{bmatrix}
            0 &  0 \\
            0 &  2C^{-1}_{j}K R_{-j}KR_{-j}C_{j}
        \end{bmatrix}P_{j}
        \\  &\quad\quad\quad\quad\quad  - (1+z+\bbE q)^{-1}\sum_{j=1}^{d}P_{j}\bbE\begin{bmatrix}
            0 &  0 \\
            0 &  2( q_{j}-\bbE  q_{j})\xi_{j}C^{-1}_{j}K R_{-j}l_{-j}l_{-j}^{\top}R_{-j}C_{j}
        \end{bmatrix}P_{j}
    \end{align*}
    with \(|(1+z+\bbE q)^{-1}|\leq (\Im[z])^{-1}\),
    \begin{align*}
        \left\| \sum_{j=1}^{d}P_{j}\bbE\begin{bmatrix}
            0 &  0 \\
            0 &  2C^{-1}_{j}K R_{-j}KR_{-j}C_{j}
        \end{bmatrix}P_{j} \right\| \leq \frac{2d\|K\|^{2}}{(\Im[z])^{2}}
    \end{align*}
    and 
    \begin{align*}
        \left\|\sum_{j=1}^{d}P_{j}\bbE\begin{bmatrix}
            0 &  0 \\
            0 &  \substack{2( q_{j}-\bbE  q_{j})\xi_{j}C^{-1}_{j}K R_{-j}l_{-j}l_{-j}^{\top}R_{-j}C_{j}}
        \end{bmatrix}P_{j}\right\| 
        \leq \frac{2d\|K\|\sqrt{\bbE[| q-\bbE q|^{2}]\bbE[\|l_{-1}\|^{4}]}}{(\Im[z])^{3}}.
    \end{align*}
    In particular, \(\|(1+z+\bbE q)^{-1}\sum_{j=1}^{d}P_{j}\bbE [\ubar{\Psi}_{j}]P_{j}\| \xrightarrow[]{n\to\infty} 0\). It only remains to show that \(\|(1+z+\bbE q)^{-1} \sum_{j=1}^{d}P_{j}\bbE [( q_{j}-\bbE q_{j})\xi_{j}\ubar{\Psi}_{j}]P_{j}\|\) vanishes. To this end, we undo the decomposition and notice that
    \begin{align*}
        \sum_{j=1}^{d}P_{j}\bbE [( q_{j}-\bbE q_{j})\xi_{j}\ubar{\Psi}_{j}]P_{j} & = \bbE\left[(\ubar{L}-\bbE \ubar{L})\Omega (L-zI_{\ell})^{-1}\right] 
        \\ & + \bbE\left[(\tilde{\ubar{L}}-\bbE \ubar{L})\Omega(L-zI_{\ell})^{-1}(\tilde{L}-\bbE L)(L-zI_{\ell})^{-1}\right] 
    \end{align*}
    where
    \[
        \ubar{L} =  
        \begin{bmatrix}
            \delta I_{\ntrain}        & A                       & 0              & 0     \\
            0                &  -I_{d\times d}          & 0             & 0 \\
            0       &  0          & 0             &  -I_{\ntest} \\
            0        & \hA           & -I_{\ntest}                      & 0
        \end{bmatrix}
    \]
    and
    \[
        \Omega = \diag\{ 0_{n_{\mathrm{train}}\times n_{\mathrm{train}}},\diag\{ q_{j}-\bbE  q_{j}\}_{j=1}^{d},0_{2n_{\mathrm{test}}\times 2n_{\mathrm{test}}}\}.
    \]
    Using the bound \(\|(L-z I_{\ell})^{-1}\|\leq (\Im[z])^{-1}\), it follows from Jensen's and Cauchy-Schwarz inequalities that
    \[
        \left\| \bbE\left[(\ubar{L}-\bbE \ubar{L})\Omega (L-zI_{\ell})^{-1}\right] \right\| \leq \frac{\sqrt{\bbE[\|L-\bbE L\|^{2}] \bbE[\max_{1\leq j\leq d} | q_{j}-\bbE  q_{j}|^{2}]}}{\Im[z]}
    \]
    and
    \begin{multline*}
        \left\|\bbE\left[(\tilde{\ubar{L}}-\bbE \ubar{L})\Omega(L-zI_{\ell})^{-1}(\tilde{L}-\bbE L)(L-zI_{\ell})^{-1}\right]  \right\| 
        \\\leq \frac{\sqrt{\bbE[\|L-\bbE L\|^{4}]\bbE[\max_{1\leq j\leq d} | q_{j}-\bbE  q_{j}|^{2}]}}{(\Im[z])^{2}}.
    \end{multline*}
    
    This term gives us the bottleneck conditions on the norm of the matrix \(L-\bbE L\) and the concentration of \( q\) around its mean. By \cref{lemma:quadratic_concentration,lemma:bound_moments}, both of the RHS bounds vanish as \(n\) diverges to infinity.
\end{proof}
    
\end{appendix}

\begin{acks}[Acknowledgments]
    The authors would like to thank David Renfrew for helpful conversations relating to the Dyson equation. The authors would also like to thank Konstantinos Christopher Tsiolis for his help at multiple stages of the project.
\end{acks}
\begin{funding}
    The first author was supported in part by the Canada CIFAR AI Chair Program (held at Mila by Courtney Paquette). The second author was supported by NSERC Discovery Grant RGPIN-2020-04974.
\end{funding}



\bibliographystyle{imsart-number} 
\bibliography{bibliography}       

@book{Couillet_Liao_2022, 
  place={Cambridge}, 
  title={Random Matrix Methods for Machine Learning}, 
  publisher={Cambridge University Press}, 
  author={Couillet, Romain and Liao, Zhenyu}, 
  year={2022}
}

@article{zhang_generalization,
  author = {Zhang, Chiyuan and Bengio, Samy and Hardt, Moritz and Recht, Benjamin and Vinyals, Oriol},
  title = {Understanding deep learning (still) requires rethinking generalization},
  year = {2021},
  issue_date = {March 2021},
  publisher = {Association for Computing Machinery},
  address = {New York, NY, USA},
  volume = {64},
  number = {3},
  issn = {0001-0782},
  url = {https://doi.org/10.1145/3446776},
  doi = {10.1145/3446776},
  abstract = {Despite their massive size, successful deep artificial neural networks can exhibit a remarkably small gap between training and test performance. Conventional wisdom attributes small generalization error either to properties of the model family or to the regularization techniques used during training.Through extensive systematic experiments, we show how these traditional approaches fail to explain why large neural networks generalize well in practice. Specifically, our experiments establish that state-of-the-art convolutional networks for image classification trained with stochastic gradient methods easily fit a random labeling of the training data. This phenomenon is qualitatively unaffected by explicit regularization and occurs even if we replace the true images by completely unstructured random noise. We corroborate these experimental findings with a theoretical construction showing that simple depth two neural networks already have perfect finite sample expressivity as soon as the number of parameters exceeds the number of data points as it usually does in practice.We interpret our experimental findings by comparison with traditional models.We supplement this republication with a new section at the end summarizing recent progresses in the field since the original version of this paper.},
  journal = {Commun. ACM},
  month = {2},
  pages = {107-115},
  numpages = {9}
}

@inproceedings{Rahimi_RF,
 title = {Random {Features} for {Large}-{Scale} {Kernel} {Machines}},
	volume = {20},
	url = {https://proceedings.neurips.cc/paper_files/paper/2007/file/013a006f03dbc5392effeb8f18fda755-Paper.pdf},
	booktitle = {Advances in {Neural} {Information} {Processing} {Systems}},
	publisher = {Curran Associates, Inc.},
	author = {Rahimi, Ali and Recht, Benjamin},
	year = {2007},
}

@article{montanari_RF,
  author = {Mei, Song and Montanari, Andrea},
  title = {The Generalization Error of Random Features Regression: Precise Asymptotics and the Double Descent Curve},
  journal = {Communications on Pure and Applied Mathematics},
  volume = {75},
  number = {4},
  pages = {667-766},
  doi = {https://doi.org/10.1002/cpa.22008},
  year = {2022}
}

@inproceedings{adlam_doubledescent,
  title = {Understanding {Double} {Descent} {Requires} {A} {Fine}-{Grained} {Bias}-{Variance} {Decomposition}},
	volume = {33},
	url = {https://proceedings.neurips.cc/paper_files/paper/2020/file/7d420e2b2939762031eed0447a9be19f-Paper.pdf},
	booktitle = {Advances in {Neural} {Information} {Processing} {Systems}},
	publisher = {Curran Associates, Inc.},
	author = {Adlam, Ben and Pennington, Jeffrey},
	year = {2020},
	pages = {11022--11032},
}

@inproceedings{jacot_RF,
    title = 	 {Implicit Regularization of Random Feature Models},
  author =       {Jacot, Arthur and Simsek, Berfin and Spadaro, Francesco and Hongler, Clement and Gabriel, Franck},
  booktitle = 	 {Proceedings of the 37th International Conference on Machine Learning},
  pages = 	 {4631--4640},
  year = 	 {2020},
  volume = 	 {119},
  series = 	 {Proceedings of Machine Learning Research},
  month = 	 {07},
  publisher =    {PMLR},
  pdf = 	 {http://proceedings.mlr.press/v119/jacot20a/jacot20a.pdf},
  url = 	 {https://proceedings.mlr.press/v119/jacot20a.html},
}

@misc{chouard2022quantitative,
  title={Quantitative deterministic equivalent of sample covariance matrices with a general dependence structure}, 
  author={Clément Chouard},
  year={2022},
  eprint={2211.13044},
  archivePrefix={arXiv},
  primaryClass={math.PR}
}

@article{deng2012mnist,
  title={The mnist database of handwritten digit images for machine learning research},
  author={Deng, Li},
  journal={IEEE Signal Processing Magazine},
  volume={29},
  number={6},
  pages={141--142},
  year={2012},
  publisher={IEEE}
}

@article{helton2007operatorvalued,
	title={Operator-valued semicircular elements: Solving a quadratic matrix equation with positivity constraints}, 
	author={J. William Helton and Reza Rashidi Far and Roland Speicher},
	journal = {International Mathematics Research Notices},
  year = {2007},
  issn = {1073-7928},
  doi = {10.1093/imrn/rnm086}
}

@article{harris_fixed_2003,
	title = {Fixed point of holomorphic mappings for domains in Banach spaces},
	volume = {2003},
	doi = {10.1155/S1085337503205042},
	journal = {Abstract and Applied Analysis},
	shortjournal = {Abstract and Applied Analysis},
	author = {Harris, Lawrence A.},
	date = {2003-03-10},
  year = {2003}
}

@incollection{HARRIS1979345,
	title = {Schwarz-Pick Systems of Pseudometrics for Domains in Normed Linear Spaces},
	editor = {Jorge Albedo Barroso},
	series = {North-Holland Mathematics Studies},
	publisher = {North-Holland},
	volume = {34},
	pages = {345-406},
	year = {1979},
	booktitle = {Advances in Holomorphy},
	issn = {0304-0208},
	doi = {https://doi.org/10.1016/S0304-0208(08)70766-7},
	author = {Harris, Lawrence A.}
}

@inproceedings{earl_hamilton,
	title = {A fixed point theorem for holomorphic mappings},
	author = {Earle, Clifford J. and Hamilton, Richard S.},
	booktitle = {Global Analysis (Proc. Sympos. Pure Math., Vol. XVI, Berkeley, 1968)},
	year = {1970},
  volume = {16},
	pages = {61--65},
	publisher = {AMS},
	address = {Rhode Island}
}

@inproceedings{ba2022highdimensional,
  title = {High-dimensional {Asymptotics} of {Feature} {Learning}: {How} {One} {Gradient} {Step} {Improves} the {Representation}},
	volume = {35},
	url = {https://proceedings.neurips.cc/paper_files/paper/2022/file/f7e7fabd73b3df96c54a320862afcb78-Paper-Conference.pdf},
	booktitle = {Advances in {Neural} {Information} {Processing} {Systems}},
	publisher = {Curran Associates, Inc.},
	author = {Ba, Jimmy and Erdogdu, Murat A and Suzuki, Taiji and Wang, Zhichao and Wu, Denny and Yang, Greg},
	year = {2022},
	pages = {37932--37946},
}

@article{matrix_herglotz,
  author = {Gesztesy, Fritz and Tsekanovskii, Eduard},
  title = {On Matrix-Valued Herglotz Functions},
  journal = {Mathematische Nachrichten},
  volume = {218},
  number = {1},
  pages = {61-138},
  keywords = {Herglotz functions, Weyl-Titchmarsh matrices, Friedrichs and Krein extensions of symmetric operators},
  doi = {https://doi.org/10.1002/1522-2616(200010)218:1<61::AID-MANA61>3.0.CO;2-D},
  url = {https://onlinelibrary.wiley.com/doi/abs/10.1002/1522-2616%28200010%29218%3A1%3C61%3A%3AAID-MANA61%3E3.0.CO%3B2-D},
  year = {2000}
}

@article{block_matrices,
  title = {Inverses of $2 \times 2$ block matrices},
  journal = {Computers \& Mathematics with Applications},
  volume = {43},
  number = {1},
  pages = {119-129},
  year = {2002},
  issn = {0898-1221},
  doi = {https://doi.org/10.1016/S0898-1221(01)00278-4},
  url = {https://www.sciencedirect.com/science/article/pii/S0898122101002784},
  author = {Tzon-Tzer Lu and Sheng-Hua Shiou},
}

@article{louart_random-matrix-approach,
  author = {Cosme Louart and Zhenyu Liao and Romain Couillet},
  title = {{A random matrix approach to neural networks}},
  volume = {28},
  journal = {The Annals of Applied Probability},
  number = {2},
  publisher = {Institute of Mathematical Statistics},
  pages = {1190 -- 1248},
  keywords = {neural networks, random feature maps, Random matrix theory},
  year = {2018},
  doi = {10.1214/17-AAP1328},
  URL = {https://doi.org/10.1214/17-AAP1328}
}

@article{benigni_moments-method,
  author = {Lucas Benigni and Sandrine P{\'e}ch{\'e}},
  title = {{Eigenvalue distribution of some nonlinear models of random matrices}},
  volume = {26},
  journal = {Electronic Journal of Probability},
  publisher = {Institute of Mathematical Statistics and Bernoulli Society},
  pages = {1 -- 37},
  keywords = {machine learning, neural networks, random matrices},
  year = {2021},
  doi = {10.1214/21-EJP699},
  URL = {https://doi.org/10.1214/21-EJP699}
}

@misc{benigni2022largest,
      title={Largest Eigenvalues of the Conjugate Kernel of Single-Layered Neural Networks}, 
      author={Lucas Benigni and Sandrine Péché},
      year={2022},
      eprint={2201.04753},
      archivePrefix={arXiv},
      primaryClass={math.PR}
}

@book{bai2010spectral,
  title={Spectral analysis of large dimensional random matrices},
  author={Bai, Zhidong and Silverstein, Jack W},
  volume={20},
  year={2010},
  publisher={Springer}
}

@misc{erdos2019matrix,
	title={The matrix Dyson equation and its applications for random matrices}, 
	author={Erdős, László},
	year={2019},
	eprint={1903.10060},
	archivePrefix={arXiv},
	primaryClass={math.PR}
}

@article{Ajanki_2019_vector1,
   title={Quadratic Vector Equations On Complex Upper Half-Plane},
   volume={261},
   ISSN={1947-6221},
   url={http://dx.doi.org/10.1090/memo/1261},
   DOI={10.1090/memo/1261},
   number={1261},
   journal={Memoirs of the American Mathematical Society},
   publisher={American Mathematical Society (AMS)},
   author={Ajanki, Oskari H. and Erdős, László and Krüger, Torben},
   year={2019},
   month={9}
}

@article{Ajanki_2019_vector2,
  author = {Ajanki, Oskari H. and Krüger, Torben and Erdős, László},
  title = {Singularities of Solutions to Quadratic Vector Equations on the Complex Upper Half-Plane},
  journal = {Communications on Pure and Applied Mathematics},
  volume = {70},
  number = {9},
  pages = {1672-1705},
  doi = {https://doi.org/10.1002/cpa.21639},
  year = {2017}
}

@article{Ajanki_2017_vector,
  author = {Ajanki, Oskari H. and Erdős, László and Krüger, Torben},
  year = {2017},
  month = {12},
  title = {Universality for general Wigner-type matrices},
  volume = {169},
  journal = {Probability Theory and Related Fields},
  doi = {10.1007/s00440-016-0740-2}
}

@article{Alt_2020,
   title={Correlated random matrices: Band rigidity and edge universality},
   volume={48},
   ISSN={0091-1798},
   url={http://dx.doi.org/10.1214/19-AOP1379},
   DOI={10.1214/19-aop1379},
   number={2},
   journal={The Annals of Probability},
   publisher={Institute of Mathematical Statistics},
   author={Alt, Johannes and Erdős, László and Krüger, Torben and Schröder, Dominik},
   year={2020},
   month={3}
}

@article{ERD_S_2019,
   title={Random Matrices With Slow Correlation Decay},
   volume={7},
   ISSN={2050-5094},
   url={http://dx.doi.org/10.1017/fms.2019.2},
   DOI={10.1017/fms.2019.2},
   journal={Forum of Mathematics, Sigma},
   publisher={Cambridge University Press (CUP)},
   author={ Erdős, László and Krüger, Torben and Schröder, Dominik},
   year={2019} 
}

@article{alt_band,
  author = {Alt, Johannes and Erdős, László and Krüger, Torben},
  year = {2018},
  month = {04},
  pages = {1421-1539},
  title = {The Dyson equation with linear self-energy: spectral bands, edges and cusps},
  volume = {25},
  journal = {Documenta Mathematica},
  doi = {10.25537/dm.2020v25.1421-1539}
}

@article{alt_Kronecker,
    author = {Johannes Alt and L{\'a}szl{\'o} Erdős and Torben Kr{\"u}ger and Yuriy Nemish},
    title = {{Location of the spectrum of Kronecker random matrices}},
    volume = {55},
    journal = {Annales de l'Institut Henri Poincaré, Probabilités et Statistiques},
    number = {2},
    publisher = {Institut Henri Poincaré},
    pages = {661 -- 696},
    year = {2019},
    doi = {10.1214/18-AIHP894},
    URL = {https://doi.org/10.1214/18-AIHP894}
}

@book{ledoux_concentration_2001,
	title = {The Concentration of Measure Phenomenon},
	isbn = {978-0-8218-3792-4},
	pagetotal = {196},
	publisher = {American Mathematical Soc.},
	author = {Ledoux, Michel},
	year = {2001},
	langid = {english}
}

@book{tao_rmt,
	title = {Topics in Random Matrix Theory},
	volume         = {132},
	isbn = {978-0-8218-7430-1},
	series         = {Graduate Studies in Mathematics},
	publisher      = {American Mathematical Society},
	author = {Tao, Terence},
	year = {2012}
}

@article{haagerup_new_2005,
	title = {A new application of random matrices: $\mathrm{Ext}(C_{\mathrm{red}}^{\ast}(F_2))$ is not a group},
	volume = {2},
	doi = {10.4007/annals.2005.162.711},
	shorttitle = {A new application of random matrices},
	journal = {Annals of Mathematics. Second Series},
	shortjournal = {Annals of Mathematics. Second Series},
	author = {Haagerup, Uffe and Thorbjørnsen, Steen},
	date = {2005-09-01},
  year = {2005}
}

@article{Anderson_2013,
	doi = {10.1214/11-aop739},
	url = {https://doi.org/10.1214%2F11-aop739},
	year = 2013,
	month = {5},
	publisher = {Institute of Mathematical Statistics},
	volume = {41},
	number = {3B},
	author = {Greg W. Anderson},
	title = {Convergence of the largest singular value of a polynomial in independent Wigner matrices},
	journal = {The Annals of Probability}
}

@article{HELTON20181,
  title = {Applications of realizations (aka linearizations) to free probability},
  journal = {Journal of Functional Analysis},
  volume = {274},
  number = {1},
  pages = {1-79},
  year = {2018},
  issn = {0022-1236},
  doi = {https://doi.org/10.1016/j.jfa.2017.10.003},
  url = {https://www.sciencedirect.com/science/article/pii/S0022123617303798},
  author = {J. William Helton and Tobias Mai and Roland Speicher},
  keywords = {Free probability, Non-commutative rational functions, Descriptor realizations, Linearizations}
}

@article{nemish_local_2020,
	title = {Local laws for polynomials of Wigner matrices},
	volume = {278},
	issn = {0022-1236},
	doi = {10.1016/j.jfa.2020.108507},
	pages = {108507},
	number = {12},
	journal = {Journal of Functional Analysis},
	author = {Erdős, László and Krüger, Torben and Nemish, Yuriy},
	year = {2020}
}

@article{BelinschiMaiSpeicher+2017+21+53,
	url = {https://doi.org/10.1515/crelle-2014-0138},
	title = {Analytic subordination theory of operator-valued free additive convolution and the solution of a general random matrix problem},
	author = {Serban T. Belinschi and Tobias Mai and Roland Speicher},
	pages = {21--53},
	volume = {2017},
	number = {732},
	journal = {Journal für die reine und angewandte Mathematik (Crelles Journal)},
	doi = {doi:10.1515/crelle-2014-0138},
	year = {2017},
	lastchecked = {2023-06-05}
}

@article{HELTON2006105,
  title = {Noncommutative convexity arises from linear matrix inequalities},
  journal = {Journal of Functional Analysis},
  volume = {240},
  number = {1},
  pages = {105-191},
  year = {2006},
  issn = {0022-1236},
  doi = {https://doi.org/10.1016/j.jfa.2006.03.018},
  author = {J. William Helton and Scott A. McCullough and Victor Vinnikov},
  keywords = {Noncommutative (NC) rational functions, Matrix convexity, Noncommutative convexity, Linear matrix inequalities, Matrix inequalities, Determinantal representations, Noncommutative realizations},
}

@article{anderson_anticommutator,
  title={A local limit law for the empirical spectral distribution of the anticommutator of independent Wigner matrices},
  journal={Annales de l'I.H.P. Probabilités et statistiques},
  volume={51},
  pages={809-841},
  year={2015},
  number={3},
  author={Greg W. Anderson},
  doi={10.1214/14-AIHP602}
}

@misc{fronk2023norm,
  title={Norm Convergence Rate for Multivariate Quadratic Polynomials of Wigner Matrices}, 
  author={Jacob Fronk and Torben Krüger and Yuriy Nemish},
  year={2023},
  eprint={2308.16778},
  archivePrefix={arXiv},
  primaryClass={math.PR}
}

@inproceedings{
  mel2022anisotropic,
  title={Anisotropic Random Feature Regression in High Dimensions},
  author={Gabriel Mel and Jeffrey Pennington},
  booktitle={International Conference on Learning Representations},
  year={2022},
  url={https://openreview.net/forum?id=JfaWawZ8BmX}
}

@inproceedings{Adlam2019ARM,
  title = 	 { A Random Matrix Perspective on Mixtures of Nonlinearities in High Dimensions },
  author =       {Adlam, Ben and Levinson, Jake A. and Pennington, Jeffrey},
  booktitle = 	 {Proceedings of The 25th International Conference on Artificial Intelligence and Statistics},
  pages = 	 {3434--3457},
  year = 	 {2022},
  volume = 	 {151},
  series = 	 {Proceedings of Machine Learning Research},
  month = 	 {03},
  publisher =    {PMLR},
  pdf = 	 {https://proceedings.mlr.press/v151/adlam22a/adlam22a.pdf},
  url = 	 {https://proceedings.mlr.press/v151/adlam22a.html},
}

@inproceedings{tripledescent,
   title = 	 {The Neural Tangent Kernel in High Dimensions: Triple Descent and a Multi-Scale Theory of Generalization},
  author =       {Adlam, Ben and Pennington, Jeffrey},
  booktitle = 	 {Proceedings of the 37th International Conference on Machine Learning},
  pages = 	 {74--84},
  year = 	 {2020},
  volume = 	 {119},
  series = 	 {Proceedings of Machine Learning Research},
  month = 	 {07},
  publisher =    {PMLR},
  pdf = 	 {http://proceedings.mlr.press/v119/adlam20a/adlam20a.pdf},
  url = 	 {https://proceedings.mlr.press/v119/adlam20a.html},
}

@inproceedings{tripuraneni2021overparameterization,
  title = {Overparameterization {Improves} {Robustness} to {Covariate} {Shift} in {High} {Dimensions}},
	volume = {34},
	url = {https://proceedings.neurips.cc/paper_files/paper/2021/file/73fed7fd472e502d8908794430511f4d-Paper.pdf},
	booktitle = {Advances in {Neural} {Information} {Processing} {Systems}},
	publisher = {Curran Associates, Inc.},
	author = {Tripuraneni, Nilesh and Adlam, Ben and Pennington, Jeffrey},
	year = {2021},
	pages = {13883--13897}
}

@phdthesis{alt_thesis,
  title        = {Dyson equation and eigenvalue statistics of random matrices},
  author       = {Johannes Alt},
  year         = 2018,
  month        = 07,
  school       = {IST Austria},
  type         = {PhD thesis}
}

@article{Luger_2017,
	doi = {10.4310/arkiv.2017.v55.n1.a10},
	year = 2017,
	publisher = {International Press of Boston},
	volume = {55},
	number = {1},
	pages = {199--216},
	author = {Annemarie Luger and Mitja Nedic},
	title = {A characterization of Herglotz{\textendash}Nevanlinna functions in two variables via integral representations},
	journal = {Arkiv för Matematik}
}

@article{ajanki_stability_2019,
	title = {Stability of the {Matrix} {Dyson} {Equation} and {Random} {Matrices} with {Correlations}},
	volume = {173},
	issn = {0178-8051, 1432-2064},
	doi = {10.1007/s00440-018-0835-z},
	abstract = {We consider real symmetric or complex hermitian random matrices with correlated entries. We prove local laws for the resolvent and universality of the local eigenvalue statistics in the bulk of the spectrum. The correlations have fast decay but are otherwise of general form. The key novelty is the detailed stability analysis of the corresponding matrix valued Dyson equation whose solution is the deterministic limit of the resolvent.},
	number = {1-2},
	journal = {Probability Theory and Related Fields},
	author = {Ajanki, Oskari H. and Erdős, László and Krüger, Torben},
	month = {2},
	year = {2019},
	pages = {293--373}
}

@book{vershynin_2018, 
	place={Cambridge}, 
	series={Cambridge Series in Statistical and Probabilistic Mathematics}, 
	title={High-Dimensional Probability: An Introduction with Applications in Data Science}, 
	DOI={10.1017/9781108231596}, 
	publisher={Cambridge University Press}, 
	author={Vershynin, Roman}, 
	year={2018}, 
	collection={Cambridge Series in Statistical and Probabilistic Mathematics}
}

@article{hastie_22,
  author = {Trevor Hastie and Andrea Montanari and Saharon Rosset and Ryan J. Tibshirani},
  title = {{Surprises in high-dimensional ridgeless least squares interpolation}},
  volume = {50},
  journal = {The Annals of Statistics},
  number = {2},
  publisher = {Institute of Mathematical Statistics},
  pages = {949 -- 986},
  keywords = {interpolation, overparametrization, Random matrix theory, regression, Ridge regression},
  year = {2022},
  doi = {10.1214/21-AOS2133}
}

@article{MEI20223,
  title = {Generalization error of random feature and kernel methods: Hypercontractivity and kernel matrix concentration},
  journal = {Applied and Computational Harmonic Analysis},
  volume = {59},
  pages = {3-84},
  year = {2022},
  note = {Special Issue on Harmonic Analysis and Machine Learning},
  issn = {1063-5203},
  doi = {https://doi.org/10.1016/j.acha.2021.12.003},
  author = {Song Mei and Theodor Misiakiewicz and Andrea Montanari},
  keywords = {Random features, Kernel methods, Generalization error, High dimensional limit}
}

@inproceedings{goldt2022gaussian,
  title={The gaussian equivalence of generative models for learning with shallow neural networks},
  author={Goldt, Sebastian and Loureiro, Bruno and Reeves, Galen and Krzakala, Florent and M{\'e}zard, Marc and Zdeborov{\'a}, Lenka},
  booktitle={Mathematical and Scientific Machine Learning},
  pages={426--471},
  year={2022},
  organization={PMLR}
}

@article{Loureiro_2022,
doi = {10.1088/1742-5468/ac9825},
url = {https://dx.doi.org/10.1088/1742-5468/ac9825},
year = {2022},
month = {11},
publisher = {IOP Publishing and SISSA},
volume = {2022},
number = {11},
pages = {114001},
author = {Bruno Loureiro and Cédric Gerbelot and Hugo Cui and Sebastian Goldt and Florent Krzakala and Marc Mézard and Lenka Zdeborová},
title = {Learning curves of generic features maps for realistic datasets with a teacher-student model*},
journal = {Journal of Statistical Mechanics: Theory and Experiment}
}

@misc{bosch2023precise,
      title={Precise Asymptotic Analysis of Deep Random Feature Models}, 
      author={David Bosch and Ashkan Panahi and Babak Hassibi},
      year={2023},
      eprint={2302.06210},
      archivePrefix={arXiv},
      primaryClass={stat.ML}
}

@inproceedings{zavatone_deep_linear,
 author = {Zavatone-Veth, Jacob and Pehlevan, Cengiz},
 booktitle = {Advances in Neural Information Processing Systems},
 pages = {42866--42897},
 publisher = {Curran Associates, Inc.},
 title = {Learning Curves for Deep Structured Gaussian Feature Models},
 url = {https://proceedings.neurips.cc/paper_files/paper/2023/file/85d456fd41f3eec83bd3b0c337037a0e-Paper-Conference.pdf},
 volume = {36},
 year = {2023}
}

@inproceedings{schroder2023deterministic,
  itle = 	 {Deterministic equivalent and error universality of deep random features learning},
  author =       {Schr\"{o}der, Dominik and Cui, Hugo and Dmitriev, Daniil and Loureiro, Bruno},
  booktitle = 	 {Proceedings of the 40th International Conference on Machine Learning},
  pages = 	 {30285--30320},
  year = 	 {2023},
  volume = 	 {202},
  series = 	 {Proceedings of Machine Learning Research},
  month = 	 {07},
  publisher =    {PMLR},
  pdf = 	 {https://proceedings.mlr.press/v202/schroder23a/schroder23a.pdf},
  url = 	 {https://proceedings.mlr.press/v202/schroder23a.html}
}

@misc{guth2023rainbow,
      title={A Rainbow in Deep Network Black Boxes}, 
      author={Florentin Guth and Brice Ménard and Gaspar Rochette and Stéphane Mallat},
      year={2023},
      eprint={2305.18512},
      archivePrefix={arXiv},
      primaryClass={cs.LG}
}

@misc{hu2024asymptotics,
      title={Asymptotics of Random Feature Regression Beyond the Linear Scaling Regime}, 
      author={Hong Hu and Yue M. Lu and Theodor Misiakiewicz},
      year={2024},
      eprint={2403.08160},
      archivePrefix={arXiv},
      primaryClass={stat.ML}
}

@InProceedings{pmlr-v119-gerace20a,
  title = 	 {Generalisation error in learning with random features and the hidden manifold model},
  author =       {Gerace, Federica and Loureiro, Bruno and Krzakala, Florent and Mezard, Marc and Zdeborova, Lenka},
  booktitle = 	 {Proceedings of the 37th International Conference on Machine Learning},
  pages = 	 {3452--3462},
  year = 	 {2020},
  volume = 	 {119},
  series = 	 {Proceedings of Machine Learning Research},
  month = 	 {07},
  publisher =    {PMLR},
  pdf = 	 {http://proceedings.mlr.press/v119/gerace20a/gerace20a.pdf},
  url = 	 {https://proceedings.mlr.press/v119/gerace20a.html},
}

@article{double_descent,
author = {Mikhail Belkin  and Daniel Hsu  and Siyuan Ma  and Soumik Mandal },
title = {Reconciling modern machine-learning practice and the classical bias-variance trade-off},
journal = {Proceedings of the National Academy of Sciences},
volume = {116},
number = {32},
pages = {15849-15854},
year = {2019},
doi = {10.1073/pnas.1903070116},
URL = {https://www.pnas.org/doi/abs/10.1073/pnas.1903070116},
}

@article{Liao_2021,
   title={A random matrix analysis of random Fourier features: beyond the Gaussian kernel, a precise phase transition, and the corresponding double descent},
   volume={2021},
   ISSN={1742-5468},
   url={http://dx.doi.org/10.1088/1742-5468/ac3a77},
   DOI={10.1088/1742-5468/ac3a77},
   number={12},
   journal={Journal of Statistical Mechanics: Theory and Experiment},
   publisher={IOP Publishing},
   author={Liao, Zhenyu and Couillet, Romain and Mahoney, Michael W},
   year={2021},
   month={12}, 
   pages={124006} 
}

@inproceedings{fan_ck,
  title = {Spectra of the {Conjugate} {Kernel} and {Neural} {Tangent} {Kernel} for linear-width neural networks},
	volume = {33},
	url = {https://proceedings.neurips.cc/paper_files/paper/2020/file/572201a4497b0b9f02d4f279b09ec30d-Paper.pdf},
	booktitle = {Advances in {Neural} {Information} {Processing} {Systems}},
	publisher = {Curran Associates, Inc.},
	author = {Fan, Zhou and Wang, Zhichao},
	year = {2020},
	pages = {7710--7721},
}

@misc{wang2024nonlinear,
      title={Nonlinear spiked covariance matrices and signal propagation in deep neural networks}, 
      author={Zhichao Wang and Denny Wu and Zhou Fan},
      year={2024},
      eprint={2402.10127},
      archivePrefix={arXiv},
      primaryClass={stat.ML}
}

@inproceedings{wang2023overparameterized,
  title = 	 {Overparameterized Random Feature Regression with Nearly Orthogonal Data},
  author =       {Wang, Zhichao and Zhu, Yizhe},
  booktitle = 	 {Proceedings of The 26th International Conference on Artificial Intelligence and Statistics},
  pages = 	 {8463--8493},
  year = 	 {2023},
  volume = 	 {206},
  series = 	 {Proceedings of Machine Learning Research},
  month = 	 {04},
  publisher =    {PMLR},
  pdf = 	 {https://proceedings.mlr.press/v206/wang23m/wang23m.pdf}
}

@inproceedings{pennington_ck,
 title = {Nonlinear random matrix theory for deep learning},
	volume = {30},
	url = {https://proceedings.neurips.cc/paper_files/paper/2017/file/0f3d014eead934bbdbacb62a01dc4831-Paper.pdf},
	booktitle = {Advances in {Neural} {Information} {Processing} {Systems}},
	publisher = {Curran Associates, Inc.},
	author = {Pennington, Jeffrey and Worah, Pratik},
	year = {2017},
}

@inproceedings{piccolo2021analysis,
  title = {Analysis of one-hidden-layer neural networks via the resolvent method},
	volume = {34},
	url = {https://proceedings.neurips.cc/paper_files/paper/2021/file/29d74915e1b323676bfc28f91b3c4802-Paper.pdf},
	booktitle = {Advances in {Neural} {Information} {Processing} {Systems}},
	publisher = {Curran Associates, Inc.},
	author = {Piccolo, Vanessa and Schröder, Dominik},
	year = {2021},
	pages = {5225--5235},
}

@misc{cui2024asymptotics,
  title={Asymptotics of feature learning in two-layer networks after one gradient-step}, 
  author={Hugo Cui and Luca Pesce and Yatin Dandi and Florent Krzakala and Yue M. Lu and Lenka Zdeborová and Bruno Loureiro},
  year={2024},
  eprint={2402.04980},
  archivePrefix={arXiv},
  primaryClass={stat.ML}
}

@article{hong_lu_GET,
  author={Hu, Hong and Lu, Yue M.},
  journal={IEEE Transactions on Information Theory}, 
  title={Universality Laws for High-Dimensional Learning With Random Features}, 
  year={2023},
  volume={69},
  number={3},
  pages={1932-1964},
  doi={10.1109/TIT.2022.3217698}
}

@misc{schröder2024asymptotics,
  title={Asymptotics of Learning with Deep Structured (Random) Features}, 
  author={Dominik Schröder and Daniil Dmitriev and Hugo Cui and Bruno Loureiro},
  year={2024},
  eprint={2402.13999},
  archivePrefix={arXiv},
  primaryClass={stat.ML}
}

@misc{xiao2017fashionmnist,
    title={Fashion-MNIST: a Novel Image Dataset for Benchmarking Machine Learning Algorithms}, 
    author={Han Xiao and Kashif Rasul and Roland Vollgraf},
    year={2017},
    eprint={1708.07747},
    archivePrefix={arXiv},
    primaryClass={cs.LG}
}

@techreport{Krizhevsky09learningmultiple,
    title={Learning multiple layers of features from tiny images},
    author={Alex Krizhevsky},
    year={2009},
    institution={University of Toronto} 
}


\end{document}